\DeclareFontFamily{U}{wncy}{}
\DeclareFontShape{U}{wncy}{m}{n}{<->wncyr10}{}
\DeclareSymbolFont{mcy}{U}{wncy}{m}{n}
\DeclareMathSymbol{\Sha}{\mathord}{mcy}{"58} 
\DeclareMathSymbol{\B}{\mathord}{mcy}{"42} 
\DeclareMathOperator{\Br}{Br}
\DeclareMathOperator{\Fun}{Fun}
\DeclareMathOperator{\Stab}{\mathfrak{Stab}}
\DeclareMathOperator{\Ker}{Ker}
\DeclareMathOperator{\Tot}{Tot}
\DeclareMathOperator{\TORS}{\text{TORS}}
\DeclareMathOperator{\Coker}{Coker}
\DeclareMathOperator{\inv}{inv}
\DeclareMathOperator{\ab}{ab}
\DeclareMathOperator{\X}{\chi}
\DeclareMathOperator{\Ab}{\text{Ab}}
\DeclareMathOperator{\UPic}{UPic}
\DeclareMathOperator{\Mor}{Mor}
\renewcommand{\H}{\mathbb{H}}
\newcommand{\G}{\mathbb{G}}
\newcommand{\I}{\mathbb{I}}
\newcommand{\act}[2]{\leftidx{^{#1}}{{#2}}{}}
\definecolor{orange}{rgb}{1,0.5,0}
\definecolor{random}{rgb}{0.7,0,0}
\title{The étale Brauer-Manin obstruction to strong approximation on homogeneous spaces}
\author{Julian L. Demeio}
\begin{document}
	
	\maketitle

	\begin{abstract}
		It is known that, under a necessary non-compactness assumption, the Brauer-Manin obstruction is the only one to strong approximation on homogeneous spaces $X$ under a linear group $G$ (or under a connected algebraic group, under assumption of finiteness of a suitable Tate-Shafarevich group), provided that the geometric stabilizers of $X$ are connected. In this work we prove, under similar assumptions, that the étale-Brauer-Manin obstruction to strong approximation is the only one for homogeneous spaces with arbitrary stabilisers. We also deal with some related questions, concerning strong approximation outside a finite set of valuations. Finally, we prove a compatibility result, suggested to be true by work of Cyril Demarche, between the Brauer-Manin obstruction pairing on quotients $G/H$, where $G$ and $H$ are connected algebraic groups and $H$ is linear, and certain abelianization morphisms associated with these spaces.
	\end{abstract}
	
\section{Introduction}
Let $X$ be an affine variety, described by a system of polynomial equations with integer coefficients. It is a classical question to determine whether $X$ satisfies the \textit{strong approximation} property for integral points, that is, whether the natural reduction map $X(\mathbb{Z}) \to \prod_{i=1}^m X(\mathbb{Z}/p_i^{k_i}\mathbb{Z})$ is surjective for all finite sets of prime powers $\{ p_i^{k_i} \}_{i=1}^m$. Equivalently, strong approximation holds if all local congruence solutions for the equations defining $X$ can be lifted to integral solutions of those same equations. For example, when $X$ is the affine line $\mathbb{A}^1_{\mathbb{Z}}$, strong approximation is satisfied thanks to the Chinese remainder theorem.

In modern terms, the question of strong approximation for a variety $X$, defined over a number field $K$, concerns the density of the $K$-rational points of $X$ in the $S$-adelic points $X(\mathbb{A}_K^S)$ (where the $S$-components are removed, $S$ being some finite set of places); also of interest are \textit{obstructions} that explain the failure of strong approximation for certain classes of varieties.

In this article we consider the case of $X$ being a homogeneous space under an algebraic group. The first result concerning this class of varieties is a theorem of Kneser (see e.g. \cite[Theorem 7.12]{PlatonovRapunchkin}), who proved that strong approximation holds for a simply connected simple linear algebraic group $G$ (such as $K$-forms of $\operatorname{SL}_n$), provided that $S$ contains at least one valuation $v$ for which $X(K_v)$ is non-compact.
The theorem of Kneser has then been generalized several times, for example by Harari \cite{Harari}, Colliot-Thélène and Xu \cite{CTXu}, and, most recently, by Borovoi and Demarche \cite{borovoi}, whose results encompass all the previous ones. In \cite{borovoi}, the authors study the case of a homogeneous space under a connected (not necessarily linear) group $G$ with connected geometric stabilizers, and prove that, under some technical assumptions, the Brauer-Manin obstruction to strong approximation is the only one. The precise technical conditions one needs are recalled in Section \ref{Sec:MainTheorem}, but it is worth noting at the outset that they include the finiteness of the Tate-Shafarevich group of the maximal abelian variety quotient of $G$ (a condition which is by now standard in the field).

On the other hand, as already proved in \cite{Demarche_example}, the Brauer-Manin obstruction to strong approximation is in general not the only one when the geometric stabilizers of $X$ are not connected. In particular, Demarche showed that there is a non-trivial étale-Brauer-Manin obstruction to strong approximation (see Section \ref{Sec:pre} for a definition) for some homogeneous spaces with finite stabilizers.
In this paper we build on \cite[Thm 1.4]{borovoi} to prove that, under some technical assumptions, the étale-Brauer-Manin obstruction to strong approximation is the only one for a general homogeneous space $X$ under a connected group $G$ (with not necessarily connected geometric stabilizers). A precise statement is as follows.

\begin{theorem}\label{Thm:Homspaces}
	Let $G$ be a connected algebraic group over a number field $K$. We assume that the Tate-Shafarevich group $\Sha(K,G^{ab})$ is finite. Let $X$ be a left homogeneous space under $G$.  Let $S\supset M_K^{\infty}$ be a finite set of places of $K$. We assume that $G^{sc}(K)$ is dense in $G^{sc}(\A^S_K)$.  Set $S_f:=S\cap M_K^{fin}=S \setminus M_K^{\infty}$.  Then the set $X(\A_K)_{\bullet}^{\acute{e}t,\Br}$ is equal to the closure of the set $G^{scu}(K_{S_f}) \cdot X(K) \subset X(\A_K)_{\bullet}$ for the adelic topology.
\end{theorem}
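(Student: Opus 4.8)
The plan is to reduce the general (arbitrary stabilizer) case to the connected-stabilizer case handled by Borovoi–Demarche, using the étale-Brauer-Manin machinery as the bridge. The étale-Brauer set is, by definition, an intersection over all finite (étale) torsors $f\colon Y\to X$ under a finite group scheme $F$, of the images of the Brauer sets $Y^\sigma(\A_K)^{\Br}_\bullet$ of the twists, pushed forward to $X(\A_K)_\bullet$. So I would first analyze the structure of finite torsors over a homogeneous space $X=G/H$ with geometric stabilizer $\bar H$. The key geometric input is that the connected component $H^\circ$ is normal in $H$ with finite quotient $\bar H/\bar H^\circ$, so there is a canonical finite torsor $Y=G/H^\circ \to X=G/H$ under the finite group $\pi_0(\bar H)=\bar H/\bar H^\circ$ (suitably as a $K$-group scheme after dealing with the Galois action). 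Crucially, $Y=G/H^\circ$ is a homogeneous space with \emph{connected} geometric stabilizer, which is precisely the setting where Theorem 1.4 of \cite{borovoi} applies.

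Next I would carry out the twisting analysis. For each cohomology class parametrizing a twist $Y^\sigma\to X$, the twisted space $Y^\sigma$ is again a homogeneous space under (a twist of) $G$ with connected geometric stabilizer; I would need to verify that the hypotheses transfer to the twist — in particular that $\Sha(K,G^{ab})$ finiteness and the density of $G^{sc}(K)$ in $G^{sc}(\A_K^S)$ are preserved, which should hold because $G^{sc}$ and $G^{ab}$ are insensitive to inner twisting by the stabilizer. Then I would apply Borovoi–Demarche to each relevant twist $Y^\sigma$ to conclude that $Y^\sigma(\A_K)^{\Br}_\bullet$ equals the closure of $G^{scu}(K_{S_f})\cdot Y^\sigma(K)$. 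Summing over the twists and pushing forward via $f$, an adelic point $(x_v)$ survives in $X(\A_K)^{\acute{e}t,\Br}_\bullet$ precisely when it lifts to a point in some twist $Y^\sigma$ that lies in the closure of the rational-times-$G^{scu}$ orbit. The containment $\overline{G^{scu}(K_{S_f})\cdot X(K)}\subseteq X(\A_K)^{\acute{e}t,\Br}_\bullet$ is the routine (functoriality/continuity) direction; the reverse containment is the substance.

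The main obstacle, as I see it, is the fibration/descent step: controlling how the closures behave under the finite map $f\colon Y\to X$ and checking that one does not lose points when passing from the twists back down to $X$. Concretely, I expect difficulty in showing that if $(x_v)\in X(\A_K)^{\acute{e}t,\Br}_\bullet$ then there is a \emph{single} twist $\sigma$ and a single adelic lift in $Y^\sigma(\A_K)^{\Br}_\bullet$ approximating the given local data, and that the $G^{scu}(K_{S_f})$-orbit downstairs is compatible with the one upstairs (since $G^{scu}$ acts on $Y$ compatibly with its action on $X$, this compatibility should hold, but the bookkeeping with the finite fibers at the places in $S_f$ requires care). A related subtlety is that the étale-Brauer obstruction a priori involves \emph{all} finite torsors, not just the canonical one $G/H^\circ\to G/H$; I would argue that the canonical torsor (together with its twists) is cofinal/sufficient — i.e. that testing against it captures the full étale-Brauer set — because any finite quotient of $\pi_1(X)$ relevant to non-connectedness factors through $\pi_0(\bar H)$. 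Establishing this sufficiency, and then gluing the local approximations across $S_f$ and the archimedean places while respecting the Brauer-set constraints, is where the real work lies; the rest is assembling Borovoi–Demarche on the connected-stabilizer pieces.
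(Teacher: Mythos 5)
Your high-level strategy --- produce a finite torsor over $X$ whose total space is a homogeneous space with \emph{connected} geometric stabilizers, and feed its twists into Borovoi--Demarche --- is exactly the paper's. But there is a genuine gap: your construction of the ``canonical'' torsor $G/H^{\circ}\to G/H$ under $\pi_0(\bar H)$ presupposes that $X$ is of the form $G/H$ with $H$ a $K$-subgroup, i.e.\ essentially that $X(K)\neq\emptyset$. For a general homogeneous space with $X(K)=\emptyset$ the stabilizer is only defined over a finite Galois extension $L/K$, and the quotient $G_L/H^0\to G_L/H\cong X_L$ need not descend to $K$; there is no canonical finite torsor of the kind you describe. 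The paper gets around this by Weil restriction: it forms $R_{L/K}(G_L/H^0)\to R_{L/K}X_L$ and pulls back along the canonical closed embedding $X\hookrightarrow R_{L/K}X_L$ to obtain a torsor $Z\to X$ over $K$ whose geometric stabilizers are still connected (Lemma \ref{Lem:existence_torsor}, via Corollary \ref{Cor:RestrStabilizer}). The resulting $Z$ is no longer geometrically connected, so one then needs \cite[Lemma 7.1]{demarche} (Stoll's argument) to find a twist $Z_{\eta}$ with a geometrically connected component $Z'$ that is itself a $G$-homogeneous space and a torsor over $X$ under a subgroup of $F^{\eta}$ (Lemma \ref{Lem:conn_comps}). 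None of this appears in your plan, and it is precisely the case the paper singles out as not being covered by Balestrieri's independent work.

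Two further points. First, the ``cofinality'' issue you flag is a non-issue for the hard inclusion: since $X(\A_K)_{\bullet}^{\acute{e}t,\Br}$ is an \emph{intersection} over all finite torsors, it is automatically contained in the set cut out by the single torsor $Z\to X$, so any point of it lifts to a Brauer--Manin point of some twist of $Z$; you never need your torsor to be cofinal among all finite covers. Second, you dismiss the inclusion $\overline{G^{scu}(K_{S_f})\cdot X(K)}\subset X(\A_K)_{\bullet}^{\acute{e}t,\Br}$ as routine functoriality, but it is not: one must check that translating a rational point by local sections of $G^{scu}$ at the places of $S_f$ stays inside the étale--Brauer set, and since that set \emph{is} an intersection over all finite torsors, this requires (a) lifting the $G^{scu}$-action to an arbitrary finite étale cover of $X$ using simple connectedness (Lemma \ref{Lem:Gscu}) and (b) the constancy of Brauer elements along orbits of a simply connected group (Lemma \ref{Lem:Brauer_trivial}). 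Also, a small correction: the twists $Y^{\sigma}$ are homogeneous spaces under $G$ itself, not under an inner twist of $G$, because the finite group acts on the right commuting with the left $G$-action; so no transfer of hypotheses to a twisted group is needed.
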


The main idea of the proof of Theorem \ref{Thm:Homspaces} is to show that, if there is no étale-Brauer-Manin obstruction to the existence of $K$-rational points on $X$, then there is a homogeneous space $Z$ under $G$, with connected geometric stabilizers, and a finite $G$-equivariant morphism $Z \to X$ that makes $Z$ a torsor over $X$ (under some finite group scheme). This allows one to apply the aforementioned result of Borovoi and Demarche to $Z$. To obtain the torsor $Z \to X$ we crucially rely on \cite[Lemma 7.1]{demarche}, a result that first appeared (in the proper case) in \cite{Stoll}.

In Section \ref{Sec:Removing_places} we prove Theorem \ref{Thm:S-et-Obstr}, an analogue of Theorem \ref{Thm:Homspaces} in which one removes an additional finite set of places. In order to do so, we will need a detailed description of the values of elements of $\operatorname{Br}(X_{K_v})$ on $X(K_v)$, which we will obtain by using the abelianization construction of Demarche \cite{Demarche_abelian} (that in turn builds on earlier work of Borovoi \cite{Borovoi_abelianization}). The explicit description that we will need is given in Theorem \ref{Thm:compatibility}, which appears to be new, and possibly also of independent interest. Standard dévissage methods seem not to yield a proof of Theorem \ref{Thm:compatibility}, so our approach relies on an explicit (and not very exciting) Galois cocycle computation instead.

In the course of proving Theorem \ref{Thm:Homspaces} we will also obtain Theorem \ref{Thm:SObstr}, which is an analogue of the strong approximation result by Borovoi and Demarche \cite{borovoi} with a finite set of places removed. This appears to be new, and, as it will be remarked, does not seem to follow directly by projection from \cite[Thm 1.4]{borovoi}, as one may think at first sight.

Part of the results of this paper were also obtained independently and almost simultaneously by Francesca Balestrieri \cite{Francesca}. Namely, she proves Theorem \ref{Thm:Homspaces} in the case that $X$ has a rational point and is a homogeneous space under a linear group. The results of this paper that are not covered by hers are Theorem \ref{Thm:Homspaces} in the case that $X(K) = \emptyset$ (in particular, handling this case is what forces us to use the Weil restriction arguments of Section \ref{Sec:MainTheorem}), Theorems \ref{Thm:SObstr} and \ref{Thm:S-et-Obstr} (which deal with the question of strong approximation after removing some non-archimedean places), and Theorem \ref{Thm:compatibility}, which aims to connect \cite[Cor. 6.3]{Demarche_abelian} with \cite[Thm 1.4]{borovoi}.

\subsection{Structure of the paper}

In Section \ref{Sec:notation} and Section \ref{Sec:pre} we set up the notation and recall some of the preliminaries for the results that are proved in the paper. 
In Section \ref{Sec:MainTheorem} we prove Theorem \ref{Thm:Homspaces}. 
In Section \ref{Sec:compatibility}, we prove Theorem \ref{Thm:compatibility}, which we use to prove the equivalence of \cite[Cor. 6.3]{Demarche_abelian} with \cite[Thm. 1.4]{borovoi},  so that we can then use this equivalence in Section \ref{Sec:Removing_places} to prove Theorem \ref{Thm:SObstr}, and then use this last to prove Theorem \ref{Thm:S-et-Obstr}. We also remark that the proof of Theorem \ref{Thm:S-et-Obstr} is logically independent from the proof of Theorem \ref{Thm:Homspaces}, and reproves it completely. However, since, to prove Theorem \ref{Thm:S-et-Obstr}, we use Theorem \ref{Thm:compatibility} with its long calculation, we preferred keeping the two theorems separate in the exposition. Finally, the appendices contain no new results, but just facts that are recalled for convenience, or because they did not appear explicitly in the literature. We include a diagram of the logical implications of this paper and a couple of results in the literature that play an essential role:

\begin{equation*}
	\begin{tikzcd}
	\text{Thm \ref{Thm:Homspaces}}                            & \text{Thm \ref{Thm:S-et-Obstr}} \arrow[l, dashed]    & \text{Thm \ref{Thm:SObstr}} \arrow[l] \\
	& \text{\cite[Thm A.1]{BoCTSko} or Thm \ref{Thm:HPforX}} \arrow[r] \arrow[ldd] & {}                   \\
		& \text{Thm \ref{Thm:compatibility}} \arrow[d] \arrow[ru]           &                     \\
	\text{\cite[Thm. 1.4]{borovoi} }\arrow[rr, leftrightarrow, dashed] \arrow[uuu] & {}                              & \text{\cite[Cor. 6.3]{Demarche_abelian}} \arrow[uuu]     
	\end{tikzcd}
\end{equation*}
In the diagram above, a non-dotted arrow between $X$ and $Y$ indicates that $X$ is used (either in the current mathematical literature or in this paper) to prove $Y$.
An arrow from $X$ pointing to another arrow indicates that $X$ gets used (again, either in the current mathematical literature or in this paper) to prove the implication it points to.
 A dotted arrow indicates that it is possible to prove $Y$ using $X$ (without using any of the other results appearing in the diagram, except at most the ones pointing to the dotted arrow from $X$ to $Y$).



\begin{acknowledgements}
	I am deeply thankful to my advisor, David Harari, who suggested the topic to me, read the paper carefully and gave many crucial suggestions. I thank Cyril Demarche, for the conversations we had concerning some aspects of his papers and for his precious comments. I also thank my colleagues and friends Guglielmo Nocera and Michele Pernice, who helped me understand the basics of stacks and $2$-groups. I thank Francesca Balestrieri for sharing her preprint with me. Finally, I thank Davide Lombardo for helpful linguistic suggestions.
\end{acknowledgements}

\section{Notation}\label{Sec:notation}
Unless specified otherwise, $k$ will always denote a field of characteristic $0$ and $K$ a number field. For a number field $K$, $M_K$ will denote the set of places of $K$, $M_K^{fin}$ (resp. $M_K^{\infty}$) the finite (resp. archimedean) places. For a place $v \in M_K$, $K_v$ will denote the $v$-adic completion of $K$, and, for $v \in M_K^{fin}$, $O_v \subset K_v$ will denote the $v$-adic integers.   The topological ring of adeles of $K$, i.e. the ring $\prod'_{v \in M_K} K_v$ (the restricted product being on $O_v \subset K_v$) is denoted by $\A_K$. For a finite subset $S \subset M_K$, $\A_K^S$ denotes the topological ring of $S$-adeles, i.e. the ring $\prod'_{v \in M_K \setminus S} K_v$, and $K_S$ denotes the product $\prod_{v \in S} K_v$. We will also use the following notations:
\begin{itemize}
	\item $\A_{O_K}\defeq \prod_{v \in M_K} O_v$;
	\item $\A_{O_{K,S}}  \defeq \prod_{v \in M_K \setminus S} O_v \times \prod_{v \in S} K_v$.
\end{itemize}

When $\phi:X \rightarrow Y$ is a morphism defined over $K$, we will denote by $\phi_v:X_{K_v}\rightarrow Y_{K_v}$ the induced morphism among the base-changed varieties $X_{K_v}=X \times_K  K_v$ and $Y_{K_v}=Y \times_K  K_v$.

All schemes appearing are separated, therefore, we tacitly always assume this hypothesis throughout the paper.

When $X$ is a variety defined over $K$, the notation $X(\A_K)_{\bullet}$ will denote the adelic points where each archimedean component is collapsed to the (discrete) topological space of its connected components. 

An {\itshape adelic-like object} is a product $\prod'_{v \in M_K} P_v$, where $P_v$ (parametrized by the places $v \in M_K$) are sets, such that almost all of them have an integral version $P_{O_v}$ (endowed with a natural morphism $P_{O_v} \rightarrow P_v$), and the restricted product is taken with respect to these integral versions. 

For a subset $Y \subset \prod_{v \in M_K}' P_v$ of an adelic-like object and a set $S \subset M_K$, we denote by $Y_S$ the set $(\pi^S)^{-1}(\pi^S(Y))$, where $\pi^S: \prod_{v \in M_K}' P_v\rightarrow \prod_{v \in M_K \setminus S}' P_v$ is the standard projection.

Group actions (and, correspondingly, homogeneous spaces) will be assumed to be left actions unless specified otherwise. In particular, most of the torsors appearing will, instead, be right. This will be specified each time.

Let $G$ be a connected algebraic group (not necessarily linear) over $k$. Then, according to Chevalley's Theorem (see \cite{Conrad} for a proof), $G$ fits into a (canonical) short exact sequence:
\[
1 \rightarrow G^{lin} \rightarrow G \rightarrow G^{ab} \rightarrow 1,
\]
where $G^{lin}$ is a connected linear $k$-group, and $G^{ab}$ is a $k$-abelian variety.

For a connected algebraic group $G$ over $k$, we will use the following notation (borrowing it from \cite{borovoi}):

$G^u$ is the unipotent radical of $G^{lin}$;

$G^{red}=G^{lin}/G^u$ is the reductive $k$-group associated with $G$;

$G^{ant}$ the maximal anti-affine subvariety of $G$ (we refer to \cite[Ch. 10]{Milne} for the definition and main properties);

$G^{ss}=[G^{red},G^{red}]\subset G^{red}$ is the commutator subgroup of $G^{red}$ (it is a semisimple $k$-group);

$G^{sc}$ is the universal cover of $G^{ss}$, a simply connected semisimple $k$-group;

$G^{scu}$ is the fibered product $G^{sc}\times_{G^{red}}G^{lin}$ (with its canonical group structure), which fits into an exact sequence:
\[
1 \rightarrow G^u \rightarrow G^{scu} \rightarrow G^{sc} \rightarrow 1.
\]
We have a canonical homomorphism $G^{scu}=G^{sc}\times_{G^{red}}G^{lin}\rightarrow G^{lin}\hookrightarrow G$.

For a field $k$, $\Sch_k$ will denote the category of quasi-projective $\Spec k$-schemes.

When $X$ is defined over $L$, a finite extension of $k$, we denote by $R_{L/k}X$ the Weil restriction to $k$ of $X$ (see \cite[Ch. 4]{Weilrestr}  for the definition and basic properties of the functor $R_{L/k}$). When $L$ is Galois over $k$, with Galois group $\Gamma$ (which we consider naturally endowed with its \textit{left} action on $L/k$), there is an action of $\Gamma$ on $\Sch_L$, which may be described as follows. If $(X,p_X) \in \Sch_L$, where $X$ is a scheme and $p_X:X \rightarrow \Spec L$ is the structural morphism, then $(X,p_X)^{\gamma}\defeq (X,\gamma \circ P_X)$, where $\gamma:\Spec L \rightarrow \Spec L$ denotes (the map induced by) conjugation by $\gamma$ (we refer to \cite[4.11.1]{Weilrestr} for more details).

A cover $f:X \rightarrow Y$ is a finite surjective morphism.

Whenever we have two functors $F:\mathcal{C}\rightarrow \mathcal{D}$ and $G:\mathcal{C}\rightarrow \mathcal{D}$, and a collection of morphisms $\mathcal{F}^Y:F(Y) \rightarrow G(Y)$, indexed by the objects $Y \in \mathcal{C}$, we say that such a collection is a natural transformation if, for every morphism $f:Y_1 \rightarrow Y_2$ in $\mathcal{C}$, we have that $G(f)\circ\mathcal{F}^{Y_1}=\mathcal{F}^{Y_2}\circ F(f)$. We also say, with a slight abuse of notation, that the morphisms are a natural transformation when their collection is. 

For a product $\bigtimes_{i \in I}X_i$ and a subset $J \subset I$ we denote (when there is no risk of confusion) by $\pi_J$ the projection $\bigtimes_{i \in I}X_i \rightarrow \bigtimes_{i \in J}X_i$.

When we have a contraviarant functor $\mathcal{F}:\mathcal{C} \rightarrow \Set$, two objects $X, Y \in \mathcal{C}$, a morphism $f:X \rightarrow Y$, and an element $a \in \mathcal{F}(Y)$, we will use the notation $\grestricts{a}{X}$ to denote the element $[\mathcal{F}(f)](a) \in \mathcal{F}(X)$ (taking inspiration from the classical notation $\restricts{a}{X}$ for restrictions, when $f$ is an embedding).

If $A$ is an abelian group, $A^{\wedge}$ will denote its profinite completion.

For a torsion abelian group $A$, $A^D$ will denote the dual $\Hom(A,\Q/\Z)$ endowed with the compact-open topology. If $A$ is a profinite abelian group, $A^D$ will denote the torsion group $\Hom_{cont}(A,\Q/\Z)$, where $\Q/\Z$ is endowed with its discrete topology.

If $G$ is a group, and $M$ is an abelian group, we will say that a set-morphism $G \rightarrow \Aut(M)$ is a group {\it pseudo-action} of $G$ on $M$. We will keep using the term ``group action'' when $G \rightarrow \Aut(M)$ is a group-homomorphism.

When $[H \rightarrow G]$ is a crossed module, and there is no risk of confusion, we will use the notation $\leftidx{^g}{h}$ to denote the left action of $g \in G$ on $h \in H$. We remind that this action is compatible with the left action of $H$ by conjugation on itself. 

If $G/k$ is an algebraic group, and $k \subset F$ is a field extension, we will use the notation $H^i(F,G)$ (with $i \in \N$ and $i=0,1$ if $G$ is not abelian) to denote the cohomology group/set $H^i(\Gamma_{\overline{F}/F},G(\overline{F}))$. If $G$ is not abelian we assume that the cohomological set $H^1(\Gamma_{\overline{F}/F},G(\overline{F}))$ is the one of right cocycles (i.e. those that correspond to left $F$-torsors under $G$ through \cite[p.18, 2.10]{Skorobogatov}). We will use the notation $H_{sx}^1(\Gamma_{\overline{F}/F},G(\overline{F}))$ to denote left cocycles instead.

If $\eta \in H^1(K,G)$, we use the notation $G^{\eta}$ to denote the inner twist of $G$ by $\eta$, and $G_{\eta}$ (resp. $G_{\eta'}$) to denote the left (resp. right) principal homogeneous space of $G$ obtained by twisting $G$ by $\eta$ (resp. $\eta'=\eta^{-1}$, which is a right cocycle). This twist is naturally endowed with a right (resp. left) action of $G^{\eta}$. See \cite[p. 12-13]{Skorobogatov} for more details on these constructions.

If $Z$ is a $k$-variety endowed with a right $G$-action, and $\eta \in H^1(K,G)$, we use the notation $Z_{\eta}$ to denote the twisted $k$-variety  $(Z \times^G G_{\eta})$ \cite[p. 20]{Skorobogatov}. This is naturally endowed with a right $G^{\eta}$-action. Analogously, one can twist with respect to a left action and a left cocycle $\eta' \in H_{sx}^1(K,G)$.


\section{Reminders}\label{Sec:pre}

We recall that, when $X$ is a variety defined over a number field $K$, the Brauer group $\Br(X)$ is defined as $H^2(X_{\acute{e}t},\G_m)$, and there exists a canonical pairing (called the \textit{Brauer Manin} pairing):
\[
\begin{matrix}
\Br(X)\times X(\A_K) &\longrightarrow &\Q/\Z, \\
(b,x)&\longmapsto & <b,x>,
\end{matrix}
\]
which is defined as follows: if $x=(x_v)_{v \in M_K}$, then $<b,x>=\sum_{v \in M_K} \inv_v(b(x_v))$, where $\inv_v:\Br(K_v)\rightarrow \Q/\Z$ is the usual invariant map (see e.g. \cite[Thm 8.9]{Harari_book} for a definition). The pairing is continuous in $x$ and additive in $b$. If $x \in X(K) \subset X(\A_K)$ or $b$ comes from $\Br K$, then $<b,x>=0$ (see \cite[Sec. 5]{Skorobogatov} for a proof, this is essentially a consequence of the classical Albert–Brauer–Hasse–Noether Theorem). 

We recall, moreover, that the pairing (being continuous on $X(\A_K)$, and taking values in a discrete group) is constant on the archimedean connected components of $X$, hence it induces a pairing:
\[
\Br(X)\times X(\A_K)_{\bullet} \longrightarrow \Q/\Z.
\]

We refer the reader to \cite[Section 5]{Skorobogatov} for more details on the Brauer-Manin pairing.

We denote by $X(\A_K)^{\Br X}$ the following (closed) subset of $X(\A_K)$:
\[
\{x \in X(\A_K) \mid \ <b,x>=0 \ \forall b \in \Br(X)\}.
\]
We then have that $\overline{X(K)} \subset X(\A_K)^{\Br X}\subset X(\A_K)$, i.e. $\Br (X)$ provides an obstruction to the existence and (adelic) density of $K$-rational points.

We also recall that, for an algebraic group $G/K$, and for all (right) torsors $f:Y\xrightarrow{G}X$ under $G$, one has that:
\begin{equation}\label{Eq:RatPointsTorsor}
	X(K)\subset \bigcup_{[\sigma]\in H_{sx}^1(K,G)} f^{\sigma}(Y^{\sigma}(K)),
\end{equation}

where, for any cocycle $\sigma \in Z^1(K,G)$, $f^{\sigma}:Y^{\sigma}\xrightarrow{G^{\sigma}}X$ denotes the torsor $f$ twisted by $\sigma$ (see \cite[Sec 2.2]{Skorobogatov}), and $[\sigma]$ denotes the class of $\sigma$ in $H_{sx}^1(K,G)$. For a right torsor $f:Y\xrightarrow{G} X$, we define:

\[
X(\A_K)^f \defeq \bigcup_{[\sigma]\in H_{sx}^1(K,G)} f^{\sigma}(Y^{\sigma}(\A_K)).
\]

Because of (\ref{Eq:RatPointsTorsor}), one has the following two inclusions:
\[
X(K)\subset X(\A_K)^{\acute{e}t,\Br} \defeq \bigcap_{\substack{f:Y \xrightarrow{G} X \\  G \text{ finite} \\ \text{group scheme}}}\bigcup_{[\sigma]\in H_{sx}^1(K,G)} f^{\sigma}(Y^{\sigma}(\A_K)^{\Br Y^{\sigma}}), 
\]
and
\[
X(K)\subset X(\A_K)^{desc} \defeq \bigcap_{\substack{f:Y \xrightarrow{G} X \\  G \ \text{linear}}} X(\A_K)^f.
\]

Hence, both $X(\A_K)^{\acute{e}t,\Br}$ and $X(\A_K)^{desc}$ provide obstructions to the existence of $K$-rational points. Less obviously, Cao, Demarche and Xu \cite[Prop. 6.4]{demarche} prove, through a Chevalley-Weil-like argument, that both $X(\A_K)^{\acute{e}t,\Br}$ and $X(\A_K)^{desc}$ are closed in $X(\A_K)$, hence they provide an obstruction to (adelic) density of $K$-rational points as well. Moreover, they prove that the two obstructions are in fact equal, i.e. $X(\A_K)^{\acute{e}t,\Br}=X(\A_K)^{desc}$.

The question of strong approximation asks whether, for a $K$-variety $X$ (where $X$ is not necessarily proper), and a finite subset $S \subset M_K$, $X(K)$ is dense in $X(\A_K^S)$ (where it is embedded through its diagonal image). Unfortunately, when $S=\emptyset$ one does not expect in general to have strong approximation for affine varieties. This is mainly because of a compactness issue. 
Namely, in the affine case, $X(K)$ is closed in $X(\A_K)$ (since $\A^n(K)$ is closed in $\A^n(\A_K)$), so there is no chance for $X(K)$ to be dense in it, unless $X(K)=X(\A_K)$, which can never happen if $\dim X \geq 1$ and $X(\A_K) \neq \emptyset$. Using the same argument, one sees that, in order for an affine variety $X$ with $X(\A_K^S) \neq \emptyset$ to satisfy strong approximation outside $S$, there must exist at least one $v \in S$ for which $X_{K_v}$ is not compact.

\subsection{Reminders on Galois cocycles}\label{Ssec:cocycles}

Let $\Gamma$ be a profinite group, and $C=[\cdots \rightarrow 0 \rightarrow M_n\xrightarrow{f_n} \dots \xrightarrow{f_{-n+1}} M_{-n} \rightarrow 0 \rightarrow \cdots]$ be a complex of $\Gamma$-modules, where $M_i$ is in degree $-i$. 

For a $\Gamma$-module $M$:

\begin{enumerate}
	\item for $j \geq 0$, $C^j \defeq C^j(\Gamma,M)$ denotes the set $\Fun (\Gamma^n, M)$ (and we will denote functions in $\Fun (\Gamma^n, M)$ with the notation $\alpha_{\sigma_1,\dots,\sigma_j}$), for $j<0$ we set $C^j\defeq 0$;
	\item $\partial ^j:C^j\rightarrow C^{j+1}$ denotes the morphism 
	$$\left(\partial ^j \alpha\right)_{\sigma_1,\ldots,\sigma_{j+1}} \defeq -\leftidx{^{\sigma_1}}{\alpha_{\sigma_2,\ldots,\sigma_{j+1}}}+\sum_{i=1}^{j}(-1)^{i+1} \alpha_{\sigma_1,\ldots,\sigma_{i-1},\sigma_{i} \sigma_{i+1}\ldots,\sigma_{j+1}}+(-1)^{j} \alpha_{\sigma_1,\ldots,\sigma_{j}};$$
	\item $Z^j \defeq Z^j(\Gamma,M) \defeq \Ker (\partial ^j)$, and $B^{j}\defeq \im (\partial ^{j-1})$ (and $B^0\defeq 0$), and $H^j(\Gamma,M) \defeq \faktor{Z^j}{B^j}$;
	\item For a pairing $\star \cdot \star: M \times M' \rightarrow M''$, $\alpha \in C^j(\Gamma,M)$ and $\beta \in C^k(\Gamma,M')$, $\alpha \cup \beta \in C^{j+k}(\Gamma,M'')$ denotes the cocycle $(\alpha \cup \beta)_{\sigma_1,\ldots,\sigma_{j+k}}=\alpha_{\sigma_1,\dots,\sigma_j}\cdot \leftidx{^{\sigma_1 \cdots \sigma_j}}{\beta_{\sigma_{j+1},\ldots,\sigma_{j+k}}}$.
\end{enumerate}

\begin{remark}\label{Rmk:Pseudo_Action}
	When $\Gamma$ pseudo-acts on $M$ (instead of acting, see Section \ref{Sec:notation}), we still use the above notation, which keeps making sense (even though in this case it is not directly related to any cohomology construction, at least to the author's knowledge).
\end{remark}

For the complex $C$:

\begin{enumerate}
	\item For $j \in \Z$, $C^j \defeq C^j(\Gamma, C) \defeq  C^{n+j}(\Gamma,M_n) \bigoplus \dots \bigoplus C^{j-n}(\Gamma,M_{-n})$;
	\item $\partial ^j:C^j\rightarrow C^{j+1}$ denotes the morphism 
	\[\partial ^j((\alpha_n, \dots, \alpha_{-n})) = (\partial ^{j+n} \alpha_n,(-1)^{j+1} f_n(\alpha_n) + \partial ^{j+n-1}\alpha_{n-1},\dots, {(-1)^{j+1}}f_1(\alpha_{-n+1}) + \ \partial ^{j-n} \alpha_{-n});\]
	\item the $j$-cocyles are $Z^j \defeq Z^j(\Gamma,C) \defeq \Ker (\partial ^j)$, the $j$-coborders are $B^{j+1}\defeq \im (\partial ^{j+1})$, and the $j$-hypercohomology is $H^j(\Gamma,C) \defeq \faktor{Z^j}{B^j}$.
\end{enumerate}

\begin{notation}\label{Not:Delta}
	When $\alpha, \beta \in C^j(\Gamma,C)$ we use the notation $\alpha \triangleq \beta$ to mean that $\alpha - \beta \in B^j(\Gamma,C)$.
\end{notation}


\begin{notation}
	To avoid having too many subscripts, in the course of the proof of Theorem \ref{Thm:compatibility} we will use $\sigma, \eta$ instead of $\sigma_1, \sigma_2$.
\end{notation}

\begin{remark}\label{Rmk:computing_spec}
	Let $M[2] \xrightarrow{i} C \defeq [M_2 \xrightarrow{f_2} M_1 \xrightarrow{f_1} M_0]$ be a morphism of complexes of $\Gamma$-modules that is a quasi-isomorphism, and $\alpha=(a_{\gamma_1,\gamma_2},b_{\gamma},c) \in Z^0(\Gamma,C)$. Let $b_{\gamma}'\in \Fun(\Gamma,M_2)$ be such that $f_2(b_{\gamma}')= b_{\gamma} - (\partial c')$,  where $c' \in M_1$ is such that $f_1(c')=c$. Let $\beta \in Z^2(\Gamma,M)=Z^0(\Gamma,M[2])$ be such that $i_*(\beta) = a_{\gamma_1,\gamma_2}- (\partial b_{\gamma}') \in Z^0(\Gamma,C)$ satisfies that $i_*[\beta]=[\alpha] \in \H^0(\Gamma,C)$.
\end{remark}

The following lemma is well-known (since it basically just unravels the definition of derived cup product in a special case), however the author was not able to find a reference with the explicit signs, which will be needed for the computation in the proof of Theorem \ref{Thm:compatibility}. We recall that, for a bicomplex $\mathcal{C}= C^{\bullet,\bullet}$ (with differentials $\partial_1: C^{\bullet,\bullet} \rightarrow C^{\bullet+1,\bullet}, \partial_2:C^{\bullet,\bullet} \rightarrow C^{\bullet,\bullet+1}$), the totalizing complex (see e.g. \cite[p. 8]{Weibel}) $\Tot^{\bullet}(\mathcal{C})$ is defined through $\Tot^{n}(\mathcal{C}) \defeq \oplus_{i+j=n} C^{i,j}$, with differential $\partial_1+(-1)^{i}\partial_2$.


\begin{lemma}\label{Lem:Cup_product}
	Let $\mathcal{C} := [\dots \rightarrow C_n \xrightarrow{d} C_{n+1} \rightarrow \dots]$ and $\mathcal{C'} := [\dots \xrightarrow{d} C_n' \rightarrow C_{n+1}' \rightarrow \dots]$ be two bounded complexes of $\Gamma_k$-modules, and let 
	\begin{equation}\label{Eq:pairings}
		C_i \otimes C_{-i}' \xrightarrow{(.)} \bar{k}^*, \ i \in \Z,
	\end{equation}
	be compatible pairings of $\Gamma_k$-modules, i.e. such that the following pairing diagrams are commutative for all $i \in \Z$:
	
	
	\[
		\begin{tikzcd}[column sep=tiny]
	{	C_i} \arrow[d, "d"'] &\times &{C'_{-i}} \arrow[rrr]                       &  &  & {\bar{k}^*}       \\
	{C_{i+1}}     & \times &{C'_{-i-1}} \arrow[rrr] \arrow[u, "d"']  &  &  & {\bar{k}^*} \arrow[u, Rightarrow, no head]                                                                
	\end{tikzcd}.
	\]

	Then, we have a (canonical) pairing:
	\[
	\H^i(k,\mathcal{C}) \otimes \H^j(k,\mathcal{C'}) \xrightarrow{\cup} \H^{i+j}(k,\bar{k}^*),
	\] 
	that is induced from the following map on the level of cochains:
	
	
	\begin{equation}\label{Eq:Cup_product}
	\begin{matrix}
			&C^i(k,\mathcal{C}) &\otimes &C^j(k,\mathcal{C'}) &\xrightarrow{\cup} &C^{i+j}(k,\bar{k}^*),\\
	
		&(\alpha_h)_{h \in \Z} &\otimes &(\alpha'_h)_{h \in \Z} &\mapsto & \sum_{h \in \Z}(-1)^{jh+\binom{h}{2}}\alpha_h \cup \alpha_{-h} .
	\end{matrix}
	\end{equation}

	Moreover, the pairing induced by (\ref{Eq:Cup_product}) coincides with the derived cup-product associated with the derived pairing $\mathcal{C}\otimes^L \mathcal{C'}\rightarrow \Tot^{\bullet}(\mathcal{C}^{\bullet}\otimes\mathcal{C'}^{\bullet})  \rightarrow \overline{k}^*[0]$ induced by the pairings (\ref{Eq:pairings}).
\end{lemma}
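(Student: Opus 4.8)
The plan is to realize the explicit cochain map (\ref{Eq:Cup_product}) as the cochain-level incarnation of the derived cup product itself, so that its descent to hypercohomology and its identification with the derived pairing are obtained \emph{simultaneously}, and the entire content of the lemma reduces to a bookkeeping of signs. First I would present both hypercohomologies as cohomologies of the total complexes of the double complexes $K^{p,q}=C^p(\Gamma_k,C_q)$ and $K'^{p,q}=C^p(\Gamma_k,C'_q)$, the horizontal differential being the group-cochain differential $\partial$ of item (2) and the vertical one induced by $d$, assembled with the $\Tot$ sign convention recalled just before the lemma (differential $\partial_1+(-1)^i\partial_2$); by construction this recovers the complex computing $\H^{\bullet}(k,\mathcal C)$ whose differential is written out explicitly in the excerpt. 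Since the bar resolution is free, $\mathcal C\otimes^L\mathcal C'$ is represented on cochains by $\Tot^{\bullet}(\mathcal C\otimes\mathcal C')$ equipped with the Eilenberg--Zilber/Alexander--Whitney external product, and the derived pairing is by definition the composite
\[
\H^i(k,\mathcal C)\otimes\H^j(k,\mathcal C')\xrightarrow{\ \times\ }\H^{i+j}\bigl(k,\Tot(\mathcal C\otimes\mathcal C')\bigr)\xrightarrow{\ \mu_*\ }\H^{i+j}(k,\overline k^{*}),
\]
where $\mu$ is the chain map induced by the pairings (\ref{Eq:pairings}). As $\times$ and $\mu$ are chain maps, the composite is automatically well defined on cohomology, so it suffices to compute it on cochains.

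The main step is unwinding this composite. On cochains the external product sends $(\alpha_h)\otimes(\alpha'_{h'})$ to the family whose $(h,h')$-component is $(-1)^{h(j-h')}\,\alpha_h\cup\alpha'_{h'}$, where $\alpha_h\cup\alpha'_{h'}$ is the group-cohomology cup product of item (4), valued in $C_h\otimes C'_{h'}$, and the Koszul factor $(-1)^{h(j-h')}$ arises from commuting the internal degree $h$ of the first factor past the group-cochain degree $j-h'$ of the second. Composing with $\mu$ collapses this to the diagonal, because the pairing $C_h\otimes C'_{h'}\to\overline k^{*}$ vanishes unless $h'=-h$; this already forces the result to be a sum over $h$ of scalar multiples of $\alpha_h\cup\alpha'_{-h}$, matching the shape of (\ref{Eq:Cup_product}).

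It remains to pin down the coefficients. Setting $h'=-h$ turns the Koszul sign into $(-1)^{h(j+h)}$. The chain-map condition for $\mu$, combined with the compatibility square in the statement (which unwinds to $(dx)\cdot y=x\cdot(dy)$), forces the diagonal pairing to be twisted by signs $\epsilon_h$ obeying a recursion dictated by the $\Tot$ convention; solving it with $\epsilon_0=1$ gives $\epsilon_h=(-1)^{\binom{h+1}{2}}$. Multiplying the two contributions and reducing modulo $2$ by means of $h^2\equiv h$ and $\binom{h+1}{2}=\binom h2+h$ yields $(-1)^{h(j+h)}\epsilon_h=(-1)^{jh+\binom h2}$, exactly the sign in (\ref{Eq:Cup_product}). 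This simultaneously identifies the explicit formula with the derived cup product and shows, a posteriori, that it is a chain map.

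I expect the sign bookkeeping of the last paragraph to be the sole genuine obstacle: the Koszul interchange sign, the sign hidden in the cup-product Leibniz rule that makes $\times$ a chain map, and the accumulating correction $\epsilon_h$ that makes $\mu$ a chain map must all be tracked under the precise $\Tot$ convention of the paper before they conspire into the clean binomial expression — which is precisely the point on which the author found no explicit reference. As a consistency check one may instead verify the Leibniz identity for (\ref{Eq:Cup_product}) directly against the total differentials, reproving well-definedness without reference to the abstract construction, at the cost of a longer but entirely mechanical computation.
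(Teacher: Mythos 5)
Your proposal is correct and takes essentially the same route as the paper: the paper's proof likewise defines the pairing as the composite through $\Tot^{\bullet}(R^{\bullet}\Gamma\mathcal{C}\otimes R^{\bullet}\Gamma\mathcal{C}')$ and then "unravels the morphisms with the correct signs," noting (as you do in your final paragraph) that the signs can alternatively be pinned down by the Leibniz rule with trivial signs in degree $0$. Your write-up simply makes explicit the Koszul interchange sign and the correction $\epsilon_h$ that the paper leaves implicit.
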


\begin{proof}
	This is basically just unraveling the definitions of derived cup-product. We recall that, if we have a morphism $\Tot^{\bullet}(\mathcal{C}^{\bullet}\otimes\mathcal{C'}^{\bullet})\rightarrow K^{\bullet}$ , this induces a morphism $\mathcal{C}^{\bullet}\otimes^L\mathcal{C'}^{\bullet} \rightarrow\Tot^{\bullet}(\mathcal{C}^{\bullet}\otimes\mathcal{C'}^{\bullet})\rightarrow K^{\bullet}$. 
	We denote by $R^{\bullet}\Gamma:D(\Ab_{\Gamma_k})\rightarrow D(\Ab)$ the functor that computes group cohomology of $\Gamma_K$-modules through the standard resolution (so, as recalled in Subsection \ref{Ssec:cocycles}). 
	The derived cup-product is then defined (in this case) as the following composition:
	\[
	\Tot^{\bullet}(\H^{\bullet}(R^{\bullet}\Gamma\mathcal{C})\otimes \H^{\bullet}(R^{\bullet}\Gamma\mathcal{C'})) \rightarrow \H^{\bullet}(\Tot^{\bullet}(R^{\bullet}\Gamma\mathcal{C} \otimes R^{\bullet}\Gamma\mathcal{C'})) \rightarrow \H^{\bullet}(R^{\bullet}\Gamma(\Tot^{\bullet}(\mathcal{C}\otimes \mathcal{C'})))\rightarrow \H^{\bullet}(R^{\bullet}\Gamma(K^{\bullet})).
	\]
	
	Unraveling the morphisms above with the correct signs, reveals that the pairing is exactly the one in (\ref{Eq:Cup_product}) (the correct signs can also be found by checking that (\ref{Eq:Cup_product}) is the only choice of signs compatible with the Leibniz rule for cup products, and with trivial signs at the degree $0$ level).
%
\end{proof}

\subsection{Reminders on abelian(ized) cohomology}\label{SSec:abelianized}

We brielfy recall here a construction of Demarche \cite{Demarche_abelian}, as well as his main result in \textit{loc. cit.}. We refer the reader to his paper for a more detailed exposition and for proofs of the results that follow.

In what follows $X$ will always denote a quotient $G/H$, where $G$ is a connected $k$-group, and $H$ is a connected linear closed subgroup.

\paragraph{Construction of abelianized cohomology} 	We assume first that $G^{lin}$ is reductive. Let $T_H \subset T_G$ be two maximal tori in, respectively, $H$ and $G$. Let $SA_G$ be a maximal semi-abelian subvariety of $G$ containing $T_G$. Let $H^{sc}\xrightarrow{\rho_H}{H}$ and $G^{sc}\xrightarrow{\rho_G}{G}$ be as in Section \ref{Sec:notation}, and let $T_H^{sc}$ and $T_G^{sc}$ be, respectively, the tori $T_H \times_H H^{sc}$ and $SA_G \times_G G^{sc}$.

Let $C_X$ be the cone of the following morphism of complexes:
\[
[T_H^{sc}\xrightarrow{\rho_H}{T_H}] \xrightarrow{[\iota^{sc},\iota]} [T_G^{sc} \xrightarrow{\rho_G} SA_G],
\]

where both complexes start in degree $-1$, and $\iota:H \rightarrow G$ denotes the closed embedding. We call $C_X$ the {\itshape abelianized complex} of $X$. Moreover, we set the notation $C_H \defeq [T_H^{sc}\xrightarrow{\rho_H}{T_H}]$ and $C_G \defeq [T_G^{sc} \xrightarrow{\rho_G} SA_G]$.

For a field $F \supset k$, we denote the (Galois) hypercohomology of $C_X$ by $\H_{ab}^i(F,X) \defeq \H^i(F,C_X)$, and we refer to it as the \textit{abelianized cohomology} of $X$.

When $G^{lin}$ is not reductive, let $G'\defeq G/G^u$, where $G^u \subset G$ denotes the unipotent radical of $G$, let $H_1 \defeq \iota(H)/(\iota(H)\cap  G^u)$, and let $X' \defeq G'/H_1$. There exists a natural surjection $X \rightarrow X'$. One can repeat the above ``abelianization'' construction for $X'$, and defines $\H_{ab}^i(F,X) \defeq \H^i(F,C_{X'})$. We refer to \cite{Demarche_abelian} for more details on this.

\paragraph{Abelianization morphism} For each field $F \supset k$, there exists an \textit{abelianization} map \cite{Demarche_abelian}: 
\begin{equation}\label{Eq:ab0}
	\ab_F^0:X(F) \rightarrow \H^0(F,C_X).
\end{equation}
 The map (\ref{Eq:ab0}) factors (by construction) through $X'(F)$ as follows: $X(F) \rightarrow X'(F) \rightarrow \H^0(F,C_{X'})$. When $F$ is a local field there is a natural topology on $\H^0(F,C_X)$, see \cite[p. 20]{Demarche_abelian}. Moreover, we have the following lemma (implicit in \cite{Demarche_abelian}):
\begin{lemma}\label{Lem:ab_continuity_openness}
	With the above notation, let $k=K$ be a number field. We choose smooth group-scheme models, over $\Spec O_{K,S}$, where $S$ is some finite set of primes of $K$, $\tilde{\iota}: \mathcal{H} \rightarrow \mathcal{G}$, for $H$, $G$ and the closed embedding $\iota$. We define $\mathcal{X} \defeq \mathcal{G}/\mathcal{H}$, and define $C_{\mathcal{X}}$ to be the abelianized complex of $\mathcal{X}$.
	The following hold:
	\begin{enumerate}
		\item For a non-archimedean  $v \in M_K$, and $F=K_v$, the local abelianization map (\ref{Eq:ab0}) $\ab_{K_v}^0$ is surjective;
		\item For all $v \in M_K$, $F=K_v$, $\ab_{K_v}^0$ is continuous and open, and for $v \notin S \cup M_K^{\infty}$, $(\ab_{K_v}^0)(\mathcal{X}(O_v))=\H^0(O_v,C_\mathcal{X})$;
	\end{enumerate}
	Moreover, if $\P^0(K,C_X) \defeq \prod'_{v \in M_K} \H^0(K_v,C_X)$ denotes the restricted product with respect to $\H^0(O_v,C_\mathcal{X}) \subset \H^0(K_v,C_X)$, we have that the restricted product morphism $\ab^0:X(\A_K)_{\bullet} \rightarrow \P^0(K,C_X)$ is continuous and open.
\end{lemma}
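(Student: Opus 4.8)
The plan is to extract all the assertions from Demarche's construction \cite{Demarche_abelian} and from Borovoi's abelianization of the cohomology of connected groups \cite{Borovoi_abelianization}, after a preliminary reduction to the reductive case. Since $\ab^0_F$ factors as $X(F) \to X'(F) \to \H^0(F, C_{X'})$ and the fibres of $X \to X'$ are torsors under the unipotent radical, which in characteristic $0$ are trivial (so that $X(K_v) \to X'(K_v)$ is a surjection whose fibres are the $K_v$-points of an affine space), the map $X(K_v) \to X'(K_v)$ is a continuous open surjection compatible with the integral models for almost all $v$; hence it suffices to prove the statement for $X'$, and we may assume $G^{lin}$ reductive.

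For part (1) I would run the usual abelianization diagram chase. The distinguished triangle $C_H \to C_G \to C_X \to C_H[1]$ yields a long exact sequence $\H^0(K_v, C_G) \to \H^0(K_v, C_X) \to \H^1(K_v, C_H) \to \H^1(K_v, C_G)$ of abelian groups, and Demarche's construction makes this compatible, through the abelianization maps, with the exact sequence of pointed sets $G(K_v) \to X(K_v) \to H^1(K_v, H) \to H^1(K_v, G)$ coming from $H \hookrightarrow G \to X$ (and its twists). Over a non-archimedean local field the group abelianization maps $G(K_v) \to \H^0(K_v, C_G)$ and $H^1(K_v, H) \to \H^1(K_v, C_H)$ are surjective by Borovoi, and $H^1(K_v, G^{sc}) = 0$ by Kneser's theorem \cite{PlatonovRapunchkin}; a diagram chase then produces, for every class in $\H^0(K_v, C_X)$, a preimage in $X(K_v)$, giving surjectivity of $\ab^0_{K_v}$.

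The continuity and openness in part (2) I would obtain from the equivariance of the abelianization map. Demarche's construction gives a continuous homomorphism $q \colon \H^0(K_v, C_G) \to \H^0(K_v, C_X)$ and the affine formula $\ab^0_{K_v}(g \cdot x) = q(\ab^0_G(g)) + \ab^0_{K_v}(x)$ for $g \in G(K_v)$ and $x \in X(K_v)$. Now $X(K_v)$ is the disjoint union of the $G(K_v)$-orbits, each of which is open (the orbit map $G(K_v) \to G(K_v)\cdot x$ is the map on $K_v$-points of a smooth surjective morphism, hence open). On a fixed orbit the orbit map $p$ is a continuous open surjection, so a topological quotient; since $\ab^0_{K_v}\circ p$ equals the continuous map $g \mapsto q(\ab^0_G(g)) + \ab^0_{K_v}(x)$, the restriction of $\ab^0_{K_v}$ to the orbit is continuous, and for $U$ open in the orbit $\ab^0_{K_v}(U) = \ab^0_{K_v}(x) + q(\ab^0_G(p^{-1}(U)))$ is a translate of an open set. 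Here I use that $\ab^0_G$ is open (Borovoi) and that $q$ is open: its image $\im q$ equals the kernel of the boundary map $\H^0(K_v, C_X) \to \H^1(K_v, C_H)$, which is continuous with discrete (indeed finite) target, so $\im q$ is open, and $q$ is open onto $\im q$ by the open mapping theorem for $\sigma$-compact locally compact groups. For the integral assertion I would spread the entire construction over $\Spec O_{K,S}$ and, for $v \notin S \cup M_K^\infty$, repeat the chase of part (1) with $\Spec O_v$-cohomology in place of $K_v$-cohomology, using that $H^1(O_v, \mathcal{G}^{sc}) = 0$ and that $H^1$ of a smooth connected group over the finite residue field vanishes by Lang's theorem (lifted to $O_v$ by Hensel); this gives $\ab^0_{K_v}(\mathcal{X}(O_v)) = \H^0(O_v, C_{\mathcal X})$. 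The adelic statement then follows formally: $\ab^0$ is the restricted product of the $\ab^0_{K_v}$, which for almost all $v$ carry $\mathcal X(O_v)$ onto $\H^0(O_v, C_{\mathcal X})$, and a restricted product of continuous open maps matching the integral pieces is continuous and open for the restricted-product topologies, while at the archimedean places one checks that $\ab^0_{K_v}$ is constant on connected components of $X(K_v)$, so that it descends to the $\bullet$-construction on the source.

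The \emph{main obstacle} is the openness: continuity and surjectivity are essentially formal once the Borovoi inputs are in place, whereas openness forces one to combine the equivariance formula with the orbit decomposition and the open-mapping-theorem argument for $q$, and then to verify that these ingredients behave uniformly enough across the places (via the integral compatibility) to survive the passage to the restricted product.
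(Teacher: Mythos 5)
Your route is essentially the one the paper takes, with the difference that you re-derive the two inputs that the paper simply quotes from Demarche: surjectivity of $\ab^0_{K_v}$ (\cite[Cor. 2.21]{Demarche_abelian}) and the continuity/openness of $\ab^0_{K_v}$ (\cite[p. 21]{Demarche_abelian}). Your reconstructions of those (the dévissage through $X'$, the comparison of the pointed-set exact sequence with the long exact sequence of $C_H \to C_G \to C_X$, the equivariance formula plus the orbit decomposition and an open-mapping argument for $q$) are the standard arguments and are fine, though strictly speaking they are not needed here since the statements are already in the literature. The passage to the restricted product is formal in both treatments.

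The one part of the lemma the paper actually proves from scratch is the integral statement $\ab^0_{K_v}(\mathcal{X}(O_v))=\H^0(O_v,C_{\mathcal{X}})$ for $v \notin S \cup M_K^{\infty}$, and there your sketch has a small but real gap. Saying ``repeat the chase of part (1) over $\Spec O_v$, using $H^1(O_v,\mathcal{G}^{sc})=0$ and Lang's theorem'' is not quite enough: since $H^1(O_v,\mathcal{H})=0$, the chase through the diagram
\[
\mathcal{G}(O_v)\to\mathcal{X}(O_v)\to H^1(O_v,\mathcal{H}), \qquad \H^0(O_v,C_{\mathcal{G}})\to\H^0(O_v,C_{\mathcal{X}})\to\H^1(O_v,C_{\mathcal{H}})
\]
only yields that the image of $\mathcal{X}(O_v)$ contains $\Ker\bigl(\H^0(O_v,C_{\mathcal{X}})\to\H^1(O_v,C_{\mathcal{H}})\bigr)$, so one must also prove $\H^1(O_v,C_{\mathcal{H}})=0$. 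Lang's theorem (lifted by Hensel) gives the vanishing of $H^1(O_v,-)$ for the smooth connected groups involved, but the complex $C_{\mathcal{H}}=[T_{\mathcal{H}^{sc}}\to T_{\mathcal{H}}]$ sits in degrees $[-1,0]$, so $\H^1(O_v,C_{\mathcal{H}})$ also sees $H^2(O_v,T_{\mathcal{H}^{sc}})$; its vanishing is not a consequence of Lang's theorem but of the fact that $T_{\mathcal{H}^{sc}}(\overline{\mathbb{F}_v})$ is torsion and $\mathbb{F}_v$ has cohomological dimension $1$ (this is exactly the extra step the paper supplies). Once that vanishing is added, your argument closes and agrees with the paper's.
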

\begin{proof}
	The last statement is an immediate consequence of the points above. For the first point and the first part of the second point above see \cite[Cor. 2.21]{Demarche_abelian} and \cite[p. 21]{Demarche_abelian}. For the second part of the second point, we notice that $\ab^0_{K_v}(\mathcal{X}(O_v))\subset \H^0(O_v,\mathcal{C_X})$ simply by functoriality. For the equality, see the following commutative diagram with exact (as pointed-sets) rows:
	\begin{equation*}
		\begin{tikzcd}
		\mathcal{H}(O_v) \arrow[r] \arrow[d] & \mathcal{G}(O_v) \arrow[r] \arrow[d] & \mathcal{X}(O_v) \arrow[r] \arrow[d] & {H^1(O_v,\mathcal{H})} \arrow[d] \\
		{\H^0(O_v,C_\mathcal{H})} \arrow[r]  & {\H^0(O_v,C_\mathcal{G})} \arrow[r]  & {\H^0(O_v,C_\mathcal{X})} \arrow[r]  & {\H^1(O_v,C_\mathcal{H})}        
		\end{tikzcd},
	\end{equation*}
	and the following short exact sequence of pointed sets:
	\[
	\mathcal{G}(O_v)\rightarrow \H^0(O_v,\mathcal{C_G}) \rightarrow H^1(O_v,\mathcal{G}^{sc})
	\]
	and notice that $H^1(O_v,\mathcal{H})=0$, $H^1(O_v,\mathcal{G}^{sc})=0$ by \cite[Thm 6.1]{PlatonovRapunchkin}. Moreover, by \cite[Thm 6.1]{PlatonovRapunchkin} again, $H^1(O_v,T_{\mathcal{H}})=0$ and $H^1(O_v,T_{\mathcal{H}^{sc}})=0$. We also have $H^2(O_v,T_{\mathcal{H}^{sc}})=0$ because $H^2(O_v,(T_{\mathcal{H}^{sc}})_{tor}) \twoheadrightarrow H^2(O_v,T_{\mathcal{H}^{sc}})$ and $H^2(O_v,(T_{\mathcal{H}^{sc}})_{tor})=0$ because the cohomological dimension of $\Spec \mathbb{F}_v$ is $1$. Hence $\H^1(O_v,\mathcal{C_H})=\H^1(O_v,[T_{\mathcal{H}^{sc}} \rightarrow T_{\mathcal{H}}])=0$, where, for a commutative algebraic group scheme $G \rightarrow S$ the subscript $tor$ defines the increasing limit $\lim_{n \to \infty} G[n]$ (the limit here is taken with respect to the partial order $n \prec m$ if $n|m$). Hence, the surjectivity of $X(O_v) \rightarrow \H^0(O_v,\mathcal{C_X})$ follows.
%
%
%
\end{proof}


\paragraph{Relation to Brauer-Manin obstruction} Let $C_X^d$ be the dual complex of $C_X$ (this complex is described explicitly in \cite[Sec. 5]{Demarche_abelian}, and its definition will be recalled in Subsection \ref{Ssec:Proof_theorem}, diagram (\ref{Table1})). We have, for every field $F \supset k$, a pairing (see Lemma \ref{Lem:Cup_product}):
\begin{equation}\label{Mor:pairing}
\H^0(F,C_X)\times \H^2(F,C_X^d)\rightarrow \H^2(F,\overline{F}^*)=\Br(F).
\end{equation}
When $F$ is a local field, (\ref{Mor:pairing}) induces, through the local invariant $\inv_v:\Br(F) \rightarrow \Q/\Z$,  a pairing:
\begin{equation}\label{Mor:local_pairing}
\H^0(F,C_X)^{\wedge}\times \H^2(F,C_X^d)\rightarrow \Q/\Z,
\end{equation}
which is perfect when $v \in M_K^{fin}$ (since $C_X$ is a cone of a complex of $1$-motives, this basically follows by devissage from \cite[Thm 0.1]{harariszamuely}, and is proved explicitly in Lemma \ref{Lem:duality}). For a number field $K$, the local pairing (\ref{Mor:local_pairing}) on its completions, induces a morphism (see \cite[p.21]{Demarche_abelian}):

\begin{equation}\label{Mor:theta}
\P^0(K,C_X)^{\wedge}\xrightarrow{\theta} \H^2(K,C_X^d)^D.
\end{equation}

Demarche \cite{Demarche_comparison} defined a morphism

\begin{equation}\label{Eq:alpha}
\alpha:\H^2(k,C_X^d) \rightarrow \Br_a(X_k,G) ,
\end{equation}

where $\Br_1(X,G) \defeq \Ker(\Br(X)\rightarrow \Br(\bar{G}))\subset \Br(X)$ and $\Br_a(X,G) \defeq \Br_1(X,G)/\Br(k)$ (we refer to {\itshape loc. cit.} or to Subsection \ref{Ssec:Proof_theorem} for more details).

Moreover, Demarche proved that $\alpha$ sits in the following exact sequence (which will not be needed in this paper, but is included for completeness):  
\[
\mathrm{NS}(\bar{G}^{\mathrm{ab}})^{\Gamma_{k}} \rightarrow \H^{2}(k, C^d_{X}) \xrightarrow{\alpha} \operatorname{Br}_{a}(X, G) \rightarrow H^{1}(k, \mathrm{NS}(\bar{G}^{\mathrm{ab}})).
\] 

We will prove in Section \ref{Sec:compatibility} Theorem \ref{Thm:compatibility}, which proves that $\alpha$ is compatible with Brauer-Manin and local pairings.

The following is the main theorem of \cite{Demarche_abelian}.

\begin{theorem}[Demarche]\label{Thm:Dem_ab}
	Let $K$ be a number field, $G$ a connected $K$-group, $S$ a finite set of places of $K$. Let $H$ be a connected linear $K$-subgroup of $G$, and let $X \defeq G/H$. We assume that the group $G^{sc}$ satisfies strong approximation outside $S$, and that $\Sha(K,G^{ab})$ is finite. Let $C_X^d$ and $\theta:X(\A_K)_{\bullet} \rightarrow (\H^0(K,C_X^d)/\Sha(K,C_X^d))^D$ be defined as above. Then the kernel of $\theta$ (i.e. $\theta^{-1}(\{0\})$) is the closure of $G^{scu}(K_{S^{fin}})\cdot X(K)$ in $X(\A_K)_{\bullet}$.
\end{theorem}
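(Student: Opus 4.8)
The plan is to reduce everything to a duality statement about the hypercohomology of the cone $C_X$ and to transport it back to adelic points through the abelianization map $\ab^0$. First I would reduce to the case in which $G^{lin}$ is reductive: the surjection $X \to X'$ of Subsection \ref{SSec:abelianized} is built from the unipotent radical $G^u$, which satisfies strong approximation and contributes nothing to abelianized cohomology, so both $\ker\theta$ and the orbit set $G^{scu}(K_{S^{fin}})\cdot X(K)$ are compatible with the induced map $X(\A_K)_{\bullet} \to X'(\A_K)_{\bullet}$, and I may assume $C_X = C_{X'}$. With this reduction in place, the central tool is Lemma \ref{Lem:ab_continuity_openness}: $\ab^0$ is continuous, open, and, at finite $v$, surjective with $\ab^0(\mathcal{X}(O_v)) = \H^0(O_v, C_{\mathcal{X}})$ for almost all $v$. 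Hence it suffices to identify the kernel of the duality pairing $\theta$ on the abelianized adelic object $\P^0(K, C_X)$ and then pull it back along $\ab^0$, openness guaranteeing that topological statements downstairs lift to topological statements on $X(\A_K)_{\bullet}$.

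For the inclusion $\overline{G^{scu}(K_{S^{fin}})\cdot X(K)} \subseteq \ker\theta$ I would argue as follows. The abelianization homomorphism $\ab^0_G \colon G(F) \to \H^0(F, C_G)$ kills $G^{scu}(F)$, since the simply connected factor $G^{sc}$ becomes acyclic in $C_G = [T_G^{sc}\to SA_G]$ and $G^u$ contributes nothing; by compatibility of the $G$-action on $X$ with the action of $\H^0(F,C_G)$ on $\H^0(F,C_X)$, the map $\ab^0$ is therefore constant on $G^{scu}(F)$-orbits for every $F$. Consequently $\theta\circ\ab^0$ is constant on $G^{scu}(K_{S^{fin}})$-orbits of adelic points. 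For $x \in X(K)$ the class $\ab^0(x)$ comes from a global element of $\H^0(K, C_X)$, and the sum of the local invariants of its pairing with any element of $\H^2(K, C_X^d)$ vanishes by reciprocity (the abelianized avatar of the vanishing of the Brauer--Manin pairing on rational points, via Lemma \ref{Lem:Cup_product} and the Albert--Brauer--Hasse--Noether theorem). Since $\theta$ is continuous its kernel is closed, so the full closure lands in $\ker\theta$.

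The reverse inclusion is the heart of the matter and rests on a global Poitou--Tate sequence for $C_X$. Locally, the pairing (\ref{Mor:local_pairing}) is perfect at finite places (Lemma \ref{Lem:duality}), which follows by dévissage from the duality of $1$-motives \cite{harariszamuely} because $C_X$ is the cone of a morphism of complexes of $1$-motives. Globally, using the finiteness of $\Sha(K, G^{ab})$ to control the Tate--Shafarevich groups of the constituent $1$-motives, I would assemble a Poitou--Tate exact sequence whose exactness at the adelic term identifies $\ker\theta$ with the closure in $\P^0(K, C_X)^{\wedge}$ of the diagonal image of the global abelianized cohomology $\H^0(K, C_X)$, the $\Sha(K, C_X^d)$-quotient built into the target of $\theta$ accounting for the defect of the global-to-local map. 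To pull this back: given $x$ with $\theta(\ab^0(x)) = 0$, one approximates $\ab^0(x)$ by $\ab^0(x_0)$ for some $x_0 \in X(K)$, and openness of $\ab^0$ lets one match $x$ with $x_0$ up to an adelic element whose abelianization vanishes at every place. Such a local discrepancy is realized by the action of $G^{scu}(K_v)$, since the fibres of $\ab^0_{K_v}$ are exactly $G^{scu}(K_v)$-orbits up to the open subgroups of Lemma \ref{Lem:ab_continuity_openness}; strong approximation for $G^{sc}$ outside $S$ then promotes this adelic $G^{scu}$-correction to a global one up to a translation supported on $K_S$, and the connectedness of $G^{scu}$ makes the archimedean $S$-components invisible in $X(\A_K)_{\bullet}$, so the correction can be taken in $G^{scu}(K_{S^{fin}})$. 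This places $x$ in $\overline{G^{scu}(K_{S^{fin}})\cdot X(K)}$.

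I expect the main obstacle to be the global duality input: establishing the precise Poitou--Tate exactness for the hypercohomology of the cone $C_X$ of complexes of $1$-motives, with $\Sha(K, G^{ab})$ as the sole finiteness hypothesis, and tracking the $\Sha(K, C_X^d)$ defect so that it matches exactly the target group in the statement. A secondary but delicate difficulty is the transfer step itself, namely converting the abelianized conclusion into a genuine approximation statement on $X(\A_K)_{\bullet}$: this requires the compatibility of the $G$-action with the $C_G$-action on $C_X$, the commutative exact diagram of Lemma \ref{Lem:ab_continuity_openness}, and a clean identification of the fibres of $\ab^0$ as $G^{scu}$-orbits, so that strong approximation for $G^{sc}$ can genuinely concentrate the correction at $S^{fin}$ rather than merely at $S$.
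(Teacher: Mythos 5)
This statement is not proved in the paper: it is quoted verbatim as the main theorem of \cite{Demarche_abelian}, so there is no in-paper proof to compare against. Your outline is, however, essentially a faithful reconstruction of Demarche's actual argument — reduction to $G^{lin}$ reductive, the continuity/openness/local surjectivity of $\ab^0$ from Lemma \ref{Lem:ab_continuity_openness}, a Poitou--Tate exact sequence for the cone $C_X$ of complexes of $1$-motives (with $\Sha(K,G^{ab})$ finite as the global input), the identification of the fibres of $\ab^0_{K_v}$ with $G^{scu}(K_v)$-orbits, and strong approximation for $G^{sc}$ outside $S$ to globalize the correction, with connectedness of $G^{scu}(K_v)$ at archimedean $v$ making the $S$-infinite components invisible in $X(\A_K)_{\bullet}$. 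The two genuinely hard steps you flag (the global duality sequence for $C_X$ and the fibre description of $\ab^0$) are exactly the content of Demarche's paper, so the proposal is a correct strategy rather than a complete proof; nothing in it points in a wrong direction.
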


\begin{remark}
	As remarked by Demarche in \cite[Rmq 6.4]{Demarche_abelian}, Theorem \ref{Thm:Dem_ab} implies the main theorem of \cite{borovoi}, once a suitable compatibility of Brauer and local duality pairings is proven. The compatibility needed is exactly the one proven in Theorem \ref{Thm:compatibility}. In fact, in Section \ref{Sec:Removing_places}, this connection will be made explicit in Theorem \ref{Thm:SObstr} and Remark \ref{...tobeadded}.
\end{remark}

\subsection{Reminders on the morphism $\alpha$}\label{SSec:abelianized2}


In this subsection we recall part of the construction of $\alpha$, given in \cite{Demarche_comparison}, of which we borrow the notation. Throughout $H$ will be a linear connected subgroup of a connected algebraic group $G$ with $G^{lin}$ reductive, both defined over a field $k$.

We fix a Levi decomposition of $H=H^u\cdot H^{red}$, such that $Z_{H^{\text {red }}}$ is contained in the maximal torus $T_{G}$ of $G^{\text {lin }}$ (keeping the notation of Subsection \ref{SSec:abelianized}). We put $Z^{\prime} :=T_{G}/Z_{H^{\text {red }}}, Z_{1} :=G / Z_{H^{\text {red }}}, H^{\prime}:=H^{\text {red }} / Z_{H^{\text {red }}}, \widetilde{H} :=\operatorname{Ker}\left(H \rightarrow H^{\prime}\right)=H^u \cdot Z_{H^{\text {red }}}$, and $Z :=G / \widetilde{H}$. We have a natural morphism
$Z_{1} \rightarrow Z$ that gives $Z_{1}$ the structure of a $Z$-torsor under $H^{u}$. We notice that $X=Z/H'$, and that we have the following commutative diagram with exact rows and columns:

\[
\begin{tikzcd}
H^u \arrow[r, hook] \arrow[d, Rightarrow, no head] & \tilde{H} \arrow[r, two heads] \arrow[d, hook] & Z_{H^{red}} \arrow[d, hook]  \\
H^u \arrow[r, hook]                                & H \arrow[d, two heads] \arrow[r, two heads]    & H^{red} \arrow[d, two heads] \\
& H' \arrow[r, Rightarrow, no head]              & H'                          
\end{tikzcd}
\]

We use the following notation

\[Q_{geom} \defeq [ \bar{k}(Z)^{*}  \rightarrow \operatorname{Div}(\bar{Z})   \rightarrow                  \operatorname{Pic}^{\prime}(\bar{Z} / \bar{X})   ],\]
where the complex ends in degree $2$.
We refer to \cite[p. 4]{Demarche_comparison} for the definition of $\operatorname{Pic}^{\prime}(\bar{Z} / \bar{X})$, which we remind being isomorphic to $\Pic (\overline{H'})$.


The following proposition is the first step in Demarche's construction of the aforementioned morphism $\alpha$. We refer to \cite[p. 9]{Weibel} for a definition of the truncation operator.
The following is consequence of \cite[Thm. 2.1, Prop. 2.2]{Demarche_comparison}:
\begin{proposition}[Demarche]\label{Prop:Demarche_derived}
	We keep the above notation, and assume that $k$ is either a number field or a local field.
	Let $p:Z\rightarrow X$ denote the natural projection, and $p_X:X \rightarrow \Spec k$ the structural morphism. There exists a natural 
	quasi-isomorphism
	\begin{equation}\label{Triangle}
		\tau_{ \leq 2} \mathbf{R} p_{X_{*}} \mathbf{G}_{m X}\rightarrow Q_{geom},
	\end{equation}
	  which induces an isomorphism:
	\begin{equation}\label{Iso:CgeomtoBrauer}
	\H^2(k,Q_{geom})\xleftarrow{\sim} \Br_a(X,G).
	\end{equation}
\end{proposition}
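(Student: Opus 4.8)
My plan is to reconstruct the two assertions of the proposition --- the quasi-isomorphism (\ref{Triangle}) and the resulting isomorphism (\ref{Iso:CgeomtoBrauer}) --- as, respectively, an explicit computation of the truncated derived pushforward $\tau_{\le 2}\mathbf{R}p_{X_*}\mathbf{G}_{mX}$ through the torsor $p\colon Z\to X$, and the passage of that computation to Galois hypercohomology. The structural input I would lean on throughout is that $X=Z/H'$ realizes $p$ as a right torsor under the connected \emph{semisimple} group $H'=H^{red}/Z_{H^{red}}$, so that $\bar{k}[\overline{H'}]^{*}=\bar{k}^{*}$ and the relative Picard sheaf $\operatorname{Pic}'(\bar{Z}/\bar{X})\cong\operatorname{Pic}(\overline{H'})$ is finite.

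First I would match, degree by degree, the cohomology sheaves of $Q_{geom}=[\bar{k}(Z)^{*}\xrightarrow{\operatorname{div}}\operatorname{Div}(\bar{Z})\to\operatorname{Pic}'(\bar{Z}/\bar{X})]$, viewed as a complex of $\Gamma_{k}$-modules, with those of $\tau_{\le 2}\mathbf{R}p_{X_*}\mathbf{G}_{mX}$, namely $\bar{k}[\bar{X}]^{*}$, $\operatorname{Pic}(\bar{X})$ and $\operatorname{Br}(\bar{X})$. The first two terms form the usual divisor (Gersten) complex, with cohomology sheaves $\bar{k}[\bar{Z}]^{*}$ and $\operatorname{Pic}(\bar{Z})$; semisimplicity of $H'$ gives $\bar{k}[\bar{Z}]^{*}=\bar{k}[\bar{X}]^{*}$ and identifies $\ker\big(\operatorname{Pic}(\bar{Z})\to\operatorname{Pic}(\overline{H'})\big)$ with $p^{*}\operatorname{Pic}(\bar{X})=\operatorname{Pic}(\bar{X})$, so that adjoining the degree-$2$ term along the fibre-restriction map yields $H^{0}(Q_{geom})=\bar{k}[\bar{X}]^{*}$ and $H^{1}(Q_{geom})=\operatorname{Pic}(\bar{X})$. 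For the degree-$2$ sheaf I would use the Leray low-degree sequence for $p$, which identifies $H^{2}(Q_{geom})=\operatorname{coker}\big(\operatorname{Pic}(\bar{Z})\to\operatorname{Pic}(\overline{H'})\big)$ with the image of the connecting map $\delta\colon\operatorname{Pic}(\overline{H'})\to\operatorname{Br}(\bar{X})$, i.e. with $\ker\big(p^{*}\colon\operatorname{Br}(\bar{X})\to\operatorname{Br}(\bar{Z})\big)$. Turning this sheaf-level matching into an actual morphism in the derived category --- built from a divisorial resolution of $\mathbf{G}_{m}$ on $Z$ together with the adjunction for $p$, and checked to be Galois-equivariant --- is exactly the content I would quote from \cite[Thm.~2.1]{Demarche_comparison}.

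Next I would apply $\H^{\bullet}(k,-)$ to (\ref{Triangle}), which transports the resulting isomorphism $\H^{2}(k,\tau_{\le 2}\mathbf{R}p_{X_*}\mathbf{G}_{mX})\cong\H^{2}(k,Q_{geom})$, and then compare the left-hand side with $\Br(X)$. The truncation triangle together with the edge maps of the Hochschild--Serre spectral sequence $H^{p}(k,R^{q}p_{X_*}\mathbf{G}_{m})\Rightarrow H^{p+q}(X,\mathbf{G}_{m})$ furnishes a canonical map $\H^{2}(k,\tau_{\le 2}\mathbf{R}p_{X_*}\mathbf{G}_{mX})\to\Br(X)$; chasing the three graded pieces $H^{2}(k,\bar{k}[\bar{X}]^{*})$, $H^{1}(k,\operatorname{Pic}\bar{X})$, $H^{0}(k,\operatorname{Br}\bar{X})$ shows its image is $\Br_{1}(X,G)$ and its kernel is the image of $\Br(k)$, giving the isomorphism $\Br_{a}(X,G)\xrightarrow{\sim}\H^{2}(k,Q_{geom})$ of (\ref{Iso:CgeomtoBrauer}). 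This bookkeeping is \cite[Prop.~2.2]{Demarche_comparison}.

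The hard part is not the (essentially formal) hypercohomology of the last step, but the degree-$2$ identification in the second step: one must know that $p^{*}\colon\operatorname{Br}(\bar{X})\to\operatorname{Br}(\bar{Z})$ vanishes, equivalently that the entire geometric Brauer group $\operatorname{Br}(\bar{X})$ is generated by the type $\delta\big(\operatorname{Pic}(\overline{H'})\big)$ of the torsor $Z\to X$. This is precisely why the construction passes to the auxiliary space $Z=G/\tilde{H}$, whose stabilizer $\tilde{H}=H^{u}\cdot Z_{H^{red}}$ is connected solvable: one controls $\operatorname{Br}(\bar{Z})$ (and hence the relative group $\Br_{1}(X,G)=\ker(\Br(X)\to\Br(\bar{G}))$, rather than the smaller $\Br_{1}(X)$) along $\bar{G}\to\bar{Z}\to\bar{X}$ using the structure of $\bar{G}$ as an extension with semisimple and solvable parts. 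Establishing this surjectivity of $\delta$ is the geometric heart of the argument, and everything else reduces to unwinding the derived pushforward and the edge maps with the correct signs.
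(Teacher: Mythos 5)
Your proposal is correct and takes essentially the same route as the paper: the paper gives no proof of this proposition beyond noting that it is a direct consequence of \cite[Thm. 2.1, Prop. 2.2]{Demarche_comparison}, and your argument likewise defers its two substantive steps (the construction of the quasi-isomorphism (\ref{Triangle}) and the Hochschild--Serre bookkeeping yielding (\ref{Iso:CgeomtoBrauer})) to exactly those two results. The additional degree-by-degree matching of cohomology sheaves via the Sansuc-type sequences for the $H'$-torsor $Z\rightarrow X$ is a reasonable unpacking of what those citations contain, but it is not something the paper itself verifies.
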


The second step is a morphism $\H^2(k, C_X^d) \rightarrow  \H^2(k,Q_{geom}) \cong \Br_a(X,G)$, but this will be recalled in detail in the next section, so we skip its construction for now.






\section{Sufficiency of étale-Brauer-Manin obstruction}\label{Sec:MainTheorem}

In this section, we prove Theorem \ref{Thm:Homspaces}.

\begin{remark}\label{Rmk:BorovoiDemarche}
	In the proof of Theorem \ref{Thm:Homspaces} we are going to use the aforementioned result of Borovoi and Demarche. Their main theorem in \cite{borovoi} can be stated exactly as Theorem \ref{Thm:Homspaces}, but restricting to the case when $X$ has connected geometric stabilizers, and substituting  $X(\A_K)_{\bullet}^{\acute{e}t,\Br}$ with $X(\A_K)_{\bullet}^{\Br X}$.
\end{remark}


\begin{example}\label{Ex:Demarche}
	This example is borrowed from \cite[Thm 2.1]{Demarche_example}. Let $p$ be a prime, and $H$ a finite constant non-commutative group of order $p^n$, such that the $p^{n+1}$-roots of unity are contained in $K$. Let $X \defeq G/H$, where $G$ is any semisimple simply connected algebraic group, and $H \hookrightarrow G$ is any embedding. Then, for any $S$, one has that $X(\A_K^S)^{\Br X} \neq \overline{X(K)}^S$ (where $\overline{X(K)}^S$ denotes the closure of $X(K)$ in $X(\A_K^S)$).
	In particular, in general, one could not hope for the statement of Theorem \ref{Thm:Homspaces} to be true with $X(\A_K)_{\bullet}^{\acute{e}t,\Br}$ replaced by $X(\A_K)_{\bullet}^{\Br X}$, i.e. the Brauer-Manin obstruction is {\itshape not} the only one to strong approximation for homogeneous space. 
\end{example}

\begin{remark}\label{Rmk:Demarche}
	 In view of the example above, it would be interesting to know if there are any intermediate obstructions $X(\A_K)_{\bullet}^{\acute{e}t,\Br} \subset X(\A_K)_{\bullet}^{?} \subset X(\A_K)_{\bullet}^{\Br X}$ such that $G^{scu}(K_{S_f}) \cdot X(K)$ is dense in $X(\A_K)_{\bullet}^{?}$.
\end{remark}

\begin{remark}\label{Rmk:Sha}
	The group $\Sha(K,G^{ab})$ is defined as the kernel of the map \[H^1(K,G^{ab})\rightarrow \prod_{v \in M_K} H^1(K_v,G^{ab}).\] It is strongly conjectured to be always finite, and it is known to be in some specific cases, for instance, when $G^{ab}$ is an elliptic curve of analytic rank $0$ or $1$ defined over $\Q$ (see \cite{AnEllCurve}).
\end{remark}

\begin{remark}\label{Rmk:PlatonovThm}
	We remind the reader of the aforementioned Theorem of Platonov \cite[Theorem 7.12]{PlatonovRapunchkin}, which states that $G^{sc}(K)$ is dense in $G^{sc}(\A^S_K)$ if and only if $G^{sc}$ has no simple component $G_i \subset G^{sc}$ with $G_i(K_S)$ compact. This makes the hypothesis ``$G^{sc}(K)$ is dense in $G^{sc}(\A^S_K)$'' of Theorem \ref{Thm:Homspaces} easily verifiable.
\end{remark}

\begin{remark}\label{Rmk:StrApprox}
	We note that, if $x \in X(\A_K)_{\bullet}$ is in $X(\A_K)_{\bullet}^{\acute{e}t,\Br}$, then Theorem \ref{Thm:Homspaces} tells us that it lies in the closure of $G^{scu}(K_{S_f}) \cdot X(K)$. Hence, its projection $x^{S_f}$ to $X(\A_K^{S_f})_{\bullet}$ lies in the closure of $X(K)$ in $X(\A_K^{S_f})_{\bullet}$ (since the projection of $G^{scu}(K_{S_f})$ is trivial). Therefore, Theorem \ref{Thm:Homspaces} may be seen as a theorem saying that, under its assumptions, the étale-Brauer-Manin obstruction to strong approximation is the only one for homogeneous spaces.
\end{remark}

\begin{remark}\label{Rmk:not_trivial_strong}
	We notice that, although, by the previous remark, we have that, for a finite $S \subset M_K^{fin}$ large enough, $\overline{X(K)}^S\supset \pi^S(X(\A_K)_{\bullet}^{\acute{e}t,\Br})$ (where $\pi^S:X(\A_K)_{\bullet}\rightarrow X(\A_K^S)_{\bullet}$ denotes the standard projection and $\overline{\star}^S$ denotes the closure in the $S$-adeles), this is not necessarily an equality, as the example presented in Proposition \ref{GBsp}, which follows, clearly points out. However, one can still aim to describe the set $\overline{X(K)}^S \subset X(\A_K^S)_{\bullet}$, although it becomes a less trivial consequence of Theorem \ref{Thm:Homspaces}. This is done in Section \ref{Sec:Removing_places}.
\end{remark}

\begin{proposition}\label{GBsp}
	Keeping the notation of the above remark, let $K=\Q$, $X=\G_m/\Q$ and $S=\{2\}$. We have that $X(\A_K)_{\bullet}^{\acute{e}t,\Br}=X(\A_{\Q})_{\bullet}^{\Br X}=\overline{X(\Q)}=X(\Q)=\Q^*$, while $\overline{X(\Q)}^S\subset X(\A_\Q^S)_{\bullet}$ is not countable.
\end{proposition}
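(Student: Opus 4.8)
The plan is to read off the standard invariants attached to $G=\G_m$, deduce the first chain of equalities from the theorem of Borovoi and Demarche (Remark \ref{Rmk:BorovoiDemarche}) together with an elementary discreteness computation, and then prove the uncountability of $\overline{X(\Q)}^S$ by a short topological argument that isolates the role of the removed place. First I would record that for $G=\G_m$ one has $G^{lin}=G^{red}=\G_m$, $G^u=0$, $G^{ss}=[G^{red},G^{red}]=0$, hence $G^{sc}=0$ and $G^{scu}=0$; moreover $G^{ab}=0$, so $\Sha(\Q,G^{ab})=0$ is finite, and the density hypothesis ``$G^{sc}(\Q)$ dense in $G^{sc}(\A_\Q^{S'})$'' holds trivially for every $S'$. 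Since $\G_m$ is a homogeneous space under itself with trivial (in particular connected) geometric stabilizers, the theorem of Borovoi and Demarche applies. As the sets $X(\A_\Q)_{\bullet}^{\acute{e}t,\Br}$ and $X(\A_\Q)_{\bullet}^{\Br X}$ do not depend on the auxiliary set of places, I would invoke that theorem with $S'=M_\Q^{\infty}$, so that $S'\cap M_\Q^{fin}=\emptyset$; because $G^{scu}$ is trivial, its conclusion reads $X(\A_\Q)_{\bullet}^{\Br X}=\overline{X(\Q)}$ (closure taken in $X(\A_\Q)_{\bullet}$).

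For the first chain of equalities I would combine this with the two general inclusions
\[
\overline{X(\Q)}\subseteq X(\A_\Q)_{\bullet}^{\acute{e}t,\Br}\subseteq X(\A_\Q)_{\bullet}^{\Br X}.
\]
The left inclusion holds because $X(\A_\Q)_{\bullet}^{\acute{e}t,\Br}$ is closed by \cite[Prop. 6.4]{demarche} and contains $X(\Q)$; the right inclusion holds because the trivial torsor (under the trivial finite group scheme) is one of the factors of the intersection defining the étale-Brauer set. Together with the previous paragraph, all three sets coincide with $\overline{X(\Q)}$. It then remains to check $\overline{X(\Q)}=X(\Q)=\Q^{*}$, i.e. that $\Q^{*}$ is discrete (hence closed) in $X(\A_\Q)_{\bullet}=\{\pm1\}\times\prod_{p}'\Q_p^{*}$. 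For this I would exhibit the open neighbourhood $\{+1\}\times\prod_{p}\Z_p^{*}$ of the identity and note that a rational number which is positive and a unit at every finite prime must equal $1$; by translation $\Q^{*}$ is discrete.

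For the last assertion the point is that removing the place $2$ frees the $2$-adic valuation, which is exactly the coordinate the product formula would otherwise pin down. Concretely, I would consider $A\defeq\{2^{a}:a\in\Z\}\subseteq\Q^{*}$, whose diagonal image lies in the compact subgroup $C\defeq\{\pm1\}\times\prod_{p\neq 2}\Z_p^{*}$ of $X(\A_\Q^{S})_{\bullet}$, since each $2^{a}$ is positive and a $p$-adic unit for every $p\neq 2$. As $2$ has infinite order in $\Z_p^{*}$ for any odd $p$ (because $2^{m}=1$ in $\Q_p$ forces $m=0$), the subgroup $A\cong\Z$ is infinite, so its closure $\overline{A}$ is an infinite closed subgroup of the compact Hausdorff group $C$, hence itself compact. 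By homogeneity a compact Hausdorff group possessing an isolated point is discrete and therefore finite; since $\overline{A}$ is infinite it has no isolated point, so, being compact Hausdorff and perfect, it is uncountable by Baire. As $\overline{X(\Q)}^{S}\supseteq\overline{A}$, this shows $\overline{X(\Q)}^{S}$ is uncountable.

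I do not expect a genuine obstacle here: every computation is elementary. The only point demanding care is the bookkeeping between the three $S$-independent obstruction sets of the first part and the genuinely $S$-dependent closure $\overline{X(\Q)}^{S}$ of the last part, together with the conceptual observation that, for $\G_m$, passing from the empty set of places (where $\Q^{*}$ is discrete and closed) to $S=\{2\}$ replaces a countable closed set by an uncountable one — precisely the failure of equality advertised in Remark \ref{Rmk:not_trivial_strong}.
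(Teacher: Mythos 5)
Your proof is correct, and it diverges from the paper's in two places worth noting. For the first chain of equalities the paper does not invoke Borovoi--Demarche at all: it quotes Harari's earlier theorem \cite[Thm.~4]{Harari} for $\G_m$ to get $\G_m(\A_\Q)_\bullet^{\Br\G_m}\subset\overline{\Q^*}$ and then asserts $\overline{\Q^*}=\Q^*$ without proof; your route through \cite{borovoi} (checking $G^{sc}=G^{scu}=G^{ab}=0$ so all hypotheses trivialize and the error term disappears) is equally valid and has the merit of making the discreteness of $\Q^{*}$ in the ideles explicit via the neighbourhood $\{+1\}\times\prod_p\Z_p^*$. The more substantive difference is in the uncountability claim: the paper identifies $\overline{X(\Q)}^{S}$ explicitly as $\Q^{*}\cdot 2^{\hat\Z}$, where $2^{\hat\Z}$ is the image of the continuous extension of $n\mapsto 2^n$ to $\hat\Z$, and reads off uncountability from that description; you instead avoid any explicit identification and argue softly that $\overline{\{2^{a}:a\in\Z\}}$ is an infinite closed subgroup of the compact group $\{\pm1\}\times\prod_{p\neq2}\Z_p^*$, hence a nonempty perfect compact Hausdorff space, hence uncountable by Baire. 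Your version is arguably cleaner, since it sidesteps the question (left implicit in the paper) of why $2^{\hat\Z}$ itself is uncountable, i.e.\ why the extension $\hat\Z\to\prod_{p\neq2}\Z_p^*$ has large image; the paper's version buys a sharper structural description of the closure, which is what Remark \ref{Rmk:not_trivial_strong} is really after. Both arguments establish the proposition.
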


\begin{proof}
	We have the following inclusions:
	\[
	X(\A_{\Q})_{\bullet}^{\Br X} \supset X(\A_K)_{\bullet}^{\acute{e}t,\Br X}\supset X(\Q)=\Q^*.
	\]
	Moreover, by \cite[Thm. 4]{Harari} (applied to the case $X=\G_m$), we have that $X(\A_{\Q})_{\bullet}^{\Br X} ={\G_m}(\A_{\Q})_{\bullet}^{\Br \G_m} \subset \overline{\Q^*}=\Q^*$ (the closure being in the idelic topology of $(\I_{\Q})_{\bullet}=({\G_m})(\A_{\Q})_{\bullet}$). Hence we have that all the inclusions above are equalities.
	
	On the other hand, we have that $\overline{X(\Q)}^S$, being the closure of $\Q^*$ in $({\I^S_{\Q}})_{\bullet}$, is equal to $\Q^*\cdot 2^{\hat{\Z}}$, where the embedding $2^{\hat{\Z}} \hookrightarrow ({\I^S_{\Q}})_{\bullet}$ is defined as described in the next paragraph.
	
	The morphism
	\[
	2^{\star}:\Z \rightarrow ({\I^S_{\Q}})_{\bullet}, \ \ n \mapsto 2^n,
	\]
	is continuous if we endow $\Z$ with the profinite topology (i.e. the one induced by the embedding $\Z \hookrightarrow \widehat{\Z}$, where $\widehat{Z}$ is endowed with its profinite topology) and $({\I^S_{\Q}})_{\bullet}$ with its natural topology. Therefore, since $({\I^S_{\Q}})_{\bullet}$ is complete, there is a unique continuous extension of $2^{\star}$ to $\hat{\Z}$, which defines an embedding $2^{\hat{\Z}} \hookrightarrow ({\I^S_{\Q}})_{\bullet}$.
	
\end{proof}


\subsection{Preliminaries for the proof of Theorem \ref{Thm:Homspaces}}

\begin{proposition}\label{Prop:FiberedProduct}
	Let $G$ be a $k$-algebraic group, and let $\phi:X \rightarrow Z$, $\psi:Y \rightarrow Z$ be $G$-equivariant morphisms between $k$-varieties equipped with a (left) $G$-action. Then the fibered product $X \times_Z Y$ is equipped with a natural (left) $G$-action, such that the two projections on $X$ and $Y$ are $G$-equivariant.
\end{proposition}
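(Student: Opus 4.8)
The plan is to work with the functor of points and to invoke Yoneda. Recall that giving a (left) $G$-action on a $k$-variety $V$ is the same as giving a (left) $G(T)$-action on the set $V(T)$ for every $k$-scheme $T$, naturally in $T$; under this dictionary the action morphism $a_V \colon G \times V \to V$ is recovered as the universal element obtained by acting $\pr_1 \in G(G \times V)$ on $\pr_2 \in V(G \times V)$. With this reformulation, $G$-equivariance of $\phi \colon X \to Z$ says exactly that each $\phi_T \colon X(T) \to Z(T)$ is $G(T)$-equivariant, and similarly for $\psi$.

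First I would define the action on points. For a $k$-scheme $T$ one has $(X \times_Z Y)(T) = \{(x,y) \in X(T) \times Y(T) : \phi_T(x) = \psi_T(y)\}$. Given $g \in G(T)$ and such a pair $(x,y)$, equivariance of $\phi_T$ and $\psi_T$ yields $\phi_T(g \cdot x) = g \cdot \phi_T(x) = g \cdot \psi_T(y) = \psi_T(g \cdot y)$, so $(g \cdot x, g \cdot y)$ again lies in $(X \times_Z Y)(T)$. This gives a $G(T)$-action on $(X \times_Z Y)(T)$, visibly natural in $T$, for which both projections are $G(T)$-equivariant by construction.

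Next I would turn this functorial action into a morphism of schemes. Since both $G \times (X \times_Z Y)$ and $X \times_Z Y$ are representable (the ambient category being closed under the relevant products), the natural transformation $(g,(x,y)) \mapsto (g \cdot x, g \cdot y)$ corresponds by Yoneda to a unique morphism $a \colon G \times (X \times_Z Y) \to X \times_Z Y$. Concretely, $a$ is exactly the morphism furnished by the universal property of the fibered product applied to the pair $a_X \circ (\mathrm{id}_G \times \pr_X)$ and $a_Y \circ (\mathrm{id}_G \times \pr_Y)$: these agree after composing with $\phi$ and $\psi$ respectively, because $\phi \circ a_X = a_Z \circ (\mathrm{id}_G \times \phi)$, $\psi \circ a_Y = a_Z \circ (\mathrm{id}_G \times \psi)$, and $\phi \circ \pr_X = \psi \circ \pr_Y$ on $X \times_Z Y$. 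The defining equalities $\pr_X \circ a = a_X \circ (\mathrm{id}_G \times \pr_X)$ and $\pr_Y \circ a = a_Y \circ (\mathrm{id}_G \times \pr_Y)$ are precisely the statement that $\pr_X$ and $\pr_Y$ are $G$-equivariant.

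Finally, the action axioms (compatibility with the unit section and associativity) hold because they hold for $a_X$ and $a_Y$: one verifies each axiom after composing with $\pr_X$ and $\pr_Y$ and then concludes by the uniqueness clause of the universal property, or equivalently checks it on $T$-points, where it reduces to the corresponding identity for the $G(T)$-actions on $X(T)$ and $Y(T)$. There is no genuine obstacle in this proposition; the only point demanding care is the passage between scheme-level actions and functorial actions on points, namely confirming that the action on $(X \times_Z Y)(T)$ is truly natural in $T$ so that Yoneda applies, and that the resulting $a$ satisfies the axioms — both of which collapse to the already-assumed equivariance of $\phi$, $\psi$ and the action axioms for $X$ and $Y$.
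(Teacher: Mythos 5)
Your proof is correct and follows essentially the same route as the paper: both construct the action morphism $G \times (X \times_Z Y) \to X \times_Z Y$ from the universal property of the fibered product applied to $a_X \circ (\mathrm{id}_G \times \pr_X)$ and $a_Y \circ (\mathrm{id}_G \times \pr_Y)$ (you reach this via Yoneda but explicitly identify the two), and both then verify the action axioms by a routine check. The functor-of-points packaging is a matter of presentation, not a different argument.
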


\begin{proof}
	Let $m_X:G\times_k X \rightarrow X$, $m_Y:G\times_k Y \rightarrow Y$, and $m_Z:G\times_k Z \rightarrow Z$ be the morphisms representing the $G$-action on $X$, $Y$ and $Z$. We know by hypothesis that 
	
	\begin{equation}
	\begin{tikzcd}
	Y \arrow[d, "\psi"'] \arrow[r, "m_Y"] & G\times_k Y \arrow[d, "{(\psi,id_G)}"] \\
	Z \arrow[r, "m_Z"]                    & G\times_k Z                           
	\end{tikzcd} \quad \quad \text{ and } \quad  \quad  \begin{tikzcd}
	X \arrow[d, "\phi"'] \arrow[r, "m_X"] & G\times_k X \arrow[d, "{(\phi,id_G)}"] \\
	Z \arrow[r, "m_Z"]                    & G\times_k Z                           
	\end{tikzcd}
	\end{equation}
	commute. This implies that the following diagram (without the dotted arrow) commutes:
	
	\begin{equation}
	\begin{tikzcd}
	&                                & G \times_k X \times_Z Y   \arrow[lld] \arrow[rd] \arrow[dd, "m_{X \times_Z Y}", dotted] &                                            \\
	G\times_k X \arrow[dd, "m_X"] \arrow[rd] &                                &                                                                   & G\times_k Y \arrow[dd, "m_Y"] \arrow[lld] \\
	& G\times_k Z \arrow[dd, "m_Z"] & X \times_Z Y \arrow[lld] \arrow[rd]                               &                                            \\
	X \arrow[rd]                              &                                &                                                                   & Y \arrow[lld]                              \\
	& Z                              &                                                                   &                                           
	\end{tikzcd},
	\end{equation}
	
	and, hence, the dotted arrow $m_{X \times_Z Y}$ exists and makes the whole diagram commute (by the universal property of the fibered product applied on $X \times_Z Y$). It is now a straightforward verification to see that $m_{X \times_Z Y}$ defines a group action that makes the projections of $X \times_Z Y$ to $X$ and $Y$ be $G$-equivariant.
	
	%
	%
	
	
\end{proof}

We shall need the following more or less standard facts about Weil restriction (we recall that, according to our notation, $\Sch_k$ denotes quasi-projective $\Spec k$-schemes):

\begin{proposition}\label{Prop:WeilRestriction}
	Let $L/k$ be a finite Galois extension with Galois group $\Gamma$.
	\begin{enumerate}
		\item[(i)] The functor $R_{L/k}:\Sch_L \rightarrow \Sch_k$ is a right adjoint to the base change functor $\Sch_k \rightarrow \Sch_L$.
		
		\item[(ii)] For every $k$-variety $X \in \Sch_k$ there exists a closed embedding $\iota_X:X \hookrightarrow R_{L/k}X_L$. Moreover, these morphisms form a natural transformation between the identity functor on $\Sch_k$ and $R_{L/k} \circ (\star \times_{\Spec K} \Spec L)$.
		
		\item[(iii)] For every $L$-variety $Y \in \Sch_L$ there is an isomorphism:
		\[
		\psi^Y=(\psi^Y_{\gamma}): (R_{L/k}Y)_L \longrightarrow \prod_{\gamma \in \Gamma}Y^{\gamma}.
		\]
		Moreover, these morphisms form a natural transformation between the functors 
		$Y \mapsto (R_{L/k}Y)_L$ and $Y \mapsto \prod_{\gamma \in \Gamma}Y^{\gamma}$.
		\item[(iv)] For every $X \in \Sch_k$, one has that $\psi^{X_L}\circ (\iota_X)_L=\Delta_{X_L}$, where $\Delta_{X_L}:X_L \rightarrow \prod_{\gamma \in \Gamma} (X_L)^{\gamma}$ denotes the diagonal embedding \footnote{We are implicitly using the fact that, if $\mathcal{X}$ is defined over $k$, then, for every $\gamma \in \Gamma$, there is a natural identification between $\mathcal{X}^{\gamma}$ and $\mathcal{X}$.}.
	\end{enumerate}
\end{proposition}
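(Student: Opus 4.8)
The plan is to treat the four statements in order, leaning on the adjunction in (i) as the structural backbone and deducing the geometric descriptions (iii)–(iv) from the standard base-change formula for Weil restriction. For (i), I would simply invoke the defining universal property of $R_{L/k}$: by construction the Weil restriction functor is characterized by the natural bijection $\Hom_{\Sch_k}(T, R_{L/k}Y) \cong \Hom_{\Sch_L}(T_L, Y)$ for every $k$-scheme $T$, which is exactly the statement that $R_{L/k}$ is right adjoint to base change $(\star)_L$. (One must check that $R_{L/k}$ indeed lands in $\Sch_k$, i.e. that quasi-projectivity is preserved; this is part of the basic theory in \cite[Ch. 4]{Weilrestr} and I would cite it rather than reprove it.)

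For (ii), the closed embedding $\iota_X$ is the unit of the adjunction from (i) applied to the object $X$: the adjunction gives a map $X \to R_{L/k}(X_L)$ corresponding to the identity $X_L \to X_L$ under the bijection. Functoriality of the unit of an adjunction then immediately yields that the $\iota_X$ assemble into a natural transformation $\mathrm{id}_{\Sch_k} \Rightarrow R_{L/k}\circ(\star)_L$. That it is a \emph{closed} embedding, rather than merely a morphism, I would again extract from the standard properties in \cite[Ch. 4]{Weilrestr}. For (iii), the isomorphism $\psi^Y$ is the classical base-change description of Weil restriction along a Galois extension: after base-changing back up to $L$, the restriction of scalars decomposes as the product over $\Gamma$ of the Galois conjugates $Y^\gamma$, with the $\gamma$-component $\psi^Y_\gamma$ given by the counit composed with the $\gamma$-conjugation. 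Naturality of each component follows from naturality of the counit together with functoriality of conjugation on $\Sch_L$ (the latter being exactly the $\Gamma$-action on $\Sch_L$ recalled in Section \ref{Sec:notation}).

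Statement (iv) is then a compatibility check between the unit of (ii) and the base-change isomorphism of (iii): I want to verify that composing $(\iota_X)_L$ with $\psi^{X_L}$ recovers the diagonal $\Delta_{X_L}$. The cleanest route is to chase the identity morphism through the adjunction bijections: $\iota_X$ corresponds to $\mathrm{id}_{X_L}$, so its base change paired with the $\gamma$-component of $\psi^{X_L}$ picks out, on each factor, the natural identification $(X_L)^\gamma \cong X_L$ coming from the fact that $X$ is defined over $k$ (the identification flagged in the footnote). Since every component equals this canonical isomorphism, the assembled map is precisely the diagonal. I expect (iv) to be the only genuinely delicate point: the potential for sign- or conjugation-bookkeeping errors is real, because one must track how the $\Gamma$-action on $\Sch_L$ interacts with the descent datum making $X_L$ defined over $k$, and confirm that each $\psi^{X_L}_\gamma \circ (\iota_X)_L$ is literally the identity-type identification rather than some twist. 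The remaining parts are formal consequences of the adjunction and standard references, so the main obstacle is ensuring the conjugation conventions in (iii)–(iv) are internally consistent with those fixed in Section \ref{Sec:notation}.
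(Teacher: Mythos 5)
Your proposal is correct and follows essentially the same route as the paper: (i)–(iii) are handled by citing the standard adjunction and base-change properties from \cite[Ch.\ 4]{Weilrestr}, and (iv) is proved by chasing the identity morphism of $X_L$ through the adjunction bijections and the defining diagram of $\psi^{X_L}$, exactly as the paper does with its commutative diagram of Hom-sets. The only difference is one of presentation — the paper makes the diagram chase explicit (identifying $(X_L)_L$ with $\coprod_{\gamma\in\Gamma}(X_L)^{\gamma^{-1}}$ so that $\pi_L$ becomes the codiagonal), whereas you describe the same bookkeeping informally.
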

\begin{proof}
	\item[(i)] This is the definition of Weil restriction, which exists by \cite[Cor. 4.8.1]{Weilrestr}. 
	\item[(ii)] See \cite{Weilrestr}[4.2.5].
	\item[(iii)] See \cite{Weilrestr}[4.11.3].
	\item[(iv)] The morphism $\pi:X_L \rightarrow X$ induces a base changed morphism $\pi_L:(X_L)_L \rightarrow X_L$. Since $L/k$ is Galois, one may identify $(X_L)_L$ with $\coprod_{\gamma \in \Gamma} (X_L)_{\gamma}$, which may again be naturally identified with $\coprod_{\gamma \in \Gamma} (X_L)$. Using this identification, the morphism $\pi_L$ corresponds to the codiagonal morphism.

	This identification induces the following commutative diagram:
	
	\begin{center}
		\begin{equation}\label{Comm_diagr}
		\begin{tikzcd}[]
		\Hom_k(X,R_{L/k}X_L) \arrow[r,"\pi^*"]&[-1em] \Hom_k(X_L,R_{L/k}X_L) \arrow[r,equal]&[-2em] \Hom_L(X_L,(R_{L/k}X_L)_L)\arrow[r,"(\psi^{X_L})_{*}"]& \Hom_L(X_L,\prod_{\gamma \in \Gamma} (X_L)^{\gamma} ) \\
		\Hom_L(X_L,X_L) \arrow[r,"\pi_L^*"] \arrow[u] &\Hom_L((X_L)_L,X_L) \arrow[u] \arrow[rr,"\sim"] &  & \Hom_L(\coprod_{\gamma \in \Gamma}(X_L)^{\gamma^{-1}},X_L), \arrow[u,equal] \\
		\end{tikzcd}
		\end{equation}
	\end{center}
	
	where the first two vertical morphisms are the ones induced from the definition of $R_{L/k}$. The commutativity of the first square follows from the definition of $R_{L/k}$, while the commutativity of the second square is the definition of $\psi^{X_L}$ (see \cite{Weilrestr}[4.11.3]).
	
	Point $(iv)$ now follows from considering the identity morphism in the bottom left corner of (\ref{Comm_diagr}), and looking at its image in the top right corner of the same diagram following the two distinct paths up-right-right and right-right-up.
\end{proof}

\begin{remark}\label{Rmk:Prodisom}
	By Proposition \ref{Prop:WeilRestriction}(i) the functor $R_{L/k}$ preserves (fibered) products. Hence, for every couple $(Y_1,Y_2)$ of $L$-varieties, the morphism $R_{L/k}(Y_1\times_L Y_2) \xrightarrow{R_{L/k}\pi_{1}\times R_{L/k}\pi_{2}} R_{L/k}Y_1 \times_k R_{L/k}Y_2$ is an isomorphism.
\end{remark}


\begin{remark}\label{Rmk:Weil_action}
	We observe that, if $m:G_L \times_L Y   \rightarrow Y$ is an action of $G_L$ on $Y$, there is a natural action of $R_{L/k}G_L$ on $R_{L/k}Y$ defined by the following composition:
	\[
	R_{L/k}G_L \times_k R_{L/k}Y  \xrightarrow{(R_{L/k}\pi_{1}\times R_{L/k}\pi_{2})^{-1}} R_{L/k}(G_L \times_L Y ) \xrightarrow{R_{L/k}m} R_{L/k}Y.
	\]
	Moreover, by functoriality of $R_{L/k}$, this induced action has the property that, for each $G_L$-equivariant morphism $f:Y_1 \rightarrow Y_2$ between $G_L$-varieties, the morphism $R_{L/k}f$ is a $R_{L/k}G_L$-equivariant morphism.
\end{remark}

Let $G$ be a $k$-algebraic group, $L/k$ be a finite Galois extension, with Galois group $\Gamma$, and let $Y/L$ be an $L$-variety endowed with a (left) $G_L$-action. We observe that Proposition \ref{Prop:WeilRestriction}(ii) (applied on $G$) gives a natural embedding $G \hookrightarrow R_{L/k}G_L$, which one can easily verify (using the fact that $\iota_{\star}$ is a natural transformation in $\star \in \Sch_k$)  to be a group homomorphism (and, hence, embedding).

\begin{proposition}\label{Prop:weilres_group}
	With the above notation, the following hold:
	\begin{enumerate}
		\item[(i)] If $Y=X_L$, with $X$ defined over $k$, and the action of $G_L$ is induced by base change from one of $G$ on $X$, then the natural embedding:
		\[
		\iota_X:X \hookrightarrow R_{L/k}X_L
		\]
		is $G$-equivariant (where the $G$-action on $R_{L/k}X_L$ is the one induced from the action of $R_{L/k}G_L$ on $R_{L/k}X_L$, defined as in Remark \ref{Rmk:Weil_action}, restricted to $G$ through the embedding $\iota_G:G \hookrightarrow R_{L/k}G_L$).
		\item[(ii)] There exists a natural $G_L$-equivariant isomorphism:
		\[
		\psi^Y\defeq \prod_{\gamma \in \Gamma}\psi^Y_{\gamma}: (R_{L/k}Y)_L \longrightarrow \prod_{\gamma \in \Gamma}Y^{\gamma},
		\]
		where the action of $G_L$ on $(R_{L/k}Y)_L$ is the one induced from the action of $R_{L/k}G_L$ on $R_{L/k}Y$, defined as in Remark \ref{Rmk:Weil_action}, restricted to $G$ through the embedding $\iota_G:G \hookrightarrow R_{L/k}G_L$, and the action on $\prod_{\gamma \in \Gamma}Y^{\gamma}$ is induced from the diagonal embedding $\Delta_{G_L}:G_L \rightarrow \prod_{\gamma \in \Gamma}G_L^{\gamma} \cong \prod_{\gamma \in \Gamma}G_L$.
	\end{enumerate}
\end{proposition}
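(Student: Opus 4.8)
The plan is to deduce both statements from the fact that $\iota_{\star}$ and $\psi^{\star}$ are natural transformations (Proposition \ref{Prop:WeilRestriction}(ii),(iii)), by applying naturality to the action morphisms themselves rather than attempting any direct computation on points. In both parts the underlying isomorphisms $\iota_X$ and $\psi^Y$ are already supplied by Proposition \ref{Prop:WeilRestriction}, so the only content is the equivariance.

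For part (i), let $m \colon G \times_k X \to X$ denote the action morphism. I would apply the naturality of $\iota$ (Proposition \ref{Prop:WeilRestriction}(ii)) to $m$, obtaining a commutative square whose vertical arrows are $\iota_{G \times_k X}$ and $\iota_X$ and whose bottom arrow is $R_{L/k}(m_L)$. The key identification is that, under the product isomorphism of Remark \ref{Rmk:Prodisom}, the embedding $\iota_{G \times_k X}$ corresponds to $\iota_G \times \iota_X$; this follows by applying naturality of $\iota$ to the two projections $G \times_k X \to G$ and $G \times_k X \to X$ and invoking the universal property of the product. Simultaneously, $R_{L/k}(m_L)$ precomposed with the inverse product isomorphism is, by definition, exactly the $R_{L/k}G_L$-action on $R_{L/k}X_L$ of Remark \ref{Rmk:Weil_action}. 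Rewriting the square with these two identifications yields precisely the assertion that $\iota_X$ intertwines the $G$-action on $X$ with the action on $R_{L/k}X_L$ restricted through $\iota_G$, i.e. $\iota_X(g\cdot x)=\iota_G(g)\cdot\iota_X(x)$, which is the desired $G$-equivariance.

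For part (ii), I would first establish the stronger statement that $\psi^Y$ is equivariant for the full base-changed $(R_{L/k}G_L)_L$-action, where $\prod_{\gamma \in \Gamma} Y^{\gamma}$ carries the componentwise action $\prod_{\gamma} m^{\gamma}$ (with $m \colon G_L \times_L Y \to Y$ the action on $Y$). This follows by applying the naturality of $\psi$ (Proposition \ref{Prop:WeilRestriction}(iii)) to $m$, together with the compatibility of $\psi$ with products, namely that $\psi^{G_L \times_L Y}$ agrees with $\psi^{G_L} \times \psi^Y$ under the base-changed product isomorphism; the latter is again a consequence of naturality of $\psi$ along the two projections. To pass from this to the statement with $G_L$ acting through $(\iota_G)_L$, I would restrict along the base-changed embedding $(\iota_G)_L \colon G_L \hookrightarrow (R_{L/k}G_L)_L$ and invoke Proposition \ref{Prop:WeilRestriction}(iv) applied to $G$, which gives $\psi^{G_L} \circ (\iota_G)_L = \Delta_{G_L}$. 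This identifies the restricted action on $\prod_{\gamma} Y^{\gamma}$ with the diagonal action, exactly as required.

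I expect the main obstacle to be purely bookkeeping: verifying that $\iota$ and $\psi$ are compatible with fibered products in the precise sense used above, and that the action morphisms defined somewhat opaquely in Remark \ref{Rmk:Weil_action} through the product isomorphism match the arrows produced by naturality. None of these steps is deep, but each requires chasing the relevant universal properties carefully and keeping track of the canonical identification $G_L^{\gamma} \cong G_L$ (available throughout because $G$ is defined over $k$); a misdirected arrow there would break the matching with $\Delta_{G_L}$ that drives part (ii).
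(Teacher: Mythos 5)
Your proposal is correct and follows essentially the same route as the paper: both parts are proved by assembling the same commutative diagrams from the naturality of $\iota_{\star}$ and $\psi^{\star}$, their compatibility with products via Remark \ref{Rmk:Prodisom}, and (for part (ii)) the identity $\psi^{G_L}\circ(\iota_G)_L=\Delta_{G_L}$ from Proposition \ref{Prop:WeilRestriction}(iv). The only difference is cosmetic: you spell out slightly more explicitly why $\iota_{G\times_k X}$ (resp.\ $\psi^{G_L\times_L Y}$) matches $\iota_G\times\iota_X$ (resp.\ $\psi^{G_L}\times\psi^Y$) under the product isomorphism, which the paper absorbs into the naturality claim.
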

\begin{proof}
	\item[(i)] Let $m_X:G\times_k X   \rightarrow X$ be the morphism defining the action of $G$ on $X$. Then, $(i)$ follows from the commutativity of the following diagram:
	
	\begin{center}
		\begin{tikzcd}[column sep=6.5em]
		G\times_k X  \arrow[d, "\iota_{X}\times \iota_G"] \arrow[r,equal] & G\times_k X \arrow[d,"\iota_{X\times G}"] \arrow[r, "m_X"] & X \arrow[d, "\iota_X"] \\
		R_{L/k}G_L \times_k R_{L/k}X_L   \arrow[r, "(R_{L/k}\pi_{1}\times R_{L/k}\pi_{2})^{-1}"] & R_{L/k}(G_L \times_L X_L  ) \arrow[r, "R_{L/k}m_X"]  & R_{L/k}X_L, \\
		\end{tikzcd}
	\end{center}
	
	which, in turn, is a consequence of the fact that, for $\mathcal{X} \in \Sch_k$, the morphisms $\iota_{\mathcal{X}}:\mathcal{X}\rightarrow R_{L/k}\mathcal{X}_L$ introduced in Proposition \ref{Prop:WeilRestriction}[(i)] are a natural transformation.
	\item[(ii)] Let $m_Y:G_L \times_L Y   \rightarrow Y$ be the morphism defining the action of $G_L$ on $Y$. Then, $(ii)$ follows from the commutativity of the following diagram:
	
	\begin{center}
		\begin{tikzcd}[column sep=4em]
		G_L \times_L (R_{L/k}Y)_L  \arrow[r, hook, "  (\iota_G)_L\times \id"] \arrow[d, "\iota \times \psi^Y"] & (R_{L/k}G_L)_L \times_L (R_{L/k}Y)_L  \arrow[d, "\psi^{G_L} \times \psi^Y  "] \arrow[r, "(R_{L/k}\pi_{1}\times R_{L/k}\pi_{2})^{-1}"] &[2em] (R_{L/k}(G_L \times_L Y ))_L \arrow[r, "(R_{L/k}m_Y)_L"] \arrow[d,"\psi^{(G_L \times_L Y )}"]& (R_{L/k}Y)_L \arrow[d,"\psi^Y"]\\
		\prod_{\gamma \in \Gamma}G_L \times_L Y^{\gamma}  \arrow[r, "(\Delta_{G_L}) \times  \id  "] &  \prod_{\gamma \in \Gamma}(G_L)^{\gamma} \times_L\prod_{\gamma \in \Gamma}Y^{\gamma}\arrow[r, "\sim"] & \prod_{\gamma \in \Gamma}(G_L \times_L Y)^{\gamma} \arrow[r,"\prod_{\gamma \in \Gamma} m^{\gamma}"] &\prod_{\gamma \in \Gamma}Y^{\gamma}.\\
		\end{tikzcd}
	\end{center}
	
	The commutativity of the first square follows from the equality $\Delta_{G_L} = \psi^{G_L}\circ(\iota_G)_L$, which was proven in Proposition \ref{Prop:WeilRestriction}[(iv)]. The commutativity of the central and last square is a consequence of the fact that, for $\mathcal{Y} \in \Sch_L$, the morphisms $\psi^{\mathcal{Y}}$ introduced in Proposition \ref{Prop:WeilRestriction}[(ii)] are a natural transformation.
\end{proof}

%
%
%

\begin{notation}
	For a group $G/k$ acting on a variety $Y$, and a point $y \in Y(\bar{k})$, we denote by $\Stab_{\bar{G}}y$ the stabilizer of $y$ in $\bar{G}\defeq G_{\bar{k}}$.
\end{notation}

Let $G$ be an algebraic $k$-group and let $Y$ be a $G_L$-variety (i.e. an $L$-variety endowed with a left $G_L$-action). We know by Remark \ref{Rmk:Weil_action} and the following discussion that $G$ acts on $(R_{L/k}Y)$ through the diagonal embedding.

\begin{corollary}\label{Cor:RestrStabilizer}
	Keeping the notation of Proposition \ref{Prop:weilres_group}(ii), we have that, for each $\bar{x} \in (R_{L/k}Y)(\bar{k})$:
	\[
	\Stab_{\bar{G}}(\bar{x})= \cap_{\gamma \in \Gamma} \Stab_{\bar{G}}(\psi^Y_{\gamma}(x)).
	\]
	%
\end{corollary}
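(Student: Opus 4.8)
The plan is to reduce the statement to the $G_L$-equivariance of the isomorphism $\psi^Y$ established in Proposition \ref{Prop:weilres_group}(ii), combined with the elementary fact that an equivariant isomorphism of $G$-varieties identifies stabilizers, and with the description of stabilizers under a diagonal action.

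First I would base-change the isomorphism
\[
\psi^Y:(R_{L/k}Y)_L \longrightarrow \prod_{\gamma \in \Gamma}Y^{\gamma}
\]
along $L \hookrightarrow \bar{k}$, obtaining an isomorphism of $\bar{k}$-varieties which, on $\bar{k}$-points, sends $\bar{x}$ to the tuple whose $\gamma$-th coordinate is $\psi^Y_{\gamma}(x)$. By Proposition \ref{Prop:weilres_group}(ii) this map is $G_L$-equivariant, hence in particular $\bar{G}$-equivariant, where $\bar{G}$ acts on the source through (the base change of) $\iota_G:G \hookrightarrow R_{L/k}G_L$ and on the target through the diagonal $\Delta_{G_L}$ restricted to $G$. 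Since an equivariant isomorphism carries the stabilizer of a point to the stabilizer of its image, this yields $\Stab_{\bar{G}}(\bar{x}) = \Stab_{\bar{G}}(\psi^Y(\bar{x}))$.

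Second, I would compute the stabilizer on the product side. The $\bar{G}$-action on $\prod_{\gamma}Y^{\gamma}$ is induced by $\Delta_{G_L}$, so an element $g \in \bar{G}$ carries the tuple $(\psi^Y_{\gamma}(x))_{\gamma}$ to the tuple whose $\gamma$-th entry is $g \cdot \psi^Y_{\gamma}(x)$, the action on the factor $Y^{\gamma}$ being the one coming from the $\gamma$-th coordinate of the diagonal via the identification $G_L^{\gamma}\cong G_L$ (afforded by $G$ being defined over $k$). Thus $g$ fixes the whole tuple if and only if it fixes each coordinate, giving $\Stab_{\bar{G}}(\psi^Y(\bar{x})) = \bigcap_{\gamma \in \Gamma}\Stab_{\bar{G}}(\psi^Y_{\gamma}(x))$. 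Combining the two equalities yields the claim.

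The only point requiring care — and the place where Proposition \ref{Prop:WeilRestriction}(iv) is implicitly used — is the bookkeeping of the action: one must verify that the single element $g \in \bar{G}$ acts through the \emph{same} element on each factor $Y^{\gamma}$, so that the right-hand side is a genuine intersection of subgroups of $\bar{G}$ rather than of a priori unrelated groups. This is precisely the content of the identity $\Delta_{G_L}=\psi^{G_L}\circ(\iota_G)_L$ recorded in Proposition \ref{Prop:WeilRestriction}(iv) and invoked in the proof of Proposition \ref{Prop:weilres_group}(ii). Everything else is formal, so I do not expect any substantial obstacle beyond this verification.
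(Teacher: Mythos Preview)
Your proposal is correct and is precisely an unpacking of the paper's one-line proof, which simply states that the claim ``immediately follows from Proposition \ref{Prop:weilres_group}(ii).'' You have supplied the details behind that sentence---equivariant isomorphisms preserve stabilizers, and the stabilizer under a diagonal action is the intersection of the componentwise stabilizers---so the approach is the same.
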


\begin{proof}
	This immediately follows from Proposition \ref{Prop:weilres_group}(ii).
\end{proof}

\begin{proposition}\label{Prop:HomSpace}
	Let $\pi:Z \rightarrow X$ be a finite surjective $G$-equivariant morphism among $k$-varieties endowed with a (left) $G$-action. If $X$ is a homogeneous space and $Z$ is geometrically integral, then $Z$ is a homogeneous space as well.
\end{proposition}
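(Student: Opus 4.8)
The plan is to work over $\bar k$ and prove that $\bar G \defeq G_{\bar k}$ acts transitively on $Z(\bar k)$; since we are in characteristic $0$ this geometric transitivity is exactly what is needed for $Z$ to be a homogeneous space. First I would fix a point $z_0 \in Z(\bar k)$ and consider its orbit $\bar G \cdot z_0 \subset \bar Z$, using the standard fact that orbits of an algebraic group action are locally closed subvarieties, open in their closure $\overline{\bar G \cdot z_0}$. The $G$-equivariance of $\pi$ gives $\pi(\bar G \cdot z_0) = \bar G \cdot \pi(z_0)$, and since $X$ is a homogeneous space we have $\bar G \cdot \pi(z_0) = X(\bar k)$; hence $\pi$ restricts to a finite surjective morphism $\bar G \cdot z_0 \to \bar X$. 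As finite morphisms preserve dimension, this forces $\dim(\bar G \cdot z_0) = \dim \bar X = \dim \bar Z$, the last equality holding because $\pi$ itself is finite and surjective. Because $\bar Z$ is integral, its only closed subset of dimension $\dim \bar Z$ is $\bar Z$ itself, so $\overline{\bar G \cdot z_0} = \bar Z$ and the orbit $\bar G \cdot z_0$ is a dense open subset of $\bar Z$.

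It then remains to upgrade density to equality, i.e.\ to rule out a lower-dimensional boundary. The complement $C \defeq \bar Z \setminus (\bar G \cdot z_0)$ is a $\bar G$-stable closed subset of $\bar Z$ with $\dim C < \dim \bar Z$. Its image $\pi(C)$ is closed (as $\pi$ is finite, hence proper) and $\bar G$-stable, and again $\dim \pi(C) = \dim C < \dim \bar X$. But a nonempty $\bar G$-stable closed subset of the homogeneous space $\bar X$ must equal all of $\bar X$, since it contains a full orbit; as $\pi(C)$ has dimension strictly less than $\dim \bar X$, it must be empty, whence $C = \emptyset$. Therefore $\bar G \cdot z_0 = Z(\bar k)$ and the action on $Z$ is transitive, so $Z$ is a homogeneous space.

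The step I expect to be the crux is precisely this last passage from a dense open orbit to transitivity: for a general action admitting a dense orbit one cannot conclude homogeneity (the boundary may be a nonempty union of smaller orbits), and the only reason it works here is that the $\bar G$-stable boundary would descend, under the \emph{finite} map $\pi$, to a proper $\bar G$-stable closed subset of $X$, which homogeneity forbids. In writing this up I would be careful to record the two structural inputs the argument rests on — that orbits are locally closed and open in their closures, and that finite (indeed proper) morphisms preserve dimension and take closed sets to closed sets — and to note that the $G$-stability of $\pi(C)$ follows formally from the $G$-equivariance of $\pi$ together with the $G$-stability of $C$.
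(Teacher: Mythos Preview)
Your argument is correct and the first half---fixing $z_0$, using local closedness of orbits, surjectivity onto $\bar X$ and the dimension count to get a dense open orbit---matches the paper's proof exactly.

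The only substantive difference is the endgame. To pass from ``dense open orbit'' to ``transitive'', the paper argues \emph{internally in $\bar Z$}: it picks a hypothetical point $\bar s$ outside the orbit $Y=\bar G\cdot z_0$, observes that the second orbit $\bar G\cdot\bar s$ is also dense (by the same dimension argument), and then notes that two dense constructible subsets of an irreducible variety must meet, contradicting disjointness of distinct orbits. You instead \emph{push the boundary down to $\bar X$}: the complement $C$ is $\bar G$-stable and closed, so $\pi(C)$ is a closed $\bar G$-stable subset of the homogeneous space $\bar X$ of strictly smaller dimension, hence empty. Both are short and valid; your route makes more explicit use of the finiteness (properness) of $\pi$ and the homogeneity of $X$, while the paper's route is a self-contained orbit argument that would go through with $\pi$ merely quasi-finite. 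Either way the proof is complete.
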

\begin{proof}
	Let $\bar{z} \in \bar{Z}(\bar{k})$ be any geometric point, and let $Y=\bar{G} \cdot \bar{z}$ be its $\bar{G}$-orbit. We assume that $Y$ is endowed with the $\bar{k}$-variety structure that comes from the natural isomorphism $\bar{G}/\Stab_{\bar{G}}\bar{z} \cong Y$. In particular, we have by \cite[Lemma 9.30]{Milne} that $Y$ is locally closed in $\bar{Z}$.
	
	Since $X$ is a homogeneous space, we have that $\pi(Y(\bar{k}))=\pi(\bar{G} \cdot \bar{z}(\bar{k}))=\pi(\bar{z})\cdot \bar{G}(\bar{k})={X}(\bar{k})$. Hence, the morphism $\restricts{\pi}{Y}:Y \rightarrow X$ is surjective on $\bar{k}$-points, hence dominant (by Nullstellensatz). Therefore, if $Y^c\subset \bar{Z}$ denotes the Zariski-closure of $Y$ in $\bar{Z}$ (which coincides with the Zariski-closure of $Y(\bar{k})$ in $\bar{Z}$ by Nullstellensatz), we have that $\dim X= \dim \bar{Z} \geq \dim Y^c\geq \dim Y \geq \dim X$, and, hence, since $\bar{Z}$ is irreducible, $Y^c=\bar{Z}$. 
	
	We want to show that actually $Y=\bar{Z}$. Since $Y$ is locally closed in $\bar{Z}$ and $\bar{Z}$ is reduced, it is enough, by the Nullstellensatz, to show that $\bar{Z}(\bar{k})=Y(\bar{k})$. We assume, by contradiction, that there exists a $\bar{s} \in \bar{Z}(\bar{k})  \setminus Y$. We have, as before, that $\bar{G} \cdot \bar{s}$ is dense in $\bar{Z}$. Therefore, since both $\bar{G} \cdot \bar{s}$ (which we give again a $\bar{k}$-variety structure as before) and $Y$ are constructible and dense in $\bar{Z}$, they both contain some non-empty Zariski open subset of $\bar{Z}$, and their intersection is non-empty. This is a contradiction because we assumed $\bar{s} \notin Y=\bar{G} \cdot \bar{z}$, hence $\bar{G} \cdot \bar{s}\cap \bar{G} \cdot \bar{z} =\emptyset$.
\end{proof}

%

\subsection{Proof of Theorem \ref{Thm:Homspaces}}

The lemmas that follow play a major role (especially Lemma \ref{Lem:conn_comps}) in the proof of Theorem \ref{Thm:Homspaces}.

\begin{lemma}\label{Lem:induced_action}
	Let $G$ be a connected algebraic group, $X$ be a $k$-scheme of finite type endowed with a $G$-action, and $X^0 \subset X$ be a connected component of $X$. Then the $G$-action on $X$ induces one on $X^0$ (i.e. there exists a unique action on $X^0$ that makes the embedding $X^0 \hookrightarrow X$ $G$-equivariant).
\end{lemma}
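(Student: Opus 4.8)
The plan is to work directly with the morphism $m : G \times_k X \to X$ defining the action and to show that its restriction to $G \times_k X^0$ factors through the subscheme $X^0 \hookrightarrow X$. Since $X$ is of finite type over $k$ it is locally Noetherian, so the connected component $X^0$ is both open and closed in $X$; in particular the inclusion $j : X^0 \hookrightarrow X$ is a clopen immersion, hence a monomorphism. Once the factorization is established, the action axioms and uniqueness will be essentially formal consequences of $j$ being a monomorphism.

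The main step is a connectedness argument. Consider the composite $a := m \circ (\mathrm{id}_G \times j) : G \times_k X^0 \to X$. Its preimage $a^{-1}(X^0)$ is open and closed in $G \times_k X^0$, because $X^0$ is clopen in $X$ and $a$ is continuous. This preimage is nonempty: composing with the identity section $e : \Spec k \to G$ gives $a \circ (e \times \mathrm{id}_{X^0}) = j$ by the unit axiom for $m$, so $\{e\} \times X^0 \subseteq a^{-1}(X^0)$. Thus, \emph{provided} $G \times_k X^0$ is connected, it will follow that $a^{-1}(X^0) = G \times_k X^0$, i.e.\ that $a$ factors through $X^0$; and since $j$ is a monomorphism this factorization $m_0 : G \times_k X^0 \to X^0$ with $j \circ m_0 = a$ is unique.

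The connectedness of $G \times_k X^0$ is where the hypothesis that $G$ is connected enters, and it is the crux of the proof. Because $G$ is a connected scheme of finite type over $k$ carrying the rational point $e \in G(k)$, it is in fact \emph{geometrically} connected (a connected scheme of finite type over a field possessing a $k$-point is geometrically connected; see e.g.\ \cite{Milne}). Combining this with the standard fact that the product of a geometrically connected $k$-scheme with any connected $k$-scheme is connected, and with the fact that $X^0$ is connected by definition, yields that $G \times_k X^0$ is connected, as required. The mild subtlety worth flagging is that $X^0$ need \emph{not} be geometrically connected, so it is essential that the geometric connectedness used here be supplied by $G$ rather than by $X^0$.

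Finally, I would verify that $m_0$ is a group action. The unit axiom $m_0 \circ (e \times \mathrm{id}_{X^0}) = \mathrm{id}_{X^0}$ and the associativity axiom are equalities of morphisms whose targets land in $X^0$; composing each with the monomorphism $j$ turns them into the corresponding identities for $m$, which hold by hypothesis, so the axioms for $m_0$ follow because $j$ is a monomorphism. The same monomorphicity gives uniqueness of the induced action: any $G$-action $m'$ on $X^0$ for which $j$ is equivariant satisfies $j \circ m' = m \circ (\mathrm{id}_G \times j) = a = j \circ m_0$, whence $m' = m_0$. I expect the connectedness input of the third paragraph to be the only genuinely non-formal ingredient; everything else is bookkeeping with the monomorphism $j$.
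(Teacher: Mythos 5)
Your proof is correct, and it is the natural argument behind the paper's one-line ``the proof is straightforward'': clopenness of $X^0$, connectedness of $G\times_k X^0$ via geometric connectedness of $G$ (supplied by the identity point, since $X^0$ itself need not be geometrically connected), and formal bookkeeping with the monomorphism $j$. Nothing to add.
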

\begin{proof}
	The proof is straightforward.
\end{proof}

We recall that, if $F$ is a group acting on the right on a $k$-variety $Z$, and $G$ acts on the left with an action that commutes with the one of $F$, and $\eta \in H_{sx}^1(K,F)$, there is a natural left action of $G$ on $Z_{\eta}$, commuting with the right $F^{\eta}$-action.

\begin{lemma}\label{Lem:conn_comps}
	Let $G$ be a connected algebraic group over $K$ and $X$ a left $G$-homogeneous space, and assume that there exists a finite group scheme $F/\Spec K$, and a right $F$-torsor $\phi:Z \rightarrow X$ such that $Z$ is endowed with a left $G$-action, commuting with the $F$-action, with connected geometric stabilizers and such that $\phi$ is $G$-equivariant. Suppose that $X(\A_K)^{\acute{e}t} \neq \emptyset$. Then, there exists a $\eta \in H_{sx}^1(K,F)$ and a connected component $Z'$ of $Z_{\eta}$ such that $Z'$, endowed with the $G$-action of Lemma \ref{Lem:induced_action}, is a $G$-homogeneous space with connected stabilizers. Moreover, there is a finite subgroup $F' \subset F^{\eta}$ such that $Z' \rightarrow X$ is a right $F'$-torsor.
\end{lemma}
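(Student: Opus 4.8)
The plan is to fix, once and for all, a reduction to a purely \emph{geometric} statement about connected components, and then to use the nonemptiness of $X(\A_K)^{\acute{e}t}$ only to produce a twist possessing a geometrically integral component. First I would record the twisting formalism: for any $\eta\in H_{sx}^1(K,F)$ the commuting left $G$-action descends to $Z_\eta$ (by the remark preceding the statement), so that $\phi_\eta\colon Z_\eta\to X$ is a $G$-equivariant right $F^\eta$-torsor. Since $G$ is connected, by Lemma \ref{Lem:induced_action} every connected component $Z'$ of $Z_\eta$ is $G$-stable and inherits a $G$-action for which $Z'\hookrightarrow Z_\eta$ is equivariant. The restricted map $\phi_\eta|_{Z'}\colon Z'\to X$ is finite and $G$-equivariant, so its image is a closed, $G$-stable, nonempty subset of the homogeneous space $X$, hence all of $X$; thus $\phi_\eta|_{Z'}$ is finite and surjective. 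Consequently, as soon as $Z'$ is geometrically integral, Proposition \ref{Prop:HomSpace} makes it a $G$-homogeneous space. In this way the whole statement is reduced to the single assertion: \emph{there is $\eta$ such that $Z_\eta$ has a geometrically integral connected component}.

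Next I would analyse the components of $\bar Z$ geometrically. Fixing $\bar x_0\in X(\bar K)$ with $\bar H:=\Stab_{\bar G}(\bar x_0)$, the fibre $\phi^{-1}(\bar x_0)$ is a right $F(\bar K)$-torsor on which $\bar H$ acts compatibly; as the identity component $\bar H^{0}$ is connected it acts trivially on this finite set, whence for every $\bar z$ in the fibre one has $\bar H^{0}\subseteq \Stab_{\bar G}(\bar z)\subseteq \bar H$. The connected-stabilizer hypothesis then forces $\Stab_{\bar G}(\bar z)=\bar H^{0}$, so the $\bar G$-orbit of $\bar z$ is $\bar G/\bar H^{0}$; by the dimension and closure argument of Proposition \ref{Prop:HomSpace} this orbit is exactly a geometrically integral connected component of $\bar Z$. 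Hence every geometric component is a homogeneous space with connected stabilizer $\bar H^{0}$, the set $\pi_0(\bar Z)$ is identified with the $F(\bar K)$-set of such components, and the stabilizer in $F^\eta$ of any single component is a finite subgroup $F'$ for which $Z'\to X$ is a right $F'$-torsor. The upshot is that a geometrically integral connected component of $Z_\eta$ over $K$ is the same datum as a $K$-point of the finite étale $K$-scheme $\pi_0(Z_\eta)$, and that any such component automatically satisfies the ``moreover'' clause.

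It remains to produce $\eta$ with $\pi_0(Z_\eta)(K)\neq\emptyset$, and this is the step where $X(\A_K)^{\acute{e}t}\neq\emptyset$ enters; I expect it to be the only real obstacle. I would invoke the functoriality of the finite-descent (étale) obstruction along the finite torsor $\phi$, namely $X(\A_K)^{\acute{e}t}=\bigcup_{[\eta]}\phi_\eta\big(Z_\eta(\A_K)^{\acute{e}t}\big)$, to obtain a single $\eta$ with $Z_\eta(\A_K)^{\acute{e}t}\neq\emptyset$. I would then use that the finite-descent obstruction separates connected components and controls $\pi_0$: a locally constant torsor argument shows that a surviving adelic class cannot be spread across several $K$-components, so some connected component $W$ of $Z_\eta$ already has $W(\A_K)^{\acute{e}t}\neq\emptyset$; a further descent along the cover attached to the Galois closure of the residue algebra of $\pi_0(W)$ then forces $W$ to be geometrically connected, i.e. $\pi_0(W)(K)\neq\emptyset$. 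This last principle is precisely the input of \cite{Stoll} and \cite[Lemma 7.1]{demarche}. Taking $Z':=W$ and $F':=\Stab_{F^\eta}(Z')$ completes the proof via the reductions above. The delicate point throughout is the bookkeeping of the non-mixing of components under finite descent, rather than any geometric input, the latter being entirely supplied by Proposition \ref{Prop:HomSpace}.
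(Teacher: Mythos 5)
Your proposal is correct and follows essentially the same route as the paper: the twist $\eta$ with a geometrically connected component is produced by exactly the Stoll-type result \cite[Lemma 7.1]{demarche} (which you correctly identify as the only genuinely arithmetic input), and the rest is the combination of Lemma \ref{Lem:induced_action} and Proposition \ref{Prop:HomSpace}, plus taking $F'=\Stab_{F^\eta}(Z')$. Your additional analysis of the geometric components and their stabilizers is harmless but not needed — the paper simply observes that connectedness of the geometric stabilizers on $Z$ is inherited by $Z'$.
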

\begin{proof}
	By \cite[Lemma 7.1]{demarche} there exists an element $\eta \in H_{sx}^1(K,F)$ and a connected component $Z'$ of $Z_{\eta}$ such that $Z'$ is geometrically connected. Since $G$ is connected, we have by Lemma \ref{Lem:induced_action} that there is a left $G$-action on $Z'$ that makes the embedding $Z' \hookrightarrow Z_{\eta}$ $G$-equivariant. 
	
	Let us now prove that $Z'$ is a homogeneous space. We know that $X$ is a homogeneous space, and that $Z'$ is smooth (because $Z' \rightarrow X$ is étale and $X$ is smooth) and geometrically connected. Hence, since $Z' \rightarrow X$ is finite and $G$-equivariant, $Z'$ is a homogeneous space by Proposition \ref{Prop:HomSpace}. Moreover, by our assumption, the geometric stabilizers of the $G$-action are connected on $Z$, so, in particular, they are on $Z'$.
	
	Letting $F'$ be the stabilizer of $Z'$ under the $F^{\eta}$-action, the last part is straightforward.
\end{proof}

\begin{lemma}\label{Lem:existence_torsor}
	Let $X$ be a (left) homogeneous space under a connected $K$-group $G$. There exists a finite group scheme $F/\Spec K$, and a right $F$-torsor $\phi:Z \rightarrow X$ such that $Z$ is endowed with a left $G$-action with connected geometric stabilizers and such that $\phi$ and the $F$-action are $G$-equivariant.
\end{lemma}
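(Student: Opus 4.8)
The plan is to first dispose of the case in which $X$ has a rational point, and then to bootstrap from a finite extension by means of the Weil restriction machinery developed above. If $X(K)\neq\emptyset$, pick $x_0\in X(K)$ and let $H\defeq\Stab_{\bar G}(x_0)$, which is then defined over $K$; its identity component $H^0$ is a connected $K$-subgroup, $F\defeq H/H^0$ is a finite $K$-group scheme, and $Z\defeq G/H^0\to X=G/H$ is manifestly a right $F$-torsor that is $G$-equivariant and has constant connected geometric stabilizer $H^0$. This settles the easy case; the whole content lies in removing the assumption $X(K)\neq\emptyset$, and this is exactly where Weil restriction enters.

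In general, since $X$ is smooth and geometrically integral it has a point over some finite extension of $K$, so I would fix a finite Galois extension $L/K$, with Galois group $\Gamma$, such that $X(L)\neq\emptyset$. Applying the previous paragraph over $L$ to a chosen $x_0\in X(L)$ yields a finite $L$-group scheme $F_L\defeq H_L/H_L^0$ (with $H_L=\Stab_{\bar G}(x_0)$) and a right $F_L$-torsor $p\colon Z_L\defeq G_L/H_L^0\to X_L$ that is $G_L$-equivariant and has connected geometric stabilizers, all conjugate to $\bar H^0\defeq (H_L^0)_{\bar K}$. I would then Weil-restrict: since $R_{L/k}$ is a right adjoint it preserves fibered products (Remark \ref{Rmk:Prodisom}), so $R_{L/K}Z_L\to R_{L/K}X_L$ is a right torsor under the finite $K$-group scheme $F\defeq R_{L/K}F_L$, and by Remark \ref{Rmk:Weil_action} and Proposition \ref{Prop:weilres_group} it is equivariant for the action of $R_{L/K}G_L$, hence for that of $G$ through the embedding $\iota_G\colon G\hookrightarrow R_{L/K}G_L$. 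Finally I would set
\[
Z\defeq X\times_{R_{L/K}X_L}R_{L/K}Z_L,
\]
the fibered product taken along the embedding $\iota_X\colon X\hookrightarrow R_{L/K}X_L$ of Proposition \ref{Prop:WeilRestriction}(ii), which is $G$-equivariant by Proposition \ref{Prop:weilres_group}(i). By Proposition \ref{Prop:FiberedProduct}, $Z$ carries a $G$-action for which both projections are $G$-equivariant, and, being the base change of a torsor, $\phi\colon Z\to X$ is a right $F$-torsor; since torsors under finite group schemes are finite and faithfully flat, $\phi$ is a cover and the $F$-action is $G$-equivariant.

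The only nontrivial point, and the one I expect to be the crux, is the connectedness of the geometric stabilizers of $Z$. Fix $\bar z\in Z(\bar K)$ with images $\bar x\in X(\bar K)$ and $\bar w\in (R_{L/K}Z_L)(\bar K)$, so that $\Stab_{\bar G}(\bar z)=\Stab_{\bar G}(\bar x)\cap\Stab_{\bar G}(\bar w)$. By Corollary \ref{Cor:RestrStabilizer}, $\Stab_{\bar G}(\bar w)=\bigcap_{\gamma\in\Gamma}\Stab_{\bar G}(\psi^{Z_L}_\gamma(\bar w))$; and because $\bar w$ maps to $\iota_X(\bar x)$, the compatibility $\psi^{X_L}\circ(\iota_X)_L=\Delta_{X_L}$ of Proposition \ref{Prop:WeilRestriction}(iv), together with the naturality of $\psi$ from Proposition \ref{Prop:WeilRestriction}(iii), forces every $\psi^{Z_L}_\gamma(\bar w)$ to be a point of $\bar Z=\bar G/\bar H^0$ lying over $\bar x\in\bar X$. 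Writing $\bar x=g\cdot\bar x_0$, each such lift has the form $gh_\gamma\bar H^0$ with $h_\gamma\in\bar H\defeq\Stab_{\bar G}(\bar x_0)$, and since $\bar H^0$ is normal in $\bar H$ its stabilizer is $gh_\gamma\bar H^0 h_\gamma^{-1}g^{-1}=g\bar H^0 g^{-1}$, the \emph{same} connected group for every $\gamma$. Hence $\Stab_{\bar G}(\bar w)=g\bar H^0 g^{-1}$, and, as $g\bar H^0 g^{-1}\subseteq g\bar H g^{-1}=\Stab_{\bar G}(\bar x)$, we conclude $\Stab_{\bar G}(\bar z)=g\bar H^0 g^{-1}$, which is connected. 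I note that $Z$ need not itself be connected; this is immaterial, since Lemma \ref{Lem:conn_comps} later extracts a geometrically connected component after twisting.
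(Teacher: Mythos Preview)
Your construction is exactly that of the paper: pick $x_0\in X(L)$ for a finite Galois $L/K$, form $Y=G_L/H_L^0\to X_L$, Weil-restrict, and set $Z=X\times_{R_{L/K}X_L}R_{L/K}Y$. The only difference is in the verification that geometric stabilizers are connected. The paper argues more tersely: from Corollary \ref{Cor:RestrStabilizer} it extracts a single containment $\bar S=\Stab_{\bar G}(\bar z)\subset g\bar H^0 g^{-1}$ (using just one factor of the product), and then a dimension count---$\dim\bar S=\dim\bar G-\dim\bar Z=\dim\bar G-\dim\bar X=\dim\bar H^0$---forces equality, since $g\bar H^0 g^{-1}$ is integral. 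Your route instead shows all factors in the intersection $\bigcap_\gamma\Stab_{\bar G}(\psi^{Z_L}_\gamma(\bar w))$ coincide with $g\bar H^0 g^{-1}$; this is correct (each $(Z_L)^\gamma_{\bar K}$ is $\bar G$-equivariantly isomorphic to $\bar G/\bar H^0$ because the Galois conjugates of $\bar H^0$ are $\bar G$-conjugates, $X$ being homogeneous), but the paper's dimension argument sidesteps the need to track the Galois-twisted factors individually.
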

\begin{proof}
	Let $L/K$ be a Galois extension such that there exists a point $\bar{x} \in X(L)$. Let $H=\Stab_{G_L}(\bar{x})$ and let $H^0 \le H$ be the connected component of $H$ in which lies the identity. We have, by \cite[Proposition 1.39]{Milne}, that $H^0$ is a normal subgroup of $H$. We denote by $H_f$ the (finite) quotient $H/H^0$. Let $Y\defeq G_L/H^0$.
	We have a $G_L$-equivariant morphism:
	\begin{equation}\label{Eq:torsor}
	\psi:Y=G_L/H^0 \rightarrow G_L/H \cong X_L,
	\end{equation}
	where the last isomorphism is induced from the map $G_L \rightarrow X_L$, $g \mapsto g\cdot \bar{x}$.
	The identifications of (\ref{Eq:torsor}) make $Y$ a right $H_f$-torsor over $X_L$ (see \cite[Section 3.2]{Skorobogatov}), and the right $H_f$-action commutes with the right $G_L$ action. 
	Hence the induced morphism 
	\[
	R_{\psi}:R_{L/K}Y \rightarrow R_{L/K}X_L,
	\] 
	makes $R_{L/K}Y$ a right $F \defeq R_{L/K}H_f$-torsor over $R_{L/K}X_L$. The left $F$-action commutes with the left $R_{L/K}G_L$-action on $R_{L/K}Y$ (defined as in Remark \ref{Rmk:Weil_action}). We endow $R_{L/K}Y$ with the left $G$-action given by restricting the $R_{L/K}G_L$-action to a $G$-action through the embedding $\iota_G:G \hookrightarrow R_{L/K}G_L$. 
	
	Let $\iota_X:X \rightarrow R_{L/K}X_L$ be the morphism of Proposition \ref{Prop:WeilRestriction}(iii).
	Let $Z$ be the fibered product $X\times_{R_{L/K}X_L}  R_{L/K}Y$. We notice that, since, by functoriality of $R_{L/K}$ and Proposition \ref{Prop:weilres_group}(i), $R_{\psi}$ and $\iota_X$ are both $G$-equivariant, the $k$-variety $Z$ is equipped with a left $G$-action by Proposition \ref{Prop:FiberedProduct}. Moreover, the projection $Z \rightarrow X$ can be endowed with the structure of a right $R_{L/K}H_f$-torsor over $X$ (since $Z \rightarrow X$ is just a base change of the right $R_{L/K}H_f$-torsor $R_{\phi}:R_{L/K}Y \rightarrow R_{L/K}X_L$)
.
	
	Lastly, we prove that the geometric stabilizers of $Z$ are connected. Let $\bar{z} \in \bar{Z}$ be a geometric point. Since $\bar{Z} \hookrightarrow \overline{R_{L/K}Y_K}$ (where  the morphism is $\bar{G}$-equivariant), we have that, by Corollary \ref{Cor:RestrStabilizer}, there exists a $g \in G(\bar{K})$ such that $\bar{S}\defeq \Stab_{\bar{G}}(\bar{z}) \subset g \bar{H^0} g^{-1}$, where $\bar{H^0}=H^0_{\bar{K}}$. Moreover, since $\dim \bar{S}= \dim \bar{G} - \dim \bar{Z}=\dim \bar{G}- \dim \bar{X}= \dim \bar{H^0}$, and $\bar{H^0}$ is integral and algebraic subgroups are always closed, we actually have that $\bar{S}=g\bar{H^0}g^{-1}$, which is connected.
\end{proof}


\begin{lemma}\label{Lem:KeyLemma}
	Let $X$ be a (left) homogeneous space under a connected $K$-group $G$, with linear stabilizers. Suppose there is no étale Brauer-Manin obstruction for the variety $X$, i.e. that there exists 
	\[
	(P_v)_{v \in M_K} \in X(\A_K)^{\acute{e}t,\Br}
	\]
	Then, there exists a homogeneous space $Z$ under $G$ with geometrically connected stabilizers, an adelic point $(Q_v)\in Z(\A_K)^{\Br Z}$, and a $G$-equivariant morphism $\psi:Z \rightarrow X$ such that $(\psi_v(Q_v))=(P_v)$. Moreover, $Z$ is a (right) torsor over $X$ under a finite group scheme.
\end{lemma}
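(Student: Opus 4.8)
The plan is to combine the two geometric lemmas just established (Lemmas \ref{Lem:existence_torsor} and \ref{Lem:conn_comps}) with a single, carefully placed application of the \emph{definition} of the étale-Brauer set. First I would invoke Lemma \ref{Lem:existence_torsor} to produce a finite $K$-group scheme $F$ and a right $F$-torsor $\phi\colon Z_0 \to X$ that is $G$-equivariant, whose $F$-action commutes with the $G$-action, and whose geometric stabilizers are connected. Since by hypothesis $(P_v) \in X(\A_K)^{\acute{e}t,\Br} \subseteq X(\A_K)^{\acute{e}t}$, the étale set is nonempty, so the hypotheses of Lemma \ref{Lem:conn_comps} are met for $\phi$. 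Applying it yields a class $\eta \in H^1_{sx}(K,F)$ and a \emph{geometrically connected} component $Z'$ of the twist $(Z_0)_\eta$ which, with the induced $G$-action, is a $G$-homogeneous space with connected geometric stabilizers, together with a finite subgroup $F' \subseteq F^\eta$ making $\psi'\colon Z' \to X$ a right $F'$-torsor; this $\psi'$ is $G$-equivariant since $Z' \hookrightarrow (Z_0)_\eta$ and $(Z_0)_\eta \to X$ both are.

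At this point the natural temptation is to read off the adelic point directly from the membership $(P_v) \in \bigcup_{[\sigma]} \phi^\sigma((Z_0)^\sigma(\A_K)^{\Br})$ obtained by testing the étale-Brauer condition against $\phi$. The difficulty — and what I expect to be the only real obstacle — is that the twist $(Z_0)_\eta$ furnished by the Stoll–Demarche argument of Lemma \ref{Lem:conn_comps} (via \cite[Lemma 7.1]{demarche}) is selected for \emph{geometric connectedness of a component}, with no reason to coincide with the twist singled out by a Brauer-orthogonal adelic lift; moreover $(Z_0)_\eta$ itself need not be geometrically connected, so one cannot simply declare it a homogeneous space. The device that dissolves this tension is to decouple the two uses: I would use Lemma \ref{Lem:conn_comps} purely for its geometric conclusion (the existence of the geometrically connected homogeneous torsor $\psi'\colon Z' \to X$ under the finite group $F'$), and only afterwards extract the adelic point by applying the étale-Brauer condition afresh to this cleaner torsor.

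Concretely, in the final step I would test the defining intersection of $X(\A_K)^{\acute{e}t,\Br}$ against the finite torsor $\psi'\colon Z' \to X$: membership of $(P_v)$ produces a class $\tau \in H^1_{sx}(K,F')$ and an adelic point $(Q_v) \in (Z')^\tau(\A_K)^{\Br (Z')^\tau}$ with $(\psi')^\tau((Q_v)) = (P_v)$. Setting $Z := (Z')^\tau$ and $\psi := (\psi')^\tau$, it remains to check that $Z$ has the desired shape. Since the right $F'$-action commutes with the left $G$-action, the twist $Z$ inherits a left $G$-action for which $\psi$ is $G$-equivariant and a right $(F')^\tau$-torsor over $X$; and because any twist is trivial over $\bar K$, we have $Z_{\bar K} \cong Z'_{\bar K}$, so $Z$ is geometrically connected with the same connected geometric stabilizers as $Z'$ (which are moreover linear, the linearity being inherited from the stabilizers of $X$). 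Hence $Z$ is a $G$-homogeneous space with connected geometric stabilizers, $\psi$ is the required $G$-equivariant finite torsor over $X$, and $(Q_v) \in Z(\A_K)^{\Br Z}$ lifts $(P_v)$, completing the argument.
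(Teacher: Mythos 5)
Your proposal is correct and follows essentially the same route as the paper: first use Lemma \ref{Lem:existence_torsor} to verify the hypotheses of Lemma \ref{Lem:conn_comps} and extract the geometrically connected homogeneous torsor $\psi'\colon Z' \to X$ under a finite group scheme, then test the defining intersection of $X(\A_K)^{\acute{e}t,\Br}$ against this torsor to obtain the twist $Z$ and the lift $(Q_v)$, and finally note that twisting by a cocycle for an action commuting with the $G$-action preserves the homogeneous-space structure and the connectedness of the geometric stabilizers. Your explicit discussion of why one must apply the étale-Brauer condition to the \emph{cleaned-up} torsor $\psi'$ rather than to the original $\phi$ is a sound reading of the same argument.
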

\begin{proof}
	We know by Lemma \ref{Lem:conn_comps} (whose hypothesis hold by Lemma \ref{Lem:existence_torsor}) that there exists a finite group scheme $F$ and a right $F$-torsor $\psi':Z' \rightarrow X$, where $Z'$ is a homogeneous space with geometrically connected stabilizers and $\psi'$ is $G$-equivariant. Since $(P_v)_{v \in M_K} \in X(\A_K)^{\acute{e}t,\Br}$, we know that there exists an element $\eta \in H_{sx}^1(K,F)$ and an element $(Q_v)_{v \in M_K}\in Z(\A_K)^{\Br Z}$, where $Z \defeq Z'_{\eta}$, such that $(\psi_v(Q_v))_{v \in M_K}=(P_v)_{v \in M_K}$, where $\psi\defeq (\psi')^{\eta}:Z \rightarrow X$. We observe that $Z=Z'_{\eta}$ is still a $G$-homogeneous space (since it is a twist of a $G$-homogeneous space, with respect to an action that commutes with the $G$ one) and $Z \rightarrow X$ is a right torsor under $F^{\eta}$.
\end{proof}

\begin{lemma}\label{Lem:rev}
	Let $X$ and $Y$ be connected $\bar{k}$-varieties, with $Y$ simply connected, and let $y_0 \in Y(\bar{k})$. Let $\phi:\mathcal{Y}\rightarrow X \times_{\bar{k}} Y$ be an étale cover such that there exists a section $\sigma_{y_0}:X\times_{\bar{k}}\{y_0\}\rightarrow \restricts{\mathcal{Y}}{X\times_{\bar{k}}\{y_0\}}$ to the restricted cover $\restricts{\phi}{X\times_{\bar{k}}\{y_0\}}$. There exists then a unique section $\sigma:X \times_{\bar{k}} Y \rightarrow \mathcal{Y}$ to $\phi$ extending $\sigma_{y_0}$.
\end{lemma}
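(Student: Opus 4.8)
The plan is to deduce the statement from the fact that, because $Y$ is simply connected, every finite \'etale cover of $X\times_{\bar k}Y$ is pulled back along the first projection $p\colon X\times_{\bar k}Y\to X$ from a cover of $X$; granting this, both existence and uniqueness of the extension become essentially formal. Write $i_{y_0}\colon X\cong X\times_{\bar k}\{y_0\}\hookrightarrow X\times_{\bar k}Y$ for the inclusion, so that $p\circ i_{y_0}=\mathrm{id}_X$, and recall that $X\times_{\bar k}Y$ is connected, being a product of connected varieties over an algebraically closed field.

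First I would record the key input: since $\bar k$ has characteristic $0$ and $X,Y$ are connected varieties with $Y$ simply connected, the pullback functor $p^{*}$ from finite \'etale covers of $X$ to finite \'etale covers of $X\times_{\bar k}Y$ is an equivalence of categories; equivalently $p_{*}\colon\pi_1(X\times_{\bar k}Y)\to\pi_1(X)$ is an isomorphism with inverse induced by $i_{y_0}$. I would justify this by the K\"unneth formula for \'etale fundamental groups: over $\mathbb C$ it follows from the Riemann existence theorem together with the topological product formula $\pi_1^{\mathrm{top}}(X(\mathbb C)\times Y(\mathbb C))=\pi_1^{\mathrm{top}}(X(\mathbb C))\times\pi_1^{\mathrm{top}}(Y(\mathbb C))$ and the fact that profinite completion commutes with finite products, using $\pi_1(Y)=1$; the case of a general algebraically closed field of characteristic $0$ reduces to $\mathbb C$ by the invariance of the \'etale fundamental group of a finite type scheme under extension of algebraically closed base fields. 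Making this reduction rigorous in the (possibly non-proper) algebraic setting is the one genuine external ingredient, and I regard it as the main point of the lemma; everything else is bookkeeping.

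Granting the equivalence, I would write $\phi$ as $p^{*}\phi_X$ for some finite \'etale cover $\phi_X\colon\mathcal Y_X\to X$, i.e. $\mathcal Y\cong\mathcal Y_X\times_X(X\times_{\bar k}Y)$. Restricting along $i_{y_0}$ and using $p\circ i_{y_0}=\mathrm{id}_X$ identifies $\restricts{\mathcal Y}{X\times_{\bar k}\{y_0\}}$ with $\mathcal Y_X\to X$, so the given $\sigma_{y_0}$ is precisely a section $s\colon X\to\mathcal Y_X$ of $\phi_X$. I would then set $\sigma\defeq(s\circ p,\mathrm{id})\colon X\times_{\bar k}Y\to\mathcal Y_X\times_X(X\times_{\bar k}Y)=\mathcal Y$, which is manifestly a section of $\phi$, and check directly that its restriction along $i_{y_0}$ equals $(s,\mathrm{incl})=\sigma_{y_0}$, so $\sigma$ extends $\sigma_{y_0}$ as required.

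Finally, for uniqueness I would use a rigidity argument that does not require simple connectedness at all. If $\sigma,\sigma'$ are two sections of $\phi$ agreeing on $X\times_{\bar k}\{y_0\}$, their equalizer is $(\sigma,\sigma')^{-1}(\Delta)$, where $\Delta\subset\mathcal Y\times_{X\times_{\bar k}Y}\mathcal Y$ is the diagonal; since $\phi$ is \'etale, hence unramified (so $\Delta$ is open) and separated (so $\Delta$ is closed), this equalizer is open and closed in $X\times_{\bar k}Y$. As $X\times_{\bar k}Y$ is connected and the equalizer contains the nonempty $X\times_{\bar k}\{y_0\}$, it must be all of $X\times_{\bar k}Y$, whence $\sigma=\sigma'$. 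The same connectedness remark gives a useful consistency check: $\sigma(X\times_{\bar k}Y)$ is a single connected component of $\mathcal Y$, namely the one meeting $\sigma_{y_0}(X)$.
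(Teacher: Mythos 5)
Your proof is correct and follows essentially the same route as the paper's: both rest entirely on the characteristic-zero K\"unneth isomorphism $\pi_1(X\times_{\bar k}Y)\cong\pi_1(X)\times\pi_1(Y)\cong\pi_1(X)$ (the paper deploys it via the monodromy action on the fibre — the point $\sigma_{y_0}(x_0,y_0)$ is $\pi_1$-invariant, so the relevant connected component has degree one — while you use the equivalent statement that $p^{*}$ is an equivalence on finite \'etale covers and descend $\phi$ to $X$). Your uniqueness step via the open-and-closed equalizer is a slightly more robust packaging than the paper's, which reads uniqueness off the degree-one identification, but the substance is the same.
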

\begin{proof}
	We can assume, without loss of generality, that $\mathcal{Y}$ is connected (otherwise we can restrict $\phi$ to the connected component containing the image of $\sigma_{y_0}$).
	
	Let $x_0 \in X(\bar{k})$ be any point, which we are going to use as a ``basepoint''. We have a canonical embedding $\iota:\pi_1(Y,y_0) \hookrightarrow \pi_1(X\times Y, (x_0,y_0))$. Since $X$ is simply connected and we are in characteristic $0$, we have that $\pi_1(X\times Y, (x_0,y_0)) \cong \pi_1(Y,y_0) \times \pi_1(X,x_0) \cong \pi_1(Y,y_0)$ through natural isomorphisms (this follows from GAGA-like theorems, see \cite[XIII 4.6]{SGA1}, whose hypothesis hold by \cite{strong_resolution}). Hence, the natural embedding $\iota$ is an isomorphism. 
	
	Let now $P \defeq \sigma_{y_0}((x_0,y_0)) \in \mathcal{Y}$. By construction $P$ is $\pi_1(Y,y_0)$-, and, hence, $\pi_1(X\times Y, (x_0,y_0))$-invariant. By the standard theory of étale covers, this means that the connected étale cover $\mathcal{Y} \rightarrow X \times Y$ has degree $1$, and, hence, is an isomorphism. In particular, the cover $\mathcal{Y} \rightarrow X \times Y$ has a unique section.
\end{proof}

The following lemma is a slightly more general case of \cite[Prop. 5.1]{HW18}.

\begin{lemma}\label{Lem:Gscu}
	Let $G$ be a connected simply connected linear $k$-group, $X$ be a $k$-variety endowed with a $G$-action, and $\phi:Z\rightarrow X$ be an étale cover. There exists then a unique $G$-action on $Z$ such that $\phi$ is $G$-equivariant.
\end{lemma}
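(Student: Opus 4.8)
The plan is to realise the sought action as the unique lift through $\phi$ of the composite $m\circ(\mathrm{id}_G\times\phi)\colon G\times_k Z\to X$, where $m\colon G\times_k X\to X$ denotes the given action on $X$. Indeed, a morphism $\tilde{m}\colon G\times_k Z\to Z$ makes $\phi$ equivariant exactly when $\phi\circ\tilde{m}=m\circ(\mathrm{id}_G\times\phi)$, so a $G$-equivariant structure on $Z$ is the same as such a lift that moreover happens to be a group action. Lifting $m\circ(\mathrm{id}_G\times\phi)$ through the cover $\phi$ is in turn the same datum as a section of the base-changed cover $(G\times_k Z)\times_X Z\to G\times_k Z$ (a lift $\tilde{m}$ corresponds to the section $w\mapsto(w,\tilde{m}(w))$). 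Over the identity locus $\{e\}\times_k Z$ the map $m\circ(\mathrm{id}_G\times\phi)$ restricts to $\phi$, since $e\cdot\phi(z)=\phi(z)$, so this base-changed cover restricts to $Z\times_X Z$, which carries the tautological diagonal section; concretely, we always have the canonical partial lift $\tilde{m}|_{\{e\}\times Z}=\mathrm{pr}_Z$.

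To extend this partial lift I would work over $\bar{k}$ and apply Lemma \ref{Lem:rev}. Decomposing $Z_{\bar{k}}$ into its connected components and treating each component $Z^0$ separately, I apply Lemma \ref{Lem:rev} with (in the notation of that lemma) $X=Z^0$, $Y=G_{\bar{k}}$, $y_0=e$, and $\mathcal{Y}$ the base-changed cover over $Z^0\times_{\bar{k}}G_{\bar{k}}$ (harmlessly swapping the order of the two factors); here $Z^0$ is connected by construction, and $G_{\bar{k}}$ is connected and, being simply connected in characteristic $0$, has trivial étale fundamental group, so the hypotheses are met, and the required section over $Z^0\times\{e\}$ is the diagonal section just described. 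The lemma produces a unique section over each $Z^0\times_{\bar{k}}G_{\bar{k}}$, and these assemble into a unique lift $\tilde{m}_{\bar{k}}\colon G_{\bar{k}}\times_{\bar{k}}Z_{\bar{k}}\to Z_{\bar{k}}$ extending $\mathrm{pr}_Z$. The uniqueness of this lift is the key point: since $m$, $\phi$ and the diagonal section are all defined over $k$, for every $\tau\in\Gamma_{\bar{k}/k}$ the conjugate $\tilde{m}_{\bar{k}}^{\tau}$ is again a lift extending $\mathrm{pr}_Z$, hence equals $\tilde{m}_{\bar{k}}$; by Galois descent $\tilde{m}_{\bar{k}}$ therefore descends to a $k$-morphism $\tilde{m}\colon G\times_k Z\to Z$.

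It remains to verify that $\tilde{m}$ is a group action and that it is the unique equivariant one, and both follow from the same uniqueness mechanism. The identity axiom $\tilde{m}(e,-)=\mathrm{id}_Z$ holds by construction. For associativity I would compare the two morphisms $G\times_k G\times_k Z\to Z$ sending $(g_1,g_2,z)$ to $\tilde{m}(g_1,\tilde{m}(g_2,z))$ and to $\tilde{m}(g_1g_2,z)$: using that $m$ is an action on $X$, both are lifts through $\phi$ of the single map $(g_1,g_2,z)\mapsto(g_1g_2)\cdot\phi(z)$, and both restrict to $\tilde{m}(g_1,z)$ along $g_2=e$; applying the uniqueness clause of Lemma \ref{Lem:rev} once more, now with $Y=G_{\bar{k}}$ the middle factor and $X$ a connected component of $(G\times_k Z)_{\bar{k}}$, forces the two morphisms to agree over $\bar{k}$ and hence over $k$. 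Finally, any $G$-equivariant action $\tilde{m}'$ on $Z$ is by definition a lift of $m\circ(\mathrm{id}_G\times\phi)$ through $\phi$, and its own identity axiom gives $\tilde{m}'|_{\{e\}\times Z}=\mathrm{pr}_Z$; the uniqueness in Lemma \ref{Lem:rev} then yields $\tilde{m}'=\tilde{m}$, proving uniqueness of the action.

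The conceptual content is entirely carried by Lemma \ref{Lem:rev}; the main thing to watch is the bookkeeping around base change to $\bar{k}$, namely ensuring the connectedness hypotheses of that lemma hold after passing to $\bar{k}$ (whence the reduction to connected components of $Z_{\bar{k}}$, and the systematic use of the simply connected factor $G_{\bar{k}}$ in each application), together with checking that every ``agree on the identity locus, hence agree everywhere'' step is a legitimate instance of its uniqueness clause. Once uniqueness over $\bar{k}$ is in hand, the descent to $k$ is standard.
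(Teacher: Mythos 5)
Your proposal is correct and follows essentially the same route as the paper's own proof: both reduce to $\bar{k}$ by Galois descent, recast the equivariant lift as a section of the pulled-back cover $(G\times_{\bar k} Z)\times_X Z\to G\times_{\bar k} Z$ pinned down by the tautological section over $\{e\}\times Z$, and conclude by Lemma \ref{Lem:rev} using that $G$ is simply connected. You merely make explicit two points the paper leaves as ``straightforward'' — the component-by-component bookkeeping needed to meet the connectedness hypotheses of Lemma \ref{Lem:rev}, and the associativity check via a second application of its uniqueness clause — which is fine.
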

\begin{proof}
	It is sufficient, by Galois descent, to prove the existence and uniqueness over $\bar{k}$. So we can assume without loss of generality that $k =\bar{k}$. Let $m_X:G\times_k X \rightarrow X$ be the $G$-action on $X$. We consider the following diagram:
	\begin{equation}\label{Eq:comm_square}
	\begin{tikzcd}
	G \times_{\bar{k}} Z  \arrow[d] \arrow[r, "m_Z", dotted]  & Z \arrow[d]\\
	G \times_{\bar{k}} X  \arrow[r,"m_X"] & X
	\end{tikzcd} \quad ,
	\end{equation}

	which we would like to complete with a (unique) group action $m_Z$ on the first row that makes it commute.

	Let us consider the following commutative diagram:
	\begin{equation}\label{Diagr_grp_action1}
	\begin{tikzcd}
	G \times_{\bar{k}} Z  \arrow[d, "{(\varphi,id_G)}"'] \arrow[rd] \arrow[rr, "m_Z" description, dotted, bend right] &                                                                       & Z \arrow[ld, "\varphi"] \arrow[ll, "{\iota:=(id_Z,e)}"', hook']  &                                  \\
	G \times_{\bar{k}} X  \arrow[r, "m_X"']                                                                & X                                                                     &                                                                                               &                                 
	\end{tikzcd},
	\end{equation}
	
	We claim that there is a unique $m_Z$ that makes diagram (\ref{Diagr_grp_action1}) above commute with all but $\iota$, and such that $\iota$ is a section of it. From this, and the fact that $m_X$ is a group action, it is a straightforward verification to see that $m_Z$ is a group action itself.
	
	
	We enlarge the commutative diagram above to the following:
	
	\begin{equation}
	\begin{tikzcd}
	& W  \arrow[ld, "{(id_{G \times_{\bar{k}} Z}, \varphi)}"'] \arrow[rd, "\pi_Z"]  &                                                                                             \\
	G \times_{\bar{k}} Z  \arrow[d, "{(\varphi,id_G)}"'] \arrow[rd] \arrow[rr, "m_Z" description, dotted, bend right] &                                                                       & Z \arrow[ld, "\varphi"] \arrow[ll, "{\iota:=(id_Z,e)}"', hook'] \arrow[lu, "{f=(\iota,id_Z)}"', bend right=60] &                                  \\
	G \times_{\bar{k}} X  \arrow[r, "m_X"']                                                                & X                                                                     &                                                                                               &                                 
	\end{tikzcd},
	\end{equation}
	
	
	
	where $W := (G \times_{\bar{k}} Z ) \times_X Z$ , and hence the square $[Z,X,G \times_{\bar{k}} Z,W]$ is cartesian by definition. The existence and uniqueness of the sought morphism $m_Z$ (such that the lower trapezoid commutes and $\iota$ is a section of it) is equivalent to the existence and uniqueness of a morphism $\sigma$ such that it is a section of $(id_{G \times_{\bar{k}} Z }, \varphi)$ and such that $\pi_Z \circ \sigma \circ \iota=id_Z$. Lemma \ref{Lem:rev} implies that the existence and uniqueness of such a section is equivalent to the existence and uniqueness of a morphism $f:Z \rightarrow W$ such that $(id_{G \times_{\bar{k}} Z }, \varphi) \circ f =\iota$ and such that it is a section of $\pi_Z$. The morphism $(\iota, id_Z)$ is the unique morphism that satisfies these properties. 
	

\end{proof}

\begin{lemma}\label{Lem:Brauer_trivial}
	Let $G$ be a connected simply connected linear $k$-group and $X$ a $k$-variety endowed with a $G$-action. Let $B \in \Br X$ be an element of the Brauer group of $X$, and let $P \in X(k)$. Then, for every element $g \in G(k)$, we have an equality $B(P)=B(P\cdot g) \in \Br k$.
\end{lemma}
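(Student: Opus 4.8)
The plan is to reduce the claim to a statement purely about the Brauer group of $G$, by pulling $B$ back along an orbit map. Fix $P \in X(k)$ and consider the orbit morphism
\[
a_P : G \longrightarrow X, \qquad g \longmapsto P\cdot g,
\]
a morphism of $k$-varieties with $a_P(e)=P$, where $e\in G(k)$ is the identity. By functoriality of the Brauer group, $a_P^*B\in\Br(G)$, and the compatibility of evaluation with pullback gives, for every $g\in G(k)$,
\[
B(P\cdot g)=(a_P\circ g)^*B=(a_P^*B)(g),\qquad B(P)=(a_P\circ e)^*B=(a_P^*B)(e).
\]
Thus it suffices to prove that, for every class $A\in\Br(G)$, the evaluation map $G(k)\to\Br(k)$, $g\mapsto A(g)$, is constant.

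The key input is the computation of the Brauer group of a simply connected linear group. First I would record that, $G$ being connected, linear and simply connected, one has $\operatorname{Pic}(\bar G)=0$ and $\Br(\bar G)=0$. For $G$ semisimple simply connected this is standard; if $G$ has nontrivial unipotent radical $G^u$, then $G/G^u$ is reductive and still simply connected, hence semisimple simply connected, and since $G\to G/G^u$ is a $G^u$-torsor with $G^u$ an iterated $\mathbb{G}_a$-bundle in characteristic $0$, pullback gives $\operatorname{Pic}(\bar G)\cong\operatorname{Pic}(\overline{G/G^u})=0$ and $\Br(\bar G)\cong\Br(\overline{G/G^u})=0$. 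Feeding $\operatorname{Pic}(\bar G)=0$ into the exact sequence of Sansuc (see, e.g., \cite{Skorobogatov})
\[
\operatorname{Pic}(\bar G)^{\Gamma_k}\to\Br(k)\to\Br_1(G)\to H^1(k,\operatorname{Pic}(\bar G)),
\]
together with $\Br(G)=\Br_1(G)$ (which follows from $\Br(\bar G)=0$), shows that the structural pullback $\pi_G^*:\Br(k)\to\Br(G)$ is an isomorphism, with inverse the evaluation $e^*$ at the identity.

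Once this is in hand the conclusion is immediate. Given $A\in\Br(G)$, write $A=\pi_G^*\beta$ with $\beta=A(e)\in\Br(k)$. Then for every $g\in G(k)$ one has $\pi_G\circ g=\mathrm{id}_{\Spec k}$, so $A(g)=g^*\pi_G^*\beta=\beta=A(e)$, i.e. the evaluation is constant on $G(k)$. Applying this to $A=a_P^*B$ and combining with the identities of the first paragraph yields $B(P\cdot g)=B(P)$, as desired.

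The main obstacle is the group-theoretic input of the second paragraph: the vanishing $\operatorname{Pic}(\bar G)=0$ and $\Br(\bar G)=0$ and the resulting identification $\Br(G)=\pi_G^*\Br(k)$. Everything else is formal functoriality of the Brauer pairing. In particular, the simple-connectedness hypothesis is used exactly once, to force $\operatorname{Pic}(\bar G)=0$, which is precisely what kills the potentially nonconstant contribution $H^1(k,\operatorname{Pic}(\bar G))$ to $\Br(G)$; without it the evaluation of a Brauer class along an orbit need not be constant.
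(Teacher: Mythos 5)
Your proposal is correct and follows essentially the same route as the paper: pull $B$ back along the orbit map $g\mapsto P\cdot g$ and use that $\Br(G)=\Br(k)$ for a connected simply connected linear group, whence the pullback class is constant on $G(k)$. The only difference is that the paper simply cites this last fact (Prop.\ 8.2.1 of the Colliot-Th\'el\`ene--Skorobogatov book), whereas you rederive it from $\operatorname{Pic}(\bar G)=\Br(\bar G)=0$ and Sansuc's exact sequence, which is a fine (if slightly longer) substitute.
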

\begin{proof}
	We know that the Brauer group of $G$ is constant, i.e. $\Br G=\Br k$ \cite[Prop. 8.2.1]{Brauer_book}. Let $m_X:G\times_k X \rightarrow X$ be the $G$-action, let $m_P:G \rightarrow X$ denote the morphism defined by $g \mapsto m_X(g,P)= `` g \cdot P  "$, and let $B_P = (m_P)^*B \in \Br G=\Br k$. It is now immediate that, for every $g \in G(k)$, $B(m_X(g,P))=B_P(g)=B_P(e)=B$, as wished.
\end{proof}

\begin{proof}[Proof of Theorem \ref{Thm:Homspaces}]
	We start by showing that $ X(\A_K)_{\bullet}^{\acute{e}t,\Br} \subset \overline{G^{scu}(K_{S_{f}}) \cdot X(K)}$. Let $(P_v) \in X(\A_K)_{\bullet}^{\acute{e}t,\Br}$.
	
	We know by Lemma \ref{Lem:KeyLemma} that there exists a left torsor $\phi:Z \rightarrow X$, under some finite group scheme, such that $Z$ is a homogeneous space under $G$ with connected geometric stabilizers, with $\phi$ being $G$-equivariant, and such that there exists $(Q_v) \in Z(\A_K)_{\bullet}^{\Br Z}$ such that $(\phi_v(Q_v))=(P_v)$. A theorem of Borovoi and Demarche, \cite[Theorem 1.4]{borovoi}, tells us that $(Q_v) \in \overline{G^{scu}(K_{S_{f}}) \cdot Z(K)}$. Since $(\phi_v):Z(\A_K)_{\bullet}\rightarrow X(\A_K)_{\bullet}$ is continuous, this implies that $(P_v)=(\phi_v(Q_v)) \in \overline{G^{scu}(K_{S_{f}}) \cdot \phi(Z(K))}\subset \overline{G^{scu}(K_{S_{f}}) \cdot X(K)}$. 
	
	We now prove that $ X(\A_K)_{\bullet}^{\acute{e}t,\Br} \supset \overline{G^{scu}(K_{S_{f}}) \cdot X(K)}$. Since $ X(\A_K)_{\bullet}^{\acute{e}t,\Br}$ is closed, it suffices to prove that $ X(\A_K)_{\bullet}^{\acute{e}t,\Br} \supset {G^{scu}(K_{S_{f}}) \cdot X(K)}$. Let $P \in X(K)$ and $(g_v)_{v \in S_f}\in G^{scu}(K_{S_{f}})$, and let $\mathbf{P_1}=({P_1}_v)_{v \in M_K} \in X(\A_K)_{\bullet}$ be the adelic point defined as ${P_1}_v=P_v$ if $v \notin S_f$ and ${P_1}_v=g_v \cdot P_v$ if $v \in S_f$. Let $\psi:W \rightarrow X$ be a left torsor under a finite group scheme $F$. We know that there exists a twist $\psi^{\sigma}:W^{\sigma}\rightarrow X$, for some $\sigma \in H^1(K,F)$ such that $P =\psi^{\sigma}(P')$, for some $P' \in W^{\sigma}(K)$. By Lemma \ref{Lem:Gscu}, we know that there exists a right $G^{scu}$-action on $W^{\sigma}$ such that $\psi^{\sigma}$ is $G^{scu}$-equivariant. 
	
	Letting $\mathbf{P'_1}=({P'_1}_v)_{v \in M_K} \in W^{\sigma}(\A_K)_{\bullet}$ be the adelic point defined by ${P'_1}_v=P'_v$ if $v \notin S_f$ and ${P'_1}_v=g_v \cdot P'_v$ if $v \in S_f$, it follows from Lemma \ref{Lem:Brauer_trivial} that $\mathbf{P'_1} \in 
	W^{\sigma}(\A_K)_{\bullet}^{\Br W^{\sigma}}$. Since $\psi(\mathbf{P'_1})=\mathbf{P_1}$, this proves that $\mathbf{P_1} \in X(\A_K)_{\bullet}^{\psi}$. Since the argument works for any finite torsor $\psi:W \rightarrow X$, we have that $\mathbf{P_1} \in X(\A_K)_{\bullet}^{\acute{e}t,\Br}$, as wished. 	
	
	This concludes the proof of Theorem \ref{Thm:Homspaces}.
\end{proof}

\section{Compatibility of abelianization and Brauer-Manin pairing}\label{Sec:compatibility}


In this section $X$ denotes a quotient $G/H$, where $G$ is a connected $K$-group, and $H$ is a connected linear closed subgroup.

The goal of this section is to prove the following theorem with an explicit computation. 

\begin{theorem}\label{Thm:compatibility}
	Let $v \in M_K$, $x \in X(K_v)$ be a local point, and $B \in \H^2(K_v,C_X^d)$. Then one has that:
	\begin{equation}\label{Eq:2pairings}
	<x,\alpha(B)>=<\ab_{K_v}^0(x),B>,
	\end{equation}
	where:
	\begin{itemize}
		\item the first pairing is the local Brauer pairing;
		\item the second pairing is the one induced by 
		the local pairing (\ref{Mor:local_pairing}).
	\end{itemize}
\end{theorem}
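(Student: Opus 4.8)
The plan is to compute both sides of (\ref{Eq:2pairings}) as explicit classes in $\Br(K_v)=\H^2(K_v,\overline{K_v}^*)$ and to check that they agree before applying $\inv_v$, working throughout with the inhomogeneous Galois cochains of Subsection \ref{Ssec:cocycles} and the explicit sign conventions of Lemma \ref{Lem:Cup_product} and Remark \ref{Rmk:computing_spec} (the symbol $\triangleq$ recording equalities up to a coboundary). First I would reduce to the case where $G^{lin}$ is reductive: by construction $\ab_{K_v}^0$ factors through $X'(K_v)$ and $C_X$ is defined to be $C_{X'}$, while $X\to X'=G'/H_1$ is, up to the affine-space bundles coming from the unipotent radical $G^u$, a Brauer-trivial fibration, so that $\Br_a(X,G)$ is identified with $\Br_a(X',G')$ compatibly with $\alpha$ and with evaluation at points. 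Hence both sides of (\ref{Eq:2pairings}) for $x$ coincide with the corresponding quantities for the image $x'\in X'(K_v)$, and I may assume $G^{lin}$ reductive and work with the complexes $C_X$, $C_X^d$, $Q_{geom}$ of Subsections \ref{SSec:abelianized} and \ref{SSec:abelianized2}.

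\textbf{Making the right-hand side explicit.} Using Demarche's construction of $\ab_{K_v}^0$, I would represent $\ab_{K_v}^0(x)\in\H^0(K_v,C_X)$ by an explicit $0$-cochain $(a_{\sigma,\eta},b_\sigma,c)$ of the cone complex $C_X$ (of shape $[T_H^{sc}\to T_H\oplus T_G^{sc}\to SA_G]$), obtained by lifting $x$ to a $\overline{K_v}$-point of $G$ and recording the resulting $H$-valued cocycle together with its torus coordinates; this is exactly the type of triple governed by Remark \ref{Rmk:computing_spec}. Choosing a cocycle representative for $B\in\H^2(K_v,C_X^d)$ and applying Lemma \ref{Lem:Cup_product}, the right-hand pairing $<\ab_{K_v}^0(x),B>$ is then $\inv_v$ of the explicit cup-product $2$-cocycle $\ab_{K_v}^0(x)\cup B\in Z^2(K_v,\overline{K_v}^*)$, with the signs $(-1)^{jh+\binom{h}{2}}$ dictated by (\ref{Eq:Cup_product}) and with the duality pairings $C_i\otimes (C_X^d)_{-i}\to\overline{K_v}^*$ serving as the pairings (\ref{Eq:pairings}) of that lemma.

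\textbf{Making the left-hand side explicit.} On the other side I would unwind the two steps defining $\alpha$. The morphism $\H^2(K_v,C_X^d)\to\H^2(K_v,Q_{geom})$ carries $B$ to a cocycle valued in $Q_{geom}=[\overline{K_v}(Z)^*\to\operatorname{Div}(\bar Z)\to\operatorname{Pic}'(\bar Z/\bar X)]$, and the quasi-isomorphism (\ref{Triangle}) together with (\ref{Iso:CgeomtoBrauer}) identifies the resulting class with $\alpha(B)\in\Br_a(X,G)$. Evaluating this Brauer class at $x$ amounts to pulling back along $x$: choosing a $\overline{K_v}$-point of $Z$ above $x$ gives a morphism of complexes $Q_{geom}\to\overline{K_v}^*[0]$ (evaluation of rational functions, specialisation of divisors and of the $\operatorname{Pic}'$-term at that point), and $x^*\alpha(B)\in\H^2(K_v,\overline{K_v}^*)=\Br(K_v)$ is the image of the $Q_{geom}$-cocycle under the induced map; Remark \ref{Rmk:computing_spec} fixes the auxiliary lifts needed to pass between the cone representative and a representative of the form $i_*(\beta)$.

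\textbf{The comparison and the main obstacle.} The crux is then to prove the cochain-level identity
\[
x^*\alpha(B)\ \triangleq\ \ab_{K_v}^0(x)\cup B
\]
in $Z^2(K_v,\overline{K_v}^*)$, after which applying $\inv_v$ yields (\ref{Eq:2pairings}). Conceptually this expresses that the geometric evaluation map $Q_{geom}\to\overline{K_v}^*[0]$ at a lift of $x$, pulled back along the second step $C_X^d\to Q_{geom}$ of $\alpha$, coincides with the duality pairing ``cup with $\ab_{K_v}^0(x)$''; in other words, the algebraically defined $0$-cocycle $\ab_{K_v}^0(x)$ and the geometric specialisation at a point of $Z$ above $x$ are two descriptions of the same functional on $C_X^d$. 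I expect this identification, carried out explicitly on two-variable Galois cochains, to be the main obstacle: one must track the identifications defining $C_X^d$ and the truncated derived pushforward simultaneously, and above all keep all the signs consistent --- which is precisely why Lemma \ref{Lem:Cup_product} was stated with the explicit signs $(-1)^{jh+\binom{h}{2}}$ and why Remark \ref{Rmk:computing_spec} was recorded. Since standard dévissage does not reduce this to a formal sign check, I would carry out the cochain computation directly, exhibiting the explicit $1$-cochain whose coboundary is the difference of the two $2$-cocycles above.
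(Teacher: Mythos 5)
Your plan coincides with the strategy of the paper's own proof: reduce to the reductive case, represent $\ab_{K_v}^0(x)$ by an explicit cone cochain built from a lift $g\in G(\overline{K_v})$ of $x$, represent $B$ by an explicit cocycle, compute the right-hand side via the cup-product formula of Lemma \ref{Lem:Cup_product}, compute the left-hand side by unwinding the two steps of $\alpha$ and specialising at a point of $Z$ above $x$ (via Remark \ref{Rmk:computing_spec}), and finally exhibit a $1$-cochain realising the difference of the two $2$-cocycles. You have correctly identified both the route and the main obstacle. That said, be aware that essentially all of the mathematical content lives in the step you defer: in the paper this occupies Lemmas \ref{Lem:RHS} and \ref{Lem:Spec} (the two explicit formulas (\ref{Eq:abelian}) and (\ref{Eq:Spec})) plus the nine identities of Lemma \ref{Lem:Final_lemma}, and it requires several auxiliary devices your outline does not anticipate: a normalisation of the cocycle representative of $B$ so that the divisors $D_\sigma$ lie in $\pi^*\Div^0(G^{ab})+\alpha^*\Div(Y)$ (Lemmas \ref{Lem:easier_calculation} and \ref{Lem:derived}, needed to make several restrictions to $G^{sc}$ and $G^{ant}$ constant); an explicit resolution of the Cassels--Tate/Weil pairing for the semi-abelian quotient (Lemma \ref{Lem:Cassels_Weil}), since the degree-$0$ term $SA_G$ of $C_X$ pairs with $\Pic^0(\bar G^{ab})$ through a pairing that is only defined in the derived category; and a ``twisted'' Galois pseudo-action (\ref{Eq:Pseudo_action}) to transport the specialisation at $x$ to a computation on $\overline{H^{sc}}$. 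Without these the two explicit $2$-cocycles you would obtain are not visibly cohomologous, so the comparison step is not a routine sign check but the bulk of the argument; your proposal is a faithful roadmap of the paper's proof rather than a proof.
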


For convenience of the reader we start by recalling some standard notations that we will use in the course of the computation.

For any algebraic group $\bar{H}/\bar{k}$, and any $\bar{k}$-variety $\bar{Y}$, endowed with a $\bar{H}$-action, we use the following notation (introduced by Borovoi and van Hamel \cite{BorovoivanHamel}):
\[
\operatorname{UPic}_{\bar{H}}(\bar{Y})^{1}\defeq \left\{(D, z) \in \operatorname{Div}(\bar{Y}) \times \bar{k}(H \times Y)^{*}:\left\{\begin{array}{l}{z_{h_{1} h_{2}}(y) = z_{h_{1}}\left(h_{2} \cdot y\right) \cdot z_{h_{2}}(y)} \\ {\operatorname{div}(z)=m^{*} D-\operatorname{pr}_{Y}^{*} D}\end{array}\right.\right\},
\]
where $z_h(y)$ stands for $z(h,y)$.

We have a natural morphism \(\bar{k}(Y)^{*} / \bar{k}^{*} \stackrel{d}{\rightarrow} \operatorname{UPic}_{\bar{H}}(\bar{Y})^{1}\), defined by \(d(f):=\left(\operatorname{div}(f), \frac{m^{*} f}{\operatorname{pr}_{Y}^{*} f}\right)\). Moreover, we define:
\[
\Pic_{\bar{H}}(\bar{Y})\defeq \operatorname{UPic}_{\bar{H}}(\bar{Y})^{1}/d(K^*(Y)).
\]

\smallskip

\subsection{Proof of Theorem \ref{Thm:compatibility}}\label{Ssec:Proof_theorem}


We anticipate a lemma that we are going to need in the proof.

\begin{lemma}\label{Lem:easier_calculation}
	Let $\bar{k}$ be an algebraically closed field (of characteristic $0$, as usual), and $X\defeq G/H$ a homogeneous space, where $H\subset G^{lin}$ is a subgroup of the maximal linear subgroup $G^{lin}$ of a connected $k$-group $G$. Let $G^{ant} \subset G$ be the maximal anti-affine subgroup of $G$, and let $Y \defeq X/G^{ant}$ (this makes sense as $G^{ant}$ is normal in $G$). We denote by $\alpha:X \rightarrow Y$, and by $\pi:X \rightarrow G^{ab}\defeq G/G^{lin} = X/G^{lin}$, the two natural projections. We then have that $\Pic X = \pi^* \Pic G^{ab} + \alpha^* \Pic Y$.  
\end{lemma}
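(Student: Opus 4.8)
The plan is to study the two projections separately and then to compare them on a single fibre of $\pi$. I would begin by recording the structure theory over $\bar k$ (characteristic $0$): by Chevalley $G^{ab}=G/G^{lin}$ is an abelian variety, while $G^{ant}$ is central in $G$ and $G=G^{lin}\cdot G^{ant}$. Set $A:=G^{lin}\cap G^{ant}$; this is a commutative linear group, and its identity component $A^{\circ}$ is the maximal connected affine subgroup of $G^{ant}$, so (in characteristic $0$) $A^{\circ}\cong T\times V$ with $T$ a torus and $V$ a vector group. A coset computation, using $H\subset G^{lin}$ and $G=G^{lin}G^{ant}$, identifies the base of $\alpha$ as $Y=X/G^{ant}=G^{lin}/(HA)$ and the fibre $Z_0:=\pi^{-1}(e\,G^{lin})$ as the linear homogeneous space $G^{lin}/H$; the restriction $\alpha|_{Z_0}\colon G^{lin}/H\to G^{lin}/(HA)=Y$ is exactly the quotient by the free central action of $\bar A:=A/(A\cap H)$, hence a torsor under the commutative linear group $\bar A$.

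With this in hand, the strategy is as follows: for $L\in\Pic X$ I would restrict to $Z_0$ and prove that $\ker(\Pic X\xrightarrow{\mathrm{res}}\Pic Z_0)=\pi^*\Pic G^{ab}$ and that $\mathrm{im}(\Pic X\xrightarrow{\mathrm{res}}\Pic Z_0)$ is contained in $\mathrm{im}\bigl((\alpha|_{Z_0})^*\colon\Pic Y\to\Pic Z_0\bigr)$. Granting these, the restriction of $L$ agrees on $Z_0$ with $(\alpha|_{Z_0})^*M$ for some $M\in\Pic Y$, so $L\otimes\alpha^*M^{-1}$ is trivial on $Z_0$ and hence lies in $\pi^*\Pic G^{ab}$; this gives $L\in\pi^*\Pic G^{ab}+\alpha^*\Pic Y$, the reverse inclusion being obvious.

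For the kernel I would use that $\pi$ is the fibre bundle associated with the $G^{lin}$-torsor $G\to G/G^{lin}=G^{ab}$; as the structure group $G^{lin}$ is connected it acts trivially on the discrete invariants $\Pic Z_0$ and $\bar k[Z_0]^*/\bar k^*$, so the sheaves $R^1\pi_*\G_m$ and the relative units are \emph{constant} on $G^{ab}$. The low-degree Leray sequence for $\pi$ and $\G_m$ then gives $\ker(\mathrm{res})=H^1(G^{ab},\pi_*\G_m)$, and the extension $1\to\G_m\to\pi_*\G_m\to\mathcal U\to 1$ (with $\mathcal U$ the constant lattice $\bar k[Z_0]^*/\bar k^*\cong\mathbb Z^{r}$) reduces the computation to the vanishing $H^1_{\acute{e}t}(G^{ab},\mathcal U)=\Hom_{\mathrm{cont}}(\pi_1^{\acute{e}t}(G^{ab}),\mathbb Z^{r})=0$, which holds because $\pi_1^{\acute{e}t}$ of an abelian variety is profinite. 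This forces $\pi^*\colon\Pic G^{ab}\to H^1(G^{ab},\pi_*\G_m)$ to be onto, proving the first identity. The ``connected part'' of the image statement is then formal: since $\bar A^{\circ}$ is again a torus times a vector group, factoring off the vector-group torsor (homotopy invariance of $\Pic$ for affine-space bundles) and then using the exact sequence $\widehat{T}\to\Pic(\,\cdot\,)\to\Pic(\,\cdot\,)\to\Pic T=0$ for the remaining torus torsor shows that all classes coming from $\bar A^{\circ}$ are hit by $\Pic Y$.

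I expect the genuine difficulty to be the \emph{image} statement in the presence of the possibly disconnected finite group $\bar A/\bar A^{\circ}$: a priori $Z_0\to Y$ has a finite étale part, and one must show that the corresponding classes of $\Pic Z_0$ are \emph{not} in the image of $\mathrm{res}$, so that $\mathrm{im}(\mathrm{res})$ really is captured by $Y$. The mechanism I would rely on is exactly the one that made the kernel computation work: a finite cover of the fibre cannot be propagated over $G^{ab}$ (there is no intermediate quotient of $X$ between $X$ and $Y$ cutting out $\bar A/\bar A^{\circ}$, since $G^{ant}$ is connected), so the part of $\Pic Z_0$ that survives restriction — identified via the Leray differential $d_2\colon\Pic Z_0\to H^2(G^{ab},\pi_*\G_m)$ — should be precisely $\mathrm{im}((\alpha|_{Z_0})^*)$. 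Pinning down this $d_2$ and matching its kernel with $\alpha^*\Pic Y$ is the step I would budget the most care for; everything else is either structural input from the Chevalley/anti-affine decomposition or formal manipulation of the Leray sequence.
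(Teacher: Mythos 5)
Your route is genuinely different from the paper's (Leray spectral sequence for $\pi\colon X\to G^{ab}$ plus torsor d\'evissage on the fibre, versus the paper's purely character-theoretic exact sequences), but as written it has a real gap, and it sits exactly where you flag it: the containment $\im\bigl(\Pic X\to\Pic Z_0\bigr)\subseteq\im\bigl(\Pic Y\to\Pic Z_0\bigr)$ for the fibre $Z_0=G^{lin}/H$. Two concrete problems. First, the ``connected part is formal'' step over-claims: the vector-group/torus d\'evissage gives surjectivity of $\Pic(Z_0/\bar A^{\circ})\to\Pic Z_0$, but $Z_0/\bar A^{\circ}\to Y=Z_0/\bar A$ is a finite \'etale cover, and pullback along a finite \'etale cover is in general not surjective on $\Pic$; so nothing in your argument yet shows these classes are ``hit by $\Pic Y$''. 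Second, the proposed repair --- computing the Leray differential $d_2\colon\Pic Z_0\to H^2(G^{ab},\pi_*\G_m)$ and showing $\ker d_2\subseteq\im\bigl((\alpha|_{Z_0})^*\bigr)$ --- is precisely the content of the lemma and is not carried out; it is not a routine verification. (Your kernel computation also silently uses that $R^1\pi_*\G_m$ is constant with global sections $\Pic Z_0$ compatibly with restriction to the fibre, i.e.\ a K\"unneth statement for $\Pic(Z_0\times U)$; this holds for homogeneous spaces under linear groups but must be justified.)

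The paper avoids the $d_2$ analysis entirely by proving the stronger statement that $\Pic Y\to\Pic Z_0$ is surjective onto \emph{all} of $\Pic Z_0$, so the component group of $\bar A$ creates no obstruction. The mechanism is the one structural fact you record but do not exploit: $B:=G^{lin}\cap G^{ant}$ is central in $G^{lin}$. After passing to a central isogeny $\tilde G\to G^{lin}$ with $\Pic\tilde G=0$ (which exists by \cite{popov}), Popov's presentation $\X(\tilde H)\twoheadrightarrow\Pic(G^{lin}/H)$ reduces the surjectivity of $\Pic\bigl(G^{lin}/(BH)\bigr)\to\Pic(G^{lin}/H)$ to the surjectivity of $\X(\tilde B\tilde H)\to\X(\tilde H)$; centrality of $\tilde B$ gives $[\tilde B\tilde H,\tilde B\tilde H]=[\tilde H,\tilde H]$, so every character of $\tilde H$ extends, uniformly in the connected and disconnected parts of $B$ (this is Lemma \ref{Lem:Ausilio}). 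The other half of your plan, $\Ker(\Pic X\to\Pic Z_0)=\pi^*\Pic G^{ab}$, is obtained in the paper by a diagram chase in Brion's exact sequences \cite[Prop.~3.12]{brion} rather than by Leray, and there your conclusion matches the paper's. If you want to keep your framework, replace the $d_2$ step by this character-extension argument.
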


\begin{proof}
	
	We consider the following commutative diagram, whose rows are exact by \cite[Prop. 3.12]{brion}:
	
	\begin{equation}\label{Comm:Brion}
	\begin{tikzcd}[column sep=normal]
	\X(G^{lin}) \arrow[r] \arrow[d] & \Pic(G^{ab}) \times \X(H)  \arrow[r] \arrow[d] & \Pic(G/H) \arrow[r] \arrow[d] & \Pic(G^{lin}) \arrow[d] \\
	\X(G^{lin}) \arrow[r]           & \X(H) \arrow[r]                           & \Pic(G^{lin}/H) \arrow[r]     & \Pic(G^{lin})          
	\end{tikzcd}.
	\end{equation}
	
	A simple diagram chasing of (\ref{Comm:Brion}) gives the following exact sequence:
	\begin{equation}\label{SES:group_fibration}
	\Pic(G^{ab})\rightarrow \Pic(X) \rightarrow \Pic (G^{lin}/H).
	\end{equation}
	
	Lemma \ref{Lem:Ausilio}, which follows, shows that the morphism 
	\[
	\Pic(Y)=\Pic(G/(G^{ant}\cdot H)) = \Pic(G^{lin}/(B\cdot H))\rightarrow \Pic(G^{lin}/H),
	\]
	where $B \defeq \Ker (G^{lin} \rightarrow G/G^{ant})=G^{lin} \cap G^{ant}$,
	is surjective, which, together with the exact sequence \ref{SES:group_fibration}, is sufficient to conclude the proof of this lemma.
	
	
	
\end{proof}

\begin{lemma}\label{Lem:Ausilio}
	Let $G$ be a connected linear $\bar{k}$-group (with $\bar{k}$ algebraically closed), let $B \subset G$ be a central algebraic subgroup, and let $H \subset G$ be an algebraic subgroup. We have that the following morphism is surjective:
	\[
	\Pic \left(\faktor{G}{B\cdot H}\right) \rightarrow \Pic(G/H).
	\]
\end{lemma}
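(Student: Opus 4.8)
The plan is to exploit the two Fossum--Iversen exact sequences (of the shape used in \cite[Prop. 3.12]{brion}) attached to the subgroups $H$ and $B\cdot H$ of $G$, and to compare them by a diagram chase. First observe that, since $B$ is central, it is in particular commutative and it normalizes $H$; hence $H$ is normal in $B\cdot H$, the quotient $\bar B\defeq (B\cdot H)/H\cong B/(B\cap H)$ is commutative, and the natural projection $p\colon G/H\to G/(B\cdot H)$ is a torsor under $\bar B$. What we must show is that $p^*\colon \Pic(G/(B\cdot H))\to \Pic(G/H)$ is surjective.

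For $\Delta\in\{H,\,B\cdot H\}$ the sequence $\X(G)\to\X(\Delta)\to\Pic(G/\Delta)\to\Pic(G)\to\Pic(\Delta)$ is exact (here $G$ is connected, as required). Writing $K_\Delta\defeq\Coker(\X(G)\to\X(\Delta))$ and $\Pic(G)_\Delta\defeq\Ker(\Pic(G)\to\Pic(\Delta))$, each sequence splits into a short exact sequence
\[
0\to K_\Delta\to \Pic(G/\Delta)\to \Pic(G)_\Delta\to 0 .
\]
The inclusion $H\hookrightarrow B\cdot H$ and the projection $p$ furnish a morphism from the sequence for $\Delta=B\cdot H$ to the one for $\Delta=H$, given by the identity on the $\X(G)$-- and $\Pic(G)$--parts, by restriction $r\colon\X(B\cdot H)\to\X(H)$ on the left, and by $p^*$ in the middle; the right-hand vertical map is the inclusion $\Pic(G)_{B\cdot H}\hookrightarrow\Pic(G)_H$. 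By the snake lemma $\Coker(p^*)$ sits in an exact sequence with $\Coker(K_{B\cdot H}\to K_H)$ and $\Coker(\Pic(G)_{B\cdot H}\hookrightarrow\Pic(G)_H)$, so it vanishes once both of these do. It therefore suffices to prove (i) $\Pic(G)_{B\cdot H}=\Pic(G)_H$ and (ii) the map $K_{B\cdot H}\to K_H$ is surjective.

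For (ii) it is enough that $r\colon\X(B\cdot H)\to\X(H)$ be surjective. Since $B$ is central, $B\cdot H\cong (B\times H)/(B\cap H)$, and a character of $B\cdot H$ is the same datum as a pair of characters of $B$ and of $H$ agreeing on $B\cap H$; thus surjectivity of $r$ reduces to surjectivity of the restriction $\X(B)\to\X(B\cap H)$. As $B$ is a commutative linear $\bar k$--group in characteristic $0$, it is the product of a group of multiplicative type and a vector group, so $\X(B)$ is the character group of its diagonalizable part, and restriction of characters from a diagonalizable group to a closed subgroup is surjective; this gives (ii). For (i), note that $\Pic(G)\to\Pic(H)$ factors as $\Pic(G)\to\Pic(B\cdot H)\to\Pic(H)$, so it is enough to show that $\Pic(B\cdot H)\to\Pic(H)$ is injective: then $\Ker(\Pic(G)\to\Pic(H))=\Ker(\Pic(G)\to\Pic(B\cdot H))$, i.e. $\Pic(G)_H=\Pic(G)_{B\cdot H}$. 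Applying the Fossum--Iversen sequence to $H\subset B\cdot H$, the kernel of $\Pic(B\cdot H)\to\Pic(H)$ is the image of $\Pic\big((B\cdot H)/H\big)=\Pic(\bar B)$, which vanishes because $\bar B$ is a commutative linear group over an algebraically closed field of characteristic $0$ (a product of a diagonalizable group and a vector group, both with trivial Picard group). Hence $\Pic(B\cdot H)\to\Pic(H)$ is injective, which proves (i).

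I expect the genuine content to be concentrated in step (i): the surjectivity statement is really the assertion that line bundles on $G/H$ descend along the $\bar B$--torsor $p$, and the mechanism that makes this work is the vanishing $\Pic(\bar B)=0$, which is where the commutativity of the central subgroup $B$ is used. The one technical point to watch is that the Fossum--Iversen/Brion exact sequence (and Rosenlicht's description of the units underlying it) is usually stated for connected groups, so when applying it to $B\cdot H$ one should first reduce to the case where $B$, and hence $B\cdot H$, is connected, or otherwise verify the sequence in the generality needed; step (ii) is then a routine character-extension argument.
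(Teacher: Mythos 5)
Your argument takes a genuinely different route from the paper's, and its core mechanism --- extending characters across a central subgroup via the surjectivity of $\X(B)\to\X(B\cap H)$ for the commutative group $B$ --- is in essence the same one the paper uses, so your step (ii) is fine. The gap is in step (i). There you apply the Fossum--Iversen sequence with \emph{ambient} group $B\cdot H$ (to identify $\Ker(\Pic(B\cdot H)\to\Pic(H))$ with the image of $\Pic\bigl((B\cdot H)/H\bigr)$), but that sequence is stated for a connected ambient group, and neither $B$ nor $H$ is assumed connected here; indeed the lemma is invoked with $B=G^{lin}\cap G^{ant}$ and with a subgroup containing $Z_{H^{red}}$, both of which may be disconnected. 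You flag this, but the proposed repair does not work: proving the statement for $B^0$ only yields surjectivity of $\Pic(G/(B^0\cdot H))\to\Pic(G/H)$, and since $\Pic(G/(B\cdot H))\to\Pic(G/H)$ factors as $\Pic(G/(B\cdot H))\to\Pic(G/(B^0\cdot H))\to\Pic(G/H)$, surjectivity of the second arrow says nothing about the composite; moreover, even with $B$ connected, $B\cdot H$ is disconnected whenever $H$ is (unless $H\subset B\cdot H^0$). So step (i) is not established in the generality the lemma requires.

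The paper avoids exactly this issue by a different dévissage: it chooses a central isogeny $\pi:\tilde G\to G$ with $\Pic(\tilde G)=0$ (Popov), sets $\tilde H=\pi^{-1}(H)$ and $\tilde B=\pi^{-1}(B)$, and uses the surjections $\X(\tilde H)\twoheadrightarrow\Pic(G/H)$ and $\X(\tilde B\cdot\tilde H)\twoheadrightarrow\Pic(G/(B\cdot H))$, which only ever invoke the Fossum--Iversen/Popov sequence with the connected ambient group $\tilde G$. The whole lemma then reduces to surjectivity of $\X(\tilde B\cdot\tilde H)\to\X(\tilde H)$, i.e.\ to your step (ii) with $\tilde B$ in place of $B$ (still central, since $[\tilde B,\tilde G]\subset\Ker(\pi)$ is finite and $\tilde G$ is connected); the $\Pic(G)$-terms, where your problematic step (i) lives, never appear. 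Grafting this device onto your argument closes the gap; as written, the proof is incomplete.
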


\begin{proof}
	Let $\pi: \tilde{G} \rightarrow G$ be a central isogeny such that $\Pic (\tilde{G})=0$ (this exists by \cite[Thm 3]{popov}). Let $\tilde{H}\defeq \pi^{-1}(H)$, and $\tilde{B}\defeq \pi^{-1}(B)$. We then have the following two natural surjections (by \cite[Thm 4]{popov}):
	\[
	\X(\tilde{H}) \twoheadrightarrow \Pic(G/H), \quad \X(\tilde{B}\cdot \tilde{H}) \twoheadrightarrow \Pic \left(\faktor{G}{B\cdot H}\right).
	\]
	To conclude the proof of the lemma it is therefore enough to show that the following morphism is a surjection:
	\[
	\X(\tilde{B}\cdot \tilde{H}) \rightarrow \X(\tilde{H}).
	\]
	Hence it is enough to show that the morphism
	\begin{equation}\label{Eq:Final_Pic_thing}
		\faktor{\tilde{H}}{[\tilde{H},\tilde{H}]} \rightarrow \faktor{\tilde{B}\cdot\tilde{H}}{[\tilde{B}\cdot\tilde{H},\tilde{B}\cdot\tilde{H}]}
	\end{equation}
	is an injection. Since $\tilde{B}$ is central in $\tilde{G}$, as we now prove, we are done.
	
	We have that $[\tilde{B},\tilde{G}]\subset \Ker (\pi)$, which is finite. By connectedness of $\tilde{G}$, this implies that $[\tilde{B},\tilde{G}] = [\tilde{B},\tilde{e}]=\tilde{e}$. This concludes the proof of the lemma.
	

\end{proof}

%
%
%
%

\begin{lemma}\label{Lem:derived}
	Let $[A_1 \xrightarrow{\iota_1} \dots \xrightarrow{\iota_{N-1}} A_N]$ be a complex, with the $A_i$ belonging to some abelian category $\mathcal{C}$ and, for some integer $1 \leq n \leq N$,  $A'_n\subset A_n$ be such that $A'_n \rightarrow A_n/\iota_{n-1}(A_{n-1})$ is an epimorphism. Then the following is a quasi-isomorphism (the complexes being the horizontal ones):
	\begin{equation}
		\begin{tikzcd}[column sep=small]
		A_1 \arrow[r] \arrow[d] & \ldots & A_{n-2} \arrow[d] \arrow[r] & A'_{n-1} \arrow[d] \arrow[r] & A'_{n} \arrow[d] \arrow[r] & A_{n+1} \arrow[d] \arrow[r] & \ldots \arrow[r] & A_{N} \arrow[d] \\
		A_1 \arrow[r]   & \ldots & A_{n-2}   \arrow[r] & A_{n-1}   \arrow[r] & A_{n}   \arrow[r] & A_{n+1}   \arrow[r] & \ldots \arrow[r] & A_{N}  
		\end{tikzcd},
	\end{equation}
	where $A'_{n-1}\defeq A'_n\times_{A_n} A_{n-1} \hookrightarrow A_{n-1}$.
\end{lemma}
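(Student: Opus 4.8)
The plan is to exhibit the chain map in the statement as a degreewise monomorphism and then reduce the claim to the acyclicity of its cokernel complex. Write $f\colon C'^{\bullet}\to C^{\bullet}$ for the vertical map, where $C'^{\bullet}$ is the top row and $C^{\bullet}$ the bottom row. Away from degrees $n-1$ and $n$ the components of $f$ are identities, while in those two degrees they are the monomorphisms $A'_{n-1}\hookrightarrow A_{n-1}$ and $A'_{n}\hookrightarrow A_{n}$; hence $f$ is a degreewise monomorphism. Before anything else I would check that the top row is a genuine complex, i.e. that $\iota_{n-2}$ factors through $A'_{n-1}=\iota_{n-1}^{-1}(A'_n)$: this is immediate from $\iota_{n-1}\circ\iota_{n-2}=0$, which forces $\operatorname{im}(\iota_{n-2})\subseteq\Ker(\iota_{n-1})\subseteq A'_{n-1}$. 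Since $f$ is a degreewise mono, there is a short exact sequence of complexes $0\to C'^{\bullet}\xrightarrow{f}C^{\bullet}\to Q^{\bullet}\to 0$, and by the associated long exact sequence in cohomology $f$ is a quasi-isomorphism precisely when $Q^{\bullet}=\Coker(f)$ is acyclic.

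Next I would compute $Q^{\bullet}$ explicitly. All of its terms vanish except in degrees $n-1$ and $n$, where it is $A_{n-1}/A'_{n-1}$ and $A_n/A'_n$ respectively, with the single nonzero differential $\bar{\iota}_{n-1}$ induced by $\iota_{n-1}$. Because $Q^{n-2}=Q^{n+1}=0$, the only cohomology that can survive is $\Ker(\bar{\iota}_{n-1})$ in degree $n-1$ and $\Coker(\bar{\iota}_{n-1})$ in degree $n$. Thus the whole lemma is reduced to the single assertion that $\bar{\iota}_{n-1}\colon A_{n-1}/A'_{n-1}\to A_n/A'_n$ is an isomorphism.

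For this isomorphism I would argue the two halves separately, each using one of the two pieces of data. Injectivity is formal and uses only the pullback definition of $A'_{n-1}$: an element of $A_{n-1}$ whose image in $A_n$ lands in $A'_n$ lies by definition in $\iota_{n-1}^{-1}(A'_n)=A'_{n-1}$, so $\Ker(\bar{\iota}_{n-1})=0$. Surjectivity is exactly the hypothesis of the lemma: $\Coker(\bar{\iota}_{n-1})=A_n/(A'_n+\operatorname{im}\iota_{n-1})$, and the assumption that $A'_n\to A_n/\iota_{n-1}(A_{n-1})$ be an epimorphism says precisely that $A'_n+\operatorname{im}(\iota_{n-1})=A_n$. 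Hence $\bar{\iota}_{n-1}$ is an isomorphism, $Q^{\bullet}$ is acyclic, and $f$ is a quasi-isomorphism.

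The only real subtlety, and the one place I would be careful, is that all of this must be phrased inside an arbitrary abelian category $\mathcal{C}$, where the element-chasing above is not literally available. I would handle this either by invoking the Freyd--Mitchell embedding theorem to legitimize the element arguments, or, more intrinsically, by reading the two claims as statements about the subobject lattice: injectivity as the identity $\iota_{n-1}^{-1}(A'_n)=A'_{n-1}$ (true by the very construction of the pullback $A'_{n-1}=A'_n\times_{A_n}A_{n-1}$) and surjectivity as the equality of subobjects $A'_n+\operatorname{im}(\iota_{n-1})=A_n$ (the hypothesis). Once these are set up, the remainder is formal homological algebra, so I do not expect any genuine difficulty beyond this bookkeeping.
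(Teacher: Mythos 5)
Your proof is correct, and it is essentially the argument the paper has in mind: the paper's own proof consists of the single line ``This follows immediately from a diagram chasing,'' and your short-exact-sequence-of-complexes formulation (reducing to the acyclicity of the two-term cokernel complex, with injectivity from the pullback definition of $A'_{n-1}$ and surjectivity from the epimorphism hypothesis) is a clean and complete way of carrying out that chase. Your closing remark about phrasing the element arguments via Freyd--Mitchell or the subobject lattice is the right way to handle the general abelian category setting.
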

\begin{proof}
	This follows immediately from a diagram chasing.
\end{proof}

\begin{proof}[Proof of Theorem \ref{Thm:compatibility}]
	
	The proof will essentially follow the simple idea of making everything as explicit as possible in terms of Galois cocycles. The two expressions that arise from this computation are, respectively, \ref{Eq:Spec} for the LHS, and \ref{Eq:abelian} for the RHS of Equation (\ref{Eq:2pairings}). These two expressions are unfortunately not equal in an ``obvious'' manner. Hence, after these first two computations, the rest of the proof will be dedicated to show that the two obtained expressions are, in fact, equivalent in $\H^2(K_v,\overline{K_v}^*)$. We set $k=K_v$, and $\Gamma=\Gamma_k$.

	We recall some quasi-isomorphisms (see diagram \ref{Table1} below), borrowed from \cite{Demarche_comparison} (in the figure the complexes are the 3-term horizontal ones, and the vertical morphisms define the quasi-isomorphisms between them, and the complexes end in degree $2$), which will serve to make the isomorphism $\alpha$ mentioned above as explicit as possible. Let $ \Div^0(\overline{Z})\defeq \Ker (\Div(\overline{Z}) \rightarrow \Pic(SA_G) \xleftarrow{\sim} \Pic(G^{ab}) \rightarrow NS(\bar{G}^{ab}))$ (i.e. the kernel of that composition). 
	
	The vertical morphisms between the second, third and fourth row are the natural ones (see \cite[Sec 1.2.1]{Demarche_comparison} for the morphism $\Pic(\overline{H}') \rightarrow \Pic'(\overline{Z}/\overline{X})$), and the ones between the first and second row are recalled below (see the proof of Lemma \ref{Lem:RHS}), and they are actually isomorphisms (as proven in \cite{Demarche_comparison}). The horizontal arrows (forming the complexes) are always the natural ones, except for the morphism $\widehat{T_G} \rightarrow \operatorname{Pic}\left(\bar{G}^{\mathrm{ab}}\right)$, which factors through $\widehat{T_G} \rightarrow \operatorname{Pic}^0\left(\bar{G}^{\mathrm{ab}}\right) \rightarrow \operatorname{Pic}\left(\bar{G}^{\mathrm{ab}}\right)$ and is the one arising from the construction of the dual motive of $SA_G$ (see e.g. \cite[Sec. 1]{harariszamuely} for details on this construction).

	\begin{equation}\label{Table1}
	\begin{tikzcd}[column sep=tiny]
	Q'_X:                                          & \widehat{T_G} \arrow[rr]                                                           &  & \operatorname{Pic}\left(\bar{G}^{\mathrm{ab}}\right) \oplus \widehat{T_{G^{\mathrm{sc}}}} \oplus \widehat{Z_{\bar{H}^{\mathrm{red}}}} \arrow[rr]                                                                                            &  & \widehat{Z_{H^{\mathrm{sc}}}}                                               \\
	Q_{mix1}: \arrow[u, "\sim"]                   & \widehat{T_G} \arrow[rr] \arrow[u, "="]                                            &  & \operatorname{Pic}\left(\bar{G}^{\mathrm{ab}}\right) \oplus \operatorname{Pic}_{\overline{T_{G}}}\left(\bar{G}^{\operatorname{lin}}\right) \oplus \operatorname{Pic}_{\overline{T_{G}}}(\overline{Z^{\prime}}) \arrow[rr] \arrow[u, "\sim"] &  & \operatorname{Pic}(\overline{H^{\prime}}) \arrow[u, "\sim"]                 \\
	Q_{mix}: \arrow[u, "\sim"] \arrow[d, "\sim"'] & \widehat{T_{G}} \oplus \bar{k}(Z)^{*} / \bar{k}^{*} \arrow[rr] \arrow[d] \arrow[u] &  & \operatorname{UPic}_{\overline{T_{G}}}(\bar{Z})^{1} \arrow[rr] \arrow[d] \arrow[u]                                                                                                                                                          &  & \operatorname{Pic}(\overline{H^{\prime}}) \arrow[d, "\sim"'] \arrow[u, "="] \\
	Q_{geom}:                                     & \bar{k}(Z)^{*} / \bar{k}^{*} \arrow[rr]                                            &  & \operatorname{Div}(\bar{Z}) \arrow[rr]                                                                                                                                                                                                      &  & \operatorname{Pic}^{\prime}(\bar{Z} / \bar{X})                             
	\end{tikzcd}.
	\end{equation}
	
	From the quasi-isomorphisms above follows that 
	
	\begin{equation}\label{Confr:abelianization}
	\H^2(k, Q'_X) \cong \H^2(k,Q_{geom})\cong \Br_a(X,G),
	\end{equation}
	
	where the last isomorphism is a direct consequence of (\ref{Iso:CgeomtoBrauer}).

	We also need the following quasi-isomorphisms:
	
	\begin{equation}\label{Table2}
	\begin{tikzcd}[column sep=tiny]
	C_X^d:                                          & \widehat{T_G} \arrow[rr]                                                           &  & \operatorname{Pic}^0\left(\bar{G}^{\mathrm{ab}}\right) \oplus \widehat{T_{G^{\mathrm{sc}}}} \oplus \widehat{Z_{\bar{H}^{\mathrm{red}}}} \arrow[rr]                                                                                            &  & \widehat{Z_{H^{\mathrm{sc}}}}                                               \\
	Q^0_{mix1}: \arrow[u, "\sim"]                   & \widehat{T_G} \arrow[rr] \arrow[u, "="]                                            &  & \operatorname{Pic}^0\left(\bar{G}^{\mathrm{ab}}\right) \oplus \operatorname{Pic}_{\overline{T_{G}}}\left(\bar{G}^{\operatorname{lin}}\right) \oplus \operatorname{Pic}_{\overline{T_{G}}}(\overline{Z^{\prime}}) \arrow[rr] \arrow[u, "\sim"] &  & \operatorname{Pic}(\overline{H^{\prime}}) \arrow[u, "\sim"]                 \\
	Q^0_{mix}: \arrow[u, "\sim"] \arrow[d, "\sim"'] & \widehat{T_{G}} \oplus \bar{k}(Z)^{*} / \bar{k}^{*} \arrow[rr] \arrow[d] \arrow[u] &  & \operatorname{UPic}^0_{\overline{T_{G}}}(\bar{Z})^{1} \arrow[rr] \arrow[d] \arrow[u]                                                                                                                                                          &  & \operatorname{Pic}(\overline{H^{\prime}}) \arrow[d, "\sim"'] \arrow[u, "="] \\
	Q^0_{geom}:                                     & \bar{k}(Z)^{*} / \bar{k}^{*} \arrow[rr]                                            &  & \operatorname{Div}^0(\bar{Z}) \arrow[rr]                                                                                                                                                                                                      &  & \operatorname{Pic}^{\prime}(\bar{Z} / \bar{X})                             
	\end{tikzcd},
	\end{equation}
	
	where $ \Div^0(\overline{Z})\defeq \Ker (\Div(\overline{Z}) \rightarrow \Pic(SA_G) \xleftarrow{\sim} \Pic(G^{ab}) \rightarrow NS(G^{ab}))$ and $\operatorname{UPic}^0_{\overline{T_{G}}}(\bar{Z})^{1}=\{(D,f) \in \operatorname{UPic}_{\overline{T_{G}}}(\bar{Z})^{1}: D \in \Div^0(\overline{Z})\}$.
	
	
	The morphism $\alpha$ cited in (\ref{Eq:alpha}) is the composition $\H^2(k, C_X^d) \rightarrow \H^2(k, Q'_X) \cong \H^2(k,Q_{geom}) \cong \Br_a(X,G)$ (see \cite[Thm 2.1]{Demarche_comparison}).


	An easy computation of non-abelian Galois cohomology, using, for instance, the explicit description of non-abelian cocycles given in Proposition \ref{Prop:explicit_H0} below, gives that
		\begin{equation}\label{Abx}
		\ab^0(x)=[((\partial \bar{h}_{\sigma})^{-1}=\partial t_{\sigma}| z_{\sigma}, t_{\sigma}| \xi)] \in \H^2(k,C_X),
		\end{equation}
			where:
		
		\begin{itemize}
			\item for a non-abelian $1$-cochain $\alpha_{\sigma} \in \Fun(\Gamma, D(\overline{k}))$ (where $D$ denotes some $k$-algebraic group) we use the notation $\partial (\alpha_{\sigma})=\alpha_{\sigma \eta} \cdot  (\act{\sigma}{\alpha_{\eta}})^{-1} \cdot\alpha_{\sigma}^{-1}$, and, for a non-abelian $0$-cochain $\alpha \in D(\overline{k})$ we use the notation $\partial \alpha= \alpha^{-1}\cdot \act{\sigma}{\alpha}\cdot$;
			\item $g \in G(\overline{k})$ is such that its projection to $X$ is the point $x$;
			\item for all $\sigma \in \Gamma$, $h_{\sigma}\defeq g^{-1}\cdot \leftidx{^\sigma}{g}$, with $h_{\sigma} \in H^{red}(\overline{k})$ (we can assume wlog that $h_{\sigma} \in H^{red}(\overline{k})$ because $H^u$ and any of its twists are cohomologically trivial, see also \cite[Lem. 2.7]{Demarche_abelian});
			\item $g=\rho_G(\bar{g}) \cdot \xi$, with $\bar{g} \in G^{sc}(\overline{k})$ and $\xi \in SA_G(\overline{k})$;
			\item for all $\sigma \in \Gamma$, $h_{\sigma}=\rho_H(\bar{h}_{\sigma})\cdot z_{\sigma}$, with $z_{\sigma} \in Z_{H_{red}}(\overline{k})$, and $\bar{h}_{\sigma} \in H^{sc}(\overline{k})$;
			\item for all $\sigma \in \Gamma$, $t_{\sigma}\defeq  \bar{g}^{-1}\cdot \leftidx{^g}{\bar{h}_{\sigma}}^{-1}\cdot \leftidx{^\sigma}{\bar{g}} \in T_{G^{sc}}(\overline{k})$.
		\end{itemize}

		We observe that the following identity holds: 
		\begin{equation}\label{Eq:deltaxi}
			\partial \xi= z_{\sigma}\cdot \rho_G(t_{\sigma})^{-1}\in C^1(\Gamma, T_G).
		\end{equation}

	
	We fix the notation for the element $B \in \H^2(k,{C_{X}}^d)$ as follows. Let $\beta \in Z^2(k,Q^0_{mix})$ be such that its class in $\H^2(k,Q^0_{mix})$ corresponds to $B$ via the quasi-isomorphism between last and second-last rows in (\ref{Table2}). Using the standard notation for Galois cocycles, as defined in Subsection \ref{Ssec:cocycles}, we put:
	\begin{equation}\label{The_Brauer_element}
	\beta=((\chi_{\sigma,\eta},f_{\sigma,\eta})|(D_{\sigma},f_{\sigma})|\mathcal{L}  ), 
	\end{equation}
	
	where  $f_{\sigma,\eta}(1)=1$ (we use the notation $1$ to denote the identity element in $G$, and its projection to $Z$), after identifying $\bar{k}(Z)^{*} / \bar{k}^{*}$ with $\{f \in \bar{k}(Z)^{*}: f(1)=1\}\subset \bar{k}(Z)^*$.

	Because of Lemma \ref{Lem:easier_calculation}, applied to $Z=G/(H^u\cdot Z_{H^{red}})$, and Lemma \ref{Lem:derived} we may assume without loss of generality (after changing $\beta$ by a coborder), that $D_{\sigma} \in \pi^*\Div^0(G^{ab}) + \alpha^*\Div(Y)$.
	
	By definition of $\Div_{\overline{T_G}}(Z)$, we have that the following identities hold:
	
	\begin{align}
		\div((f_{\sigma})_t)=t^*D_{\sigma}-D_{\sigma} \quad & \forall t \in T_G(\overline{k}) \label{Div_unif1},\\
		(f_{\sigma})_{t_1t_2}=(f_{\sigma})_{t_1}\cdot t_1^*(f_{\sigma})_{t_2} \label{Div_unif2} \quad &\forall t_1,t_2 \in T_G(\overline{k}).
	\end{align}	
	
	Moreover, by definition of cocycle, we have the following identities:
	\begin{align}
		\div f_{\sigma,\eta}&=\partial D_{\sigma}, \label{Cocy1}\\
		[\restricts{D_{\sigma}}{x\cdot \bar{H'}}]&=\partial \mathcal{L},	\label{Cocy2}\\
		\chi_{\sigma,\eta}(t)\cdot \frac{t^*f_{\sigma,\eta}}{f_{\sigma,\eta}}&=(\partial f_{\sigma})_t. \label{Cocy3}
	\end{align}
	
	We will assume throughout the rest of the proof that all the specializations of functions at the specific points appearing are $\neq 0$. This may always be done without loss of generality.

	We start the computation of the RHS of (\ref{Eq:2pairings}).
	
	\begin{lemma}\label{Lem:RHS}
		The following identity holds:
		\begin{equation}\label{Eq:abelian}
		<\ab^0(x),B>=\left[ \widetilde{\chi}_{\sigma,\eta}(\xi) \cdot \left( (\leftidx{^{\sigma}}{f_{\eta}})_{z_{\sigma}}(\square)^{-1}\cdot (\leftidx{^{\sigma}}{f_{\eta}})_{t_{\sigma}}(\bigstar)\cdot \frac{\leftidx{^{\sigma}}{\epsilon_{\eta}}(\bigstar)}{\leftidx{^{\sigma}}{\epsilon_{\eta}}(t_{\sigma}\cdot \bigstar)} \right)\cdot \left(\frac{\psi((\partial \bar{h'}_{\sigma})^{-1}x)}{\psi(x)} \right)^{-1}\right],
		\end{equation}
		for any $\square \in {T_G}(\overline{k})$, $\bigstar \in {G^{sc}}(\overline{k})$, and
		for any $\tilde{\chi}_{\sigma,\eta} \in \bar{k}(SA_G)^*_{vert}/\bar{k}^*$ such that:
		\begin{itemize}
			\item $\restricts{\widetilde{\chi}_{\sigma,\eta}}{T_G}=\chi_{\sigma,\eta}$,
			\item $\div \widetilde{\chi}_{\sigma,\eta}= \partial D_{\sigma} \mod \pi^*(\Div(\bar{k}(G^{ab})^*/\bar{k}^*))$.
		\end{itemize}
	\end{lemma}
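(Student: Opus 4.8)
The plan is to evaluate the cup product $\ab^0(x)\cup B$ directly on explicit Galois (hyper)cochains, using the sign-explicit formula of Lemma~\ref{Lem:Cup_product} with the representative (\ref{Abx}) for $\ab^0(x)\in\H^0(k,C_X)$ and the representative (\ref{The_Brauer_element}) for $B$, the latter taken in the quasi-isomorphic complex $Q^0_{mix}$ of (\ref{Table2}). Concretely, the duality pairing $C_X\otimes C_X^d\to\overline{k}^*$ is transported along the quasi-isomorphism $Q^0_{mix}\xrightarrow{\sim}C_X^d$, so that one pairs the group-theoretic cochains of $C_X$ (valued in $T_H^{sc}$, in $T_H\oplus T_{G^{sc}}$, and in $SA_G$) against the geometric cochains of $Q^0_{mix}$ (valued in $\Pic(\overline{H'})$, in $\operatorname{UPic}^0_{\overline{T_G}}(\bar Z)^1$, and in $\widehat{T_G}\oplus\bar k(Z)^*/\bar k^*$) by evaluating the relevant characters and rational functions. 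Before doing so, I would invoke the reduction $D_\sigma\in\pi^*\Div^0(G^{ab})+\alpha^*\Div(Y)$ (legitimate by Lemmas~\ref{Lem:easier_calculation} and~\ref{Lem:derived}), which separates the anti-affine directions, where characters do not contribute, from the remaining ones, thereby making the evaluations tractable.

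Since $\ab^0(x)$ sits in total degree $0$ and $\beta$ in total degree $2$, the sum in Lemma~\ref{Lem:Cup_product} collapses to the three terms indexed by $h=0,-1,-2$. These pair, respectively, the degree-$0$ part $\xi\in SA_G$ against $(\chi_{\sigma,\eta},f_{\sigma,\eta})$, the degree-$(-1)$ part $(z_\sigma,t_\sigma)$ against $\act{\sigma}{(D_\eta,f_\eta)}$ (the Galois twist by $\sigma$ being produced by the cup formula), and the degree-$(-2)$ part $(\partial\bar h_\sigma)^{-1}$ against $\mathcal{L}$. I expect the first term to yield $\widetilde{\chi}_{\sigma,\eta}(\xi)$ once $\chi_{\sigma,\eta}$ is extended to a function $\widetilde{\chi}_{\sigma,\eta}$ on $SA_G$ (the non-uniqueness of this extension being exactly the freedom recorded in the statement, and the normalization $f_{\sigma,\eta}(1)=1$ disposing of the function part). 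The second term should produce the middle bracket of (\ref{Eq:abelian}): one evaluates the $\operatorname{UPic}$-function $\act{\sigma}{f_\eta}$ at the torus elements $z_\sigma$ and $t_\sigma$ and at auxiliary test points $\square\in T_G(\bar k)$, $\bigstar\in G^{sc}(\bar k)$, and corrects for the divisor $\act{\sigma}{D_\eta}$ by a local equation $\epsilon_\eta$; the relations (\ref{Div_unif1}) and (\ref{Div_unif2}) guarantee that the outcome is a well-defined element of $\overline{k}^*$, independent of $\square,\bigstar$ up to coboundary. The third term should give the last factor of (\ref{Eq:abelian}), after realizing $\mathcal{L}\in\Pic(\overline{H'})$ by a rational function $\psi$ on the fibre $x\cdot\overline{H'}$, as permitted by (\ref{Cocy2}), and evaluating it at $(\partial\bar h_\sigma)^{-1}x$ relative to $x$.

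The remaining work is bookkeeping: assembling the three contributions with the correct signs --- which arise from the combination of the factor $(-1)^{\binom{h}{2}}$ of Lemma~\ref{Lem:Cup_product} (the factor $(-1)^{jh}$ being trivial as $j=2$) with the internal signs of the cone complex $C_X$, the latter explaining why the $T_H$-contribution $(\act{\sigma}{f_\eta})_{z_\sigma}(\square)$ enters inverted while the $T_{G^{sc}}$-contribution $(\act{\sigma}{f_\eta})_{t_\sigma}(\bigstar)$ does not --- and then checking, via the cocycle identities (\ref{Cocy1})--(\ref{Cocy3}) together with (\ref{Eq:deltaxi}), that the resulting class in $\H^2(k,\overline{k}^*)$ is genuinely independent of all the auxiliary choices.

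I expect the main obstacle to be the degree-$(-1)$ term: making the abelian/motivic duality pairing against $\operatorname{UPic}^0_{\overline{T_G}}(\bar Z)^1$ completely explicit as an evaluation of functions, and in particular tracking the divisor part $D_\eta$ through the auxiliary functions $\epsilon_\eta$ and the test points $\square,\bigstar$ so that the combination is a genuine element of $\overline{k}^*$ rather than merely a class modulo constants. This, together with the reconciliation of the cone signs with the cup-product signs, is where the computation is least transparent; by contrast, the degree-$0$ and degree-$(-2)$ terms should reduce to comparatively direct evaluations of a single character and of a single rational function.
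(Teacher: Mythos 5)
Your overall strategy --- computing $\langle\ab^0(x),B\rangle$ at the cochain level via the sign-explicit cup product of Lemma \ref{Lem:Cup_product}, using the representative (\ref{Abx}) for $\ab^0(x)$, the representative (\ref{The_Brauer_element}) for $B$, and the evaluation maps (\ref{Eq1})--(\ref{Eq3}) realized by the auxiliary functions $\epsilon$ and $\psi$ --- is the same as the paper's. But there is a genuine gap in the way you set up the pairing. Lemma \ref{Lem:Cup_product} requires honest termwise pairings $C_i\otimes C'_{-i}\to\bar{k}^*$ compatible with the differentials, and no such termwise pairing exists between $C_X$ and $C_X^d$ (nor between $C_X$ and $Q^0_{mix}$): the degree-$0$ term $SA_G$ of $C_X$ does not pair with $\widehat{T_G}$ (the point $\xi\in SA_G(\bar k)$ need not lie in $T_G$), and, more seriously, nothing in $C_X$ pairs termwise with the $\operatorname{Pic}^0(\bar G^{ab})$ (resp.\ divisor-class) component of $C_X^d$ (resp.\ $Q^0_{mix}$). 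The duality between $SA_G$ and its dual $1$-motive is the generalized Weil/Cassels--Tate pairing, which exists only in the derived category, so ``transporting the pairing along $Q^0_{mix}\xrightarrow{\sim}C_X^d$'' and then evaluating term by term is not a legitimate application of Lemma \ref{Lem:Cup_product}.

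The paper resolves exactly this point by replacing \emph{both} complexes with further quasi-isomorphic models on which the pairing \emph{is} termwise: $C'_X$, built from zero-cycles $Z_{ab}(SA_G)$, $Z^0_{ab}(SA_G)$ (diagram (\ref{Table4})), and ${C'_X}^d$, built from vertical rational functions on $SA_G$ and degree-zero divisors on $G^{ab}$ (diagram (\ref{Table3})); the identification of the resulting evaluation pairing with the actual duality pairing is the content of Lemma \ref{Lem:Cassels_Weil}. Your extension of $\chi_{\sigma,\eta}$ to $\widetilde{\chi}_{\sigma,\eta}\in\bar k(SA_G)^*_{vert}/\bar k^*$ is precisely the degree-$0$ shadow of this replacement, and the vanishing of the zero-cycle component $O$ in $v([(\partial\bar h_\sigma)^{-1}|z_\sigma,t_\sigma,O|(\xi)])$ is why no explicit Cassels--Tate term survives in (\ref{Eq:abelian}); but without the zero-cycle/divisor models and Lemma \ref{Lem:Cassels_Weil} you have no justification that your explicit expression computes the pairing (\ref{Mor:local_pairing}) rather than merely some number. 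This missing identification, not the degree-$(-1)$ bookkeeping you single out, is the crux of the proof.
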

	
	\begin{proof}
		
		
		
		%

		
		
		The isomorphisms appearing between the first and second lines in the diagram (\ref{Table1}) are the following (we refer to \cite{Demarche_comparison} for the proof that these are (iso)morphisms):
		
		\hspace{-0.07\textwidth}
		\begin{minipage}[c]{0.33\textwidth}
			\begin{equation}\label{Eq1}
			\begin{matrix}
			&\operatorname{Pic}_{\overline{T_{G}}}(\overline{Z^{\prime}})  &\cong  &\widehat{Z_{H_{red}}}\\
			&[(D,f)] &\mapsto &(z \mapsto f_z(1))
			\end{matrix}
			\end{equation}
		\end{minipage}
		\begin{minipage}[c]{0.38\textwidth}
			\begin{equation}\label{Eq2}
			\begin{matrix}
			&\operatorname{Pic}_{\overline{T_{G}}}(\overline{G}^{\operatorname{lin}}) &\cong &\widehat{T_{G^{\mathrm{sc}}}}\\
			&[(D,f)] &\mapsto &\left(t \mapsto f_t(\star)^{-1}\cdot \frac{\epsilon(t \cdot \star)}{\epsilon(\star)}\right)
			\end{matrix}
			\end{equation}
		\end{minipage}
		\begin{minipage}[c]{0.33\textwidth}
			\begin{equation}\label{Eq3}
			\begin{matrix}
			&\operatorname{Pic}(\overline{H^{\prime}}) &\cong &\widehat{Z_{\overline{H}^{\mathrm{c}}}}\\
			&[D] &\mapsto &\left(z \mapsto \frac{z^*\psi}{\psi}\right)
			\end{matrix}
			\end{equation}
		\end{minipage}
		
		where $\epsilon \in \bar{k}(G^{sc})^*$ is such that $\div \epsilon = \rho_G^* D$, $\star$ is any element  in $\bar{G^{sc}}(\overline{k})$, and $\psi \in \bar{k}(\bar{H^{sc}})^*$ is such that $\div \psi = \rho_H^* D$. We denote by $\hat{z}_{\sigma},\hat{t}_{\sigma}$ and $w$ the images of respectively, $(D_{\sigma},f_{\sigma})$ under the isomorphism (\ref{Eq1}), $(D_{\sigma},f_{\sigma})$ under (\ref{Eq2}), and $\mathcal{L}$ under (\ref{Eq3}).
		
		%
		%
		We recall (one of) the construction(s) of the Cassels-Weil pairing (for semi-abelian varieties). We will take the liberty to identify, with a slight abuse of notation, a semi-abelian variety $Y$, defined over an algebraically closed field $\bar{k}$ with the $\Gamma_k$-group $Y(\bar{k})$:
		
		\begin{lemma}\label{Lem:Cassels_Weil}
			Let $S$ be a semiabelian variety over a field $k$, $e$ be the identity element in $S$, $A\defeq S^{ab}$ and $T \defeq \Ker(S \rightarrow A)$. We denote by $Z(\overline{S})$ the degree $0$ zero-cycles on $\overline{S}\defeq S \times_k \bar{k}$, and by $Z^0(\overline{S})$ the degree $0$ zero-cycles $\sum_{i=0}^N n_i(P_i)$ such that $\prod P_i^{n_i}=e \in \overline{S}$. 
			Moreover, let:
			\begin{equation}
			Z_{ab}(\overline{S})\defeq \faktor{Z(\overline{S})}{<(t\cdot P)- (P) -(t) + (e), \ t \in T(\overline{k}), \ P 	\in SA_G(\overline{k})>}
			\end{equation}
			There is a natural morphism $Z(\overline{S})\rightarrow \overline{S}$ defined by $\sum_{i=0}^N n_i(P_i) \mapsto \prod P_i^{n_i} \in \overline{S}$, which factors through $Z_{ab}(\overline{S})$. We define then
			\begin{equation}
			Z^0_{ab}(\overline{S})\defeq \Ker (Z_{ab}(\overline{S})\rightarrow \overline{S})\cong Z^0(A).
			\end{equation}
			The following morphisms of complexes are quasi-isomorphisms:
			\[
			[Z_{ab}^0(\overline{S})\rightarrow Z_{ab}(\overline{S})]\rightarrow [Z^0(\overline{S})\rightarrow Z(\overline{S})] \rightarrow [0 \rightarrow \overline{S}].
			\]
			The following morphism (that exists only in the derived category) is a quasi-isomorphism as well:
			\[
			[\bar{k}(S)_{vert}^*/\bar{k}^*\rightarrow \Div^0 (A)] \rightarrow [\widehat{T}\rightarrow A^*=\Pic^0(A)],
			\]
			\[
			(f,D) \longmapsto (\restricts{f}{T}, [D]).
			\]
			The Cassels-Tate pairing $[\widehat{T_{G}}\rightarrow A^*] \otimes^{L}[0 \rightarrow \overline{S}]\rightarrow \G_m[-1]\defeq [\bar{k}^*][-1]$ is then induced by the following pairing:
			
			\begin{equation}\label{Eq:pairing_casselsTate}
					[\bar{k}(S)_{vert}^*/\bar{k}^*\rightarrow \Div^0 (A)] \otimes [Z_{ab}^0(\overline{S})\rightarrow Z_{ab}(\overline{S})]  \rightarrow \G_m[-1],
			\end{equation}
			where
			\[
			\bar{k}(S)_{vert}^*/\bar{k}^* \otimes Z(\overline{S}) \rightarrow \bar{k}^*
			\]
			is the pairing induced by the evaluation of a function, and:
			\[
			\Div^0 (\overline{A}) \otimes Z^0(\overline{S}) \rightarrow \bar{k}^*
			\]
			is defined as:
			\[
			\Div^0 (\overline{A}) \otimes Z^0(\overline{S}) \rightarrow \Div^0 (\overline{A}) \otimes Z^0(\overline{A}) \rightarrow  \bar{k}^*,
			\]
			where the last arrow is defined as in \cite[Sec. 3.2]{Cassels_Tate}.
		\end{lemma}
			
		\begin{proof}
			When $S$ is an abelian variety, this is well-known ( see \cite[Sec. 3]{Cassels_Tate}). In the general case it follows by devissage.
%
%
%
		\end{proof}
		
		We have the following quasi-isomorphism:
		%
		%
		\begin{equation}\label{Table3}
		\begin{tikzcd}
		C_X^d:&\widehat{T_G} \arrow[r]                                          & \operatorname{Pic}^0\left(\bar{G}^{\mathrm{ab}}\right) \oplus \widehat{T_{G^{\mathrm{sc}}}} \oplus \widehat{Z_{\bar{H}^{\mathrm{red}}}} \arrow[r] & \widehat{Z_{H^{\mathrm{sc}}}}           \\
		{C'_X}^d \arrow[u, "\eta"]: &\overline{k}(SA_G)^*_{vert}/\overline{k}^* \arrow[r] \arrow[u] & \Div^0(\overline{G}^{ab})\oplus \widehat{T_{G^{\mathrm{sc}}}}\oplus \widehat{Z_{H^{red}}} \arrow[r] \arrow[u]                                   & \widehat{Z_{H^{\mathrm{sc}}}} \arrow[u]
		\end{tikzcd},
		\end{equation}
		
		where  the first vertical morphism is restriction to $T_G \subset SA_G$. We remark that technically $\eta$ is only defined in the derived category.
		
		We also have the following quasi-isomorphism:
		
		\begin{equation}\label{Table4}
		\begin{tikzcd}
		C'_X \arrow[d, "v"]: &Z_{H^{\mathrm{sc}}} \arrow[r] \arrow[d] & Z_{\bar{H}^{\mathrm{red}}}\oplus T_{G^{\mathrm{sc}}}\oplus Z^0_{ab}(SA_G) \arrow[r] \arrow[d] & Z_{ab}(SA_G) \arrow[d] \\
		C_X : &Z_{H^{\mathrm{sc}}} \arrow[r]           & Z_{\bar{H}^{\mathrm{red}}} \oplus T_{G^{\mathrm{sc}}} \arrow[r]                                & SA_G                  
		\end{tikzcd}.
		\end{equation}
		
		We notice that $[\eta([\tilde{\chi}_{\sigma,\eta}|D_{\sigma},\hat{z}_{\sigma},\hat{t}_{\sigma}|w])]=B \in \H^2(\Gamma_k, C_X^d)$, and 
		$v([(\partial \bar{h}_{\sigma})^{-1}|z_{\sigma},t_{\sigma},O|(\xi)])=\ab^0(x)$, hence, in view of (\ref{Abx}), (\ref{Eq1}), (\ref{Eq2}), (\ref{Eq3}) and Lemma \ref{Lem:Cassels_Weil}, Lemma \ref{Lem:Cup_product} (applied to $\mathcal{C}=C'_X$ and $\mathcal{C'}={C'_X}^d$) gives: 
		
		%
		
		\begin{equation*}
			\alpha \cup \ab(x)=\alpha' \cup \ab'(x)= \left[ \widetilde{\chi}_{\sigma,\eta}(\xi) \cdot \left( (\leftidx{^{\sigma}}{f_{\eta}})_{z_{\sigma}}(\square)^{-1}\cdot (\leftidx{^{\sigma}}{f_{\eta}})_{t_{\sigma}}(\bigstar)\cdot \frac{\leftidx{^{\sigma}}{\epsilon_{\eta}}(\bigstar)}{\leftidx{^{\sigma}}{\epsilon_{\eta}}(t_{\sigma}\cdot \bigstar)} \right)\cdot \left(\frac{\psi((\partial \bar{h'}_{\sigma})^{-1}x)}{\psi(x)} \right)^{-1}\right].
		\end{equation*}
		
		This concludes the proof of this lemma.
		
		
	\end{proof}

	%
	%
	To use (\ref{Eq:abelian}) for our comparison purposes we make a specific choice of $\tilde{\chi}_{\sigma,\eta}$, that will lead us to identity (\ref{Eq:ab2}) below.
	
	We notice that, since $\overline{SA_{G}}\rightarrow \overline{G^{ab}}$ defines a $\G_m^n$-torsor (for some $n \geq 0$)  on $\overline{G^{ab}}$ that is Zariski locally trivial, there exists, for each $\sigma \in \Gamma_k$, a $g_{\sigma} \in \overline{k}(SA_G)^*$ such that $g_{\sigma}(t)=(f_{\sigma})_t(1), \ \forall t \in T_G(\overline{k})$ and $\div(g_{\sigma})=D_{\sigma}-E_{\sigma}$, where $E_{\sigma} \in \pi^*\Div(\overline{G^{ab}})$. 
	
	We now define $\tilde{\chi}_{\sigma,\eta}\defeq f_{\sigma,\eta}\cdot (\partial g_{\sigma})^{-1}$. We notice that, by (\ref{Cocy3}), $f_{\sigma,\eta}\cdot (\partial g_{\sigma})^{-1}$ restricts to $\chi_{\sigma,\eta}^{-1}$ on $\overline{T_G} \subset \overline{SA_G}$.

	We have that, for any $t \in T(\bar{k})$, $\div(t^*g_{\sigma}/g_{\sigma})=t^*D_{\sigma}-D_{\sigma}=\div((f_{\sigma})_t)|_{SA_G}$
	(since $t^*E_{\sigma}=E_{\sigma}$). From this we deduce that
	
	\[t^*g_{\sigma}/g_{\sigma}=(f_{\sigma})_t|_{SA_G} \cdot (\tilde{\chi}_{\sigma})_t,\]
	
	for some $(\tilde{\chi}_{\sigma})_t(\star) \in \overline{k}(T_G\times SA_G)^*$ such that $(\tilde{\chi}_{\sigma})_t \in \overline{k}[SA_G]^*$ for every $t \in T(\bar{k})$ where the specialization makes sense. One may easily see that $(\tilde{\chi}_{\sigma})_1=1$ and $(\tilde{\chi}_{\sigma})_1(\star)=1$, from which it follows that $(\tilde{\chi}_{\sigma})_t = 1 \in \overline{k}(T_G\times SA_G)^*$.

	
	Noticing that $\partial g_{\sigma}(\xi)= \partial (g_{\sigma}(\xi))\cdot \leftidx{^\sigma}{g_{\eta}}(\leftidx{^\sigma}{\xi})^{-1}\cdot \leftidx{^\sigma}{g_{\eta}}(\xi)$, we can rewrite $\widetilde{\chi}_{\sigma,\eta}(\xi)$, up to a $2$-coborder as follows (we remind the reader on Notation \ref{Not:Delta}):
	
	\begin{align}
		\widetilde{\chi}_{\sigma,\eta}(\xi)^{-1}
		&=f_{\sigma,\eta}(\xi)\cdot \leftidx{^\sigma}{g_{\eta}}(\leftidx{^\sigma}{\xi})\cdot \leftidx{^\sigma}{g_{\eta}}(\xi)^{-1}=f_{\sigma,\eta}(\xi)\cdot (\leftidx{^\sigma}{f_{\eta}})_{(\partial \xi)^{-1}}(\xi)^{-1}
		\nonumber \\
		&=
		f_{\sigma,\eta}(\xi)\cdot (\leftidx{^\sigma}{f_{\eta}})_{\partial \xi}(\xi)^{-1}
		\triangleq f_{\sigma,\eta}(\xi)\cdot (\leftidx{^\sigma}{f_{\eta}})_{\partial \xi}(\xi)^{-1}. \label{Eq:ab2}
	\end{align}
	
	\smallskip

	To (hopefully) smoothen the computation of the LHS of (\ref{Eq:2pairings}) we introduce a second Galois pseudo-action (as defined in Section \ref{Sec:notation}) on $\bar{k}(H^{sc})^*$ and $\Div(H^{sc})$. It is defined as follows:
	
	\begin{equation}\label{Eq:Pseudo_action}
	(\leftidx{^{\sigma^{\dagger}}}{f})(x)\defeq (\leftidx{^{\sigma}} f)(\bar{h}_{\sigma}^{-1}\cdot x), \quad \leftidx{^{\sigma^{\dagger}}}D\defeq (\bar{h}_{\sigma}^{-1})^* (\leftidx{^{\sigma}}D),
	\end{equation}
	
	where $\leftidx{^{\sigma}}\star$ denotes the usual $\Gamma_k$-action on $\bar{k}(H^{sc})^*$ and $\Div(\overline{H^{sc}})$. We refer to this pesudo-action as {\it the} ``twisted'' $\Gamma_k$-pseudo-action (as it will be the only non-standard one appearing). To avoid confusion, we will use the notation $\partial ^{\dagger}$ to denote a coborder morphism taken with respect to the twisted Galois action. We notice that the restriction of the pseudo-action (\ref{Eq:Pseudo_action}) to $\bar{k}(H')^*\subset \bar{k}(H^{sc})^*$is an actual action of $\Gamma_k$. In fact, it is exactly the one obtained by pullbacking via the isomorphism $\overline{H'} \xrightarrow{g \cdot} \overline{Z_x}$, where $Z_x \defeq Z \times_X x \hookrightarrow Z$, the usual $\Gamma_k$-action on $\bar{k}(Z_x)^*$. 
	
	\begin{lemma}\label{Lem:Spec}
		The following identity holds:
		\begin{equation}\label{Eq:Spec}
		x^*\alpha=\left[f_{\sigma,\eta}(g \cdot x)^{-1}\cdot \partial^{\dagger}\left(\epsilon_{\sigma}(\xi^{-1}\bar{g}\xi x)\cdot (f_{\sigma})_{t}(\xi^{-1}\bar{g}\xi x)\cdot 
		\left(\frac{\leftidx{^{\sigma^{\dagger}}}{\psi(x)}{}}{\psi(x)} \right)^{-1} 
		\right)\right] \in \H^2(\Gamma_k,\overline{k}^*).
		\end{equation}
	\end{lemma}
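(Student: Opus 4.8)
The plan is to make the class $\alpha(B)\in\Br_a(X,G)$ completely explicit as a $2$-cocycle valued in $\overline{k}^*$ and then to read off its pullback along the point $x$, organising the computation along the three quasi-isomorphisms that define $\alpha$. First I would fix a representative in $Q_{geom}$: starting from the cocycle $\beta=((\chi_{\sigma,\eta},f_{\sigma,\eta})|(D_\sigma,f_\sigma)|\mathcal{L})$ of (\ref{The_Brauer_element}), I push it through the vertical quasi-isomorphisms of diagram (\ref{Table1}), working with the $\Pic$-rows since $\alpha$ factors through $Q'_X$ and $Q_{geom}$. Under $Q_{mix}\to Q_{geom}$ the summand $\widehat{T_G}$ is discarded and the $\UPic$-term collapses onto its divisor, so $\alpha(B)$ is represented by $(f_{\sigma,\eta}|D_\sigma|\mathcal{L})\in Z^2(k,Q_{geom})$, the suppressed data $\chi_{\sigma,\eta}$ and $f_\sigma$ surviving only through the cocycle relations (\ref{Cocy1})--(\ref{Cocy3}).

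Next I would turn this hypercohomology class into a genuine element of $\Br(X)$ via the quasi-isomorphism $\tau_{\le2}\mathbf{R}p_{X_*}\mathbf{G}_{mX}\to Q_{geom}$ of Proposition \ref{Prop:Demarche_derived}, and compute $x^*$. The pullback along $x$ amounts to restricting all the data to the fibre $Z_x=Z\times_X x$ and transporting it to $\overline{H'}$ through the isomorphism $\overline{H'}\xrightarrow{g\cdot}\overline{Z_x}$ afforded by the chosen lift $g=\rho_G(\bar g)\xi\in G(\bar k)$ of $x$. The decisive subtlety is that $g$ is only a $\bar k$-point, so this identification fails to be $\Gamma_k$-equivariant; the discrepancy is measured precisely by $h_\sigma=g^{-1}\cdot\act{\sigma}{g}$ and its semisimple part $\bar h_\sigma$, which is exactly why the twisted pseudo-action (\ref{Eq:Pseudo_action}) and its coboundary $\partial^\dagger$ enter in place of the ordinary Galois coboundary.

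I would then extract the $\overline{k}^*$-valued cocycle by applying the recipe of Remark \ref{Rmk:computing_spec} to the fibre-restricted complex. The top-degree term $\mathcal{L}\in\Pic(\overline{H'})$ is trivialised by a divisor read off from $\psi\in\bar k(\overline{H^{sc}})^*$ (with $\div\psi=\rho_H^*D$, as in (\ref{Eq3})); the resulting degree-$1$ correction is assembled from the uniformiser $\epsilon_\sigma\in\bar k(G^{sc})^*$ (with $\div\epsilon_\sigma=\rho_G^*D_\sigma$) together with the $\UPic$-datum $(f_\sigma)_t$, all specialised at the point $\xi^{-1}\bar g\xi x$ dictated by $g=\rho_G(\bar g)\xi$; and the leading term is the value $f_{\sigma,\eta}(g\cdot x)$ of the surviving $2$-cochain. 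Collecting $f_{\sigma,\eta}(g\cdot x)^{-1}$ and rewriting the corrections as a single twisted coboundary $\partial^\dagger(\cdots)$ should produce exactly (\ref{Eq:Spec}).

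The main obstacle I anticipate is the second stage: pinning down, at the level of cochains, how the transgression $x^*\colon\Br(X)\to\Br(k)$ is realised through $Q_{geom}$, and verifying that the non-invariance of the base point $g$ is compensated exactly by passing from $\partial$ to $\partial^\dagger$. This is in essence a lengthy bookkeeping of signs and of the specialisation points of each function; the uniformiser identities (\ref{Div_unif1})--(\ref{Div_unif2}) for $(f_\sigma)_t$ and the compatibility (\ref{Eq:deltaxi}) relating $\partial\xi$ to $z_\sigma$ and $t_\sigma$ are precisely what is needed to check that the assembled expression is a genuine $\overline{k}^*$-valued cocycle rather than a mere cochain.
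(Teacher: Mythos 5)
Your proposal follows essentially the same route as the paper: restrict to the fibre $Z_x$ using the naturality of the quasi-isomorphism of Proposition \ref{Prop:Demarche_derived}, transport to $\overline{H'}$ via $g^*$ with the twisted $\Gamma_k$-(pseudo-)action accounting for the non-rationality of $g$, and apply Remark \ref{Rmk:computing_spec} so that the whole statement reduces to checking that $\epsilon_{\sigma}(\xi^{-1}\bar{g}\xi x)\cdot (f_{\sigma})_{t}(\xi^{-1}\bar{g}\xi x)\cdot \frac{\leftidx{^{\sigma^{\dagger}}}{\psi(x)}}{\psi(x)}$ has divisor $g^*D_{\sigma}-\partial^{\dagger}\mathcal{L}$. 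This is exactly the paper's argument, and the remaining divisor verification you defer is likewise left as a one-line display there.
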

	
	\begin{proof}

		It follows immediately from the fact that the quasi-isomorphism \ref{Triangle} is natural that we have the following commutative diagram:
		
		\begin{equation}\label{Comm_diag_spec}
		\begin{tikzcd}
		\tau_{ \leq 2} \mathbf{R} p_{X_{*}} \mathbf{G}_{m X} \arrow[r] \arrow[d]& C_X^d \arrow[d] \\
		\tau_{ \leq 2} \mathbf{R} p_{X_{*}} \mathbf{G}_{m Z_x} \arrow[r] & \widehat{C_{Z_x}},
		\end{tikzcd}
		\end{equation}

		where vertical morphisms are defined though pullback via the inclusion $Z_x \hookrightarrow Z$.

		We notice that we have the following isomorphism of complexes:
		
		\begin{equation}\label{Eq:Spec1}
		[\bar{k}(Z_x)^*\rightarrow \Div (Z_x) \rightarrow \Pic (\bar{H}')] \xrightarrow{g^*} [\bar{k}(H')^*\rightarrow \Div (H') \rightarrow \Pic (\bar{H}')],
		\end{equation}

		where the action on the LHS is the usual one, and the one on the LHS is the 
		twisted one.
		%
		%
		%
		It follows now from Remark \ref{Rmk:computing_spec}, in view of the commutativity of (\ref{Comm_diag_spec})  and the isomorphism (\ref{Eq:Spec1}) that,
		for any choice $(g_{\sigma}) \in \Fun(\Gamma_k,\bar{k}(H')^*)$ and $\overline{\mathcal{L}}\in \Div(\bar{H}')$ such that $\div(g_{\sigma})=g^*D_{\sigma}-\partial^{\dagger} {\overline{\mathcal{L}}}$ and $[\overline{\mathcal{L}}]=\mathcal{L} \in \Pic(\bar{H}')$, we have that $f'_{\sigma,\eta}=f_{\sigma,\eta}\cdot (\partial g_{\sigma})^{-1} \in Z^2(\Gamma_k,\bar{k}^*)$ is a cocycle representing $x^*\alpha$. 
		
		Hence, Lemma \ref{Lem:Spec} follows from the following:
		
		\[
		\div\left(\epsilon_{\sigma}(\xi^{-1}\bar{g}\xi x)\cdot (f_{\sigma})_{t}(\xi^{-1}\bar{g}\xi x)\cdot \frac{\leftidx{^{\sigma^{\dagger}}}{\psi(x)}}{\psi(x)}  \right)=	g^*D_{\sigma}-\partial^{\dagger} \mathcal{L} \in \Div(\overline{H^{\prime}}) \subset \Div(\overline{H^{sc}})
		\]
	\end{proof}
	

	
	In the calculations that follow we use the following notation: any $=$ sign with references under it stands for an equality that is justified by the operations or references under it. Most of the references will refer to Lemma \ref{Lem:Final_lemma}, which appears just after the calculations, and is basically just a collection of easy-to-prove identities.
	
	We have the following, where $\Delta=\act{\xi^{-1}}{\bar{g}}$:
	\begin{align}
		RHS&=\left(f_{\sigma,\eta}(g \cdot x)^{-1}\right)\cdot \partial^{\dagger}\left(\epsilon_{\sigma}(\xi^{-1}\bar{g}\xi x)\cdot (f_{\sigma})_{t}(\xi^{-1}\bar{g}\xi x)\cdot \left(\frac{\leftidx{^{\sigma^{\dagger}}}{\psi(x)}}{\psi(x)} \right)^{-1} \right)\\
		\underset{x\mapsto e}{=}&
		\left(\frac{(\partial \epsilon_{\sigma}) (\Delta)}{f_{\sigma,\eta}(\xi \cdot \Delta)}
		\cdot (\partial f_{\sigma})_{t}(\Delta)\right)
		\cdot	
		\left(\frac{(\leftidx{^{\sigma}}{f_{\eta}})_{t}(\Delta)}{(\leftidx{^\sigma}{f_{\eta}})_{{\leftidx{^\sigma}{t}{}}}\left(\leftidx{^\sigma}{\Delta}\cdot \bar{h}_{\sigma}^{-1}\right)}	
		\cdot 		
		\frac{ \leftidx{^{\sigma}}{\epsilon_{\eta}}(\Delta)}{\leftidx{^{\sigma}}{\epsilon_{\eta}} \left(\leftidx{^\sigma}{\Delta}\cdot \bar{h}_{\sigma}^{-1}\right)} \right) \cdot \partial^{\dagger} \left(\frac{\leftidx{^{\sigma^{\dagger}}}{\psi(x)}}{\psi(x)}\right)^{-1}
		\\
		\underset{\ref{Lem:Final_lemma}(4)}{=} & \left(\frac{f_{\sigma,\eta}( \Delta)}{f_{\sigma,\eta}(\xi \cdot \Delta)}
		\cdot (\partial f_{\sigma})_{t}(\Delta)\right)
		\cdot	
		\left( \frac{(\leftidx{^{\sigma}}{f_{\eta}})_{t}(\Delta)}{(\leftidx{^\sigma}{f_{\eta}})_{{\leftidx{^\sigma}{t}{}}}\left(\leftidx{^\sigma}{\Delta}\cdot \bar{h}_{\sigma}^{-1}\right)}	
		\cdot 		
		\frac{ \leftidx{^{\sigma}}{\epsilon_{\eta}}(\Delta)}{\leftidx{^{\sigma}}{\epsilon_{\eta}} \left(\leftidx{^\sigma}{\Delta}\cdot \bar{h}_{\sigma}^{-1}\right)}\right) \cdot \partial^{\dagger} \left(\frac{\leftidx{^{\sigma^{\dagger}}}{\psi(x)}}{\psi(x)}\right)^{-1} \\
		\underset{\ref{Lem:Final_lemma}(6)}{=} & \left(\frac{f_{\sigma,\eta}( \Delta)}{f_{\sigma,\eta}(\xi \cdot \Delta)}
		\cdot (\partial f_{\sigma})_{t}(\Delta)\right)
		\cdot	
		\left((\leftidx{^{\sigma}}{f_{\eta}})_{\partial a}(1) \cdot (\leftidx{^{\sigma}}{f_{\eta}})_{\partial \xi^{-1}}(\Delta)
		\cdot 		
		\frac{ \leftidx{^{\sigma}}{\epsilon_{\eta}}(\Delta)}{\leftidx{^{\sigma}}{\epsilon_{\eta}} \left(\leftidx{^\sigma}{\Delta}\cdot \bar{h}_{\sigma}^{-1}\right)}\right) \cdot \partial^{\dagger} \left(\frac{\leftidx{^{\sigma^{\dagger}}}{\psi(x)}}{\psi(x)}\right)^{-1}.
	\end{align}	
	
	On the other hand we have that:
	
	\begin{align}
		LHS = &\widetilde{\chi}_{\sigma,\eta}(\xi) \cdot \left( (\leftidx{^{\sigma}}{f_{\eta}})_{z_{\sigma}}(\square)^{-1} \cdot (\leftidx{^{\sigma}}{f_{\eta}})_{t_{\sigma}}(\bigstar)\cdot \frac{ \leftidx{^{\sigma}}{\epsilon_{\eta}}(\bigstar)}{ \leftidx{^{\sigma}}{\epsilon_{\eta}}(t_{\sigma}\cdot \bigstar)} \right)\cdot \left(\frac{\psi((\partial \bar{h'}_{\sigma})^{-1}x)}{\psi(x)} \right) ^{-1} \\
		\underset{(\ref{Eq:ab2})}{\triangleq} &f_{\sigma,\eta}(\xi)^{-1}\cdot (\leftidx{^\sigma}{f_{\eta}})_{\partial \xi}(\xi)\cdot \left( (\leftidx{^{\sigma}}{f_{\eta}})_{z_{\sigma}}(\square)^{-1}\cdot (\leftidx{^{\sigma}}{f_{\eta}})_{t_{\sigma}}(\bigstar)\cdot \frac{ \leftidx{^{\sigma}}{\epsilon_{\eta}}(\bigstar)}{ \leftidx{^{\sigma}}{\epsilon_{\eta}}(t_{\sigma}\cdot \bigstar)} \right)\cdot \left(\frac{\psi((\partial \bar{h'}_{\sigma})^{-1}x)}{\psi(x)} \right) ^{-1} \\
		\underset{{(\ref{Eq:deltaxi})},{(\ref{Div_unif2})}}{\underset{\star \mapsto e, \square \mapsto e}{=}} &\left(f_{\sigma,\eta}(\xi)^{-1} \cdot (\partial f_{\sigma})_{t}(1) \right) \cdot (\partial f_{\sigma})_{t}(1)^{-1} \cdot (\leftidx{^\sigma}{f_{\eta}})_{\partial \xi}(\xi) \cdot \left( (\leftidx{^{\sigma}}{f_{\eta}})_{\partial \xi^{-1}}(1)\cdot \frac{ \leftidx{^{\sigma}}{\epsilon_{\eta}}(1)}{ \leftidx{^{\sigma}}{\epsilon_{\eta}}(t_{\sigma})} \right)\cdot \left(\frac{\psi((\partial \bar{h'}_{\sigma})^{-1}x)}{\psi(x)} \right) ^{-1}.
	\end{align}



	In view of points 5., 9., 1., and 8. (appearing in the order they are being used) of Lemma \ref{Lem:Final_lemma}, we obtain $LHS \triangleq RHS$, as wished.
	
\end{proof}

\begin{lemma}\label{Lem:Final_lemma}
	The following hold:
	\begin{enumerate}
		\item The function \[ (\leftidx{^{\sigma}}{f_{\eta}})_{\partial \xi^{-1}} (\bigstar) \cdot \frac{\leftidx{^{\sigma}}{\epsilon_{\eta}}(\bigstar)}{\leftidx{^{\sigma}}{\epsilon_{\eta}}(t_{\sigma} \cdot (\leftidx{^{z_{\sigma}^{-1}}}{\bigstar} ))},\] is constant in $\bigstar \in {G^{sc}}(\overline{k})$ (we remind the reader of the convention of our notation $\leftidx{^g}{h}$, when $[H \rightarrow G]$ is a crossed module, as is, in our case $[G^{sc}\rightarrow G]$).
		\item If $\Delta \defeq \leftidx{^{\xi^{-1}}}{\bar{g}} \in {G^{sc}}(\overline{k})$, we have that:
		\[
		\leftidx{^\sigma}{\Delta}\cdot \bar{h}_{\sigma}^{-1}=t_{\sigma}\cdot \leftidx{^{z_{\sigma}^{-1}}}{\Delta}.
		\]
		\item For any $(f,D) \in \operatorname{UPic}_{\overline{T_{G}}}(\bar{Z})^{1}$, with $D \in \pi^*\Div(G^{ab})+\alpha^*\Div(Y)$, we have that $a^*f_t=f_t$ for any $a \in G^{ant}$. Moreover, for any $a \in T_G \cap G^{ant}, t \in T_G(\overline{k})$ such that $f_t(1)\neq 0$, we have that $f_a(t)=f_a(1)$.
		\item $f_{\sigma,\eta}(\bigstar) = (\partial \epsilon_{\sigma})(\bigstar)$, for any $\bigstar \in \overline{G^{sc}}$.
		\item We have the following:
		\[
		\frac{f_{\sigma,\eta}(\xi \cdot \bigstar)}{f_{\sigma,\eta}(\bigstar)} \cdot (\partial f_{\sigma})_t^{-1}(\bigstar) = \frac{f_{\sigma,\eta}(\xi)}{f_{\sigma,\eta}(1)} \cdot (\partial f_{\sigma})_t^{-1}(1)=f_{\sigma,\eta}(\xi) \cdot (\partial f_{\sigma})_t^{-1}(1),
		\]
		for any $\bigstar \in G^{sc}$ such that all the quantities appearing above are $\neq 0$.
		\item We have the following:
		\[
		\frac{(\leftidx{^{\sigma}}{f_{\eta}})_{t}(\Delta)}{(\leftidx{^\sigma}{f_{\eta}})_{{\leftidx{^\sigma}{t}{}}}\left(\leftidx{^\sigma}{\Delta}\cdot \bar{h}_{\sigma}^{-1}\right)}= (\leftidx{^{\sigma}}{f_{\eta}})_{\partial t^{-1}}(\Delta)=(\leftidx{^{\sigma}}{f_{\eta}})_{\partial \xi^{-1}}(\Delta)\cdot (\leftidx{^{\sigma}}{f_{\eta}})_{\partial a^{-1}}(\Delta)^{-1}=(\leftidx{^{\sigma}}{f_{\eta}})_{\partial a}(1) \cdot (\leftidx{^{\sigma}}{f_{\eta}})_{\partial \xi^{-1}}(\Delta) ,
		\]
		where the pseudo-action of $\Gamma_k$ on $\Delta$ is the twisted one, otherwise it is the standard $\Gamma_k$-action.
		\item For any $a \in G^{ant}\cap G^{aff}$, and any $(f,D)\in \UPic_{T_G}(Z)$ such that $D \in \pi^* \Div(G^{ab})+\alpha^*G^{aff}$, we have $f_a(\Delta)=f_a(1)$ for every $\Delta \in G^{sc}$.
		\item We have the following:
		\[
		\partial^\dagger \left(\frac{\leftidx{^{\sigma^{\dagger}}}{\psi(x)}}{\psi(x)}\right) = \frac{\psi((\partial \bar{h'}_{\sigma})^{-1}x)}{\psi(x)} ,
		\]
		where the pseudo-action of $\Gamma_k$ on the RHS is the one defined in (\ref{Eq:Pseudo_action}).
		\item We have the following equality:
		\[
		(\partial f_{\sigma})_{t}^{-1}(1) \cdot (\leftidx{^\sigma}{f_{\eta}})_{\partial \xi}(\xi) \triangleq (\leftidx{^{\sigma}}{f_{\eta}})_{\partial a}(1).
		\]
	\end{enumerate}
\end{lemma}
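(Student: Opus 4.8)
The plan is to verify the nine identities one at a time; they fall into two families. The first family (statements 2, 4, 8, and the formal parts of 6 and 9) are identities in Galois cochains, provable by unwinding definitions together with the cocycle relations (\ref{Div_unif1})--(\ref{Cocy3}), the decomposition (\ref{Eq:deltaxi}) of $\partial\xi$, and the crossed-module axioms for $[G^{sc}\xrightarrow{\rho_G}G]$ (equivariance $\rho_G(\act{g}{b})=g\rho_G(b)g^{-1}$ and the Peiffer identity $\act{\rho_G(a)}{b}=aba^{-1}$). The second family (statements 1, 3, 5, 7, and the passage from divisors to values in 6) are constancy statements: each asserts that a rational function of a free variable $\bigstar,\square$ or $\Delta$ ranging over $G^{sc}$ (or over $SA_G$) is in fact independent of that variable. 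The uniform mechanism here is that $G^{sc}$ is semisimple and simply connected, so $\bar{k}[G^{sc}]^{\times}=\bar{k}^{\times}$ and $\Pic(\overline{G^{sc}})=0$; hence any rational function on $G^{sc}$ with trivial divisor is a constant, and one pins down that constant by evaluating at the identity, using the normalisation $f_{\sigma,\eta}(1)=1$ together with the chosen normalisations $\div\epsilon_\sigma=\rho_G^*D_\sigma$ and $\div\psi=\rho_H^*D$.

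For statement 2 I would compute directly: from $g=\rho_G(\bar g)\xi$ and equivariance one gets $g=\xi\rho_G(\Delta)$ with $\Delta=\act{\xi^{-1}}{\bar g}$, and I would substitute this, the relation $h_\sigma=g^{-1}\act{\sigma}{g}=\rho_H(\bar h_\sigma)z_\sigma$, and the definition $t_\sigma=\bar g^{-1}\act{g}{\bar h_\sigma}^{-1}\act{\sigma}{\bar g}$ into both sides, moving factors past one another via the Peiffer identity and the centrality of $z_\sigma\in T_G$. Statement 4 equates $f_{\sigma,\eta}|_{G^{sc}}$ with $\partial\epsilon_\sigma$: by (\ref{Cocy1}) and $\div\epsilon_\sigma=\rho_G^*D_\sigma$ the two sides share the same divisor on $G^{sc}$, so their ratio is constant, and $f_{\sigma,\eta}(1)=1$ fixes it to $1$. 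Statement 8 is bookkeeping for the twisted pseudo-action (\ref{Eq:Pseudo_action}): I would expand $\partial^{\dagger}$ of the one-cochain $\sigma\mapsto\act{\sigma^{\dagger}}{\psi}/\psi$ straight from the definition, let the $\psi$-factors telescope, and recognise the surviving translation as the one by $(\partial\bar h'_\sigma)^{-1}$.

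For the constancy statements the content lies in tracking how translations and conjugations act on divisors. In statement 3 the key point is that $T_G\subset G^{lin}$ acts trivially on $G^{ab}=G/G^{lin}$, so for $D\in\pi^*\Div(G^{ab})+\alpha^*\Div(Y)$ the $\pi^*$-part drops out of $\div f_t=t^*D-D$, leaving only the $\alpha^*\Div(Y)$-part, which is $G^{ant}$-invariant since $Y=X/G^{ant}$ and $G^{ant}$ is central; the ``moreover'' part is cleaner still, because $a\in T_G\cap G^{ant}\subset G^{lin}\cap G^{ant}$ projects trivially to both $G^{ab}$ and $Y$, forcing $\div f_a=0$ and hence $f_a$ constant. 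Statement 7 is the same argument with $a\in G^{ant}\cap G^{lin}$. Statements 1 and 5 then combine this invariance with statement 2: one computes the divisor (in $\bigstar$, resp.\ in $\Delta$) of the displayed product, checks that the contributions from $\epsilon_\eta$, from $(\act{\sigma}{f_\eta})_{(\cdot)}$, and from the $t_\sigma$- and $z_\sigma$-twisted translates cancel using (\ref{Div_unif1})--(\ref{Div_unif2}) and the $G^{ant}$-invariance above, and concludes constancy on $G^{sc}$. Statements 6 and 9 are the reassembly: in 6 one rewrites $\act{\sigma}{\Delta}\bar h_\sigma^{-1}$ via statement 2, splits the relevant torus element as a product of $\partial\xi^{-1}$ and an element $a\in G^{ant}\cap G^{lin}$, and applies multiplicativity (\ref{Div_unif2}) together with statements 3 and 7 to move the evaluation point from $\Delta$ to $1$; in 9 one substitutes (\ref{Eq:deltaxi}), uses (\ref{Cocy3}) and (\ref{Div_unif2}), and identifies the discrepancy as an explicit $2$-coboundary, which is all that $\triangleq$ demands.

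I expect the main obstacle to be the divisor bookkeeping underlying statements 1, 5 and 6, rather than any single conceptual point: one must keep precise track of how left translation by $\xi\in SA_G$ and conjugation by $z_\sigma\in Z_{H^{red}}$ interact, all taken modulo $\pi^*\Div(G^{ab})$, and it is precisely the reduction $D_\sigma\in\pi^*\Div^0(G^{ab})+\alpha^*\Div(Y)$ from Lemma \ref{Lem:easier_calculation}, combined with the centrality of $G^{ant}$, that makes the unwanted translates disappear. Everything else is formal manipulation of cochains.
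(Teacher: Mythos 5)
Your overall plan matches the paper's proof almost exactly: points 1, 4, 5 and 7 are handled by computing a divisor, observing it vanishes on $G^{sc}$, and invoking $\bar{k}[G^{sc}]^{*}=\bar{k}^{*}$ with the normalisations $f_{\sigma,\eta}(1)=1$, $\operatorname{div}\epsilon_{\sigma}=\rho_G^*D_{\sigma}$; point 2 is the same direct crossed-module computation; point 8 is the same expansion of $\partial^{\dagger}$; points 6 and 9 are reassembled from 3 and 7 exactly as you describe. So for those items there is nothing to add.

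The one place where your uniform mechanism breaks down is point 3, and this is precisely where the paper has to do something different. The function $a^*f_t/f_t$ lives on $\bar{Z}$, not on $G^{sc}$, so showing its divisor vanishes only puts it in $\bar{k}[\bar{Z}]^{*}$, which is in general strictly larger than $\bar{k}^{*}$ (e.g.\ characters of a torus factor are non-constant units); and evaluating at the identity gives $f_t(a)/f_t(1)$, which is not \emph{a priori} equal to $1$ — indeed that equality is essentially what is being proved. The paper closes this in two steps: first, the map $(a,t)\mapsto [a^*f_t/f_t]\in\bar{k}[\bar{Z}]^{*}/\bar{k}^{*}$ is continuous from the connected $G^{ant}\times T_G$ to a discrete target and is trivial at $t=1$ (using $f_1=1$), so $a^*f_t/f_t$ is a scalar $c_{a,t}$; second, for fixed $t$ the assignment $a\mapsto c_{a,t}$ is a regular function on $G^{ant}$, hence constant by anti-affineness ($\bar{k}[G^{ant}]=\bar{k}$), hence equal to $c_{1,t}=1$. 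Your treatment of the ``moreover'' clause has the same defect: $\operatorname{div} f_a=0$ does \emph{not} force $f_a$ to be constant on $\bar{Z}$; the paper instead deduces $f_a(t)=f_a(1)$ from the commutativity of $T_G$ and the cocycle relation, via $f_t(1)\cdot f_a(1)=f_t(a)\cdot f_a(1)=f_{at}(1)=f_a(t)\cdot f_t(1)$. Since point 6 (and through it the final comparison) leans on point 3, this step needs the extra argument; everything else in your proposal is sound.
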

\begin{proof}
	\begin{enumerate}
		\item The divisor of this function is null, so the first point follows from the fact that $\bar{k}[G^{sc}]^*=\bar{k}^*$.
		\item We have
		\[
		\leftidx{^\sigma}{\Delta}\cdot \bar{h}_{\sigma}^{-1}=\leftidx{^{\leftidx{^{\sigma}}{\xi}^{-1}}}{(\leftidx{^{\sigma}}{\bar{g}}{})}{}\cdot \bar{h}_{\sigma}^{-1}=
		\leftidx{^{\leftidx{^{\sigma}}{\xi}^{-1}}}{\bar{g}}\cdot \leftidx{^{\leftidx{^{\sigma}}{\xi}^{-1}}}{\bar{g}^{-1}} \cdot \leftidx{^{\leftidx{^{\sigma}}{\xi}^{-1}}}{(\leftidx{^{\sigma}}{\bar{g}}{})}\cdot \bar{h}_{\sigma}^{-1}.
		\]
		But
		\[
		t_{\sigma}=\bar{g}^{-1} \cdot \leftidx{^{\leftidx{^{\sigma}}{g}{}}}{(\bar{h}_{\sigma}^{-1})} \cdot \leftidx{^{\sigma}}{\bar{g}}{}
		=\bar{g}^{-1}\cdot \leftidx{^{\sigma}}{\bar{g}}{} 		\cdot \leftidx{^{\leftidx{^{\sigma}}{\xi}{}}}{(\bar{h}_{\sigma}^{-1})}
		=\leftidx{^{\leftidx{^{\sigma}}{\xi}^{-1}}}{\bar{g}^{-1}} \cdot \leftidx{^{\leftidx{^{\sigma}}{\xi}^{-1}}}{(\leftidx{^{\sigma}}{\bar{g}}{})}\cdot \bar{h}_{\sigma}^{-1},
		\]
		where the third equality follows from the fact that $t_{\sigma}$ and $\leftidx{^{\sigma}}{\xi}$ commute. Hence:
		\[
		\leftidx{^\sigma}{\Delta}\cdot \bar{h}_{\sigma}^{-1}=\leftidx{^{\leftidx{^{\sigma}}{\xi}^{-1}}}{\bar{g}}\cdot t_{\sigma}=\leftidx{^{(\partial \xi )^-1}}{\Delta} \cdot t_{\sigma}=t_{\sigma}\cdot \leftidx{^{z_{\sigma}^{-1}}}{\Delta},
		\]
		as wished.
		\item We have that $\div(a^*f_t/f_t)=(a^*-\id)(t^*-\id)D=0$. We may consider then the function $A \times T \rightarrow \bar{k}[Z]^*/\bar{k}^*$, that sends $(a,t)$ to $[a^*f_t/f_t]$. If we endow $\bar{k}[Z]^*/\bar{k}^*$ with the discrete topology, and $A \times T$ with the Zariski one, than this function is clearly continuous. Since  $A \times T$ is connected, and $f_1 =1$, we conclude that $a^*f_t/f_t$ is a constant $c_{a,t}\neq 0$ for every $a,t$. Clearly the morphism $(a,t) \mapsto c_{a,t}$ is algebraic, and $c_{1,t}=1$ for every $ t \in \overline{T}$. If we fix $t$, $c_{a,t}$ is a function in $\bar{k}[G^{ant}]=\bar{k}$. We conclude that $c_{a,t}=1$ for every $a$ and $t$.
		
		For the second part, we have: $f_t(1)\cdot f_a(1)=f_t(a)\cdot f_a(1)=f_{at}(1)=f_a(t)\cdot f_t(1)$. 
		\item We have that $\div (f_{\sigma,\eta}\cdot (\partial \epsilon_{\sigma})^{-1})=0$. Since $\bar{k}[G^{sc}]^*=\bar{k}^*$ and $(f_{\sigma,\eta}\cdot (\partial \epsilon_{\sigma})^{-1})(1)=1$, we conclude the sought equality.
		\item It is enough to notice that the divisor of the LHS (as a function in $\bigstar \in G^{sc}$), which is $\restricts{(\xi^*(\partial D_{\sigma}) - t^*(\partial D_{\sigma}))}{G^{sc}}$, is trivial since $(\xi^*(\partial D_{\sigma}) - t^*(\partial D_{\sigma}))$ is the pullback of a divisor from $G^{ab}$.
		\item The first and middle equalities are a consequence of Point 3. of this lemma, the last is a consequence of the point below.
		\item The divisor of $f_a$ is $a^*D-D$. Since $a \in G^{ant}\cap G^{aff}$, we have that $\div (f_a)= O$. Hence $\restricts{f_a}{G^{sc}}$ is constant, as wished.
		\item This follows immediately by expanding the LHS. 
		\item \[
		(\partial f_{\sigma})_{t}(1) \cdot (\leftidx{^\sigma}{f_{\eta}})_{\partial \xi}(\xi)^{-1} \triangleq (\leftidx{^{\sigma}}{f_{\eta}})_{\partial a}(1)^{-1}.
		\]
		We have 
		\[
		(\partial f_{\sigma})_{t}(1) =  \partial ((f_{\sigma})_{t})(1) \cdot (\leftidx{^\sigma}{f_{\eta}})_{\leftidx{^\sigma}{t}{}}(1) \cdot (\leftidx{^\sigma}{f_{\eta}})_t (1) ^{-1}\triangleq (\leftidx{^\sigma}{f_{\eta}})_{\leftidx{^\sigma}{t}{}}(1) \cdot (\leftidx{^\sigma}{f_{\eta}})_t (1) ^{-1} = (\leftidx{^\sigma}{f_{\eta}})_{\partial t}(t),
		\]
		and
		\[
		(\leftidx{^\sigma}{f_{\eta}})_{\partial \xi}(\xi)=(\leftidx{^\sigma}{f_{\eta}})_{\partial \xi}(t)=(\leftidx{^\sigma}{f_{\eta}})_{\partial t}(t) \cdot (\leftidx{^\sigma}{f_{\eta}})_{\partial a}(t)=(\leftidx{^\sigma}{f_{\eta}})_{\partial t}(t) \cdot (\leftidx{^\sigma}{f_{\eta}})_{\partial a}(1),
		\]
		where the first and middle and equalities follow from Point 3. .
	\end{enumerate}
\end{proof}

\section{Removing places}\label{Sec:Removing_places}

In what follows we are going to give a version of Theorem \ref{Thm:Homspaces} for strong approximation outside some set of (finite) places $S$, see Theorem \ref{Thm:S-et-Obstr}. So, in what follows, we fix a number field $K$, and a set of finite places $S \subset M_K^{fin}$.

For a $K$-variety $X$ we define the $S$-modified Brauer-Manin group of $X$ to be the following:
\[
\Br^SX \defeq \Ker \left( \Br X \rightarrow \prod_{v \in S}\Br X_{K_v}  \right).
\]

 We define the Brauer-Manin set \textit{outside $S$} of $X$ as follows:
\begin{equation}
	X(\A_K^S)^{\Br^SX}\defeq 
	\begin{cases}
	\{x \in X(\A_K^S): \ <x,B>=0 \ \text{ for all } B \in \Br^SX \} \ \text{if } X(K_S) \neq \emptyset, \\
	\emptyset \ \text{otherwise}.
	\end{cases}
\end{equation}
We clearly have an inclusion $\overline{X(K)}^S\subset X(\A_K^S)^{\Br^SX}$, where $\overline{\star}^S$ denotes the closure in the $S$-adeles.

We define now the étale Brauer-Manin obstruction to strong approximation \textit{outside $S$} on $X$ as follows:
\begin{equation}
	X(\A_K^S)^{\acute{e}t,\Br^S} = \bigcap_{\substack{f:Y \xrightarrow{F} X \\  F \text{ finite} \\ \text{group scheme}}}\bigcup_{[\sigma]\in H^1(K,F)} f^{\sigma}(Y^{\sigma}(\A_K^S)^{\Br^S Y^{\sigma}}).
\end{equation}

Since, by \cite[Proposition 6.4]{demarche}, $X(\A_K^S)^{\acute{e}t,\Br^S}$ is closed, we have that $\overline{X(K)}^S\subset X(\A_K^S)^{\acute{e}t,\Br^S}$.

\begin{theorem}\label{Thm:SObstr}
	Let  $G$ be a connected $K$-group, $X$ be a $G$-homogeneous space with geometrically connected linear stabilizers, and $S$ be a finite set of places of $K$, and $S_0 \subset S \cap M_K^{fin}\defeq S_f$. We assume that the Tate-Shafarevich group $\Sha(K,G^{ab})$ is finite and that $G^{sc}(K)$ is dense in $G^{sc}(\A^S_K)$ . 
	We then have that $\overline{G^{scu}(K_{S_f\setminus S_0}) \cdot X(K) }^{S_0}=X(\A_K^{S_0})_{\bullet}^{\Br^{S_0}X}$, where $\Br^{S_0}X \subset \Br X$ denotes the kernel of $\Br X \rightarrow \prod_{v \in S_0} \Br X_{K_v}$.
\end{theorem}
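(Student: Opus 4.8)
The plan is to deduce the theorem from the full (all-places) strong approximation statement by analysing the projection $\pi^{S_0}\colon X(\A_K)_{\bullet}\to X(\A_K^{S_0})_{\bullet}$. First I would record the Brauer--Manin form of the full result: combining Demarche's Theorem~\ref{Thm:Dem_ab} with the compatibility of pairings proved in Theorem~\ref{Thm:compatibility} (exactly as in Remark~\ref{Rmk:BorovoiDemarche} and the remark following Theorem~\ref{Thm:Dem_ab}), one obtains, under the present hypotheses, that $X(\A_K)_{\bullet}^{\Br X}=\overline{G^{scu}(K_{S_f})\cdot X(K)}$ inside $X(\A_K)_{\bullet}$. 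The entire theorem then reduces to the single identity
\[
\pi^{S_0}\bigl(X(\A_K)_{\bullet}^{\Br X}\bigr)=X(\A_K^{S_0})_{\bullet}^{\Br^{S_0}X}, \qquad (\star)
\]
via a squeezing argument. Indeed, continuity of $\pi^{S_0}$ together with the fact that projection absorbs the $G^{scu}(K_{S_0})$-translations gives $\pi^{S_0}(X(\A_K)_{\bullet}^{\Br X})=\pi^{S_0}\bigl(\overline{G^{scu}(K_{S_f})\cdot X(K)}\bigr)\subseteq \overline{G^{scu}(K_{S_f\setminus S_0})\cdot X(K)}^{S_0}$, because $\pi^{S_0}\bigl(G^{scu}(K_{S_f})\cdot X(K)\bigr)=G^{scu}(K_{S_f\setminus S_0})\cdot X(K)$ as subsets of $X(\A_K^{S_0})_{\bullet}$.

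Next I would verify the inclusion $\overline{G^{scu}(K_{S_f\setminus S_0})\cdot X(K)}^{S_0}\subseteq X(\A_K^{S_0})_{\bullet}^{\Br^{S_0}X}$. Since the right-hand side is closed, it suffices to treat a point $g\cdot P$ with $P\in X(K)$ and $g=(g_v)_{v\in S_f\setminus S_0}\in G^{scu}(K_{S_f\setminus S_0})$. For $B\in\Br^{S_0}X$ the local Brauer value is $G^{scu}(K_v)$-invariant, by the argument of Lemma~\ref{Lem:Brauer_trivial} (which applies to $G^{scu}$ because its Brauer group is constant), so $\langle g\cdot P,B\rangle=\sum_{v\notin S_0}\inv_v(B(P))$; as $B$ localizes to $0$ at each $v\in S_0$, this equals $\sum_{v\in M_K}\inv_v(B(P))=0$ by reciprocity. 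The same computation, read on $\pi^{S_0}(x)$, shows the easy half $\pi^{S_0}(X(\A_K)_{\bullet}^{\Br X})\subseteq X(\A_K^{S_0})_{\bullet}^{\Br^{S_0}X}$ of $(\star)$, since $\langle x,B\rangle=\langle\pi^{S_0}(x),B\rangle$ for $B\in\Br^{S_0}X$. Combining with the previous paragraph, both intermediate sets are trapped between the two sides of $(\star)$, so it remains only to prove the surjective half of $(\star)$.

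The crux is therefore the lifting statement: every $x^{S_0}\in X(\A_K^{S_0})_{\bullet}^{\Br^{S_0}X}$ extends to some $x\in X(\A_K)_{\bullet}^{\Br X}$. Writing $\mathcal{B}_{S_0}\defeq\Br X/\Br^{S_0}X\hookrightarrow\prod_{v\in S_0}\Br X_{K_v}$, the assignment $B\mapsto\sum_{v\notin S_0}\inv_v(B(x_v))$ descends to a homomorphism $\bar\phi\in\mathcal{B}_{S_0}^{D}$, and I must choose local points $x_v\in X(K_v)$ for $v\in S_0$ realizing $-\bar\phi$ under $(x_v)\mapsto\bigl(B\mapsto\sum_{v\in S_0}\inv_v(B(x_v))\bigr)$; the assembled adelic point $x$ will then pair trivially with all of $\Br X$. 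Because $S_0\subset M_K^{fin}$, at each $v\in S_0$ the abelianized local pairing is perfect (Lemma~\ref{Lem:duality}, i.e. Harari--Szamuely duality after dévissage), the abelianization map $\ab_{K_v}^0$ is surjective and open (Lemma~\ref{Lem:ab_continuity_openness}), and Theorem~\ref{Thm:compatibility} identifies the local Brauer pairing with the abelianized one through $\alpha$; together these make the evaluation $X(K_v)\to(\Br_a(X_{K_v}))^{D}$ surject onto the relevant finite quotient.

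The main obstacle is precisely this lifting step. Assembling the local surjectivities over the finite set $S_0$ realizes arbitrary functionals on each factor $\Br X_{K_v}$ separately, but $\bar\phi$ lives on the \emph{subgroup} $\mathcal{B}_{S_0}$ of the product; controlling its image there -- equivalently, ruling out a global reciprocity constraint relating the places in $S_0$ to those outside -- is what forces the use of the Poitou--Tate type exactness encoded in the ``Hasse principle for $X$'' (Theorem~\ref{Thm:HPforX}, equivalently \cite[Thm A.1]{BoCTSko}) rather than bare local duality. Once this exactness produces the required $(x_v)_{v\in S_0}$ and hence $(\star)$, the squeezing argument of the first paragraph closes the proof.
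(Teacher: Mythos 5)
Your reduction to the identity $(\star)$, $\pi^{S_0}\bigl(X(\A_K)_{\bullet}^{\Br X}\bigr)=X(\A_K^{S_0})_{\bullet}^{\Br^{S_0}X}$, is where the argument breaks: $(\star)$ is false in general, and the paper exhibits a counterexample precisely to warn against this route (Proposition \ref{GBsp}, invoked right after the statement of Theorem \ref{Thm:SObstr}). For $X=\G_m$ over $\Q$ and $S_0=\{2\}$ one has $X(\A_{\Q})_{\bullet}^{\Br X}=\Q^*$, so $\pi^{S_0}\bigl(X(\A_{\Q})_{\bullet}^{\Br X}\bigr)$ is countable, whereas $X(\A_{\Q}^{S_0})_{\bullet}^{\Br^{S_0}X}\supset\overline{\Q^*}^{S_0}=\Q^*\cdot 2^{\hat{\Z}}$ is uncountable; so the "surjective half" of $(\star)$, i.e.\ your lifting step, cannot hold. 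The underlying reason is that Lemma \ref{Lem:duality} is a perfect pairing between $\H^0(K_v,C_X)^{\wedge}$ --- the \emph{profinite completion} --- and $\H^2(K_v,C_X^d)$: the functional $-\bar\phi$ you need to realize at the places of $S_0$ may only be hit by an element of the completion, not by an actual point of $X(K_{S_0})$ (in the example, one would need "$2^{-a}$" with $a\in\hat{\Z}\setminus\Z$ at $v=2$). Theorem \ref{Thm:HPforX} does not repair this: it asserts existence of a rational point under vanishing of the $\B_{\infty}$-obstruction, not surjectivity of the local evaluation maps onto $(\Br X/\Br^{S_0}X)^D$.

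The correct statement is weaker than $(\star)$: the set $X(\A_K^{S_0})_{\bullet}^{\Br^{S_0}X}$ equals the \emph{closure} of $\pi^{S_0}\bigl(X(\A_K)_{\bullet}^{\Br X}\bigr)$, and producing that closure is the actual content of the proof. The paper works on the abelianized side: after reducing to $X(K)\neq\emptyset$ via Theorem \ref{Thm:HPforX} and transporting everything through $\ab^0$ and Theorem \ref{Thm:compatibility}, the key identity becomes $\theta'^{-1}\bigl(\im(\pi_{S_0}^D)\bigr)=\overline{(\Ker\theta')_{S_0}}$ in $\P^0(K,C_X)$. Proving this requires the global quasi-compactness of $\P^0(K,C_X)/\overline{\H^0(K,C_X)}$ (Lemma \ref{Lem:Compactness}) and Corollary \ref{Lem:SameImage} (which says that $\P^0$ and its completion have the same image in $\H^2(K,C_X^d)^D$ --- a global fact with no local analogue), together with the openness of $\ab^0$ to pull the closure back to $X(\A_K)_{\bullet}$. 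Your first two inclusions (the projection lands in $\overline{G^{scu}(K_{S_f\setminus S_0})\cdot X(K)}^{S_0}$, and that set lies in the $S_0$-Brauer set) are correct, but without the compactness input the chain cannot be closed.
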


\begin{remark}\label{Rmk:Quot_by_constant}
	In Theorem \ref{Thm:SObstr} above one can also substitute $\Br^{S_0}X \subset \Br X$ with its quotient by constant elements instead; i.e. the quotient $\Br^{S_0}X/\Br^S K\subset \Br X/\Br K$. Moreover, if we denote by $\Br^{S_0}_{loc}(X) \defeq \Ker \left( \Br X \rightarrow \prod_{v \in S_0}\Br X_{K_v}/\Br K_v  \right)$, we have that $X(\A_K^{S_0})^{\Br^{S_0}X}=X(\A_K^{S_0})^{\Br^{S_0}_{loc}(X)}$ (and the same holds for $X(\A_K)_{\bullet}$). This follows from the surjectivity $Br K \twoheadrightarrow \oplus_{v \in S_0} \Br K_v$, which is an immediate consequence of the Albert-Brauer-Hasse-Noether Theorem.
\end{remark}

\begin{remark}\label{Rmk:S0bigenough}
	In the case that $S_0=S \cap M_K^{fin}$, Theorem \ref{Thm:SObstr} yields the equality $\overline{X(K)}^{S_0}=X(\A_K^{S_0})_{\bullet}^{\Br^{S_0}X}$, i.e. a result of strong approximation on $X$, without the ``$G^{scu}$ error".
\end{remark}

\begin{remark}\label{...tobeadded}
	In the case that $S_0=\emptyset$, Theorem \ref{Thm:SObstr} reproves the main theorem of \cite{borovoi}. However, it does use \cite[Cor. 6.3]{Demarche_abelian}, and Theorem \ref{Thm:compatibility}, itself somehow a complement to \cite{Demarche_comparison}. As mentioned before, Demarche remarks in \cite[Rmq 6.4]{Demarche_abelian} that the compatibility of Theorem \ref{Thm:compatibility} is enough to re-prove Theorem \ref{Thm:Dem_ab} in this case.
\end{remark}

We notice that Theorem \ref{Thm:SObstr} does not follow immediately from Theorem \ref{Thm:Homspaces} by projecting on $X(\A_K^S)$, as shown in Remark \ref{Rmk:not_trivial_strong} and Proposition \ref{GBsp}.

The following will follow from Theorem \ref{Thm:SObstr} using an argument similar to that of the proof Theorem \ref{Thm:Homspaces}. We notice that, when $S_0=\emptyset$, Theorem \ref{Thm:S-et-Obstr} is a reformulation of Theorem \ref{Thm:Homspaces}.

\begin{theorem}\label{Thm:S-et-Obstr}
	Let  $G$ be a connected $K$-group, $X$ be a $G$-homogeneous space with linear stabilizers, and $S$ be a finite set of places of $K$, and $S_0 \subset S \cap M_K^{fin}$. We assume that the Tate-Shafarevich group $\Sha(K,G^{ab})$ is finite and that $G^{sc}(K)$ is dense in $G^{sc}(\A^S_K)$. 
	We then have that $\overline{G^{scu}(K_{S_f\setminus S_0}) \cdot X(K) }^{S_0}=X(\A_K^{S_0})_{\bullet}^{\acute{e}t, \Br^{S_0}}$. 
\end{theorem}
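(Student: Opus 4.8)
The plan is to mirror the proof of Theorem~\ref{Thm:Homspaces}, replacing the Borovoi--Demarche theorem by its place-removed counterpart Theorem~\ref{Thm:SObstr} and carrying the set $S_0$ along throughout. As there, I would prove the two inclusions separately. Since $X(\A_K^{S_0})_{\bullet}^{\acute{e}t,\Br^{S_0}}$ is closed by \cite[Prop.~6.4]{demarche}, the inclusion $\supset$ reduces to showing that this set \emph{contains} $G^{scu}(K_{S_f\setminus S_0})\cdot X(K)$.

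For the inclusion $X(\A_K^{S_0})_{\bullet}^{\acute{e}t,\Br^{S_0}}\subset \overline{G^{scu}(K_{S_f\setminus S_0})\cdot X(K)}^{S_0}$, fix a point $(P_v)_{v\notin S_0}$ in the left-hand set. First I would establish an $S_0$-analogue of Lemma~\ref{Lem:KeyLemma}: using Lemmas~\ref{Lem:existence_torsor} and~\ref{Lem:conn_comps}, one produces a finite, $G$-equivariant torsor $\phi\colon Z\to X$ with $Z$ a $G$-homogeneous space having geometrically connected (linear) stabilizers, together with a lift $(Q_v)_{v\notin S_0}\in Z(\A_K^{S_0})_{\bullet}^{\Br^{S_0}Z}$ satisfying $\phi_v(Q_v)=P_v$. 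The orthogonality to $\Br^{S_0}Z$ is exactly what the definition of $X(\A_K^{S_0})_{\bullet}^{\acute{e}t,\Br^{S_0}}$ supplies for the chosen torsor. Now Theorem~\ref{Thm:SObstr} applies to $Z$, whose stabilizers are connected and linear and whose hypotheses on $\Sha(K,G^{ab})$ and on the density of $G^{sc}(K)$ in $G^{sc}(\A_K^S)$ are inherited verbatim from those on $X$ (the group $G$ is unchanged). This gives $(Q_v)\in\overline{G^{scu}(K_{S_f\setminus S_0})\cdot Z(K)}^{S_0}$, and pushing forward along the continuous, $G$-equivariant map $\phi\colon Z(\A_K^{S_0})_{\bullet}\to X(\A_K^{S_0})_{\bullet}$ yields $(P_v)\in\overline{G^{scu}(K_{S_f\setminus S_0})\cdot X(K)}^{S_0}$.

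For the reverse inclusion, take $P\in X(K)$ and $(g_v)_{v\in S_f\setminus S_0}\in G^{scu}(K_{S_f\setminus S_0})$, and form the translated point $\mathbf{P_1}$. Given any finite torsor $\psi\colon W\to X$ under $F$, choose a twist $\psi^{\sigma}\colon W^{\sigma}\to X$ with a rational lift $P'\in W^{\sigma}(K)$ of $P$, and equip $W^{\sigma}$ with the $G^{scu}$-action making $\psi^{\sigma}$ equivariant via Lemma~\ref{Lem:Gscu}. The crucial observation is that the translation occurs only at places in $S_f\setminus S_0$, which are disjoint from $S_0$: for $B\in\Br^{S_0}W^{\sigma}$, Lemma~\ref{Lem:Brauer_trivial} applied over each $K_v$ shows that replacing $P'_v$ by $g_v\cdot P'_v$ leaves every local invariant $\inv_v(B(\cdot))$ unchanged, while the global reciprocity $\sum_{v\in M_K}\inv_v(B(P'_v))=0$ loses exactly its $S_0$-terms, since $B$ restricts to $0$ on $W^{\sigma}_{K_v}$ for $v\in S_0$. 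Hence $\sum_{v\notin S_0}\inv_v(B(\mathbf{P_1}_v))=0$, so the translated lift lies in $W^{\sigma}(\A_K^{S_0})_{\bullet}^{\Br^{S_0}W^{\sigma}}$ and $\mathbf{P_1}\in\psi^{\sigma}(W^{\sigma}(\A_K^{S_0})_{\bullet}^{\Br^{S_0}W^{\sigma}})$. As $\psi$ was arbitrary, $\mathbf{P_1}\in X(\A_K^{S_0})_{\bullet}^{\acute{e}t,\Br^{S_0}}$.

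The main obstacle I anticipate is the place-bookkeeping in the first inclusion, specifically verifying the $S_0$-analogue of Lemma~\ref{Lem:KeyLemma}. The geometric construction of the torsor (Lemmas~\ref{Lem:existence_torsor} and~\ref{Lem:conn_comps}, which reuse the Weil-restriction input of Section~\ref{Sec:MainTheorem}) only invokes \cite[Lemma~7.1]{demarche}, whose sole arithmetic input is an adelic point with trivial \'etale obstruction feeding the search for a geometrically connected twist in $H^1(K,F)$; one must check that a point of $X(\A_K^{S_0})^{\acute{e}t}$ suffices in place of one of $X(\A_K)^{\acute{e}t}$, i.e.\ that the relevant twist is insensitive to the finitely many places removed. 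The remaining subtlety is purely notational: using $\Br^{S_0}$ rather than $\Br$ consistently is precisely what makes the reciprocity argument of the second inclusion work, since the $S_0$-components of the adelic points are untouched by the $G^{scu}$-translation and are annihilated by $\Br^{S_0}$-classes. This is also why, as noted in Remark~\ref{Rmk:not_trivial_strong} and Proposition~\ref{GBsp}, the statement does not reduce to Theorem~\ref{Thm:Homspaces} by mere projection.
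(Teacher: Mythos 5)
Your proposal follows essentially the same route as the paper: the hard inclusion is obtained by lifting the adelic point to a geometrically-connected finite twisted torsor $Z\to X$ via Lemmas~\ref{Lem:existence_torsor} and~\ref{Lem:conn_comps} and then applying Theorem~\ref{Thm:SObstr} to $Z$, while the easy inclusion uses closedness together with Lemmas~\ref{Lem:Gscu} and~\ref{Lem:Brauer_trivial} exactly as in the proof of Theorem~\ref{Thm:Homspaces}. The extra points you flag (that an $S_0$-adelic étale-Brauer point still feeds \cite[Lemma~7.1]{demarche}, and the reciprocity bookkeeping for $\Br^{S_0}$) are correct and are left implicit in the paper's argument.
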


\subsection{Lemmas on complexes}

\begin{definition}\label{Def:good}
	A \textit{good} complex is a complex of commutative algebraic $K$-groups $[M_1 \xrightarrow{f_1} M_2 \xrightarrow{f_2} M_3]$ such that $M_1$ and $M_2$ are groups of multiplicative type, $M_3$ is a semi-abelian variety, $\Ker f_1$ is a finite group, and $M_i$ is in degree $i-3$.
\end{definition}

For a good complex $C$ of commutative algebraic $K$-groups, and a field $F \supset K$, we denote by $\H^i(F,C)$ the $F$-hypercohomology of the complex $C$. When $F$ is a local field, we endow the groups $\H^i(F,C)$ with their natural topologies as in \cite[Sec 5.1]{Demarche_abelian}.



We will also use the notation $\P^i(K,C) \defeq \prod'_{v \in M_F}\H^i(F_v,C)_{\bullet}$, where the restricted product is taken over $\H^i(O_v,C) \rightarrow \H^i(F_v,C)$ 
(after an implied choice of an integral model for $C$ has been made)
, and $\H^i(F_v,C)_{\bullet}$ denotes the usual hypercohomology for $v \in M_K^{fin}$ and hypercohomology modified à la Tate (as defined in \cite[p. 103]{harariszamuely}) for $v \in M_K^{\infty}$.

\begin{lemma}\label{Lem:Compactness}
	Let $C$ be a good complex. The topological group $\P^0(K,C)/\H^0(K,C)$ is quasi-compact. As a direct consequence, $\P^0(K,C)/\overline{\H^0(K,C)}$ is compact, where $\overline{\H^0(K,C)}$ denotes the closure of $\H^0(K,C)$ in $\P^0(K,C)$. 
\end{lemma}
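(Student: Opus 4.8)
The plan is to prove the quasi-compactness by dévissage on the complex $C$, reducing to the semi-abelian variety $M_3$ and to the two-term complex of groups of multiplicative type $C'' \defeq [M_1 \xrightarrow{f_1} M_2]$ (placed in degrees $-2,-1$). The short exact sequence of complexes $0 \to C'' \to C \to M_3[0] \to 0$ yields a distinguished triangle, hence over each completion $K_v$ (and over $K$) a long exact localization sequence; assembling these into restricted products produces a commutative ladder relating $\P^i(K,C'')$, $\P^i(K,C)$ and $\P^i(K,M_3)$ to their global counterparts $\H^i(K,-)$. The map $\P^0(K,C) \to \P^0(K,M_3)$ then descends to a continuous homomorphism $\P^0(K,C)/\H^0(K,C) \to \P^0(K,M_3)/\H^0(K,M_3)$, whose image is a subgroup of the (quasi-compact) target cut out by the connecting map $\H^0(K,M_3) \to \H^1(K,C'')$, and whose kernel is a quotient of $\P^0(K,C'')/\H^0(K,C'')$. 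Since an extension of a quasi-compact topological group by a quasi-compact one is again quasi-compact, it suffices to establish the statement separately for $M_3$ and for $C''$.

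For the semi-abelian part I would dévisser once more along $1 \to T \to M_3 \to A \to 1$, with $T$ a torus and $A$ an abelian variety. For $A$, properness gives $A(O_v)=A(K_v)$ at almost all $v$, so $\P^0(K,A)=\prod_v A(K_v)_{\bullet}$ is a genuine product of compact groups, hence compact, and any quotient — in particular by $A(K)$ — is quasi-compact. For the torus $T$, the quasi-compactness of $\P^0(K,T)/T(K)$ is the classical compactness of the idele-class group modulo its archimedean connected component: reducing by restriction of scalars to the case $T=\G_m$, this is exactly the statement that the norm-one idele classes are compact, which follows from the finiteness of the class number together with Dirichlet's unit theorem. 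It is precisely here that the Tate modification of the archimedean local terms $\H^0(K_v,-)_{\bullet}$ intervenes, since it removes the noncompact connected components at infinity.

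For the two-term multiplicative complex $C''$, each local group $\H^0(K_v,C'')$ is finite (it is built from $H^2(K_v,M_1)$ and $H^1(K_v,M_2)$, which are finite for groups of multiplicative type over a local field), so $\P^0(K,C'')$ is locally profinite; the hypothesis that $\Ker f_1$ is finite is what guarantees that the lower hypercohomology behaves well and that $\H^0(K,C'')$ maps into $\P^0(K,C'')$ with the expected discreteness. The quasi-compactness of $\P^0(K,C'')/\H^0(K,C'')$ is then a Poitou–Tate-type cocompactness statement, which I would obtain from the duality for complexes of tori, i.e. by dévissage from \cite[Thm 0.1]{harariszamuely} and the framework of \cite[Sec. 5]{Demarche_abelian}, identifying the quotient with the dual of a profinite (or finitely generated) cohomology group. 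Finally, the stated consequence is formal: writing $Q \defeq \P^0(K,C)/\H^0(K,C)$, the further quotient $\P^0(K,C)/\overline{\H^0(K,C)}$ is the maximal Hausdorff quotient of $Q$, hence quasi-compact and Hausdorff, that is, compact.

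I expect the main obstacle to be the topological bookkeeping in the dévissage of the first paragraph: because of the connecting homomorphisms, the kernel and image of the descended map are not literally the $\P^0/\H^0$ of the constituent pieces but subquotients of them cut out by these maps, so one must verify that quasi-compactness genuinely survives passage to these subquotients and through the (possibly non-closed) images. The torus compactness, though classical, is the conceptual heart of the argument, and confirming that the archimedean Tate modification is exactly what makes it go through is the delicate point.
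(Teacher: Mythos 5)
Your dévissage is genuinely different from the paper's. You split $C$ by the stupid filtration into the multiplicative two-term piece $C''=[M_1\to M_2]$ and the semi-abelian quotient $M_3$, and you then need a Poitou--Tate cocompactness statement for the general two-term complex $C''$; that statement is the real content of your argument and you only gesture at it. The paper instead first reduces to the case where $C$ is \emph{middle exact} (by adjoining a quasi-trivial torus $P$ and replacing $C$ by $[M_1\to M_2\to M_3\oplus P]$, using Hilbert 90 and Shapiro to see that $\P^0$ surjects); in the middle-exact case the only cohomology objects of $C$ are the finite group $F=\Ker f_1$ in degree $-2$ and a semi-abelian variety $M$ in degree $0$, so the triangle $F[2]\to C\to M\to F[3]$ reduces everything to classical Poitou--Tate for finite modules ($P^2(K,F)/H^2(K,F)\cong H^0(K,F^d)^D$, finite) plus Harari's compactness result for $P^0(K,M)/H^0(K,M)$. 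This reduction is precisely what lets the paper avoid the duality theory for two-term complexes of groups of multiplicative type that your route requires. Also note that your justification for the local finiteness of $\H^0(K_v,C'')$ is wrong as stated: $H^2(K_v,M_1)$ is \emph{not} finite for $M_1$ of multiplicative type (e.g.\ $H^2(K_v,\G_m)=\Q/\Z$); finiteness of $\H^0(K_v,C'')$ instead comes from the finiteness of $\Ker f_1$, which forces the kernel of $H^2(K_v,M_1)\to H^2(K_v,M_2)$ to be finite.

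There is one step that would actually fail as written: ``an extension of a quasi-compact topological group by a quasi-compact one is again quasi-compact'' is false for topological groups without an openness hypothesis on the projection. (Take $B=\Z_p$ with the discrete topology mapping identically onto $C=\Z_p$ with the profinite topology, with trivial kernel: $A$ and $C$ are quasi-compact, $B$ is not.) This is exactly why the paper's Lemma \ref{Lem:Ex_seq} assumes the middle map is open, and why the proof spends effort checking openness of the map $\alpha$ in diagram (\ref{Commdiagr1}) via integral models over $O_{K,S_0}$. Your argument needs the analogous openness of $\P^0(K,C)\to\P^0(K,M_3)$ (and of the induced maps on quotients), which is true here but must be verified, not assumed. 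Finally, as you partly anticipate, the kernel of your descended map $\P^0(K,C)/\H^0(K,C)\to\P^0(K,M_3)/\H^0(K,M_3)$ is not literally a quotient of $\P^0(K,C'')/\H^0(K,C'')$: an adelic class of $C$ mapping into $\H^0(K,M_3)$ need not be correctable by a global class of $C$, and the discrepancy is controlled by $\Sha^1(K,C'')$, whose finiteness is an additional (true but nontrivial) input.
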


\begin{proof}
	We follow a \textit{devissage} used by Demarche in \cite{Demarche_abelian}. We first prove the result when the complex $C$ is middle exact, i.e. if $C=[M_1 \xrightarrow{f_1} M_2 \xrightarrow{f_2} M_3]$ with $\Ker f_2=\im  f_1$.  
	
	We have a commutative diagram:
	\begin{equation}\label{Diagram:devissage1}
		\begin{tikzcd}
			\Ker f_1 \arrow[d,hook] \arrow[r] & 0 \arrow[r]\arrow[d]& 0 \arrow[d]\\
			M_1 \arrow[d] \arrow[r] &M_2 \arrow[d] \arrow[r] &M_3 \arrow[d]\\
			0 \arrow[r]& 0 \arrow[r]& \Coker(f_2).
		\end{tikzcd}
	\end{equation}
	Denoting $\Ker f_1$ by $F$ and $\Coker f_2$ by $M$, the commutative diagram (\ref{Diagram:devissage1}) induces the following distinguished triangle:
	\begin{equation}\label{Triangle:1}
		F[2] \rightarrow C \rightarrow M \rightarrow F[3],
	\end{equation}
	
	where, for an abelian group $A$, we also use the letter $A$, with a slight abuse of notation, to denote the complex $[\cdots \rightarrow 0 \rightarrow A \rightarrow 0 \rightarrow \cdots]$, where $A$ lies in degree $0$. We notice that, because of the assumption that $C$ is good, $M$ is a semi-abelian variety (being the quotient of a semi-abelian variey by a subgroup of multiplicative type) and both $F[2]$ and $M=M[0]$ are good. 

	The triangle (\ref{Triangle:1}) induces the following commutative diagram with exact rows:
	\begin{equation}\label{Commdiagr1}
		\begin{tikzcd}
		H^2(K,F) \arrow[r]\arrow[d] &\H^0(K,C) \arrow[r]\arrow[d]& H^0(K,M)\arrow[r]\arrow[d]&H^3(K,F)\arrow[d]\\
		P^2(K,F) \arrow[r] &\P^0(K,C) \arrow[r,"\alpha"]& P^0(K,M)\arrow[r]&P^3(K,F).
		\end{tikzcd}
	\end{equation}

	All the morphisms of (\ref{Commdiagr1}) are continuous by construction, and $\alpha$ is open (this easily follows by a computation of the long exact sequence associated with the distinguished triangle $\mathcal{F}[2] \rightarrow \mathcal{C} \rightarrow \mathcal{M} \rightarrow \mathcal{F}[3]$, where $\mathcal{F}, \mathcal{C}, \mathcal{M}$ are integral models of $F, C$ and $M$ over a ring of $S_0$-integers $O_{K,S_0}$, for $S_0$ sufficiently large).
	
	The Poitou-Tate theorem \cite[Thm. 17.13]{Harari_book} implies that the last vertical arrow of (\ref{Commdiagr1}) is an isomorphism of finite groups. Since $H^0(K,M) \rightarrow P^0(K,M)$ is injective, a diagram chasing of (\ref{Commdiagr1}) yields the following exact sequence:
	\begin{equation}\label{Seq1}
		{P^2(K,F)}/{H^2(K,F)} \rightarrow \P^0(K,C)/\H^0(K,C) \xrightarrow{\bar{\alpha}} P^0(K,M)/H^0(K,M) \rightarrow F',
	\end{equation}
	where $F'$ is some finite discrete group (this follows by the finiteness of $P^3(K,F)$). Moreover, all the morphisms of (\ref{Seq1}) are continuous, and $\bar{\alpha}$ is open. As proven in \cite[Lemme 4]{Harari}, the quotient $P^0(K,M)/H^0(K,M)$ is quasi-compact (actually Harari proves the compactness of $P^0(K,M)/\overline{H^0(K,M)}$, but, because of Lemma \ref{Lem:Comp_nnHausdorff}, this is equivalent to what we want). We also have that ${P^2(K,F)}/{H^2(K,F)}\cong  H^0(K,{F}^d)^D$ (as topological groups) by the Poitou-Tate exact sequence. Since $H^0(K,{F}^d)^D$ is a finite discrete set, we deduce that ${P^2(K,F)}/{H^2(K,F)}$ is one as well.  Therefore, applying Lemma \ref{Lem:Ex_seq}, we deduce that $\P^0(K,C)/\H^0(K,C)$ is quasi-compact, as wished.
	
	We now turn to the case of a general good $C$. Let $p:M_2/\im(f_1)\rightarrow P$ be an embedding of the quotient $M_2/\im(f_1)$ into a quasitrivial torus $P$, and let $C'$ be the complex $[M_1\rightarrow M_2 \rightarrow M_3 \oplus P]$, which is middle exact. We have the following distinguished triangle:
	\begin{equation}\label{Triangle2}
		P \rightarrow C' \rightarrow C \rightarrow P[1],
	\end{equation}
	which induces the following commutative diagram with exact rows and continuous morphisms:
	\begin{equation}\label{Commdiagr2}
	\begin{tikzcd}
	H^0(K,P) \arrow[r]\arrow[d] &\H^0(K,C') \arrow[r]\arrow[d]& \H^0(K,C)\arrow[r]\arrow[d]&H^1(K,P)\arrow[d]\\
	P^0(K,P) \arrow[r] &\P^0(K,C') \arrow[r]& \P^0(K,C)\arrow[r]&P^1(K,P).
	\end{tikzcd}
	\end{equation}
	Hilbert's Theorem 90 and Shapiro's lemma imply that the last column is zero, hence we get the surjectivity of:
	\[
	\P^0(K,C') \rightarrow \P^0(K,C),
	\]
	which induces a (continuous) surjective morphism $\P^0(K,C')/\H^0(K,C') \rightarrow \P^0(K,C)/\H^0(K,C)$. It follows now from the quasi-compactness of $\P^0(K,C')/\H^0(K,C')$ that $\P^0(K,C)/\H^0(K,C)$ is quasi-compact as well, thus concluding the proof.
\end{proof}

\begin{corollary}\label{Lem:SameImage}
	Let $C$ be a good complex. The groups $\P^0(K,C)$ and $(\P^0(K,C))^{\wedge}$ have same image in $\H^2(K,C^d)^D$, under the morphism $\theta:(\P^0(K,C))^{\wedge} \rightarrow \H^2(K,C^d)^D$ defined by local duality 
	(we recall that $\theta$ is the map induced from local duality as in (\ref{Mor:theta})).
\end{corollary}

\begin{proof}
	The compactness of $\P^0(K,C)/\overline{\H^0(K,C)}$ implies that its image in  $\P^0(K,C)^{\wedge}/{\H^0(K,C)}^{\wedge}$ is closed. Since it is also, by the definition of profinite completion, dense, we see that its image is the whole quotient $\P^0(K,C)^{\wedge}/{\H^0(K,C)}^{\wedge}$.
	
	The corollary now follows from the following commutative diagram:
	\begin{center}
		\begin{tikzcd}
		\H^0(K,C) \arrow[hook,r] \arrow[d] &\P^0(K,C)\arrow[r,"\theta"] \arrow[d] &\H^2(K,C^d)^D \arrow[d,equal]\\
		(\H^0(K,C))^{\wedge} \arrow[r] &(\P^0(K,C))^{\wedge}\arrow[r,"\theta"] &\H^2(K,C^d)^D
		\end{tikzcd},
	\end{center}
	
	of which both rows are complexes and the second one is exact.
\end{proof}

\begin{lemma}\label{Lem:duality}
	Let $C$ be a good complex, defined over a local field $K_v$. Then, the pairing:
	\begin{equation}\label{Eq:pairing}
		\H^0(K_v,C)^{\wedge}_{\bullet} \times \H^2(K_v,C^d)_{\bullet} \rightarrow H^2(K_v,\overline{K_v}^*) \rightarrow \Q/\Z,
	\end{equation} 
	is perfect.
\end{lemma}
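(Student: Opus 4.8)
The plan is to establish perfectness by a \emph{dévissage} that decomposes the good complex $C$ into its three atomic constituents---finite group schemes, (quasitrivial) tori, and semi-abelian varieties---for each of which the relevant local duality is classical. For a finite group scheme $F$ and for groups of multiplicative type, perfectness of $H^i(K_v,\,\cdot\,)\times H^{2-i}(K_v,\,\cdot^d\,)\to\Q/\Z$ is local Tate duality (see \cite{Harari_book}). For a semi-abelian variety $M$, regarded as the degenerate $1$-motive $[0\to M]$, we have $M^d=\mathbf{R}\mathcal{H}om(M,\G_m)=M^\vee[-1]$, and the perfectness of $\H^0(K_v,M)^\wedge_\bullet\times\H^2(K_v,M^d)_\bullet=\mathbb{H}^0(K_v,[0\to M])\times\mathbb{H}^1(K_v,M^\vee)\to\Q/\Z$ is exactly \cite[Thm. 0.1]{harariszamuely} (this is the only genuinely arithmetic input). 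The mechanism that glues these together is that $(-)^d=\mathbf{R}\mathcal{H}om(-,\G_m)$ is a contravariant exact functor on the derived category, so every distinguished triangle built from $C$ dualizes to one built from $C^d$, and the induced long exact sequences in hypercohomology are interchanged by the pairing.

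I will follow precisely the two reduction steps already used in Lemma \ref{Lem:Compactness}. First assume $C$ is middle exact; then $C$ sits in the distinguished triangle (\ref{Triangle:1}), $F[2]\to C\to M\to F[3]$, with $F=\Ker f_1$ finite and $M=\Coker f_2$ semi-abelian. Dualizing reverses it to $M^d\to C^d\to F^d[-2]\to M^d[1]$. Taking $\H^0(K_v,-)$ of the first triangle and $\H^2(K_v,-)$ of the second, and using that $H^3(K_v,F)=0=\mathbb{H}^2(K_v,M^\vee)$ over a local field, yields two short exact sequences
\begin{equation*}
0\to H^2(K_v,F)\to \H^0(K_v,C)\to \H^0(K_v,M)\to 0,\qquad
0\to \H^2(K_v,M^d)\to \H^2(K_v,C^d)\to H^0(K_v,F^d)\to 0,
\end{equation*}
which the pairing identifies with one another up to reversal. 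Since the outer terms pair perfectly (local Tate duality on $H^2(K_v,F)\times H^0(K_v,F^d)$ and Harari--Szamuely on $\H^0(K_v,M)\times\H^2(K_v,M^d)$), the five-lemma for perfect pairings of locally compact groups forces perfectness on the middle term $C$.

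For a general good complex I reduce to the middle-exact case exactly as in Lemma \ref{Lem:Compactness}: embedding $M_2/\mathrm{im}(f_1)$ into a quasitrivial torus $P$ produces the middle-exact good complex $C'=[M_1\to M_2\to M_3\oplus P]$ and the distinguished triangle (\ref{Triangle2}), $P\to C'\to C\to P[1]$. Dualizing and again applying the five-lemma---using the perfectness just proved for $C'$ together with local duality for the quasitrivial torus $P$ (whose dual is its cocharacter lattice, so Hilbert $90$ kills the obstructing terms as in Lemma \ref{Lem:Compactness})---gives perfectness for $C$.

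The arithmetic is thus entirely standard; the real work, and the main obstacle, is topological. \emph{Perfect} here means that each paired group is the full continuous Pontryagin dual of the other, one side being the compact group obtained after the $(-)^\wedge$ completion and the other a discrete torsion group, and one must check that all connecting homomorphisms appearing in the long exact sequences are continuous and \emph{strict} (open onto their images), that the relevant $\H^2$'s are finite, and that both the profinite completion and the archimedean Tate modification $(-)_\bullet$ are compatible with passage along the triangles---so that the Pontryagin dual of each short exact sequence of locally compact groups is again exact. These are the same strictness and finiteness facts (Poitou--Tate finiteness and Hilbert $90$ for the quasitrivial torus) that already underlie Lemma \ref{Lem:Compactness}, and once they are recorded the five-lemma arguments go through verbatim in the topological category, completing the proof.
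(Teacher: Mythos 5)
Your proposal is correct and follows essentially the same route as the paper: the identical dévissage via the triangles $F[2]\to C\to M\to F[3]$ and $P\to C'\to C\to P[1]$ from Lemma \ref{Lem:Compactness}, with local Tate duality for $F$ and $P$, \cite[Thm 0.1]{harariszamuely} for the semi-abelian part, and a five-lemma argument after profinite completion. The only (cosmetic) difference is that you truncate to two short exact sequences where the paper works with the full five-term exact rows of diagram (\ref{EQ:COMMDIAGR}), and your closing paragraph on strictness and compatibility of $(-)^{\wedge}$ with the rows matches the paper's justification via finiteness of $H^2(K_v,F)$, $H^3(K_v,F)$ and right-exactness of completion.
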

\begin{proof}
	We focus on the proof for non-archimedean $v$, the proof when $v$ is archimedean follows the same pattern with regular cohomology replaced by Tate cohomology. The proof follows the same devissage as the one in Lemma \ref{Lem:Compactness}, described in diagram (\ref{Diagram:devissage1}) and (\ref{Triangle2}), of which we borrow the notation. We start with the case that $C$ is middle exact. We have the following distinguished triangles:
	\[
	F[2] \rightarrow C \rightarrow M \rightarrow F[3],
	\]
	and
	\[
	F^d[-3] \rightarrow M^d \rightarrow C^d \rightarrow F^d[-2].
	\]
	We deduce that the rows of the following commutative diagram are exact:
	\begin{equation}\label{EQ:COMMDIAGR}
		\begin{tikzcd}
		{\H^{-1}(K_v,M)} \arrow[r] \arrow[d] & {H^2(K_v,F)} \arrow[r] \arrow[d] & {\H^0(K_v,C)} \arrow[r] \arrow[d] & {\H^0(K_v,M)} \arrow[r] \arrow[d] & {H^3(K_v,F)} \arrow[d] \\
		{\H^3(K_v,M^d)^D} \arrow[r]          & {H^0(K_v,F^d)^D} \arrow[r]       & {\H^2(K_v,C^d)^D} \arrow[r]       & {\H^2(K_v,M^d)^D} \arrow[r]       & {H^{-1}(K_v,F^d)^D}  .
		\end{tikzcd}
	\end{equation}
	
	Since $H^2(K_v,F)$ and $H^3(K_v,F)$($\cong 0$) are finite groups, $\H^0(K_v,C)$ and $\H^0(K_v,M)$ are endowed with their profinite topologies, and $\H^{-1}(K_v,M)=\H^{-1}_{\wedge}(K_v,M)\cong 0$, we deduce that the first row of the following diagram is exact:
	
	\begin{equation}\label{EQ:COMMDIAGR2}
	\begin{tikzcd}
	{\H^{-1}_{\wedge}(K_v,M)} \arrow[r] \arrow[d] & {H^2(K_v,F)} \arrow[r] \arrow[d] & {\H^0(K_v,C)^{\wedge}} \arrow[r] \arrow[d] & {\H^0(K_v,M)^{\wedge}} \arrow[r] \arrow[d] & {H^3(K_v,F)} \arrow[d] \\
	{\H^3(K_v,M^d)^D} \arrow[r]          & {H^0(K_v,F^d)^D} \arrow[r]       & {\H^2(K_v,C^d)^D} \arrow[r]       & {\H^2(K_v,M^d)^D} \arrow[r]       & {H^{-1}(K_v,F^d)^D}   
	\end{tikzcd}.
	\end{equation}
	Since the first and fourth columns are isomorphisms by \cite[Thm 0.1]{harariszamuely}, and the second and fifth columns are isomorphisms by local duality, we deduce that the middle column is an isomorphism as well, concluding the proof of the middle exact case.	
	For the general case, we use the following distinguished triangle (again, we borrow the notation used in the proof of the previous lemma):
	\begin{equation}\label{Triangle2.2}
	P \rightarrow C' \rightarrow C \rightarrow P[1],
	\end{equation}
	from which we deduce the exactness of the rows of the following commutative diagram:
	\begin{equation}\label{Commdiagr2.2}
	\begin{tikzcd}
	H^0(K_v,P) \arrow[r]\arrow[d] &\H^0(K_v,C') \arrow[r]\arrow[d]& \H^0(K_v,C)\arrow[r]\arrow[d]&H^1(K_v,P)\arrow[d]\\
	H^2(K_v,P^d)^D \arrow[r] &H^2(K_v,(C')^d)^D \arrow[r]& H^2(K_v,C^d)^D \arrow[r]&H^1(K_v,P^d)^D.
	\end{tikzcd}
	\end{equation}
	Since $P$ is quasi-trivial, we deduce that the last column is $0$. Moreover, since all groups appearing on the upper row are endowed with their profinite topologies, and profinite completion is a right exact functor, we deduce the exactness of the rows of the following commutative diagram:
	\begin{equation}\label{Commdiagr2.3}
	\begin{tikzcd}
	H^0(K_v,P)^{\wedge} \arrow[r]\arrow[d] &\H^0(K_v,C')^{\wedge}  \arrow[r]\arrow[d]& \H^0(K_v,C)^{\wedge} \arrow[r]\arrow[d]&0\arrow[d]\\
	H^2(K_v,P^d)^D \arrow[r] &H^2(K_v,(C')^d)^D \arrow[r]& H^2(K_v,C^d)^D \arrow[r]&0.
	\end{tikzcd}
	\end{equation}
	Since the second column is an isomorphism by the previous case, and the first is an isomorphism by local duality for tori, we deduce that the third column is an isomorphism as well.
%
\end{proof}

\subsection{Main theorem with connected stabilizers and removed places}

\begin{proof}[Proof of Theorem \ref{Thm:SObstr}]
	
	We claim that we may assume, without loss of generality that $X(K) \neq \emptyset$. In fact, we have the following inclusion:
	\[
	X(\A_K)^{\Br^{S_0}X} \subset X(\A_K)^{\B_{\infty}(X)}.
	\]
	In particular, if $X(\A_K)^{\Br^{S_0}X} \neq \emptyset$, then $X(\A_K)^{\B_{\infty}(X)} \neq \emptyset$, and, by Theorem \ref{Thm:HPforX}, we deduce that $X(K) \neq \emptyset$. On the other hand, if $X(\A_K)^{\Br^{S_0}X} = \emptyset$, then, since $X(K) \subset X(\A_K)^{\Br^{S_0}X}$, there is nothing to prove. So this concludes the proof of the claim. From now on, we can and will assume that $X=G/H$, with $H$ linear and connected. We may and will use all the abelianization paraphernalia of sections \ref{SSec:abelianized} and \ref{SSec:abelianized2} (in particular Theorem \ref{Thm:compatibility}), of which we borrow the notation as well.
	
	We notice the following:
	\begin{equation}\label{Eq:quotient}
		\overline{X(K)}^{S_0}=\pi^S(\overline{X(K)_{S_0}}),
	\end{equation}
	where we are using the notations introduced in Section \ref{Sec:notation}.
	
	We have the following commutative diagram of Hausdorff topological spaces, where the rows are exact (the first is exact in a set-wise sense, in fact the middle term is even a direct product of the other two here) and every morphism
	is continuous: 
	\begin{equation}\label{Diagr:S-proof}
\begin{tikzcd}
X(K_{S_0}) \arrow[rr] \arrow[d, "\ab_{S_0}"]              &  & X(\A_K)_{\bullet} \arrow[rr, "\pi^{S_0}"] \arrow[d, "\ab"] \arrow[dd, "\theta", pos=1/3, bend left=49] &  & X(\A^{S_0}_K)_{\bullet} \arrow[d, "\ab^{S_0}"] \arrow[dd, "\theta^{S_0}", bend left=60] \\
{\H^0(K_{S_0},C_X)} \arrow[d, "\theta'_{S_0}"] \arrow[rr, hook, "\iota"]            &  & {\P^0(K,C_X)} \arrow[rr] \arrow[d, "\theta'"]                                    &  & {\P^0_{S_0}(K,C_X)} \arrow[d]                                                 \\
{\H^2(K_{S_0},{C_X^d})^D} \arrow[rr, "\pi_{S_0}^D"] &  & {\H^2(K,{C_X^d})^D} \arrow[rr]                                               &  & {\H^2_{S_0}(K,{C_X^d})^D}                                                
\end{tikzcd},
	\end{equation}
	where $\H^0_{S_0}(K,{C_X^d}) \defeq \Ker (\H^2(K,{C_X^d}) \rightarrow \H^2(K_{S_0},{C_X^d}))$.
	We remind the reader that there is a natural morphism ${\H^2_{S_0}(K,{C_X^d})} \xrightarrow{\alpha} \Br^{S_0}(X)$, and that this is compatible with Brauer-Manin obstruction in the sense of Theorem \ref{Thm:compatibility} . We hence have the following sequence of inclusions:
	\[
	X(\A_K)_{\bullet}^{\alpha(\H^2_{S_0}(K,{C_X^d}))} \supset X(\A_K)_{\bullet}^{\Br^{S_0}(X)}\supset  \overline{G^{scu}(K_{S_f\setminus S_0})\cdot X(K)_{S_0}}.
	\]
	
	Therefore, because of (\ref{Eq:quotient}), to prove  Theorem \ref{Thm:SObstr} it is enough to prove that $\Ker (\theta^{S_0}\circ \pi^{S_0}) = X(\A_K)_{\bullet}^{\alpha(\H_{S_0}^2(K,{C_X^d}))} \subset \overline{G^{scu}(K_{S_f\setminus S_0}) \cdot X(K)_{S_0}}$ (we use, with slight abuse of notation, the symbol $\Ker$ to denote the fiber of $0$). We have that 
	\begin{enumerate}
		\item ${\overline{G^{scu}(K_{S_f\setminus S_0})\cdot X(K)_{S_0}}} = \overline{(\Ker \theta)_{S_0}}$ by Theorem \ref{Thm:Dem_ab},
		\item ${(\Ker \theta)_{S_0}}= \ab^{-1}({(\Ker \theta')_{S_0}})$, as it easily follows from the commutativity of (\ref{Diagr:S-proof}), the fact that $\ab_{S_0}$ is surjective by \cite[Prop. 2.18]{Demarche_abelian} and Lemma \ref{Lem:Set_Lemma},
		\item $\overline{(\Ker \theta)_{S_0}} \supset \ab^{-1}\left( \overline{(\Ker \theta')_{S_0}}\right)$ by the point above, the openness  (proved in Lemma \ref{Lem:ab_continuity_openness}) of $\ab:X(\A_K)_{\bullet} \rightarrow \P^0(K,C_X)$ and Lemma 	\ref{Lem:Inverse_image_closure},
		\item $\Ker (\theta^{S_0}\circ \pi^{S_0}) = {\theta}^{-1}(\im(\pi_{S_0}^D))= \ab^{-1}({\theta'}^{-1}(\im(\pi_{S_0}^D)))$ by the commutativity of (\ref{Diagr:S-proof}) and the exactness of its third row.
	\end{enumerate}

	
	Hence, by the points above, it is sufficient that we prove that ${\theta'}^{-1}(\im(\pi_{S_0}^D))=\overline{(\Ker \theta')_{S_0}}$.

%
	
	We have the following factorization of the morphism $\theta'$:
	\begin{equation}\label{Eq:fact}
		\P^0(K,C_X) \rightarrow \faktor{\P^0(K,C_X)}{\overline{\H^0(K_{S_0},C_X)}}  \rightarrow \faktor{\P^0(K,C_X)}{\Ker \theta'} \xhookrightarrow{\theta^{\prime \prime}} \H^2(K,{C_X^d})^D.
	\end{equation}
	
	We have that:
	\begin{align}\label{Eq:almost_last_S-pf}
	\begin{split}
		\theta^{\prime}\left( \overline{\iota(\H^0(K_{S_0},C_X))\cdot (\Ker \theta')} \right)&= \theta^{\prime \prime}\left( \overline{\iota(\H^0(K_{S_0},C_X))\cdot (\Ker \theta')}/(\Ker \theta') \right)\\
	&=\overline{\theta^{\prime \prime}\left( \left( \iota(\H^0(K_{S_0},C_X))\cdot (\Ker \theta')\right) /(\Ker \theta') \right)  } \\ &=\overline{\pi_{S_0}^D\left(\theta'_{S_0}(\H^0(K_{S_0},C_X))\right)}=\pi_{S_0}^D\left(\overline{\theta'_{S_0}(\H^0(K_{S_0},C_X))}\right) \\
	&=\pi_{S_0}^D\left(\H^2(K_{S_0}, {C_X^d})^D\right)=\im(\pi_{S_0}^D),
	\end{split}
	\end{align}
	
	where the second and fourth identity follow from Lemma \ref{Lem:some_top_lemma} (whose hypothesis hold by Lemma \ref{Lem:Compactness} and Corollary \ref{Lem:SameImage} for the second identity and by the fact that the dual of a torsion group is profinite, hence compact, for the fourth identity) , the third by the commutativity of the lower-left square of (\ref{Diagr:S-proof}), and the fifth one follows from the fact that $\theta'_{S_0}$ has dense image in $\H^2(K_{S_0},{C_X^d})^D$ (by \ref{Mor:local_pairing}). Now, it easily follows from (\ref{Eq:almost_last_S-pf}) that ${\theta'}^{-1}(\im(\pi_{S_0}^D))=\overline{(\Ker \theta')_{S_0}}$.



\end{proof}

\begin{proof}[Proof of Theorem \ref{Thm:S-et-Obstr}]
	The inclusion $\overline{G^{scu}(K_{S_f\setminus S_0})\cdot X(K)}^{S_0}\subset X(\A_K^{S_0})_{\bullet}^{\acute{e}t_{S_0},\Br^{S_0}}$ follows from
	the fact that $G^{scu}(K_{S_f})\cdot X(K)\subset X(\A_K^{S_0})_{\bullet}^{\acute{e}t_{S_0},\Br^{S_0}}$ (which follows from Lemmas \ref{Lem:Gscu} and \ref{Lem:Brauer_trivial} as in the proof of Theorem \ref{Thm:Homspaces}) and the fact that the latter is closed.
	
	The inclusion $\overline{G^{scu}(K_{S_f\setminus S_0})\cdot X(K)}^{S_0}\supset X(\A_K)_{\bullet}^{\acute{e}t_{S_0},\Br^{S_0}}$ can be proven as follows. Let $\alpha \in X(\A_K)_{\bullet}^{\acute{e}t_{S_0},\Br^{S_0}}$, using Lemmas \ref{Lem:conn_comps} and \ref{Lem:existence_torsor} as in the proof of Theorem \ref{Thm:Homspaces}, we know that there is a (right) torsor $Z \xrightarrow{\phi} X$ under a finite group scheme $F$, such that $Z$ is a (left) homogeneous space under $G$ with geometrically connected stabilizers. Since $\alpha \in X(\A_K)_{\bullet}^{\acute{e}t_{S_0},\Br^{S_0}}$, we may assume, up to twisting $Z$ by some cocycle $\in H^1_{sx}(K,F)$, that there is a $\beta \in Z(\A_K)^{\Br^{S_0}Z}$ such that $\phi(\beta)=\alpha$. Since we know, by Theorem \ref{Thm:SObstr}, that $\beta \in \overline{G^{scu}(K_{S_f\setminus S_0})\cdot Z(K)}^{S_0}$, we deduce that $\alpha \in \phi\left(\overline{G^{scu}(K_{S_f\setminus S_0})\cdot Z(K)}^{S_0}\right)\subset \overline{G^{scu}(K_{S_f\setminus S_0}\cdot X(K))}^{S_0} $.
\end{proof}

%

%


With the same method of proof, one may obtain the following, which is, in some sense, a limit of Theorem \ref{Thm:S-et-Obstr} as $S_0$ grows to the whole $M_K^{fin}$ (putting $S=S_0 \cap M_K^{fin}$):
\begin{proposition}\label{Prop:HassePrinciple}
		Let  $G$ be a connected $K$-group, $X$ be a  (left)$G$-homogeneous space with linear stabilizers. We assume that the Tate-Shafarevich group $\Sha(K,G^{ab})$ is finite. We then have that $X(K)\neq \emptyset $ if and only if $X(\A_K)_{\bullet}^{\acute{e}t, \B_{\infty}(X)}\neq \emptyset$. 
\end{proposition}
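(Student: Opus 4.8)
The plan is to mimic the proof of Theorem \ref{Thm:S-et-Obstr}, replacing the strong-approximation input Theorem \ref{Thm:SObstr} by the connected-stabilizer Hasse principle Theorem \ref{Thm:HPforX}. One implication is immediate: since rational points pair trivially with every Brauer class, one always has $X(K) \subset X(\A_K)_\bullet^{\acute{e}t,\B_\infty(X)}$. Indeed, for any finite torsor $f \colon Y \xrightarrow{F} X$ and any $x \in X(K)$, the inclusion (\ref{Eq:RatPointsTorsor}) produces a twist $Y^\sigma$ and a point $y \in Y^\sigma(K)$ with $f^\sigma(y) = x$; as $y \in Y^\sigma(K) \subset Y^\sigma(\A_K)^{\B_\infty(Y^\sigma)}$, the point $x$ lies in $f^\sigma(Y^\sigma(\A_K)^{\B_\infty(Y^\sigma)})$, and since $f$ was arbitrary, $x \in X(\A_K)_\bullet^{\acute{e}t,\B_\infty(X)}$. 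Hence $X(K) \neq \emptyset$ forces the obstruction set to be non-empty.

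For the converse, I would fix $\alpha \in X(\A_K)_\bullet^{\acute{e}t,\B_\infty(X)}$. Since each $Y^\sigma(\A_K)^{\B_\infty(Y^\sigma)}$ is contained in $Y^\sigma(\A_K)$, the obstruction set is contained in the descent set, so in particular $X(\A_K)^{\acute{e}t} \neq \emptyset$ and the hypothesis of Lemma \ref{Lem:conn_comps} is met. Applying Lemmas \ref{Lem:existence_torsor} and \ref{Lem:conn_comps} exactly as in the proof of Theorem \ref{Thm:Homspaces}, I obtain a finite group scheme $F$ and a right $F$-torsor $\phi \colon Z \rightarrow X$, with $\phi$ being $G$-equivariant and $Z$ a (left) $G$-homogeneous space whose geometric stabilizers are connected; since the stabilizers of $X$ are linear, those of $Z$ are connected \emph{and} linear.

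By the definition of $X(\A_K)_\bullet^{\acute{e}t,\B_\infty(X)}$ applied to this particular torsor, there is a class $\sigma \in H_{sx}^1(K,F)$ and a point $\beta \in Z^\sigma(\A_K)^{\B_\infty(Z^\sigma)}$ with $\phi^\sigma(\beta) = \alpha$; replacing $Z$ by the twist $Z^\sigma$ (still a $G$-homogeneous space with connected linear stabilizers), I may assume $Z(\A_K)^{\B_\infty(Z)} \neq \emptyset$. Now Theorem \ref{Thm:HPforX} applies to $Z$: since $\Sha(K,G^{ab})$ is finite and $Z$ has geometrically connected linear stabilizers, the non-emptiness of $Z(\A_K)^{\B_\infty(Z)}$ yields $Z(K) \neq \emptyset$, and therefore $X(K) \supset \phi(Z(K)) \neq \emptyset$.

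The only genuine content beyond bookkeeping lies in combining the two black boxes: the structural Lemmas \ref{Lem:existence_torsor}--\ref{Lem:conn_comps}, which manufacture the connected-stabilizer cover $Z \to X$, and Theorem \ref{Thm:HPforX} (i.e.\ \cite[Thm A.1]{BoCTSko}), which supplies the Hasse principle in the connected case. The main point to verify carefully is that the cover extracted from the étale obstruction is of exactly the type to which Theorem \ref{Thm:HPforX} applies --- that is, that twisting a $G$-homogeneous space with connected linear stabilizers by a finite-group-scheme cocycle preserves both connectedness and linearity of the geometric stabilizers --- so that no additional hypothesis on $G$ or $X$ is needed.
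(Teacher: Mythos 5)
Your proposal is correct and follows essentially the same route as the paper: the forward implication is the trivial inclusion $X(K)\subset X(\A_K)_{\bullet}^{\acute{e}t,\B_{\infty}(X)}$, and the converse combines Lemmas \ref{Lem:existence_torsor} and \ref{Lem:conn_comps} to produce a connected-stabilizer finite cover, twists it to carry an adelic point orthogonal to $\B_{\infty}$, and invokes Theorem \ref{Thm:HPforX}. Your extra care about twisting preserving connectedness and linearity of the geometric stabilizers is exactly the point the paper handles via Lemma \ref{Lem:KeyLemma} and the observation that the twist is taken with respect to an action commuting with that of $G$.
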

\begin{proof}
	The implication $X(K)\neq \emptyset \Rightarrow  X(\A_K)_{\bullet}^{\acute{e}t, \B_{\infty}(X)}\neq \emptyset$ is clear, since $X(K) \subset X(\A_K)_{\bullet}^{\acute{e}t, \B_{\infty}(X)}$.
	On the other hand, assume that $X(\A_K)_{\bullet}^{\acute{e}t, \B_{\infty}} \neq \emptyset$, then by Lemma \ref{Lem:conn_comps} there exists a finite group scheme $F$, a right $F$-torsor $\phi:Y \rightarrow X$ such that $Y$ is (left) homogeneous space with geometrically connected stabilizers. Moreover, since there exists a $(P_v) \in X(\A_K)_{\bullet}^{\acute{e}t, \B_{\infty}}$, we may assume (up to twisting $Y$) that there exists an adelic point $(Q_v) \in Y(\A_K)^{\B_{\infty}}$. Hence, by Theorem \ref{Thm:HPforX} there exists a $Q \in Y(K)$, hence $\phi(Q)\in X(K)\neq \emptyset$. 
\end{proof}

\section{Appendix A: Topological and set-theoretic lemmas}\label{AppendixA}

We will use the following notation\footnote{We warn that this notation is similar to one defined in Section \ref{Sec:notation}, which was referring to the particular case of adele-like sets. We believe that there should be no risk of confusion, since the the one that follows is only used in this appendix.}:

\begin{notation}\label{Not:LowerS}
	Let $X$ and $A$ be non-empty sets, if $Y \subset X \times A$, we denote by $Y_A$ the set $\pi_A^{-1}(\pi_A(Y))$, where $\pi_A:X \times A \rightarrow A$ is the projection on the second factor.
\end{notation}

\begin{lemma}\label{Lem:Set_Lemma}
	Let $X,Y,A,A'$ be non-empty sets and assume we have functions $f:X \rightarrow Y$, $p:A' \rightarrow A$, and a subset $Z \subset Y \times A$. If $p$ is surjective, we have that:
	\[
	((f \times p)^{-1}(Z))_{A'}=(f \times p)^{-1}(Z_A),
	\]
	where we are using the notation \ref{Not:LowerS}.
\end{lemma}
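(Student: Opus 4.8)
The plan is to prove the set equality by a direct element-chase, reducing it to a single membership equivalence in which surjectivity of $p$ is the only hypothesis that gets used. First I will unwind Notation \ref{Not:LowerS} on both sides, writing everything as an explicit subset of $X \times A'$. The saturation $W_{A'}$ of a subset $W \subseteq X \times A'$ leaves the $A'$-coordinate free while recording only the first coordinate, i.e. $W_{A'} = \pi_X(W) \times A'$, so membership in $W_{A'}$ does not depend on the $A'$-component. Applying this to $W = (f \times p)^{-1}(Z)$ and using $(f \times p)(x,a') = (f(x), p(a'))$ gives
\[
((f \times p)^{-1}(Z))_{A'} = \{(x, a') \in X \times A' : \exists\, b' \in A',\ (f(x), p(b')) \in Z\}.
\]
Likewise $Z_A = \pi_Y(Z) \times A$, and taking the preimage (noting $p(a') \in A$ always holds) yields
\[
(f \times p)^{-1}(Z_A) = \{(x, a') \in X \times A' : \exists\, b \in A,\ (f(x), b) \in Z\}.
\]
Both descriptions depend only on $x$, so it suffices to prove, for each fixed $x \in X$, the equivalence
\[
\big(\exists\, b' \in A',\ (f(x), p(b')) \in Z\big) \iff \big(\exists\, b \in A,\ (f(x), b) \in Z\big).
\]

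The second step is to verify the two implications. The forward direction needs no hypothesis: given $b' \in A'$ with $(f(x), p(b')) \in Z$, the element $b := p(b') \in A$ witnesses the right-hand side, which gives the easy inclusion $((f \times p)^{-1}(Z))_{A'} \subseteq (f \times p)^{-1}(Z_A)$. The reverse direction is the only nontrivial point, and it is precisely where surjectivity of $p$ enters: given $b \in A$ with $(f(x), b) \in Z$, surjectivity of $p$ produces some $b' \in A'$ with $p(b') = b$, whence $(f(x), p(b')) = (f(x), b) \in Z$, as required. Establishing the equivalence for every $x$ then shows the two displayed subsets of $X \times A'$ coincide, which is the assertion of the lemma.

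The main (and essentially only) obstacle is this reverse inclusion $(f \times p)^{-1}(Z_A) \subseteq ((f \times p)^{-1}(Z))_{A'}$: without surjectivity of $p$, a value $b \in A$ realizing $(f(x), b) \in Z$ need not lie in the image of $p$, so it cannot be pulled back to a witness $b' \in A'$ and the inclusion can fail. By contrast, $f$ is only ever evaluated and never inverted, so no hypothesis on $f$ is needed; all the content of the statement is carried by $p$ being onto.
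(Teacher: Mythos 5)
Your proof is correct, and it is exactly the element chase that the paper leaves out: the paper's entire proof of this lemma is the sentence ``The proof is straightforward'', so there is no finer argument to compare against. One point worth flagging, though, since it is the only place where your write-up deviates from the paper's literal text: your unwinding $W_{A'} = \pi_X(W) \times A'$ (free the $A'$-coordinate, record the first coordinate) is the \emph{transpose} of what Notation \ref{Not:LowerS} literally says, namely $W_{A'} := \pi_{A'}^{-1}(\pi_{A'}(W)) = X \times \pi_{A'}(W)$ with $\pi_{A'}$ the second projection, which frees the $X$-coordinate instead. Under that literal reading the lemma as stated is false --- take $X=\{x\}$, $Y=\{y_1,y_2\}$, $f(x)=y_1$, $A=A'$ a singleton, $p=\mathrm{id}$, $Z=\{(y_2,a)\}$: the left-hand side is empty while the right-hand side is all of $X\times A'$ --- and the relevant hypothesis would be surjectivity of $f$, not of $p$. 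Your reading is the intended one: it is the unique reading under which ``$p$ surjective'' is exactly the needed hypothesis (as you correctly isolate in your last paragraph), it matches the adelic convention $Y_S=(\pi^S)^{-1}(\pi^S(Y))$ of Section \ref{Sec:notation} where the projection \emph{forgets} the $S$-components, and it is what the application in the proof of Theorem \ref{Thm:SObstr} requires, where the surjective map $\ab_{S_0}$ acts precisely on the ($S_0$-)components that the saturation frees. So the printed Notation \ref{Not:LowerS} contains a slip, and your proof implements (and in effect corrects to) the intended statement; it is complete as written.
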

\begin{proof}
	The proof is straightforward.
%
\end{proof}

%
%
%

\begin{lemma}\label{Lem:Inverse_image_closure}
	Let $f:X \rightarrow Y$ be an open morphism of topological spaces. We have that, for any subset $Z \subset Y$, $\overline{f^{-1}(Z)}\supset f^{-1}(\overline{Z})$.
\end{lemma}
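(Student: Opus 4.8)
The plan is to prove the equivalent inclusion $f^{-1}(\overline{Z}) \subseteq \overline{f^{-1}(Z)}$ directly, using the neighborhood characterization of closure in a topological space. First I would fix an arbitrary point $x \in f^{-1}(\overline{Z})$, so that $f(x) \in \overline{Z}$, and let $U$ be an arbitrary open neighborhood of $x$. To conclude $x \in \overline{f^{-1}(Z)}$ it suffices to show that $U \cap f^{-1}(Z) \neq \emptyset$, since such $U$ range over a neighborhood basis of $x$.

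The key step — and the only place where the openness hypothesis on $f$ enters — is to observe that $f(U)$ is an open set containing $f(x)$, hence an open neighborhood of the point $f(x) \in \overline{Z}$. By the definition of closure, every open neighborhood of a point of $\overline{Z}$ meets $Z$, so $f(U) \cap Z \neq \emptyset$. Choosing $z \in f(U) \cap Z$ and then a preimage $u \in U$ with $f(u) = z$, the condition $z \in Z$ says precisely that $u \in f^{-1}(Z)$, and thus $u \in U \cap f^{-1}(Z)$. This exhibits the required nonempty intersection, so $x \in \overline{f^{-1}(Z)}$, completing the argument.

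I expect no genuine obstacle here: the statement is a routine point-set topology fact, and the whole content is the single observation that an open map sends neighborhoods of $x$ to neighborhoods of $f(x)$, which transports the "meets $Z$" property of $\overline{Z}$ back to the "meets $f^{-1}(Z)$" property of $\overline{f^{-1}(Z)}$. (One should note that the reverse inclusion can fail, so openness is used in an essential, asymmetric way; but for the stated one-sided inclusion the above suffices.)
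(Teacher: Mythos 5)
Your proof is correct and is essentially the same argument as the paper's: both hinge on the single observation that $f(U)$ is open, the paper phrasing it contrapositively (an open $U$ disjoint from $f^{-1}(Z)$ has open image disjoint from $Z$, hence from $\overline{Z}$) while you phrase it directly via the neighborhood characterization of closure. No gap.
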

\begin{proof}
	For any $U\subset X$ disjoint from $f^{-1}(Z)$, the image $f(U) \subset Z$ is open and disjoint from $Z$, hence from $\overline{Z}$. Unraveling the definitions, the lemma follows.
\end{proof}

\begin{lemma}\label{Lem:some_top_lemma}
	Let $\alpha:X \rightarrow Y$ be a continuous map of topological spaces, with $X$ compact and $Y$ Hausdorff. Then, for any subset $S \subset X$ we have that $\overline{\alpha(S)}=\alpha(\overline{S})$.
\end{lemma}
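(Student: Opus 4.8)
The plan is to prove the two inclusions separately, noting at the outset that only one of them actually uses the topological hypotheses. The inclusion $\alpha(\overline{S}) \subseteq \overline{\alpha(S)}$ holds for \emph{any} continuous map, with no assumptions on $X$ or $Y$. First I would observe that $\overline{\alpha(S)}$ is closed in $Y$, so by continuity of $\alpha$ its preimage $\alpha^{-1}(\overline{\alpha(S)})$ is closed in $X$. Since this preimage plainly contains $S$, it also contains $\overline{S}$; applying $\alpha$ then gives $\alpha(\overline{S}) \subseteq \overline{\alpha(S)}$.

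For the reverse inclusion $\overline{\alpha(S)} \subseteq \alpha(\overline{S})$ I would bring in the hypotheses. Since $\overline{S}$ is a closed subset of the compact space $X$, it is itself compact; its continuous image $\alpha(\overline{S})$ is therefore compact, and because $Y$ is Hausdorff this image is closed in $Y$. As $\alpha(\overline{S})$ is a closed set containing $\alpha(S)$, it must contain the smallest closed set containing $\alpha(S)$, namely $\overline{\alpha(S)}$. This yields $\overline{\alpha(S)} \subseteq \alpha(\overline{S})$, and combining the two inclusions gives the claimed equality.

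The main (and really the only) obstacle is the second inclusion, which is exactly where compactness of $X$ and the Hausdorff property of $Y$ are indispensable: without compactness the image of a closed set need not be closed, and without Hausdorffness a compact subset of $Y$ need not be closed, so the failure of either hypothesis would break the argument. The first inclusion, by contrast, is a pure formality unwinding the definition of continuity together with the characterisation of closure as the smallest containing closed set.
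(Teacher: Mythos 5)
Your proof is correct and is the standard argument; the paper itself dismisses this lemma as common knowledge without giving a proof, and your two-inclusion argument (continuity for one direction, compactness of $\overline{S}$ plus closedness of compact subsets of Hausdorff spaces for the other) is exactly the expected justification.
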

\begin{proof}
	This is common knowledge.
\end{proof}

\begin{lemma}\label{Lem:Comp_nnHausdorff}
	Let $B$ be a topological abelian group (not necessarily Hausdorff), let $0 \in B$ be the unit element, and let $D=\overline{\{0\}}$ be its closure. Then, $B$ is quasi-compact if and only if the quotient $B/D$, which is Hausdorff, is compact.
\end{lemma}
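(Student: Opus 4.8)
The plan is to reduce everything to the single observation that the topology of $B$ is entirely controlled by the quotient map $\pi\colon B\to B/D$, in the precise sense that every open subset of $B$ is a union of cosets of $D$. First I would record the standard structural facts: $D=\overline{\{0\}}$ is a closed subgroup, normal since $B$ is abelian, so that $B/D$ is a topological group; and $B/D$ is Hausdorff, because quotienting by $\overline{\{0\}}$ makes the identity coset closed (its preimage is the open set $B\setminus D$), and a topological group in which $\{0\}$ is closed (equivalently $T_0$, equivalently $T_1$) is automatically Hausdorff. These justify the parenthetical ``which is Hausdorff'' in the statement.

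The key lemma to establish is the identity $D=\bigcap_{U} U$, where $U$ ranges over the open neighbourhoods of $0$. This follows by translating: since translations are homeomorphisms, $x\in\overline{\{0\}}$ iff every neighbourhood $x+U$ of $x$ (with $U$ an open neighbourhood of $0$) contains $0$, i.e.\ iff $-x\in U$ for all such $U$; and since $U\mapsto -U$ permutes the open neighbourhoods of $0$, this says exactly $x\in\bigcap_U U$. From this I deduce that every open set $V\subset B$ is \emph{saturated} for $\pi$, i.e.\ $V+D=V$: if $v\in V$, then $V-v$ is an open neighbourhood of $0$, hence contains $D$, so $v+D\subset V$.

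With saturation in hand the rest is purely formal. The assignments $V\mapsto\pi(V)$ and $W\mapsto\pi^{-1}(W)$ are mutually inverse bijections between the open subsets of $B$ and those of $B/D$: the map $\pi$ is continuous and surjective, it is open because $\pi^{-1}(\pi(V))=V+D=V$ is open for open $V$, and every open set of $B$ is saturated so no information is lost on pushing forward. Consequently an open cover of $B$ corresponds under $\pi$ to an open cover of $B/D$ and conversely, and a finite subcover on either side transports to a finite subcover on the other (using that preimages commute with unions and that $\pi$ is surjective). Hence $B$ is quasi-compact if and only if $B/D$ is quasi-compact, and since $B/D$ is Hausdorff this latter condition is the same as compactness, which is exactly the claimed equivalence.

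The main obstacle — indeed the only non-formal point — is the identification $D=\bigcap_U U$ together with the resulting saturation of open sets; everything after that is a routine transport of the covering property across the induced bijection of open sets. A minor care-point worth observing is that deriving both implications from the single saturation statement (rather than handling the easy direction separately via ``the continuous image of a quasi-compact space is quasi-compact'') keeps the argument symmetric and avoids any spurious appeal to Hausdorffness of $B$, which is not assumed.
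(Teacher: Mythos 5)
Your proof is correct and rests on the same core observation as the paper's: that the topology of $B$ is saturated with respect to $\pi\colon B\to B/D$ (the paper phrases this as $\overline{\{P\}}=P+D\subset C$ for every closed $C$ containing $P$ and transfers the finite-intersection property, while you phrase it as $D\subset U$ for every open neighbourhood $U$ of $0$ and transfer open covers — a purely dual formulation). No gap; this matches the paper's argument.
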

\begin{proof}
	If $B$ is quasi-compact, then $B/D$, being a quotient of it, is clearly quasi-compact as well.
	
	If $B/D$ is compact, we are going to prove compactness of $B$ by showing that, if $\mathcal{C}=\{C_i\}_{i \in I}$ is a collection of closed subsets of $B$ such that $\cap_{i \in I} C_i =\emptyset$, then there exists a finite subset of indexes $I_0 \subset I$ such that $\cap_{i \in I_0} C_i=\emptyset$. In fact we notice that, whenever $P \in C_i$, then $\pi^{-1}(\pi(P))=P+D=\overline{P} \subset C_i$, where $\pi:B \rightarrow B/D$ denotes the projection. Hence, for each $i \in I$, we have that $C_i=\pi^{-1}(\pi(C_i))$. Therefore, $\cap_{i \in I} \pi(C_i) =\emptyset$, and there exists a finite $I_0 \subset I$ such that $\cap_{i \in I_0} \pi(C_i) =\emptyset$. It follows that $\cap_{i \in I_0} C_i=\cap_{i \in I_0} \pi^{-1}(\pi(C_i)) = \pi^{-1}(\cap_{i \in I_0} \pi(C_i))=\emptyset$, as wished.
\end{proof}

\begin{lemma}\label{Lem:Sum_top}
	Let $B$ be a topological group, and let $D$ and $K$ be two subsets of $B$, where $D$ is closed and $K$ is quasi-compact. Then the sum $D+K$ is closed in $B$.
\end{lemma}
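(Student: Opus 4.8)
The plan is to show that the complement $B \setminus (D+K)$ is open, which is equivalent to the closedness of $D+K$. Throughout I will use that the subtraction map $\mu \colon B \times B \rightarrow B$, $(x,y) \mapsto x-y$, is continuous, as $B$ is a topological group. First I would fix an arbitrary point $b \in B \setminus (D+K)$ and reformulate the membership condition: saying $b \notin D+K$ means precisely that for every $k \in K$ one has $b - k \notin D$, i.e. $\mu(b,k) = b-k$ lies in the open set $B \setminus D$.

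Next, for each $k \in K$, since $\mu$ is continuous and $\mu(b,k) \in B \setminus D$, I would produce open neighbourhoods $V_k \ni b$ and $W_k \ni k$ with $\mu(V_k \times W_k) = V_k - W_k \subset B \setminus D$, that is, $(V_k - W_k) \cap D = \emptyset$. The family $\{W_k\}_{k \in K}$ is an open cover of $K$, so the key step is to invoke quasi-compactness of $K$ to extract a finite subcover $W_{k_1}, \dots, W_{k_n}$. Setting $V \defeq \bigcap_{i=1}^{n} V_{k_i}$ then gives an open neighbourhood of $b$.

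Finally I would verify that $V \cap (D+K) = \emptyset$, which shows $b$ is interior to the complement and hence concludes the argument. The verification is a short contradiction: if some $v \in V$ could be written $v = d + k$ with $d \in D$ and $k \in K$, then $k$ lies in one of the chosen $W_{k_i}$, while $v \in V \subset V_{k_i}$, so $v - k \in V_{k_i} - W_{k_i} \subset B \setminus D$; but $v - k = d \in D$, a contradiction. Since $b$ was arbitrary, $B \setminus (D+K)$ is open.

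I do not expect a genuine obstacle here, as this is the standard ``closed plus compact is closed'' lemma; the only point requiring slight care is the bookkeeping of the two families of neighbourhoods, ensuring one shrinks the neighbourhood of $b$ (taking a finite intersection) while covering $K$ (taking a finite subcover). The quasi-compactness hypothesis is used exactly once, in passing from the open cover $\{W_k\}$ to a finite subcover, and this is the single indispensable step.
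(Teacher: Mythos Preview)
Your proof is correct. The paper instead argues via the quotient map $\pi\colon B \to B/D$: since $D$ is closed, $B/D$ is Hausdorff; then $\pi(K)$ is quasi-compact, hence closed in the Hausdorff space $B/D$; and $D+K=\pi^{-1}(\pi(K))$ is the preimage of a closed set. This route is shorter, but as written it implicitly treats $D$ as a closed \emph{subgroup} (so that $B/D$ is a topological group and the fibres of $\pi$ are the cosets $b+D$, which is what makes the identity $D+K=\pi^{-1}(\pi(K))$ hold). Your direct open-cover argument, by contrast, works for an arbitrary closed subset $D$, exactly as the lemma is stated, and is in that sense the more robust proof.
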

\begin{proof}
	Since $D$ is closed, we know that the topological quotient $B/D$ is Hausdorff. Let $\pi:B \rightarrow B/D$ be the projection. We have that $D+K=\pi^{-1}(\pi(K))$. Since $K$ is quasi-compact, $\pi(K)\subset B/D$ is as well. Hence, since $B/D$ is Hausdorff, $\pi(K)$ is closed. Therefore $\pi^{-1}(\pi(K))$, i.e. $D+K$ is closed as well.
\end{proof}


\begin{lemma}\label{Lem:Ex_seq}
	Let $A \xrightarrow{\phi} B \xrightarrow{\psi} C \xrightarrow{\alpha} F$ be an exact sequence of topological groups, where all the morphisms are continuous. Assume that $F$ is finite and discrete, $\psi$ is open, and $A$ and $C$ are quasi-compact. Then $B$ is quasi-compact as well.
\end{lemma}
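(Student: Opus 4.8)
The plan is to analyze the two ``ends'' of the sequence separately and then glue them by a tube-lemma argument. First I would identify $\im\psi$ and $\ker\psi$ as quasi-compact subgroups of $B$. By exactness, $\im\psi=\ker\alpha=\alpha^{-1}(\{0\})$; since $F$ is discrete the point $0$ is closed, so $\ker\alpha$ is a closed subset of the quasi-compact group $C$ and is therefore quasi-compact. Again by exactness, $\ker\psi=\im\phi=\phi(A)$ is the continuous image of the quasi-compact group $A$, hence quasi-compact. Writing $N\defeq\ker\psi$, the statement then reduces to the following general fact: if $N$ is a quasi-compact (normal) subgroup of a topological group $B$, $\psi\colon B\to C$ is a continuous open homomorphism with kernel $N$, and $\im\psi$ is quasi-compact, then $B$ is quasi-compact.

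To prove this I would argue directly on open covers. Let $\{U_i\}_{i\in I}$ be an open cover of $B$. Each coset $bN$ is homeomorphic to $N$, hence quasi-compact, so it is covered by finitely many $U_i$; let $V_b$ be their (open) union, so $bN\subset V_b$. The key step is a tube lemma: using continuity of multiplication and quasi-compactness of $N$, I would produce an open set $O_b\ni b$ with $O_bN\subset V_b$. Concretely, for each $n\in N$ choose open $O_n\ni b$ and $P_n\ni n$ with $O_nP_n\subset V_b$, extract a finite subcover $P_{n_1},\dots,P_{n_m}$ of $N$, and take $O_b\defeq\bigcap_{j} O_{n_j}$. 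Set $\tilde V_b\defeq O_bN$; this is open (a union of translates of $O_b$), it contains $bN$, it is contained in $V_b$, and, since $N=\ker\psi$, it is saturated, i.e. $\psi^{-1}(\psi(\tilde V_b))=\tilde V_b$.

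Finally I would push the cover $\{\tilde V_b\}$ down to $C$. Because $\psi$ is open, each $\psi(\tilde V_b)$ is open in $C$, and these sets cover the quasi-compact set $\im\psi$, so finitely many $\psi(\tilde V_{b_1}),\dots,\psi(\tilde V_{b_r})$ suffice. By saturation the corresponding sets $\tilde V_{b_1},\dots,\tilde V_{b_r}$ cover all of $B$: any $x\in B$ satisfies $\psi(x)\in\psi(\tilde V_{b_s})$ for some $s$, and saturation forces $x\in\tilde V_{b_s}$. Since each $\tilde V_{b_s}\subset V_{b_s}$ is a union of finitely many of the $U_i$, this yields a finite subcover of $B$, proving quasi-compactness. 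The main obstacle I anticipate is the tube-lemma step, where openness of $\psi$ and quasi-compactness of $N$ must be combined correctly; one must also take care throughout that $B$ is merely assumed quasi-compact (not Hausdorff), so the whole argument is phrased via saturated open sets rather than by passing to a Hausdorff quotient.
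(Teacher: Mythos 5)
Your proof is correct, and it reaches the same destination as the paper's by a slightly different road. The paper first reduces to $F=\{1\}$ by replacing $C$ with $\ker\alpha$, then proves that $\psi$ is a \emph{closed} map: for closed $D\subset B$ it saturates $D$ to $D'=\phi(A)+D$, invokes Lemma \ref{Lem:Sum_top} (sum of a closed set and a quasi-compact set is closed) to see $D'$ is closed, and uses openness of $\psi$ on the complement to conclude $\psi(D)$ is closed; the tube neighbourhoods are then obtained for free as $V_c=C\setminus\psi\bigl(B\setminus\bigcup_i U_i^c\bigr)$, and the usual proper-map argument finishes. You instead keep $F$ around (working with $\im\psi=\ker\alpha$ as a quasi-compact closed subset of $C$, which is equivalent to the paper's reduction) and manufacture the tubes upstairs by hand, applying the tube lemma to the continuous multiplication $B\times B\to B$ and the quasi-compact kernel $N=\phi(A)$ to get saturated open sets $O_bN$, which you then push down using openness of $\psi$. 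The trade-off: the paper's route isolates the reusable statement ``$\psi$ is closed'' and leans on Lemma \ref{Lem:Sum_top}, while yours bypasses that lemma and the closedness claim entirely at the cost of an explicit compactness-of-the-fiber argument; both correctly avoid any Hausdorff hypothesis by phrasing everything through saturated sets, and both ultimately implement the same ``quasi-compact fibers over a quasi-compact base'' idea.
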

\begin{proof}
	First of all we may assume, up to changing $C$ with $\Ker \alpha$, without loss of generality, that $F=\{1\}$.
	
	Let us now prove that the morphism $\psi$ is closed. Let $D \subset B$ be a closed subset, and let $D' \defeq A'+D$ be the sum of the image $A'$ under $\phi$ of $A$ and $D$. Since $A$ is quasi-compact, so is $A'$. Hence, by Lemma \ref{Lem:Sum_top}, $D' \defeq A'+D$ is closed. Since $\psi(D)=\psi(D')$, and $\psi(B \setminus D') = C \setminus \psi(A'+D)=C \setminus \psi(D)$, and the former is open, one has that $\psi(D)$ is closed, as wished.
	
	We now prove the compactness of $B$. Let $I$ be a set of indices, and $B= \cup_{i\in I} U_i$ be a covering of $B$. Let, for each $c \in C$, $\{U^c_1, \dots, U^c_{n(c)} \}$ be a finite subcovering of $\mathcal{U}$ such that $\psi^{-1}(c) \subset \cup_{i=1}^{n(c)} U^c_i$ (the subcovering may always be assumed to be finite since $\psi^{-1}(c)$, being a translate of $A'$ is quasi-compact). Let now, 
	\[
	V_c \defeq \left\{c' \in C \mid \psi^{-1}(c') \subset \bigcup_{i=1}^{n(c)} U^c_i \right\}=C \setminus \psi\left(B\setminus\cup_{i=1}^{n(c)}U_i^c\right),
	\]
	which is open (since $\psi$ is closed).
	
	Since $C$ is compact and $\cup_{c \in C}V_c=C$, there exist a finite number $n \in \N$ and $c_1,\dots,c_n \in C$ such that $\cup_{j=1}^nV_{c_j}=C$. It is straightforward to verify that then $\cup_{j=1}^n\cup_{i=1}^{n(c_j)}U_i^{c_j}=B$.
\end{proof}

\section{Appendix B: Hasse principle}\label{AppendixB}

The following result is basically already present in \cite[Appendix A]{BoCTSko}. However, since it is not explicitly stated there, we represent it here for completeness, with a proof that is just a simplified version of the proof of \cite[Theorem A.1]{BoCTSko}.

\begin{theorem}\label{Thm:HPforX}
	Let $K$ be a number field and $X$ be a left homogeneous space under a connected algebraic group $G/K$, satisfying $\Sha(K,G^{ab})$ finite. We assume that the $G$-action on $X$ has connected geometric stabilizers. Let $\B_{\infty}(X) \subset \Br_{1,ur}(X)$ be defined as:
	\begin{equation}\label{Eq:Br_loc_na_const}
		\B_{\infty}(X) \defeq \Ker \left(\Br_{1}(X) \rightarrow \prod_{v \in M_K^{fin}}\Br (X_{K_v})/\Br K_v\right).
	\end{equation}
	We then have that $X(K)\neq \emptyset$ if and only if $X(\A_K)^{\B_{\infty}(X)}\neq \emptyset$.
\end{theorem}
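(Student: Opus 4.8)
The forward implication is immediate: if $X(K)\neq\emptyset$, any $P\in X(K)$ gives a constant adelic point whose class lies in $X(\A_K)^{\Br X}\subset X(\A_K)^{\B_{\infty}(X)}$ by the global reciprocity law recalled in Section~\ref{Sec:pre}. For the converse I would first perform the standard reductions. The unipotent radical of $G^{lin}$ is cohomologically trivial and invisible to the Brauer group, so one may assume $G^{lin}$ reductive. The connected geometric stabilizer $H$ has a linear part $H^{lin}=H\cap G^{lin}$ and an abelian-variety image $B=\operatorname{im}(H\to G^{ab})$, which is an abelian subvariety. This produces a $G$-equivariant fibration $q\colon X\to Y$, where $Y:=G/(G^{lin}\cdot H)$ is a $K$-torsor under the abelian variety $A:=G^{ab}/B$ (for which $\Sha(K,A)$ is still finite, by Poincaré reducibility applied to $B\subset G^{ab}$), and every geometric fibre is a homogeneous space of the connected \emph{linear} group $G^{lin}$ with connected geometric stabilizer $H^{lin}$.

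The heart of the argument is to show that $[Y]\in H^1(K,A)$ vanishes. The given adelic point of $X$ pushes forward to an adelic point $(y_v):=(q(x_v))$ of $Y$, so $[Y]$ is everywhere locally trivial, i.e. $[Y]\in\Sha(K,A)$. Now for $\xi\in\Sha(K,A^{\vee})$, where $A^{\vee}=\mathrm{Pic}^0(\overline{Y})$ is the dual abelian variety, the associated algebraic Brauer class $b_{\xi}\in\Br_1(Y)$ is locally constant at \emph{every} place, and in particular at every finite place, whence $q^{*}b_{\xi}\in\B_{\infty}(X)$. The $\B_{\infty}(X)$-condition on $(x_v)$ therefore forces $\langle b_{\xi},(y_v)\rangle=0$ for all such $\xi$, and by the standard identification of the Brauer--Manin pairing with the Cassels--Tate pairing (cf. \cite{Cassels_Tate}) this value equals $\langle [Y],\xi\rangle_{CT}$, independently of the adelic point. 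Finiteness of $\Sha(K,A)$ makes the Cassels--Tate pairing $\Sha(K,A)\times\Sha(K,A^{\vee})\to\Q/\Z$ nondegenerate, so $[Y]=0$ and $Y(K)\neq\emptyset$. This is exactly the step where both hypotheses are used: $\B_{\infty}(X)$ already contains (the pullback of) the subgroup $\Sha(K,A^{\vee})$, which is all one needs to detect $\Sha(K,A)$, and finiteness of $\Sha(K,G^{ab})$ (hence of $\Sha(K,A)$) converts triviality of the pairing into vanishing of $[Y]$.

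It then remains to descend to a fibre: picking $y_0\in Y(K)$ and setting $X_0:=q^{-1}(y_0)$, one obtains a homogeneous space under the connected linear group $G^{lin}$ with connected geometric stabilizer, for which the statement is the classical case of the theorem (the abelianization $(G^{lin})^{ab}$ being a torus, so that its $\Sha$ is automatically finite, cf. the linear sub-case of \cite{BoCTSko}). The last point to verify is that $X_0$ carries an adelic point in $X_0(\A_K)^{\B_{\infty}(X_0)}$: one transports the components $x_v$ into the fibre over $y_0$ using the $G(K_v)$-action that lifts the $A(K_v)$-translation carrying $y_v$ to $y_0$, and exploits that $\B_{\infty}(X_0)$ only constrains the archimedean components, which are inherited from $(x_v)$. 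I expect this transport and local-solubility bookkeeping to be the main technical obstacle of the write-up, since $G(K_v)\to A(K_v)$ can fail to be surjective and the rational point $y_0$ must be chosen compatibly with the local data so that the fibre is everywhere locally soluble; by contrast the Cassels--Tate vanishing of $[Y]$, once set up, is the clean conceptual core that makes the non-linear case reduce to the linear one.
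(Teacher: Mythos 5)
Your forward direction and the Cassels--Tate step are essentially sound: the quotient $q\colon X\to Y$ by $G^{lin}$ exists, $\B_{\infty}(X)$ contains $q^{*}\B(Y)\cong\Sha(K,A^{\vee})$, and orthogonality together with finiteness of $\Sha(K,A)$ does force $[Y]=0$. The genuine gap is precisely the step you defer to ``local-solubility bookkeeping'': producing an adelic point of $X_{0}=q^{-1}(y_{0})$ orthogonal to $\B(X_{0})$. Because $A$ is an abelian variety, $Y(K)$ is \emph{not} dense in $Y(\A_K)$ (nor in any $S$-adelic quotient), so you cannot choose $y_{0}$ adelically close to $(q(x_v))$; and since the image of $X(K_v)\to Y(K_v)$ is in general a proper open subset (the defect being governed by nontrivial local $H^{1}$ of the stabilizers), you cannot transport the $x_v$ into the fibre over $y_{0}$. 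Hence there is no reason for $X_{y_{0}}$ to be locally soluble at the finite places for \emph{any} rational $y_{0}$, while the linear case you invoke on the fibre requires everywhere local solubility plus a Brauer--Manin condition. This is not a technicality to be cleaned up; it is the reason the fibration over the maximal abelian quotient does not work.

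The paper's proof (a simplified version of \cite[Thm.\ A.1]{BoCTSko}) chooses a different d\'evissage precisely to dodge this. After reducing to $G^{lin}$ reductive, it replaces $G$ so that $G^{ss}$ is simply connected and, by passing to a torsor $Z\to X$ under a quasi-trivial torus, arranges that the map $\chi_X\colon M\to G^{sab}$ is injective. It then fibres $X\to Y':=X/G^{ss}$: the base is a torsor under a \emph{semi-abelian} variety, to which Harari's theorem \cite{Harari_torsors} applies and, moreover, allows one to pick $y_{1}\in Y'(K)$ lying in the prescribed archimedean connected components; the fibre $X_{y_1}$ is then a homogeneous space of the simply connected group $G^{ss}$ with geometric stabilizers $\bar{H}^{ssu}$, for which Borovoi's Hasse principle \cite[Cor.\ 7.4]{borovoi_abelian} requires only real points at the real places --- no finite-place solubility and no Brauer condition on the fibre. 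To keep your abelian-variety base you would need an analogous ``archimedean-only'' Hasse principle for the fibres $G^{lin}/H^{lin}$, which is false in general; so the choice of which normal subgroup to quotient by is the substantive content of the proof, not an implementation detail.
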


\begin{remark}\label{Rmk:HPforX}
	\begin{enumerate}
		\item For any smooth geometrically connected variety $X/K$ (\ref{Eq:Br_loc_na_const}) describes the Brauer set that is locally constant on non-archimedean places. We notice that, if $B \in \Br(X)$ is locally constant for all $v \notin S$ (with $S$ finite), then, since there exists a smooth model $\mathcal{X}$ for $X$ over some $\Spec O_{K,S'}$ (with $S' \supset S$ finite), such that $B \in \Br(\mathcal{X})$, and, by enlarging $S'$, we may assume, by Lang-Weil estimates, that $\mathcal{X}(O_v)\neq \emptyset$ for all $v \notin S \cup M_K^{\infty}$, we have that $B$ is necessarily $0$ for all $v \notin S \cup M_K^{\infty}$ (since it is constant, and its value on the integral points is automatically $0$).
		\item We notice that $\B_{\infty}(X)$ differs from the classical $\B(X)$ just by the behaviour at $M_K^{\infty}$. In particular, if $K$ is totally imaginary, then $\B_{\infty}(X)=\B(X)$. So, in this case, Theorem \ref{Thm:HPforX} reduces to \cite[Thm 3.4]{BoCTSko}, so it is already stated explicitly in the paper \cite{BoCTSko}.
		\item We notice that, by \cite[Thm. 2.1.1]{harari1994} and the first point of this remark, for any smooth geometrically connected variety $Y/K$, all elements of $\B_{\infty}(Y)$ are unramified elements of $\Br(Y)$.
	\end{enumerate}
\end{remark}

\begin{proof}
	As said before, we follow step-by-step the reductions of \cite[Theorem A.1]{BoCTSko}. 
	
	If $X(K) \neq \emptyset$, then $\emptyset \neq X(K)\subset X(\A_K)^{\B_{\infty}(X)}$. So we focus now on proving the other direction. Namely, we assume that $X(\A_K)^{\B_{\infty}(X)}\neq \emptyset$.
	
	We do a first reduction to show that it is enough to prove the result for $G$ such that $G^{lin}$ is reductive.
	
	Let $Y \defeq G^u\backslash X$, $G'\defeq G/G^u$, so that $Y$ is a $G'$ homogeneous space (this is because $G^u$ is normal in $G$). Let $\phi_G:G \rightarrow G'$ be the standard projection. We have a canonical morphism $\phi:X \rightarrow Y$, that is $\phi_G$-equivariant. We assume that we already know the result for $(Y,G')$. We notice that $\phi^*\B_{\infty}(Y)\subset \B_{\infty}(X)\subset \Br_{1}X$. In particular, if $(x_v) \in X(\A_K)^{\B_{\infty}(X)}$, then $(\phi(x_v)) \in Y(\A_K)^{\B_{\infty}(Y)}$. So, we deduce, from the reduction assumption, that $Y(K)\neq \emptyset$. Let $y_0 \in Y(K)$. We then have that $X_{y_0} \rightarrow y_0 \cong \Spec K$ is a homogeneous space of $G^u$. In particular, by \cite[Lem 3.2]{Borovoi1995TheBO}, we deduce that $X_{y_0}(K) \neq \emptyset$, concluding the proof of this reduction step.
	
	For the second reduction step, we know by \cite[Proposition 3.1]{BoCTSko} that there exists a group $\tilde{G}$ such that $X$ may be regarded as a homogeneous space under $\tilde{G}$, with linear connected stabilizers and such that $\tilde{G}^{ss}$ is semisimple simply connected. Moreover, by \cite[Lemma A.3]{BoCTSko} we still have that $\Sha(K,\tilde{G}^{ab})$ is finite. So we can and will assume from now on that $G$ is such that $G^u=\{1\}$, $\tilde{G}^{ss}$ is semisimple simply connected, and the geometric stabilizer $\bar{H}\subset G^{lin}$ is connected.
	
	The homogeneous space $X$ defines a $K$-form $M$ of $\bar{H}^{mult}=\bar{H}/\bar{H}^{ssu}$ (see \cite[Sec. 4.1]{Borovoi1995TheBO}), the largest quotient of $\bar{H}$ of multiplicative type, and a natural homomorphism $\chi_X:M \rightarrow G^{sab}\defeq G/G^{ss}$.

	We treat the case where $\chi_X$ is injective first (i.e. $\bar{H}^{ssu}=\bar{G} \cap \bar{G}^{ss}$). In this case, let $Y' \defeq X/G^{ss}$, and $\psi:X \rightarrow Y'$ be the standard projection. We have that $Y'$ is a homogeneous space with linear stabilizers under $G^{sab}=G/G^{ss}$, a semi-abelian variety, and is therefore, a torsor under a semi-abelian variety $G''$ itself. Moreover, since $Y'$ has linear stabilizers by the $G^{sab}$-action, we have that $G^{ab}\cong (G'')^{ab}$, and, consequently $\Sha((G'')^{ab})\cong \Sha(G^{ab})$, so that $\Sha(G'')$ is finite. 
	
	Let $(x_v) \in X(\A_K)^{\B_{\infty}(X)}$, and let $\mathcal{U}\subset X(\A_K)^{\B_{\infty}(X)}$ be the open subset defined as $\prod_{v \in M_K^{fin}}' X(K_v) \times \prod_{v \in M_K^{\infty}} C_{x_v}$, where $\prod'$ denotes the usual restricted product defining the adele sets (see Section \ref{Sec:notation}), and $C_{x_v} \subset X(K_v)$ denotes, for an archimedean $v$, the connected component in which lies ${x_v}$. Since the Brauer-Manin pairing is constant on the connected components of the archimedean places, that $\mathcal{U} \subset X(\A_K)^{\B_{\infty}(X)}$. We have by \cite[Lem. A.2]{BoCTSko}, that, for each $v \in M_K^{\infty}$, $\psi(C_{x_v})\subset Y(K_v)$ is a connected component of $Y(K_v)$. So, if we define $\mathcal{V} \subset Y(\A_K)$ to be $\prod_{v \in M_K^{fin}}' Y(K_v) \times \prod_{v \in M_K^{\infty}} \psi(C_{x_v})$, we have that $\emptyset \neq \mathcal{V} \subset Y(\A_K)^{\B_{\infty}(Y)}$ (since $\psi^* \B_{\infty}(Y) \subset \B_{\infty}(X)$) and, by \cite{Harari_torsors}, there exists a $y_1 \in \mathcal{V}\cap Y(K)$. The fiber $X_{y_1}$ is a homogeneous space under the semisimple simply connected group $G^{ss}$, with geometric stabilizers isomorphic to $\bar{H}^{ssu}$. Moreover, by construction, $X_{y_1}$ has real points in all real places. Hence, by \cite[Cor. 7.4]{borovoi_abelian}, there exists a rational point $x' \in X_{y_1}(K)$, concluding this case.
	
	We turn now to the general case.

	We construct as in the proof of \cite[Theorem A.5]{BoCTSko} a quasi-trivial torus $P$, a $P$-torsor $\phi:Z \rightarrow X$, such that $Z$ is a $G \times P$ homogeneous space, and $\phi$ is equivariant by $\pi:G \times P \rightarrow G$. Moreover, as in \cite[Theorem A.5]{BoCTSko}, we may and will assume that $(Z,G \times P)$ satisfies all of the reductions above, that the geometric stabilizers are still isomorphic to $\bar{H}$, and that the homomorphism $M \cong M_Z \rightarrow (G \times P)^{sab}\cong G^{sab} \times P$ is injective (here $M_Z$ denotes the $K$-form of $\bar{H}^{mult}$ defined by $Z$, which happens to be, in this case, isomorphic to $M$). Since, by Hilbert Theorem 90 and Shapiro's Lemma, the $P_{K_v}$-torsors $Z_{x_v} \rightarrow x_v$ are trivial for each $v \in M_K$, there exists an adelic point $(z_v) \in Z(A_K)$ such that $(\phi(z_v))=(x_v)$. Moreover, by Lemma \ref{Lem:pullback}, $\phi^*:\B_{\infty}(X) \rightarrow \B_{\infty}(Z)$ is an isomorphism, hence, $(\phi(z_v)) \in Z(\A_K)^{\B_{\infty}(Z)}$. Since $Z$ satisfies the assumption of the previous case, we already know that there exists a point $z_0 \in Z(K)$. In particular, $\phi(z_0) \in X(K)$, from which we conclude.
\end{proof}

The following lemma is a slightly modified version of \cite[Lemma A.4]{BoCTSko}:

\begin{lemma}\label{Lem:pullback}
	Let $\phi:Z \rightarrow X$ be a torsor under a quasi-trivial torus $P$, where $Z$ and $X$ are smooth geometrically connected varieties over a number field $K$. Then there is an induced homomorphism $\phi^*:\B_{\infty}(X)\rightarrow \B_{\infty}(Z)$ and it is an isomorphism.
\end{lemma}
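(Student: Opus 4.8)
The plan is to reduce everything to two structural features of a quasi-trivial torus $P$: that $P$ is \emph{special} (all its torsors are Zariski-locally trivial), and that its geometric character module $\widehat{\overline{P}}$ is a permutation $\Gamma_K$-module. First I would record well-definedness of $\phi^*$: if $b\in\Br_1(X)=\ker(\Br X\to\Br\overline{X})$ then $\overline{\phi}^*(b|_{\overline{X}})=0$, so $\phi^*b\in\Br_1(Z)$; and since base change to $K_v$ commutes with pullback, a class that becomes constant modulo $\Br K_v$ at every finite $v$ remains so after applying $\phi_v^*$. Hence $\phi^*$ indeed carries $\B_\infty(X)$ into $\B_\infty(Z)$.

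For injectivity I would use specialness of $P$: the torsor $\phi\colon Z\to X$ is then Zariski-locally trivial, and since $X$ is smooth and geometrically integral it admits a section $s\colon U\to Z$ over a dense open $U\subseteq X$. For any $b\in\Br X$ with $\phi^*b=0$ this gives $b|_U=(\phi\circ s)^*b=s^*(\phi^*b)=0$, whence $b=0$ by purity, i.e. by the injection $\Br X\hookrightarrow\Br U$ valid for regular $X$. Thus $\phi^*\colon\Br X\to\Br Z$ is injective, and the identical argument over each completion $K_v$ (where $P_{K_v}$ is again quasi-trivial, $X_{K_v}$ geometrically integral) shows that $\phi_v^*\colon\Br X_{K_v}\to\Br Z_{K_v}$ is injective too. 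In particular $\phi^*\colon\B_\infty(X)\to\B_\infty(Z)$ is injective.

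For surjectivity I would pass to the extended Picard complex. Because $\Pic\overline{P}=0$ and $\overline{K}[\overline{P}]^*/\overline{K}^*=\widehat{\overline{P}}$, the fundamental exact sequence of Colliot-Thélène--Sansuc for the torsor $\overline{Z}\to\overline{X}$ yields a distinguished triangle $\UPic(\overline{X})\to\UPic(\overline{Z})\to\widehat{\overline{P}}\xrightarrow{+1}$. Writing $\Br_0(Y):=\operatorname{im}(\Br K\to\Br Y)$, $\Br_a(Y):=\Br_1(Y)/\Br_0(Y)\cong\H^1(K,\UPic(\overline{Y}))$ (Borovoi--van Hamel \cite{BorovoivanHamel}), and using that $\widehat{\overline{P}}=\bigoplus_i\mathbb{Z}[\Gamma_K/\Gamma_{L_i}]$ is a permutation module so that $H^1(K,\widehat{\overline{P}})=\bigoplus_i\Hom(\Gamma_{L_i},\mathbb{Z})=0$, the long exact hypercohomology sequence of the triangle shows that $\phi^*\colon\Br_a(X)\to\Br_a(Z)$ is surjective; together with the injectivity above it is an isomorphism, and the same holds over every $K_v$. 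Since $\phi^*\colon\Br_0(X)\to\Br_0(Z)$ is an isomorphism (surjective tautologically, injective by the section argument), I conclude that $\phi^*\colon\Br_1(X)\to\Br_1(Z)$ is an isomorphism. Given $b'\in\B_\infty(Z)$ I would take its unique preimage $b\in\Br_1(X)$; for each finite $v$ the image of $b_{K_v}$ in $\Br_a(X_{K_v})$ maps under the injective $\phi_v^*$ to the image of $b'_{K_v}$ in $\Br_a(Z_{K_v})$, which vanishes, so $b_{K_v}$ is constant modulo $\Br K_v$ and hence $b\in\B_\infty(X)$. This proves $\phi^*$ is onto $\B_\infty(Z)$, completing the isomorphism.

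The step I expect to be the main obstacle is the surjectivity combined with the transfer of the defining local condition: it is not enough to lift $b'$ to $\Br_1(X)$, one must ensure the lift already satisfies the finite-place constancy built into $\B_\infty$. This is exactly where the \emph{local} injectivity of $\phi_v^*$ on $\Br_a(X_{K_v})\to\Br_a(Z_{K_v})$ is decisive, and it is again furnished by specialness of $P$ over $K_v$ via the section-plus-purity argument. A secondary point requiring care is the clean availability of the identification $\Br_a\cong\H^1(K,\UPic)$ and of the torsor triangle over a base $X$ with possibly nontrivial geometric units, so that the permutation-module vanishing $H^1(K,\widehat{\overline{P}})=0$ can be applied; once these are in place the remainder is a short diagram chase.
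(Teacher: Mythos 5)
Your proof is correct, but it takes a genuinely different route from the paper's. The paper's proof is a three-line reduction: it passes to smooth compactifications $\phi^c:Z^c\to X^c$, writes $\B_\infty(Y)$ as the kernel of $\Br_1(Y^c)\to\prod_{v\in M_K^{fin}}\Br_1(Y^c_{K_v})/\Br K_v$ (using that $\B_\infty$-classes are unramified), and then invokes \cite[Lemma A.4]{BoCTSko} — which asserts that $(\phi^c)^*:\Br_1(X^c)\to\Br_1(Z^c)$ and its local analogues are isomorphisms — so that the claim follows from a diagram chase on the two exact rows. You instead avoid compactifications entirely and essentially reprove the key isomorphism $\Br_1(X)\xrightarrow{\sim}\Br_1(Z)$ for the open varieties: injectivity via a rational section plus purity (note that here you do not really need specialness of $P$; it suffices that the generic fibre of $\phi$ is a torsor under a quasi-trivial torus over $K(X)$, hence trivial by Shapiro and Hilbert~90, which already yields a section over a dense open), and surjectivity via the Colliot-Thélène--Sansuc/$\UPic$ triangle together with $H^1(K,\widehat{\overline{P}})=0$ for permutation modules and the identification $\Br_a\cong\H^1(K,\UPic)$ of \cite{BorovoivanHamel} (valid here since $H^3(K_v,\overline{K_v}^*)=0$ at every place). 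Your transfer of the finite-place constancy condition via local injectivity of $\phi_v^*$ on $\Br_a$ is exactly right and replaces the paper's use of the local isomorphisms from the cited lemma. What the paper's approach buys is brevity, at the cost of outsourcing the substance to \cite{BoCTSko} (whose proof uses the same permutation-module mechanism you make explicit); what yours buys is a self-contained argument on the open models that makes visible why quasi-triviality of $P$ is the operative hypothesis. The two points you flag as delicate — the existence of the torsor triangle for $\UPic$ over a base with nontrivial geometric units, and the comparison $\Br_a\cong\H^1(K,\UPic)$ — are indeed the places where precise references are needed, but both are available in the literature you name, so there is no gap.
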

\begin{proof}
	Let $\phi^c:Z^c \rightarrow X^c$ be smooth compactifications of $\phi$, $Z$ and $X$. We have the following commutative diagram (see Remark \ref{Rmk:HPforX}(ii) for the rows), where the columns are defined by the pullback $(\phi^c)^*$:
	\begin{equation}\label{Eq:diagram}
		\begin{tikzcd}
		0 \arrow[r] \arrow[d] & \B_{\infty}(X) \arrow[r] \arrow[d] & \Br_1(X^c) \arrow[r] \arrow[d, "\sim"] &  \prod_{v \in M_K^{fin}}\Br_1 (X^c_{K_v})/\Br K_v \arrow[d, "\sim"] \\
		0 \arrow[r]           & \B_{\infty}(Z) \arrow[r]           & \Br_1(Z^c) \arrow[r]                   &  \prod_{v \in M_K^{fin}}\Br_1 (Z^c_{K_v})/\Br K_v    ,            
		\end{tikzcd}
	\end{equation}
	where the last two columns are isomorphisms by \cite[Lemma A.4]{BoCTSko}. Hence the morphism in the first column is an isomorphism as well, concluding the proof of the lemma.
\end{proof}

\section{Appendix C: 2-torsors}\label{AppendixC}

In this appendix, let $\iota:H \hookrightarrow G$ be an embedding of connected reductive groups, defined over a field $k$ of characteristic $0$, with $H$ linear. We use the notation $C_H \defeq [H^{sc} \rightarrow H^{red}]$, and $C_G \defeq [G^{sc} \rightarrow G^{red}]$.

We recall the following definition (present e.g. in \cite{Demarche_abelian}):
\begin{definition}\label{Def:H0abDemarche}
	The set $H^0(k,[C_H \rightarrow C_G])$ is the set of couples $(\mathcal{D}, r: \mathcal{D} \bigwedge^{\mathcal{H}}\mathcal{G} \xrightarrow{\sim} \mathcal{G}) \in \TORS(\mathcal{H}) \times \Mor(\TORS(\mathcal{G}))$ up to the following equivalence. Two elements $(\mathcal{D}_1, r_1: \mathcal{D}_1 \bigwedge^{\mathcal{H}}\mathcal{G} \xrightarrow{\sim} \mathcal{G})$ and $(\mathcal{D}_2, r_2: \mathcal{D}_2 \bigwedge^{\mathcal{H}}\mathcal{G} \xrightarrow{\sim} \mathcal{G})$ are equivalent  if there exists a morphism $\Mor(\TORS(\mathcal{H})) \ni s: \mathcal{D}_1 \xrightarrow{\sim} \mathcal{D}_2$ and a $2$-morphism $\alpha \in \Mor^2(\TORS(\mathcal{G}))$:
	\begin{equation*}
		\xymatrix{
			& \mathcal{G}&  \\
			\mathcal{D}_2 \bigwedge^{\mathcal{H}}\mathcal{G} \ar[ru]^{r_2} & \ar@{=>}[u]_{\alpha} & \mathcal{D}_1 \bigwedge^{\mathcal{H}}\mathcal{G}. \ar[ll]^{s \bigwedge^{\mathcal{H}}\mathcal{G}} \ar[lu]_{r_1}\\
		}
	\end{equation*}
\end{definition}

The formulas appearing in Proposition \ref{Prop:explicit_H0} below are just the ones coming from making Definition \ref{Def:H0abDemarche} explicit using cocycle formulas, such as the ones that one may find in \cite[Sec. 6]{Breen} (see especially subsections $6.2$ and $6.3$ of {\itshape loc. cit.}). 

Before coming to Proposition \ref{Prop:explicit_H0}, giving an explicit description of the set $H^0(k,[C_H \rightarrow C_G])$ in terms of cocyles, we recall the following notation (which is in essence borrowed from \cite{Borovoi_abelianization}).

We define $T \defeq \Spec k$ and $S \defeq \Spec \bar{k}$.
We are going to denote by $S^n$ the scheme $S \times_T \ldots \times_T S$, where the product is taken $n$ times. 

We recall that we have isomorphisms $\phi_n:S^n_T \xrightarrow{\sim} \bigsqcup_{(\gamma_1,\ldots,\gamma_{n-1}) \in \Gamma^{n-1}} S$ defined as follows:
\[
\restricts{\phi_n^{-1}}{S_{(\gamma_1,\ldots,\gamma_{n-1})}}: S \rightarrow S^n, \ \ (\id, \gamma_1, \gamma_1\cdot \gamma_2,\ldots, \gamma_1  \cdots  \gamma_{n-1});
\]
where $S_{(\gamma_1,\ldots,\gamma_{n-1})}$ denotes the copy of $S$ in $\bigsqcup_{(\gamma_1,\ldots,\gamma_{n-1}) \in \Gamma^{n-1}} S$ indexed by $(\gamma_1,\ldots,\gamma_{n-1}) \in \Gamma^{n-1}$, and $\gamma_i:S \xrightarrow{\gamma_i} S$ denotes the $\Spec k$-morphism $\Spec (\Spec \act{\gamma_i}{\star}:\Spec \overline{k} \rightarrow \Spec \overline{k})$. We notice that $\phi_n$ is a morphism of $S$-schemes, if we endow $\bigsqcup_{(\gamma_1,\ldots,\gamma_{n-1}) \in \Gamma^{n-1}} S$ with its natural $S$-scheme structure, and $S^n$ with the $S$-scheme structure coming from the projection on the first coordinate.

For an a $T$-scheme $Y$, sometimes we denote, with a slight abuse of notation, the elements of $Y(S^n)$ by $y_{\gamma_1,\ldots, \gamma_{n-1}}$, to actually denote the composition:
\[
S^n_T \xrightarrow{\phi_n} \bigsqcup_{(\gamma_1,\ldots,\gamma_{n-1}) \in \Gamma^{n-1}} S \xrightarrow{y_{\gamma_1,\ldots, \gamma_{n-1}}} Y.
\]

For a crossed module $\mathcal{C} \defeq [C \xrightarrow{\rho} P]$:

We define $C^0_{\mathcal{C}}(S) \defeq P(S^2) \times C(S^3)$, $C^1_{\mathcal{C}}(S) \defeq P(S) \times C(S)$, and $C^2_{\mathcal{C}}(S) \defeq C(S)$. According to the notation above, we denote elements of $C^0_{\mathcal{C}}(S)$ (resp. $C^1_{\mathcal{C}}(S), C^2_{\mathcal{C}}(S)$) by $(p_{\sigma}, c_{\sigma,\eta})$ (resp. $(\bar{p}, \bar{c}_{\sigma}), \tilde{p}$). We recall that $C^1_{\mathcal{C}}(S)$ and $C^2_{\mathcal{C}}(S)$ have group operations (denoted by $\circ_1$ and $\circ_2$) defined as follows:
\begin{equation}\label{Eq:grp_operation_1}
	(\bar{p}^1, \bar{c}^1_{\sigma})\circ_1(\bar{p}^2, \bar{c}^2_{\sigma})\defeq (\bar{p}^1\cdot \bar{p}^2,  \bar{c}^1_{\sigma}\cdot \act{\bar{p_1}}{\bar{c}^2_{\sigma}}),
\end{equation}
\begin{equation}\label{Eq:grp_operation_2}
	\tilde{p}_1\circ_1 \tilde{p_2} \defeq \tilde{p}_1\cdot \tilde{p_2}.
\end{equation}
We define $Z^0_{\mathcal{C}}(S)$ as follows:
\begin{equation}
Z^0_{\mathcal{C}}(S) \defeq \left\{(p_{\sigma}, c_{\sigma,\eta}) \in P(S^2) \times C(S^3), \ \ \begin{cases}
p_{\sigma \eta}=\rho(c_{\sigma,\eta})p_{\sigma}\act{\sigma}{p}_{\eta}, \\
c_{\sigma\eta,\nu}c_{\sigma,\eta}=c_{\sigma,\eta\nu}\act{p_{\sigma}\sigma}{c_{\sigma\eta,\nu}}.
\end{cases} \right\}
\end{equation}

We have a left action of $C^1_{\mathcal{C}}(S)$ on $Z^0_{\mathcal{C}}(S)$, and one of $C^2_{\mathcal{C}}(S)$ on $C^1_{\mathcal{C}}(S) \times Z^0_{\mathcal{C}}(S)$, defined as follows:
\begin{equation}\label{Eq:action_operation_0}
\star_0:C^1_{\mathcal{C}}(S) \times Z^0_{\mathcal{C}}(S) \rightarrow C^0_{\mathcal{C}}(S), \ \ (\bar{p}, \bar{c}_{\sigma}) \star_0 (p_{\sigma}, c_{\sigma,\eta}) \defeq (p^2_{\sigma}=\bar{c}_{\sigma}^{-1} \cdot \bar{p}\cdot p_{\sigma} \cdot \act{\sigma}{\bar{p}}^{-1},  \ \
 \bar{c}_{\sigma\eta}^{-1} \cdot \act{\bar{p}}{c_{\sigma,\eta}}\cdot \bar{c}_{\sigma} \cdot \act{p^2_{\sigma}\sigma}{\bar{c}_{\eta}}),
\end{equation}
\begin{equation}\label{Eq:action_operation_1}
\star_1:
C^2_{\mathcal{C}}(S) \times 
\left( C^1_{\mathcal{C}}(S) \times Z^0_{\mathcal{C}}(S) \right) 
\rightarrow C^1_{\mathcal{C}}(S) \times Z^0_{\mathcal{C}}(S), \ \  \tilde{p}\star_1\left((\bar{p}, \bar{c}_{\sigma}), (p_{\sigma}, c_{\sigma,\eta})\right)\defeq 
 \left((
\rho(\tilde{p}) \cdot \bar{p}, \ \
\tilde{p}\cdot \bar{c}_{\sigma} \cdot \act{p_{\sigma}\sigma}{\tilde{p}}^{-1}),(p_{\sigma}, c_{\sigma,\eta}) \right).
\end{equation}

\begin{proposition}\label{Prop:explicit_H0}
	Keeping the notation, $T=\Spec k$, $S=\Spec \overline{k}$, we have a natural isomorphism between $H^0(k,[{C}_H \xrightarrow{\iota_*} {C}_G])$ and the following set: 
	\begin{align*}
		(\psi_{\sigma}, u_{\sigma,\eta}, g, a) \in Z^0_{{C_H}}(S)\times C^1_{{C_G}}(S)=H(S^2) \times H^{sc}(S^3) \times G(S) \times G^{sc}(S^2), \\ \text{s.t.} \
		\begin{cases}
		\psi_{\sigma \eta}= \rho_H(u_{\sigma,\eta})\psi_{\sigma}\act{\sigma}{\psi}_{\eta}, \\
		u_{\sigma\eta,\nu}u_{\sigma,\eta}=u_{\sigma,\eta\nu}\act{\psi_{\sigma}\sigma}{u_{\sigma\eta,\nu}},\\
		g\cdot \psi_{\sigma}=\rho_G(a_{\sigma}) \cdot \act{\sigma}{g}, \\
		a_{\sigma\eta}=\act{g}{u_{\sigma,\eta}}\cdot a_{\sigma} \cdot \act{\psi_{\sigma}\sigma}{a_{\eta}}.
		\end{cases},\ \ i.e. \text{ with } \alpha= (\psi_{\sigma}, u_{\sigma,\eta})  \text{ and } \beta=(g, a_{\sigma}), \ \beta \star_0 \iota(\alpha) = e;
	\end{align*}
	quotiented by the following equivalence relation:
	\begin{align*}
    (a_1,b_1) \sim (a_2,b_2) \in Z^0_{{C_H}}(S)\times C^1_{{C_G}}(S), \ \ \text{if there exists } \\ (c,d) \in  C^1_{{C_H}}(S)\times C^2_{{C_G}}(S), \ \ s.t. \begin{cases}
    a_1=c \star_0 a_2, \\
    d\star_1 (b_2, \iota(a_2))= (\iota(c) \circ_1 b_1, \iota(a_2)).
    \end{cases} 
	\end{align*}
	Moreover, the image under the natural morphism $H^0(k,[H \xrightarrow{\iota} G]) \rightarrow H^0(k,[{C}_H \xrightarrow{\iota} {C}_G])$ of the element $(h_{\sigma},g) \in Z^0_{[1 \rightarrow H]}(S) \times C^1_{[1 \rightarrow G]}(S) = H(S^2)\times G(S)$ (where $g \cdot h_{\sigma}= \act{\sigma}{g}$) is represented by $(h_{\sigma},e,g,e)$ (where the $e$'s denote constant cocyles valued in the identity element).
\end{proposition}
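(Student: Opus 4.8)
The plan is to unwind the stack-theoretic Definition \ref{Def:H0abDemarche} into explicit descent cocycles, following the cocyclic formalism for gr-stacks (2-groups) of Breen \cite{Breen}. Throughout I would work over the faithfully flat cover $S=\Spec\overline{k}\to T=\Spec k$, so that the schemes $S^n$, decomposed via $\phi_n$ into copies of $S$ indexed by $\Gamma^{n-1}$, record the relevant Čech/descent data. The gr-stacks $\mathcal{H}$ and $\mathcal{G}$ attached to the crossed modules $C_H=[H^{sc}\xrightarrow{\rho_H}H]$ and $C_G=[G^{sc}\xrightarrow{\rho_G}G]$ become neutral over $S$, so every object and morphism in Definition \ref{Def:H0abDemarche} can be trivialized over $S$ and reconstructed from its gluing data over $S^2$ and $S^3$. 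The three main steps will be: (i) encode the $\mathcal{H}$-torsor $\mathcal{D}$; (ii) encode the trivialization $r$; (iii) match the equivalence of pairs with 2-morphisms.

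For (i), an object $\mathcal{D}\in\TORS(\mathcal{H})$ trivialized over $S$ produces descent data consisting of an isomorphism over $S^2$, recorded by $\psi_{\sigma}\in H(S^2)$, together with a coherence 2-arrow over $S^3$, recorded by $u_{\sigma,\eta}\in H^{sc}(S^3)$; the descent (1-cocycle) and coherence (2-cocycle) axioms are precisely the two relations cutting out $Z^0_{C_H}(S)$. For (ii), after also trivializing $\mathcal{G}$ and $\mathcal{D}\wedge^{\mathcal{H}}\mathcal{G}$ over $S$, the isomorphism $r$ of $\mathcal{G}$-torsors is recorded by a pair $(g,a_{\sigma})\in C^1_{C_G}(S)=G(S)\times G^{sc}(S^2)$, where $g$ is the $S$-component of $r$ and $a_{\sigma}$ measures its failure to commute with descent. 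Compatibility of $r$ with the gluing of $\mathcal{D}\wedge^{\mathcal{H}}\mathcal{G}$ over $S^2$ and $S^3$ yields exactly $g\cdot\psi_{\sigma}=\rho_G(a_{\sigma})\cdot\act{\sigma}{g}$ and $a_{\sigma\eta}=\act{g}{u_{\sigma,\eta}}\cdot a_{\sigma}\cdot\act{\psi_{\sigma}\sigma}{a_{\eta}}$, which by the formula (\ref{Eq:action_operation_0}) for $\star_0$ is the single equation $\beta\star_0\iota(\alpha)=e$ with $\alpha=(\psi_{\sigma},u_{\sigma,\eta})$ and $\beta=(g,a_{\sigma})$.

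For (iii), two pairs are equivalent in the sense of Definition \ref{Def:H0abDemarche} exactly when there is an isomorphism $s\colon\mathcal{D}_1\xrightarrow{\sim}\mathcal{D}_2$ of $\mathcal{H}$-torsors and a 2-morphism $\alpha$ filling the triangle. Over $S$, the morphism $s$ is recorded by $c\in C^1_{C_H}(S)$ acting on descent data through $\star_0$, giving $a_1=c\star_0 a_2$; the 2-morphism $\alpha$ is recorded by $d\in C^2_{C_G}(S)=G^{sc}(S)$ acting through $\star_1$ of (\ref{Eq:action_operation_1}), giving $d\star_1(b_2,\iota(a_2))=(\iota(c)\circ_1 b_1,\iota(a_2))$. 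I would then verify that the whole assignment is independent of the chosen $S$-trivializations (a change of trivialization acts precisely by some $\star_0$ or $\star_1$, hence stays within one equivalence class) and that it is bijective by exhibiting the inverse, rebuilding $\mathcal{D}$ and $r$ from a cocycle by descent along $S\to T$. The hard part will be exactly this last bookkeeping: checking that Breen's general coherence cocycles \cite[6.2--6.3]{Breen} specialize to the stated operations $\star_0,\star_1,\circ_1$ with the precise conjugations $\act{\bar{p}}{(\cdot)}$, $\act{p_{\sigma}\sigma}{(\cdot)}$ and the correct left/right and inverse conventions. All relations are forced, but matching signs and the semidirect structure is the delicate point.

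Finally, for the concluding compatibility, the map $H^0(k,[H\xrightarrow{\iota}G])\to H^0(k,[C_H\xrightarrow{\iota_*}C_G])$ is induced by the morphism of crossed modules of crossed modules given coordinatewise by $[1\to H]\to[H^{sc}\to H]$ and $[1\to G]\to[G^{sc}\to G]$ (the identity on $H$, resp.\ $G$, and the unique map into $H^{sc}$, resp.\ $G^{sc}$). Since the cocyclic dictionary of steps (i)--(iii) is functorial in the crossed module, this morphism induces on cochains the padding $h_{\sigma}\mapsto(h_{\sigma},e)$ on $Z^0(S)$ and $g\mapsto(g,e)$ on $C^1(S)$, so the class of $(h_{\sigma},g)$ with $g\cdot h_{\sigma}=\act{\sigma}{g}$ is carried to $(h_{\sigma},e,g,e)$. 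One checks directly that, upon setting $u=e$ and $a=e$, the defining relations of $Z^0_{C_H}(S)\times C^1_{C_G}(S)$ collapse to the ordinary 1-cocycle identity $h_{\sigma\eta}=h_{\sigma}\act{\sigma}{h_{\eta}}$ together with $g\cdot h_{\sigma}=\act{\sigma}{g}$, confirming that $(h_{\sigma},e,g,e)$ is a well-defined representative and completing the proof.
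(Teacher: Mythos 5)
Your proposal follows essentially the same route as the paper, whose own proof is only a sketch asserting that the statement is ``an easy calculation from \cite[Sec. 6]{Breen}''; your unwinding of Definition \ref{Def:H0abDemarche} into \v{C}ech descent data over $S \to T$ (trivialize $\mathcal{D}$ and $r$ over $S$, record gluing data over $S^2$ and $S^3$, match equivalences with the $\star_0$, $\star_1$ actions) is precisely that calculation, carried out in more detail than the paper gives. The one point the paper flags that you pass over silently is that Breen's formalism is phrased in terms of hypercoverings, and one must invoke \cite[Example 9.11]{Artin-Mazur} to see that over the étale site of $\Spec k$ these are dominated by \v{C}ech coverings, so that the single cover $S \to T$ really does capture every object and morphism of $\TORS(\mathcal{H})$ and $\TORS(\mathcal{G})$.
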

\begin{proof}[Sketch of Proof]
	A more detailed proof may appear in a future version of this work or in other work. However, the proof is in essence an easy calculation from \cite[Sec. 6]{Breen} (keeping in mind that, since we are over the étale site of the spectrum of a field, the hypercoverings appearing in \cite[Sec. 6]{Breen} may always be dominated by \v{C}ech coverings by \cite[Example 9.11]{Artin-Mazur}).
\end{proof}

\bibliographystyle{alpha2}      
\bibliography{homspaces}

\begin{thebibliography}{BCTS08}
\expandafter\ifx\csname url\endcsname\relax
  \def\url#1{\texttt{#1}}\fi
\expandafter\ifx\csname doi\endcsname\relax
  \def\doi#1{\burlalt{doi:#1}{http://dx.doi.org/#1}}\fi
\expandafter\ifx\csname urlprefix\endcsname\relax\def\urlprefix{URL }\fi
\expandafter\ifx\csname href\endcsname\relax
  \def\href#1#2{#2}\fi
\expandafter\ifx\csname burlalt\endcsname\relax
  \def\burlalt#1#2{\href{#2}{#1}}\fi

\bibitem[AM69]{Artin-Mazur}
M.~Artin and B.~Mazur.
\newblock {\em Etale Homotopy}.
\newblock Lecture Notes in Mathematics 100. Springer-Verlag Berlin Heidelberg,
  1 edition, 1969.

\bibitem[Bal]{Francesca}
F.~{Balestrieri}.
\newblock { Some remarks on strong approximation and applications to
  homogeneous spaces of linear algebraic groups}.
\newblock {\em Preprint},
  \url{https://fbalestrieri.github.io/files/SROSAAATHSOLAG.pdf}, To appear in
  Proc. A.M.S.

\bibitem[BCTS08]{BoCTSko}
M.~Borovoi, J.-L. Colliot-Thélène, and A.~N. Skorobogatov.
\newblock The elementary obstruction and homogeneous spaces.
\newblock {\em Duke Math. J.}, 141(2):321--364, 02 2008.

\bibitem[BD13]{borovoi}
M.~Borovoi and C.~Demarche.
\newblock Manin obstruction to strong approximation for homogeneous spaces.
\newblock {\em Comment. Math. Helv.}, 88(1):1--54, 2013.

\bibitem[Bor93]{borovoi_abelian}
M.~V. Borovoi.
\newblock Abelianization of the second nonabelian {G}alois cohomology.
\newblock {\em Duke Math. J.}, 72(1):217--239, 10 1993.

\bibitem[Bor95]{Borovoi1995TheBO}
M.~Borovoi.
\newblock The {B}rauer-{M}anin obstructions for homogeneous spaces with
  connected or abelian stabilizer.
\newblock {\em Journal f{\"u}r die reine und angewandte Mathematik (Crelles
  Journal)}, 1996:181 -- 194, 1995.

\bibitem[Bor96]{Borovoi_abelianization}
M.~Borovoi.
\newblock Abelianization of the first {G}alois cohomology of reductive groups.
\newblock {\em Internat. Math. Res. Notices}, 1996(8):401--407, 1996.

\bibitem[Bre07]{Breen}
L.~Breen.
\newblock {\em Bitorseurs et Cohomologie Non Ab{\'e}lienne}, pages 401--476.
\newblock Birkh{\"a}user Boston, Boston, MA, 2007.

\bibitem[Bri11]{brion}
M.~Brion.
\newblock On the geometry of algebraic groups and homogeneous spaces.
\newblock {\em Journal of Algebra}, 329(1):52 -- 71, 2011.
\newblock Special Issue Celebrating the 60th Birthday of Corrado De Concini.

\bibitem[BvH06]{BorovoivanHamel}
M.~Borovoi and J.~van Hamel.
\newblock Extended {P}icard complexes for algebraic groups and homogeneous
  spaces.
\newblock {\em C. R. Math. Acad. Sci. Paris}, 342(9):671--674, 2006.

\bibitem[CDX19]{demarche}
Y.~Cao, C.~Demarche, and F.~Xu.
\newblock Comparing descent obstruction and {B}rauer-{M}-anin obstruction for
  open varieties.
\newblock {\em Transactions of the American Mathematical Society},
  371(12):8625--8650, mar 2019.

\bibitem[Con02]{Conrad}
B.~Conrad.
\newblock A modern proof of {C}hevalley's theorem on algebraic groups.
\newblock {\em J. Ramanujan Math. Soc.}, 17(1):1--18, 2002.

\bibitem[CT]{Brauer_book}
J.-L. Colliot-Thélène and A.~Skorobogatov.
\newblock {\em The Brauer-Grothendieck group}.
\newblock \url{http://wwwf.imperial.ac.uk/~anskor/brauer.pdf}, To be published.

\bibitem[CTX09]{CTXu}
J.-L. Colliot-Th\'{e}l\`ene and F.~Xu.
\newblock Brauer-{M}anin obstruction for integral points of homogeneous spaces
  and representation by integral quadratic forms.
\newblock {\em Compos. Math.}, 145(2):309--363, 2009.
\newblock With an appendix by Dasheng Wei and Xu.

\bibitem[Dem11]{Demarche_comparison}
C.~Demarche.
\newblock Une formule pour le groupe de {B}rauer algébrique d'un torseur.
\newblock {\em Journal of Algebra}, 347(1):96 -- 132, 2011.

\bibitem[Dem13]{Demarche_abelian}
C.~Demarche.
\newblock {\em Abélianisation des espaces homogènes et applications
  arithmétiques}, page 138–209.
\newblock London Mathematical Society Lecture Note Series. Cambridge University
  Press, 2013.

\bibitem[Dem17]{Demarche_example}
C.~Demarche.
\newblock Obstructions de {B}rauer-{M}anin enti\`eres sur les espaces
  homog\`enes \`a stabilisateurs finis nilpotents.
\newblock {\em Bull. Soc. Math. France}, 145(2):225--236, 2017.

\bibitem[EH02]{strong_resolution}
S.~Encinas and H.~Hauser.
\newblock Strong resolution of singularities in characteristic zero.
\newblock {\em Commentarii Mathematici Helvetici}, 77(4):821--845, 2002.

\bibitem[Gro71]{SGA1}
A.~Grothendieck.
\newblock {\em Rev\^etements \'etales et groupe fondamental (SGA 1)}, volume
  224 of {\em Lecture notes in mathematics}.
\newblock Springer-Verlag, 1971.

\bibitem[Har94]{harari1994}
D.~Harari.
\newblock Méthode des fibrations et obstruction de {M}anin.
\newblock {\em Duke Math. J.}, 75(1):221--260, 07 1994.

\bibitem[Har06]{Harari_torsors}
D.~Harari.
\newblock {The Manin obstruction for torsors under connected algebraic groups}.
\newblock {\em International Mathematics Research Notices}, 2006, 01 2006.
\newblock 68632.

\bibitem[Har08]{Harari}
D.~Harari.
\newblock Le d\'{e}faut d'approximation forte pour les groupes alg\'{e}briques
  commutatifs.
\newblock {\em Algebra Number Theory}, 2(5):595--611, 2008.

\bibitem[Har17]{Harari_book}
D.~Harari.
\newblock {\em Cohomologie galoisienne: Et th{\'e}orie du corps de classes}.
\newblock Savoirs Actuels. EDP Sciences, 2017.

\bibitem[HS05]{harariszamuely}
D.~Harari and T.~Szamuely.
\newblock Arithmetic duality theorems for 1-motives.
\newblock {\em J. Reine Angew. Math.}, 578:93--128, 2005.

\bibitem[HW20]{HW18}
Y.~Harpaz and O.~Wittenberg.
\newblock Z{\'{e}}ro-cycles sur les espaces homog{\`{e}}nes et probl{\`{e}}me
  de {G}alois inverse.
\newblock {\em Journal of the American Mathematical Society}, 33(3):775--805,
  mar 2020.

\bibitem[Kol88]{AnEllCurve}
V.~A. Kolyvagin.
\newblock Finiteness of {$E({\bf Q})$} and {{$\Sha(E,{\bf Q})$}} for a subclass
  of {W}eil curves.
\newblock {\em Izv. Akad. Nauk SSSR Ser. Mat.}, 52(3):522--540, 670--671, 1988.

\bibitem[Mil17]{Milne}
J.~Milne.
\newblock {\em Algebraic Groups: The Theory of Group Schemes of Finite Type
  over a Field}.
\newblock Cambridge Studies in Advanced Mathematics. Cambridge University
  Press, 2017.

\bibitem[Pop74]{popov}
V.~L. Popov.
\newblock {Picard} {groups} {of} {homogeneous} {spaces} {of} {linear}
  {algebraic} {groups} {and} {one}-{dimensional} {homogeneous} {vector}
  {bundles}.
\newblock {\em Mathematics of the {USSR}-Izvestiya}, 8(2):301--327, apr 1974.

\bibitem[PRR93]{PlatonovRapunchkin}
V.~Platonov, A.~Rapinchuk, and R.~Rowen.
\newblock {\em Algebraic Groups and Number Theory}.
\newblock Pure and Applied Mathematics. Elsevier Science, 1993.

\bibitem[PS99]{Cassels_Tate}
B.~Poonen and M.~Stoll.
\newblock The {C}assels-{T}ate pairing on polarized abelian varieties.
\newblock {\em Ann. of Math. (2)}, 150(3):1109--1149, 1999.

\bibitem[Sch94]{Weilrestr}
C.~Scheiderer.
\newblock {\em Real and {\'E}tale cohomology}.
\newblock Lecture notes in mathematics. Springer-Verlag, 1994.

\bibitem[Sko01]{Skorobogatov}
A.~Skorobogatov.
\newblock {\em Torsors and Rational Points}.
\newblock Cambridge Tracts in Mathematics. Cambridge University Press, 2001.

\bibitem[Sto07]{Stoll}
M.~Stoll.
\newblock Finite descent obstructions and rational points on curves.
\newblock {\em Algebra Number Theory}, 1(4):349--391, 2007.

\bibitem[Wei94]{Weibel}
C.~A. Weibel.
\newblock {\em An Introduction to Homological Algebra}.
\newblock Cambridge Studies in Advanced Mathematics. Cambridge University
  Press, 1994.

\end{thebibliography}

\end{document}